\newcommand{\mf}{\mathfrak}
\newcommand{\mc}{\mathcal}
\newcommand{\mb}{\mathbf}
\newcommand{\C}{\mathbb{C}}
\newcommand{\Q}{\mathbb{Q}}
\newcommand{\im}{\operatorname{im}}
\newcommand{\re}{\operatorname{re}}
\newcommand{\R}{\mathbb{R}}
\newcommand{\Z}{\mathbb{Z}}
\newcommand{\N}{\mathbb{N}}
\newcommand{\PP}{\mathbb{P}}
\newcommand{\Hom}{\operatorname{Hom}}
\newcommand{\Ext}{\operatorname{Ext}}
\DeclareMathOperator{\ext}{ext}
\newcommand{\Hilb}{\operatorname{Hilb}}
\newcommand{\on}{\operatorname}
\newcommand{\Id}{\operatorname{Id}}
\newcommand{\Spec}{\operatorname{Spec}}
\newcommand{\GIT}{/\!\!/}
\newcommand{\Heis}{\mc{H}eis}
\renewcommand{\hat}{\widehat}
\renewcommand{\bar}{\overline}
\renewcommand{\tilde}{\widetilde}
\newcommand{\GL}{\operatorname{GL}}
\newcommand{\ab}[2]{\langle #1, #2\rangle }
\newcommand{\ttmx}[4]{\begin{pmatrix} #1 & #2 \\ #3 & #4\end{pmatrix}}
\newcommand{\ses}[3]{0 \to #1 \to #2 \to #3 \to 0}
\newcommand{\RN}[1]{%
  \textup{\uppercase\expandafter{\romannumeral#1}}%
}
\newcommand{\quiv}[3]{\mf{M}_{#1}({#2}, {#3})}
\newcommand{\quivreg}[3]{\mf{M}^{reg}_{#1}({#2}, {#3})}
\newcommand{\quivb}[3]{\mf{M}_{#1}(\mathbf{#2}, \mathbf{#3})}
\newcommand{\quivvar}{\mathfrak{M}_{\theta, \zeta}(\mathbf{v}, \mathbf{w})}
\DeclareMathOperator{\Rep}{Rep}
\DeclareMathOperator{\Amp}{Amp}
\DeclareMathOperator{\NS}{NS}
\DeclareMathOperator{\Stab}{Stab}
\DeclareMathOperator{\Pos}{Pos}
\DeclareMathOperator{\Mov}{Mov}
\DeclareMathOperator{\NE}{NE}
\DeclareMathOperator{\Nef}{Nef}
\DeclareMathOperator{\res}{res}
\DeclareMathOperator{\id}{id}
\DeclareMathOperator{\gr}{gr}
\newtheorem{thm}{Theorem}[section]
\newtheorem{cor}[thm]{Corollary}
\newtheorem{prop}[thm]{Proposition}
\newtheorem{lem}[thm]{Lemma}
\theoremstyle{definition}
\newtheorem{defn}[thm]{Definition}
\newtheorem{const}[thm]{Construction}
\newtheorem{eg}[thm]{Example}
\newtheorem{notn}[thm]{Notation}
\newtheorem{rmk}[thm]{Remark}
\let\c@equation\c@thm
\numberwithin{equation}{section}
\title{\vspace{-2cm} Affinizations of Lorentzian Kac-Moody Algebras and Hilbert Schemes of Points on K3 Surfaces}
\author{Samuel DeHority}
\begin{document}
\maketitle
\begin{abstract}
For a class of K3 surfaces, the action of a Lie algebra which is a certain affinization of a Kac-Moody algebra is given on the cohomology of the moduli spaces of rank 1 torsion free sheaves on the surface. This action is generated by correspondences between moduli spaces of Bridgeland stable objects on the surface, and is equivalent to an action defined using Fourier coefficients of vertex operators. Two other results are included: a more general result giving geometric finite dimensional Lie algebra actions on moduli spaces of Bridgeland stable objects on K3 surfaces subject to natural conditions and a geometric modular interpretation of some quiver varieties for affine ADE quivers. 
\end{abstract}

\tableofcontents

\section{Introduction}
This paper describes the cohomology of the moduli spaces of all rank 1 torsion-free sheaves on a certain class of K3 surfaces as a module over a specific Lie algebra. 
The characterization of cohomologies of moduli spaces of sheaves as a representation of a Lie algebra is so pervasive that it would be impossible to list every incarnation of this result, but a central role is played by the Hilbert schemes of points on a smooth surface $S$ so that if we combine all of their cohomologies into a single vector space
\[\bigoplus_{n = 0}^\infty H^*(S^{[n]})\]
then this space is a highest weight representation of the Heisenberg algebra $\Heis_{H^*(S)}$ modelled on the cohomology of the surface itself \cite{nakajima1999lectures}. As stated, this result simply says that the given vector space has countable dimension, and even if we asked for the weight spaces to have specific geometric meaning it would only require that these spaces had a specific dimension. Thus the result cited actually says more since it describes a specific set of correspondences between different moduli spaces which induce the action of the generators of $\Heis_{H^*(S)}$.

When the surface $S$ is an ADE surface formed as the minimal resolution of $\C^2/\Gamma$ for a finite group $\Gamma$, which will be the setting of the local result whose global analogue is proven in this paper, the given action of $\Heis_{H^*(S)}$ is related to the action of a larger algebra, namely the affine lie algebra $\hat{\mf g}$ of corresponding type ADE. This algebra acts on the module
\[V_S := \bigoplus_{\alpha \in H^2(S, \Z)} \bigoplus_{n \in \Z} H^*(M(1, \alpha, n))\]
which is the direct sum of the cohomologies of the all spaces of rank 1 torsion free sheaves on $S$. 

There are two ways to produce such an action. The first use the fact that by taking a tensor product with the line bundle $\mc{L}_{\alpha}$ such that $c_1(\mc{L}_{\alpha}) = \alpha \in H^2(S, \Z)$ there is an isomorphism 
\[ \bigoplus_{n = 0}^\infty H^*(S^{[n]}) \simeq \bigoplus_{n \in \Z} H^*(M(1, \alpha, n)) \]
for each $\alpha$, and so $\Heis_{H^*(S)}$ acts on $V_S$. But if we take $H^2(S) \subset H^*(S)$ we get a corresponding inclusion $\Heis_{H^2(S)} \hookrightarrow \Heis_{H^*(S)}$, so $V(S)$ is also a module for $\Heis_{H^2(S)}$, which is of level 1. Then the Frenkel-Kac construction \cite{frenkel1980basic} uses vertex algebra techniques to upgrade the action of $\Heis_{H^2(S)}$ to a level 1 action of $\hat{\mf g}$ on $V_S$ which is compatible with the isomorphism $H^2(S) \simeq \mf{h} \subset \mf{g} \subset \hat{\mf{g}}$ and the corresponding inclusion $\Heis_{H^2(S)} \subset \hat{\mf{g}}$. There are various geometric interpretations of this construction \cite{nakajima1999lectures, carlsson_exts_2012}.

The second way to produce an action of $\hat{\mf g}$ on $V_S$ relies on the general fact \cite{nakajima1994instantons,nakajima1998quiver} that for a quiver $Q$ with associated Kac-Moody algebra $\mf{g}_{KM}$ there is an action of $\mf{g}_{KM}$ on the cohomology of certain Nakajima quiver varieties 
\[ \bigoplus_{\mb v}H^*(\mf{M}(\mb v, \mb w)).\]
When $Q$ is an affine ADE quiver, there is a vector $\mb w = \mb{w_0}$ such that there is a correspondence between data $(\alpha, n) \in H^2(S, \Z)\times \Z$ and dimension vectors $\mb v$ such that
$M(1, \alpha, n)\simeq \mf{M}(\mb v, \mb {w_0})$
for some choice of stability data (see Section \ref{sec:quivers} for details), and in this case the Kac-Moody algebra $\mf{g}_{KM}$ agrees with $\hat{\mf g}$, the relevant affine Lie algebra. Thus combining all of these isomorphisms for all vectors $\mb{v}$ we get an action of $\hat{\mf{g}}$ on $V_S$.

The second way of producing the action of $\hat{\mf {g}}$ makes it apparent that the action of Chevalley generators $e_i$ and $f_i$ are given by convolution with specific (holomorphic) Lagrangian correspondences between quiver varieties for different 
dimension vectors $\mb v$ which are intimately related to the birational geometry of the corresponding quiver varieties. More precisely, after potentially conjugating by the birational transformation induced by variation of GIT stability condition,  they are irreducible components of the variety
\[Z(\mb {v}_1, \mb{v}_2) := \mf{M}(\mb v_1, \mb w)\times_{M_0}\mf{M}(\mb v_2, \mb w)  \]
where $M_0$ is an affine variety and the maps from $\mf{M}(\mb v_i, \mb w)$ are symplectic resolutions of their images. Thus $Z(\mb {v}_1, \mb{v}_2)$ are analogues of the Steinberg variety which encodes the Springer representation of the Weyl group on Springer fibers, see \cite{chriss_representation_2010} for an excellent account of this story. 

The main result of this paper is that both of these methods can be extended to produce equivalent Lie algebra actions on the space $V_S$ 
by a Lie algebra which is a central extension of the loop algebra of a Kac-Moody algebra where $S$ is a K3 surface such that 
\begin{itemize}
  \item $\NS(S)$ is generated by irreducible $-2$ curves
  \item Any pair of irreducible -2 curves on $S$ are either disjoint or intersect transversally at a single point.
\end{itemize}
More general results follow by deforming the complex structure of a given K3 surface to be related to one of this type.  

Before stating this result precisely, it is necessary to say something about the techniques involved and how they relate to possible extensions of the present work. 
First, the K-trivial birational geometry of quiver varieties of affine ADE type is captured by variation of GIT stability of quiver representations, while the K-trivial birational geometry of moduli spaces of sheaves on K3 surfaces is most completely described by variation of Bridgeland stability \cite{bayer2014projectivity,bellamy2018birational}, namely if $M = M_{H}(v)$ is the moduli space of $H$-stable sheaves of primitive Mukai vector $v$ then all K-trivial birational models of $M$ are the moduli spaces $M_{\sigma}(v)$ of $\sigma$-stable objects of Mukai vector $v$ for some stability condition $\sigma$. Also in fairly general settings (e.g. \cite{toda2018moduli}) but especially for K3 surfaces \cite{arbarello2018singularities}, the local structure of singularities of moduli spaces of sheaves induced by varying to a non-generic stability parameter is locally analytically (or \'etale locally) described by the variation of stability for a quiver variety, namely the Ext quiver of a polystable representative of a sheaf represented by a point in the singular moduli space.

The first ancillary result, which is proven as Theorem \ref{thm:finite_action}, is that subject to natural conditions on the stable factors of an element of $M_{\sigma}(v)$ and assuming that one has a local Ext-quiver description of the map contracting S-equivalent objects in $M_{\sigma}(v)$ under a stability condition $\sigma_0$, one can combine local finite dimensional Lie algebra actions on the cohomologies of the Ext quivers to produce global Lie algebra actions on cohomologies of moduli spaces of stable complexes. For a precise set of conditions see Definition \ref{def:amenable}, which we briefly record here.  

Let $S$ be any $K3$ surface and let $v\in H^*_{alg}(S)$ be a primitive Mukai vector. Let $\mc{S} = \{ s_1, \ldots, s_n\}$ be a set of Mukai vectors of norm $-2$ spanning a negative definite sublattice of $H^*_{alg}(S)$ such that for $s_{\mb t} \in \Z \mc{S}$ if $(v + s_{\mb t})^2 \ge -2$ then $v + s_{\mb t}$ is primitive.   Let $\sigma_0\in \Stab^{\dagger}(S)$  be a stability condition with an adjacent generic stability condition $\sigma$ such that the phase of an object in $M_{\sigma}(v)$ overlaps with that of a $\sigma_0$-stable object of Mukai vector $s_i$ for each $i$. Suppose that the $\sigma_0$-stable factors of a $\sigma$-stable object of Mukai vector in $v + \Z \mc S$ are a unique object in $v + \Z \mc S$ and some number of objects of Mukai vector in $\mc{S}$, and that the local map contracting $S$-equivalent objects is locally analytically isomorphic to the map from a quiver variety for generic stability parameter to the affine quotient for the Ext-quiver of a polystable object on the base. Then Definition \ref{def:amenable} says that the data $(v, \mc S, \sigma_0)$ is amenable to a local quiver Lie algebra action. This name is justified because of the following theorem. 

\begin{thm}
 Suppose $(v, \mc S, \sigma_0)$ is amenable to a local quiver Lie algebra action. Let $\mf{g}$ denote the semisimple Lie algebra whose Cartan matrix is the Gram matrix of the set $\mc{S}$. There is a space $M_0$ and maps \[\pi : M_{\sigma}(v + s_{\mb t}) \to M_0\]
for all $s_{\mb t} \in \Z\mc{S}$ and an action of $\mf{g}$  on the cohomology of the moduli spaces
\[\bigoplus_{s_{\mb t} \in \Z\mc{S}} H^*(M_{\sigma}(v+ s_{\mb t}))\]
induced by Lagrangian correspondences which are components of the fiber products
\[ M_{\sigma}(v + s_{\mb t}) \times_{M_0} M_{\sigma}(v + s_{\mb t'}).\]
\end{thm}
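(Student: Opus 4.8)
The plan is to reduce the global statement to the known local geometric representation theory of Nakajima quiver varieties of finite ADE type, using the amenability hypothesis to patch these local actions together. First I would fix, for each $s_i \in \mc S$, the Ext-quiver $Q$ of a polystable representative: since $\mc S$ spans a negative definite sublattice with Gram matrix equal to (minus) a Cartan matrix of finite ADE type, the quiver $Q$ underlying the Ext-quiver is precisely the Dynkin diagram associated to $\mf g$, and the dimension vectors $\mb v(\mb t)$ appearing for the various $s_{\mb t}$ correspond to the sheaf-theoretic data $v + s_{\mb t}$ via the Ext-quiver dictionary. The amenability hypothesis gives a distinguished stability parameter $\sigma_0$ with adjacent generic $\sigma$, and the contraction map $M_\sigma(v + s_{\mb t}) \to M_0$ is, locally analytically near each point of the singular locus, identified with the map from a quiver variety $\mf M_\theta(\mb v, \mb w)$ at generic $\theta$ to the affine quotient $\mf M_0(\mb v, \mb w)$ for the Ext-quiver; here $\mb w$ is read off from the (fixed, primitive) vector $v$. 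The space $M_0$ in the statement is then built as the target of the Bridgeland-semistable-to-polystable contraction, i.e.\ the coarse moduli space $M_{\sigma_0}(v + s_{\mb t})$ (which is independent of $\mb t$ in the appropriate sense because the $\sigma_0$-stable factors lying in $\mc S$ contribute only to the nilpotent/exceptional directions).

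Next I would construct the operators. For each simple root $\alpha_i$ of $\mf g$, Nakajima's construction \cite{nakajima1994instantons, nakajima1998quiver} produces Hecke correspondences $\mf P_i(\mb v) \subset \mf M_\theta(\mb v', \mb w) \times \mf M_\theta(\mb v, \mb w)$ (with $\mb v' = \mb v \pm \mb e_i$), which are Lagrangian and are components of the Steinberg-type fiber product $\mf M_\theta(\mb v', \mb w) \times_{\mf M_0} \mf M_\theta(\mb v, \mb w)$; convolution with their fundamental classes yields operators $e_i, f_i$ satisfying the Serre relations of $\mf g$. The key transfer step: because the amenability hypothesis identifies the Bridgeland contraction $M_\sigma(v + s_{\mb t}) \to M_0$ étale-locally (or locally analytically) with the quiver contraction, and because the singular locus of $M_0$ where S-equivalence is nontrivial is exactly where distinct $\sigma_0$-stable factors from $\mc S$ collide, the fiber product $M_\sigma(v + s_{\mb t}) \times_{M_0} M_\sigma(v + s_{\mb t'})$ is, component by component, locally isomorphic to the corresponding quiver Steinberg variety $Z(\mb v_1, \mb v_2)$. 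I would then identify the relevant irreducible Lagrangian components globally — using that off the singular locus the fiber product is the diagonal, so the interesting components are supported over the exceptional locus where the local model controls everything — and define $e_i, f_i$ on $\bigoplus_{s_{\mb t}} H^*(M_\sigma(v + s_{\mb t}))$ by convolution with these global classes. The uniqueness clause in amenability (a $\sigma$-stable object has $\sigma_0$-stable factors consisting of a \emph{unique} object in $v + \Z\mc S$ plus objects from $\mc S$) is what guarantees the correspondences are honest multiplicity-free components and that the convolution is well-defined with the expected support.

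Finally I would verify the $\mf g$-relations. The $[e_i, f_j] = \delta_{ij} h_i$ and Serre relations are local statements about convolution of Lagrangian cycles in a neighborhood of the exceptional fibers, and since the local model is exactly a finite-type Nakajima quiver variety, the relations follow from Nakajima's theorem applied locally together with a partition-of-unity / Mayer–Vietoris style gluing argument for the cohomology — or more cleanly, by observing that cohomology of each $M_\sigma(v + s_{\mb t})$ decomposes (via the contraction $\pi$ and the decomposition theorem) into a sum over strata of $M_0$ of pieces governed by the local quiver cohomology, compatibly with the correspondences. The primitivity hypothesis on $v + s_{\mb t}$ whenever $(v + s_{\mb t})^2 \ge -2$ ensures $M_\sigma(v + s_{\mb t})$ is a smooth projective holomorphic symplectic variety (or empty), so that the cohomological convolution formalism applies without stacky corrections. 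I expect the main obstacle to be precisely this last gluing step: promoting the étale-local/analytic-local identification of the Steinberg correspondences into a global identity of cohomological operators satisfying the Lie relations, since one must control how the decomposition-theorem summands over different strata of $M_0$ interact and check that no "cross-stratum" correction terms appear in the commutators — this requires carefully using that the $\mc S$-factors contribute orthogonally (negative definite Gram matrix) and that the hypothesis forces the polystable factors in $v + \Z\mc S$ to be rigid enough that the base $M_0$ is genuinely common to all the $M_\sigma(v + s_{\mb t})$.
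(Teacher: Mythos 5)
Your overall strategy --- take the common base $M_0$ supplied by the amenability hypothesis, glue Nakajima's Hecke correspondences through the local analytic quiver description of the contractions, and verify the Lie algebra relations by convolution-algebra computations reduced to the local models --- is the same route the paper takes (Lemma \ref{lem:stratum_open_cover}, Definition \ref{defn:heckek3}, Proposition \ref{prop:steinberg_properties}, Theorem \ref{thm:finite_action}). There are, however, two places where your argument as written has a gap. The smaller one is well-definedness of the glued correspondence: the local quiver charts are not canonical, so you must check that the locally defined cycles $\mf{P}(\mb v)\times \Delta$ agree on overlaps; the paper does this via the intrinsic, chart-independent characterization of the Hecke correspondence by its fibers over points (Proposition \ref{prop:hecke_from_1_wall}), and the ``uniqueness of the non-$\mc{S}$ stable factor'' clause of amenability by itself does not provide this.

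The more serious issue is your verification of the Serre relations by ``Nakajima's theorem applied locally together with a partition-of-unity / Mayer--Vietoris style gluing.'' A Borel--Moore class on the Steinberg variety that restricts to zero on every set of an open cover need not vanish, so local vanishing of $(\ad e_i)^{1-a_{ij}}e_j$ does not yield the global relation. The commutator relation $[e_i,f_j]=\delta_{ij}h_i$ \emph{is} checkable locally, but for a specific reason: the relevant class is represented by a middle-dimensional (Lagrangian) cycle, hence is determined by its multiplicities along irreducible components, and those multiplicities are local data (this is how the paper uses Proposition \ref{prop:steinberg_properties}(iv) together with the computation of \cite[p.~545--6]{nakajima1998quiver}, with the trivial factors of Lemma \ref{lem:stratum_open_cover} cancelling in the dimension count). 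The Serre relations \eqref{bkm4}--\eqref{bkm5} are then deduced in the paper not by localization but by the standard integrability argument: finite-dimensionality of $H^*(M_{\sigma_w})$ plus $\mathfrak{sl}_2$-theory, as in \cite[\S 9.iii]{nakajima1998quiver}. Replacing your gluing step by this argument (and adding the short product-structure argument showing generators attached to different connected components of the Dynkin diagram commute, which the semisimple rather than simple case requires) closes the gap.
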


The main result then applies this result and combines them for several different finite dimensional Lie algebras. Here we restrict to the stated class of K3 surfaces so that $S$ is a K3 surface such that $\NS(S)$ is generated by irreducible $-2$ curves which are pairwise disjoint or intersect transversely at a single point. Let $\mf{g}$ be the Kac-Moody algebra whose Cartan matrix is the Gram matrix of the irreducible $-2$ curves. Define a Lie algebra $\hat{\mf g}(\NS(S))$ which as a vector space is 
\[\hat{\mf g}(\NS(S)) = \mf{g}[t^{\pm 1}] \oplus \Q c \oplus \Q d\]
  and has commutation relations as in \eqref{eq:affine}. The following is Theorem \ref{thm:main} in the main text. 

\begin{thm}
  Let $\NS(S)^\perp \subset H^*(S)$ denote the orthogonal complement of $\NS(S)$ in $H^*(S)$. Then there is an action of 
  $\hat{\mf g}(\NS(S))\oplus \Heis_{\NS(S)^\perp}$ on 
  \[V_S := \bigoplus_{\alpha \in H^2(S, \Z)} \bigoplus_{n \in \Z} H^*(M(1, \alpha, n)) \]
  such that the action of $\hat{\mf g}(\NS(S))$ satisfies the following:
  \begin{enumerate}[(i)]
    \item  The algebra $\hat{\mf g}(\NS(S))$ is generated by correspondences which are the compositions of components of fiber products over symplectic singularities of $M_{\sigma}(1, \alpha, n)$ with correspondences inducing birational transformations $M(1, \alpha, n)\dashrightarrow M_{\sigma}(1, \alpha, n)$ for stability conditions $\sigma\in \Stab^\dagger(S)$.  
    \item This action induces the quiver affine Lie algebra action $\hat{\mf g}$ on local quiver varieties corresponding to moduli spaces of rank 1 torsion free sheaves on $U\subset S$ where $U$ is biholomorphic to an affine ADE surface and  $\hat{\mf g}\subset \hat{\mf g}(\NS(S))$ is the corresponding affine Lie algebra. 
    \item This action of $\hat{\mf g}(\NS(S))\oplus \Heis_{\NS(S)^\perp}$ agrees with that defined by Fourier coefficients of vertex operators.
  \end{enumerate}
\end{thm}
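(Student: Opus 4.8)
The plan is to construct the action in two independent ways -- geometrically, by gluing together the finite-dimensional actions furnished by Theorem~\ref{thm:finite_action}, and algebraically, via the vertex algebra of the even Lorentzian lattice $\NS(S)$ -- and then to prove that the two coincide; this last step is what forces the geometric operators to satisfy exactly the relations of $\hat{\mf g}(\NS(S))$ and nothing more. For the underlying module: by \cite{nakajima1999lectures} the space $\bigoplus_n H^*(S^{[n]})$ is an irreducible Fock space for $\Heis_{H^*(S)}$, and twisting by the line bundle $\mc L_\alpha$ with $c_1(\mc L_\alpha)=\alpha$ identifies $\bigoplus_n H^*(M(1,\alpha,n))$ with it for each $\alpha\in\NS(S)$, while $M(1,\alpha,n)=\varnothing$ for $\alpha\notin\NS(S)$. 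Recording $\alpha$ by the group algebra $\C[\NS(S)]$, one gets $V_S\cong V_{\NS(S)}\otimes F_{\NS(S)^\perp}$, where $V_{\NS(S)}$ is the lattice vertex algebra of $\NS(S)$ and $F_{\NS(S)^\perp}$ is the Fock module of $\Heis_{\NS(S)^\perp}$, compatibly with $\Heis_{H^*(S)}=\Heis_{\NS(S)}\oplus\Heis_{\NS(S)^\perp}$ acting by Nakajima correspondences. The second tensor factor will commute with everything constructed below and accounts for the $\Heis_{\NS(S)^\perp}$ summand, so it remains to act by $\hat{\mf g}(\NS(S))$ on $V_{\NS(S)}$.

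For the algebraic side: since the irreducible $-2$ curves $C_i$ span $\NS(S)$ and meet pairwise in $0$ or $1$ points, their intersection form is minus the symmetric, simply-laced generalized Cartan matrix of $\mf g$, and $\NS(S)$ with negated form is its root lattice. For each $\alpha\in\NS(S)$ with $\alpha^2=-2$, the lattice vertex algebra carries a weight-one current $X(\alpha,z)=\sum_m X_m(\alpha)z^{-m-1}$ (for a choice of cocycle on $\NS(S)$), and from the operator product expansions in $V_{\NS(S)}$ \cite{frenkel1980basic} one computes $[X_m(\alpha),X_{m'}(\alpha')]=0$ when $(\alpha,\alpha')\ge 0$, $=\pm X_{m+m'}(\alpha+\alpha')$ when $(\alpha,\alpha')=-1$, and $=\pm(h_{\alpha,(m+m')}+m\delta_{m+m',0}c)$ when $\alpha'=-\alpha$ -- precisely the relations~\eqref{eq:affine}. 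The Lie algebra generated by the $X_m(\alpha)$, the Heisenberg modes $h_{(m)}$ for $h\in\NS(S)$, and the grading operator $d=L_0$ is therefore $\hat{\mf g}(\NS(S))$, with $c$ acting by $1$ by the self-intersection normalization: this is the Frenkel--Kac construction extended to an indefinite (Lorentzian) root lattice, and it is the action ``defined by Fourier coefficients of vertex operators'' of part~(iii).

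For the geometric side, fix $v=(1,\alpha,n)\in H^*_{alg}(S)$ and let $T$ range over the subsets of $\{C_i\}$ spanning a \emph{negative definite} -- equivalently, finite ADE -- sublattice of $\NS(S)$, in particular over single curves, with $\mc S_T$ the corresponding set of spherical Mukai vectors of square $-2$ (of rank $0$, supported on the curves). I would verify that $(v,\mc S_T,\sigma_0)$ is amenable in the sense of Definition~\ref{def:amenable} for a suitable $\sigma_0\in\Stab^\dagger(S)$ lying on the wall along which the $\mc S_T$-objects acquire the phase of objects of Mukai vector in $v+\Z\mc S_T$: primitivity of $v+s_{\mb t}$ is automatic in rank $1$; the statement that the $\sigma_0$-stable factors of a $\sigma$-stable object consist of a unique one of Mukai vector in $v+\Z\mc S_T$ together with some objects of Mukai vector in $\mc S_T$ should follow from the analysis of rank-$1$ walls on these surfaces \cite{bayer2014projectivity}; and the required local-quiver model of the contraction of $S$-equivalent objects is the \'etale-local statement of \cite{arbarello2018singularities,toda2018moduli} applied to the Ext-quiver of the polystable representative, a finite ADE quiver here. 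Theorem~\ref{thm:finite_action} then gives $\mf g_T$ acting on $\bigoplus_{s_{\mb t}\in\Z\mc S_T}H^*(M_\sigma(v+s_{\mb t}))$ by the stated Lagrangian correspondences; transporting through the wall-crossing maps $M(1,\alpha',n')\dashrightarrow M_\sigma(1,\alpha',n')$ yields operators on $V_S$ that are compatible with the $\mc L_\alpha$-twists and, for varying $T$, agree on overlaps (each is determined near the locus of objects with a sub/quotient supported on the relevant curves), hence assemble into an action of the $t^0$-part $\mf g=\mf g(\NS(S))$ by correspondences of the form in~(i). Writing $X_m(\alpha)=\tfrac12[h_{\alpha,(m)},X_0(\alpha)]$ then realizes the loop generators as composites of these with Nakajima Heisenberg correspondences, so once $X_0(\alpha)$ is matched with the geometric operator attached to $\mc S=\{\alpha\}$, all of $\hat{\mf g}(\NS(S))$ is generated as in~(i); restricting to $U\subset S$ biholomorphic to an affine ADE surface -- where $M(1,\alpha,n)$ is \'etale-locally $\mf{M}(\mb v,\mb{w_0})$ (Section~\ref{sec:quivers}) and the correspondences become Nakajima's quiver Hecke correspondences -- then gives part~(ii), the affine-node generator being reached because it is a real root of $\mf g(\NS(S))$.

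The matching of the two constructions is part~(iii) and is the step I expect to be the main obstacle. The Heisenberg modes agree on both sides by construction. For a $-2$ curve class $\alpha_i$, the geometric operator $e_i$ from Theorem~\ref{thm:finite_action} with $\mc S=\{\alpha_i\}$ is, like $X_0(\alpha_i)$, a degree-shifting weight-one operator; to show $e_i=X_0(\alpha_i)$ up to a scalar I would verify that $e_i$ satisfies the commutation relation $[h_{(m)},e_i]=(h,\alpha_i)\,(\text{loop mode})$ with the Heisenberg that characterizes $X_0(\alpha_i)$ among fields with prescribed Heisenberg operator product expansion \cite{frenkel1980basic}, and then fix the scalar by a single computation in minimal cohomological degree. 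Because this commutator is supported near the locus of objects with a factor supported on $C_i$, it can be checked after restricting to an affine-ADE neighborhood $U$ of $C_i$, where the operators become Nakajima's quiver-variety operators and the Heisenberg--affine-Lie-algebra compatibility is classical \cite{nakajima1999lectures}; the delicate point is to justify this localization, i.e.\ to control the cohomology class of the global correspondence on $M_\sigma(v)$ well enough -- through the \'etale-local excess-intersection model underlying Theorem~\ref{thm:finite_action}, or through a degeneration of $S$ to an ALE-fibered limit -- that it restricts to Nakajima's correspondence on the nose, and not merely to an operator obeying the same Lie-theoretic relations. Once~(iii) is in hand, the geometric operators inherit precisely the relations of $\hat{\mf g}(\NS(S))$, and the theorem follows.
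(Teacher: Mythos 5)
Your overall architecture matches the paper's: identify $V_S\cong V_{alg}\otimes V_T$, set up the Frenkel--Kac action on the Lorentzian lattice side (Proposition \ref{prop:main_from_rep_theory}), produce geometric correspondences from local quiver descriptions, and pin the two down against each other via the characterization of lattice vertex operators by their commutation with the Heisenberg algebra (Proposition \ref{prop:frenkel_kac_follows_from_VOA}). But you diverge structurally in a way that creates real problems for parts (i) and (ii): you only construct the $t^0$ part $\mf g(\NS(S))$ geometrically from Theorem \ref{thm:finite_action} and then manufacture all loop modes as commutators $X_m(\alpha)=\tfrac12[h_{\alpha,(m)},X_0(\alpha)]$ with Nakajima operators, whereas the paper constructs full affine Lie algebra actions geometrically at corners of the stability space (Notation \ref{not:boundary_stab}, Propositions \ref{prop:general_base}, \ref{prop:affine_corners}, Corollary \ref{cor:affine_on_VOA}), with the affine-node generators realized as Hecke-type correspondences for the spherical class $(0,-C_{\max},1)$, and then glues the actions from different corners and collections via Lemma \ref{lem:flop_chain}. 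Your remark that the affine-node generator is reached ``because it is a real root of $\mf g(\NS(S))$'' is a slip: it is a real root of the affinization, i.e.\ it lives in $\mf g\otimes t^{\pm1}$, not in the $t^0$ Lorentzian algebra. Consequently your commutator definition does not directly exhibit the loop generators as compositions of Steinberg-type components of fiber products with flop correspondences (statement (i)), nor does it directly compare with Nakajima's affine quiver action on local quiver varieties (statement (ii)); both of these are automatic in the paper's corner construction. Your ``agree on overlaps'' gluing of the finite actions for varying $T$ also needs an argument; in the paper this independence-of-corner statement is exactly Lemma \ref{lem:flop_chain}.

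The genuine gap is at your step (iii), which you correctly identify as load-bearing but then discharge with an incorrect appeal to the literature. You propose to verify $[h_{(m)},e_i]=(h,\alpha_i)\,X_m(\alpha_i)$ by localizing to an ALE neighborhood of $C_i$ ``where the Heisenberg--affine-Lie-algebra compatibility is classical \cite{nakajima1999lectures}.'' That compatibility --- between the quiver-variety correspondence action (at $\theta_+$) and the Nakajima/Fock-space description in the Gieseker--Hilbert chamber --- is not in \cite{nakajima1999lectures}; it is Nagao's theorem \cite{nagao2009quiver} in type $A$ only (Theorem \ref{thm:nagao}), extended to all cohomological degrees in Proposition \ref{prop:flop_intertwines}, and for types $D$ and $E$ it is established only inside the proof of Theorem \ref{thm:main} itself. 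Moreover the localization step --- controlling the global cohomology class of a commutator of a Nakajima correspondence (which moves points anywhere on $S$) with the Hecke correspondence well enough to compute it on $\pi_{\mc C}^{-1}(Sym^\lambda(U))$ and conclude it restricts to Nakajima's correspondence ``on the nose'' --- is precisely what Proposition \ref{prop:hilb_is_quiv_amenable}, the flop-intertwining argument, and Lemma \ref{lem:flop_chain} are engineered to supply, and you offer no substitute. Without these ingredients the identification $e_i=X_0(\alpha_i)$ does not close, and since your entire relation-checking scheme (including the very definition of the loop modes) funnels through it, the proof as proposed has a gap at its central step.
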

All of the stability conditions considered may be chosen to have central charge $Z_{\omega, \beta}$ corresponding to complexified K\"ahler form $\beta + i\omega$ is such that there is a bound $N$ (potentially depending on $\alpha$ and $n$) and a contractible collection of $-2$ curves $\mc{C}$ on $S$ such that for all $C \in \mc{C}$ we have
\begin{align*} |\beta\cdot \omega| &< N & \omega^2 &\gg 0 & |\omega\cdot C| &< \frac{N}{\omega^2}.
\end{align*}
Geometrically, this corresponds to the fact that the volume of $S$ and hence each $M_{\sigma}(v)$ is arbitrarily large, but that each contractible curve $C\in \mc{C}$ is extremely small, and also that the volume of the exceptional divisor of the Hilbert-Chow map remains bounded and so is small compared to the volume of the moduli space. We get different affine Lie algebra actions on moduli spaces for these stability conditions depending on which collection $\mc{C}$ is chosen. 

Finally, en route to this result, a geometric modular interpretation of quiver varieties for affine ADE quivers and framing vector $\mb{w_0}$ is given, as long as the corresponding finite ADE diagram can occur as the dual graph of a collection of $-2$ curves on a K3 surface. This is Corollary \ref{cor:quiver_geometric_moduli} in the main text.

\begin{prop}
  Let $Q$ be an affine ADE quiver corresponding to a connected contractible collection $\mc C$ on $S$. Fix an open set $U \subset S_{\mc C}$ containing $\mc{C}$ which is biholomorphic to the corresponding ADE surface.  Fix framing vector $\mb{w_0}$. Then given a generic stability condition $\theta$ and dimension vector $\mb v$ there is a Mukai vector $v$, a generic stability condition $\sigma \in \Stab^\dagger(S)$ in a chamber which has a stability condition on its boundary inducing a contraction $\pi_{\mc C}$ onto $Sym^k(S_{\mc C})$ and an isomorphism 
  \[ \quiv{\theta}{\mb{v}}{\mb{w_0}} \simeq M_\sigma(v, U)\]
  between the corresponding quiver variety and an open set $M_\sigma(v, U) = \pi_{\mc {C}}^{-1}(Sym^k(U))$ of $M_\sigma(v)$ parametrizing $\sigma$-stable objects on $S$ of Mukai vector $v$. This correspondence is such that the birational transformations between different chambers in the stability space for the quiver are induced by those between different chambers in $\Stab^\dagger(S)$.
\end{prop}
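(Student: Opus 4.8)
The plan is to deduce this from three inputs: (a)~the classical description of affine ADE quiver varieties with framing $\mb{w_0}$ as moduli of rank-one objects on the minimal resolution $X=\tilde{\C^2/\Gamma}$, under which $\mf{M}_0(k\delta,\mb{w_0})\cong Sym^k(\C^2/\Gamma)$, the general $\mf{M}_0(\mb v,\mb{w_0})$ is the evident variant with nontrivial determinant, and $\mf{M}_\theta\to\mf{M}_0$ is the induced Hilbert--Chow type contraction \cite{nakajima1994instantons,nakajima1999lectures}; (b)~the biholomorphism $U\cong X$, which identifies $D^b(X)$ with the full subcategory of objects of $D^b(S_{\mc C})$ supported in $U$, taking $\mc C$ to the exceptional locus; and (c)~the \'etale-local quiver model of the contraction $\pi_{\mc C}$ underlying Theorem \ref{thm:finite_action} and Definition \ref{def:amenable}, together with the local structure theory of \cite{arbarello2018singularities}. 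Much of the work is already contained in the proof of Theorem \ref{thm:main}; what remains is to observe that for \emph{connected} contractible $\mc C$ and $U$ a neighborhood of $\mc C$ biholomorphic to all of $X$, that local comparison upgrades to a global isomorphism of moduli spaces.

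\emph{Constructing $v$, $\sigma$ and the isomorphism.}
Given $(\theta,\mb v)$, input~(a) produces a rank-one Mukai vector $v_X$ on $X$ with $\dim M_\theta(v_X)=\dim\quiv{\theta}{\mb v}{\mb{w_0}}$ and an isomorphism $\quiv{\theta}{\mb v}{\mb{w_0}}\cong M_\theta(v_X)$, where $\theta$ is read as a stability datum on $D^b(X)$ via the McKay dictionary. Transporting $v_X$ across $U\cong X$ gives a class $v_U$ supported in $U$; since $U$ is open in $S_{\mc C}$ and $\mc C$ spans $\NS(S_{\mc C})$, the class $v_U$ is cut out from a rank-one Mukai vector $v=(1,\alpha,n)$ on $S_{\mc C}$ with $\alpha\in\Z\mc C$ and $k$ fixed by $n$; after tensoring by a suitable line bundle $M_\sigma(v)\cong S_{\mc C}^{[k]}$, so that $\pi_{\mc C}$ is the induced Hilbert--Chow contraction onto $Sym^k(S_{\mc C})$. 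Now $M_\sigma(v,U)=\pi_{\mc C}^{-1}(Sym^k(U))$ is exactly the locus of $\sigma$-stable objects whose support cycle lies in $U$; on such objects restriction along $U\hookrightarrow S_{\mc C}$ and extension by zero agree (the support avoids $\partial U$), so $M_\sigma(v,U)$ is a moduli space of stable objects on $D^b(X)$, hence is identified with $\quiv{\theta}{\mb v}{\mb{w_0}}$ once the stability data are matched.

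\emph{Matching chambers.}
Choose the chamber of $\sigma\in\Stab^\dagger(S_{\mc C})$ so that its restriction to $D^b(X)$ lies in the GIT chamber of $\theta$. The central charges $Z_{\omega,\beta}$ permitted in Theorem \ref{thm:main} --- with $\omega^2\gg0$ and $|\omega\cdot C|<N/\omega^2$ for $C\in\mc C$ --- are precisely those whose restriction to $X$ falls in the small--exceptional--volume region in which Bridgeland stability is controlled by the finite root system of $\mc C$; the walls of $\Stab^\dagger(S_{\mc C})$ meeting this region are cut out by $\delta$ and the positive roots of $\mc C$, i.e.\ by the same linear data governing the GIT walls for $\theta$, because the Gram matrix of $\mc C$ is the ADE Cartan matrix. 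Hence generic $\theta$ corresponds to generic $\sigma$, and crossing a quiver wall (a Mukai flop of quiver varieties) corresponds, under the isomorphism, to crossing the matching wall of $\Stab^\dagger(S_{\mc C})$, whose birational transformation is the wall-crossing of \cite{bayer2014projectivity}; the two coincide because each is an isomorphism away from $\pi_{\mc C}^{-1}$ of the singular locus and is modelled \'etale-locally on the same variation-of-GIT problem for the Ext-quiver, the input already used for Theorem \ref{thm:main}. This yields the final assertion.

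\emph{Main obstacle.}
The delicate step is the locality claim in~(b): that the moduli \emph{problem} --- not just its underlying set --- for objects supported near a contractible configuration in the compact K3 $S_{\mc C}$ agrees with a moduli problem on the noncompact surface $X$, including the Bridgeland stability and the wall structure. This forces one to make ``moduli of semistable objects on $X$'' precise (via the perverse $t$-structure obtained by tilting at the exceptional fiber, or via Bridgeland--King--Reid--Kuznetsov), and, more subtly, to pin down the dictionary between the central charge data $(\omega,\beta)$ near $\mc C$ and the quiver GIT parameter $\theta$ sharply enough that \emph{walls match on the nose} rather than up to finite ambiguity --- which is where the standing hypotheses on $S$ (N\'eron--Severi generated by pairwise transverse $-2$ curves, $\mc C$ connected and contractible) and the explicit bounds on $\omega\cdot C$ get used.
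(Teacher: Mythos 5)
Your argument leans on two assertions that are precisely the content the paper has to supply, and neither is established in your write-up. First, your input (a) --- that for an \emph{arbitrary} generic GIT parameter $\theta$ the quiver variety $\quiv{\theta}{\mb v}{\mb{w_0}}$ is a moduli space $M_\theta(v_X)$ of stable objects of $D^b(X)$ for a stability datum ``read off from $\theta$ via the McKay dictionary'' --- is not in the classical references you invoke: Theorem \ref{thm:quivers_are_torsionfree_moduli} and the equivariant description cover only the Gieseker/Hilbert chamber and the chamber $\theta_+$, not general chambers, so for general $\theta$ this modular interpretation is essentially the local form of the statement being proven. Second, your step (b) --- that Bridgeland stability on the compact surface restricts to the subcategory of objects supported in $U$ so as to give a moduli problem on the noncompact ADE surface with the same stable objects and with walls matching on the nose --- is exactly what you yourself flag as the ``main obstacle,'' and you do not close it; making ``moduli of semistable objects on $X$'' precise and pinning down the $(\omega,\beta)\leftrightarrow\theta$ dictionary is where all the work lies. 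The paper deliberately avoids both points: it never defines a moduli problem on the noncompact surface. Instead it starts from the honest isomorphism in the Gieseker-type chambers, where $\pi_{\mc C}^{-1}(Sym^k(U))$ is literally a moduli space of rank one torsion-free sheaves and hence a quiver variety, and then invokes the classification of \emph{all} birational models on both sides --- Bellamy--Craw (Theorem \ref{thm:birat_sym}) for the quiver, Bayer--Macr\`i (Theorem \ref{thm:bm_mmp}) for the K3 moduli space --- together with the explicit matching of wall-and-chamber structures (Lemma \ref{lem:cc_shift_c1}, Proposition \ref{prop:affine_quiver_shift}, Proposition \ref{prop:hilb_is_quiv_amenable}) to see that the birational transformations preserve the relevant open sets and coincide chamber by chamber. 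Your appeal to the \'etale-local Ext-quiver model cannot substitute for this, since local models only control the contracted fibers, not the global identification of moduli spaces and stability chambers.

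There is also a concrete omission in your chamber-matching step: in the limiting region of $\Stab^\dagger(S)$ used here (central charges with $(c_1(\mc E)-\beta)\cdot\omega>0$), only the quiver chambers with $\theta\cdot\delta>0$ are realized. The paper reaches the remaining chambers with $\theta\cdot\delta<0$ by the derived-dual symmetry $(\omega,\beta)\mapsto(\omega,-\beta)$, which gives $M_{\sigma_{\omega,\beta}}(v)\simeq M_{\sigma_{\omega,-\beta}}(v)$, with the wall $\theta\cdot\delta=0$ matching the Hilbert--Chow wall $(c_1(\mc E)-\beta)\cdot\omega=0$. Your claim that ``generic $\theta$ corresponds to generic $\sigma$'' gives no mechanism for this half of the quiver stability space, so even granting your inputs the proof as written covers only part of the statement.
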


This paper is organized as follows. In Section \ref{sec:VOA}, the action of Lie algebras given by the Frenkel-Kac construction in presented. Sections \ref{sec:quivers} - \ref{sec:birational_hilb} review necessary background on Nakajima quiver varieties, birational geometry and Bridgeland stability conditions. Section \ref{sec:stabk3_specific} describes the specific relevant limits of the space of stability conditions on K3 surfaces of the required form and Section \ref{sec:construction} constructs the required Lie algebra actions and provides the proof of the main theorem. 

\paragraph{Acknowledgements} I am extremely grateful for the help and advice of so many people, especially my advisor A. Okounkov, and many other including A. Bayer, A. Craw, I. Danilenko, A. Gyenge, A.J. de Jong, H. Liu, G. Sacc\`a, and J. Sawon. 

\section{Lattice VOAs and Cohomology}\label{sec:VOA}
\paragraph{K3 Lattices}
Recall the K3 lattice
\[\Lambda_{K3} = H^*(X,\Z) = U^{\oplus 4} \oplus -E_8 \oplus -E_8\]
is an even, integral lattice of signature $(4,20)$, where $U$ is the hyperbolic lattice with matrix $\begin{pmatrix} 0 & 1\\1 & 0\end{pmatrix}$, and the $E_8$ lattices are the usual one, where $-L$ refers to $L$ with the negative intersection pairing. Now for a K3 surfaces $S$, the Picard group is equal to the Neron Severi lattice and  $\on{Pic}(S) = \on{NS}(X) =  H^{1,1}(X,\C)\cap H^2(X, \Z)$ has signature $(1,\rho(S) -1 )$ and is contained in $\Lambda_{K3}$. Let $\Delta = \{\alpha\in \on{NS}(S) | \alpha^2 = -2\}$ be the classes with square -2, and let
\[W_S = \langle r_\alpha: v \mapsto v + 2\langle v , \alpha \rangle \alpha\rangle \]
denote the Coxeter group of reflections by these vectors. If we extend them to act on $\on{NS}(S)_\R := \on{NS}(S)\otimes \R$ then
$W_S \subset O(\on{NS}(S)_\R)$, further the cone $\Nef(S) = \overline{\on{Amp}(S)}$ is
a fundamental domain for the action of $W_S$ on the positive cone $C^+ \subset
\on{NS}_\R$.  Here $C_+$ is defined such that $C = \{ \alpha \in \on{NS}_\R | \alpha^2 > 0\} =
C^+\sqcup C^-$ and there is an ample class in $C^+$.  Technically the claim that $\Nef(S)$ is a fundamental domain is true only as long as $\Nef(S)$
avoids the boundary of $C^+$. In the case where $\Nef(S)$ hits the boundary of $C^+$ then $\Nef(S) \backslash (\Nef(S))\cap \partial C^+)$ is a fundamental domain for the action. For this paper we only consider the case that $\Nef(S) \cap \partial C^+$ is a finite number of points.   The only other case is that $\Nef(S)$ is the entire positive cone.

\paragraph{Kac-Moody algebras} Consider the negative of the intersection pairing on $\on{NS}(S)$, denoted $\langle -, - \rangle$ with signature $(\rho(S)-1, 1)$. We will denote the actual intersection pairing $\langle -, - \rangle_{\on{NS}}$.  Let $\Delta$ denote the set of irreducible $-2$ curves, whose corresponding hyperplanes form the walls of the NEF cone. If the symplectic automorphism group of $S$ is infinite then it is possible there are an infinite number. In general we also need to consider additional NEF classes to get a basis of our lattice. So the situation where the representation theory of Kac-Moody algebras is most closely related to the geometry of the K3 surface occurs when $\NS(S)$ is generated by irreducible $-2$ classes.  Then letting $C$ be the intersection matrix according to the simple roots (whose rank may be larger than $\rho(S)$ define the algebra algebra $\mf{g}(S) = \mf{g}(S, \Delta)$ which is generated by $e_i, f_i, h_i$ for $\alpha_i \in \Delta$ subject to the relations
\begin{align}
  [h_i, h_j] &= 0\label{bkm1}\\
[h_i, e_j] = \langle \alpha_i, \alpha_j\rangle e_j,&~~ [h_i, f_j] = -\langle
\alpha_i, \alpha_j\rangle f_j\label{bkm2}\\
[e_i, f_j] &= \delta_{ij} h_i\label{bkm3}\\
(\on{ad}(e_i))^{1 - \langle \alpha_i, \alpha_j\rangle}e_j &= (\on{ad}(f_i))^{1
- \langle \alpha_i, \alpha_j\rangle}f_j = 0\label{bkm4}\\
\langle \alpha_i, \alpha_j\rangle = 0 \Rightarrow [e_i, e_j] &= [f_i, f_j] =
0.\label{bkm5}
\end{align}
Then we have a triangular decomposition as usual with
\[ \mf{g}(S) = \left( \oplus_{\alpha \in \Delta_+} \mf{g}_\alpha\right) \oplus \mf{g}_0 \left( \oplus_{\alpha \in \Delta_+} \mf{g}_{-\alpha}\right)\]
where $\Delta_+$ is the set of effective divisors on $S$ of norm $\ge -2$. The Lie algebra $\mf{g}(S)$ comes with an invariant bilinear form $\langle\cdot, \cdot\rangle$ such that $\mf{g}_\alpha \perp \mf{g}_\beta$ if $\alpha\neq -\beta$ The roots accessible by the Weyl group from the real simple roots are called  real roots, and the corresponding root spaces have dimension 1. It is hard to write an explicit formula for the multiplicity of a general root.
\paragraph{Affinization of $\mf{g}(S)$}
From the invariant form $\langle\cdot, \cdot\rangle$ we can form the affinization of $\mf{g}(S)$, which as a vector space is $\mf{g}(S)\otimes \C[t, t^{-1}]\oplus \C c$ and the usual bracket
\begin{equation} \label{eq:affine}
  [x\otimes t^m, y\otimes t^n] = [x,y]\otimes t^{n+m} + m\langle x, y\rangle \delta_{m+n,0} c. 
\end{equation}
This algebra is not a Kac-Moody algebra although some of the theory carries through. Call this algebra $\tilde{\mf g}(\on{NS}(S))$. If we adjoin an outer derivation $d$ so that $[d, x\otimes t^n] = n x\otimes t^n$ then we get another Lie algebra $\hat{\mf g}(\on{NS}(S)) = \tilde{\mf g}(\on{NS}(S)) \oplus \C d$. In general we will use $\tilde{\mf{g}}$ ($\hat{\mf g}$) for the affinization without (with) outer derivation $d$, when $\mf g$ is a Kac-Moody algebra.

\paragraph{Vertex algebra from K3, Mukai or NS Lattice} We apply the well-known construction of a vertex algebra from an even lattice $\Lambda$ \cite{borcherds1986vertex, frenkel1980basic} in the cases relevant to K3 surfaces. See also \cite[ch.~ 9]{nakajima1999lectures}.
Let $\Lambda = \on{NS}(S)$ or $\Lambda = M := \on{NS}(S)\oplus H^0(S, \Z) \oplus H^4(S,\Z)$ the Mukai lattice, or $\Lambda = \Lambda_{K3}$ the full K3 lattice. Let $\on{Heis}_\Lambda$ be the corresponding Heisenberg algebra and $\mc{F}_\Lambda(1)$ the level 1 Fock representation (over $\Q$). Let $\epsilon: \Lambda\times\Lambda \to \{\pm 1\}$ be a 2-cocycle and let $\Q[\Lambda]_\epsilon$ denote the group algebra twisted by $\epsilon$, i.e.
\[e^\alpha e^\beta = \epsilon(\alpha, \beta) e^{\alpha+\beta}.\]
Then let
\begin{equation}\label{eq:VOA}
  V_\Lambda := \mathcal{F}_\Lambda(1) \otimes \Q[L]_\epsilon
\end{equation}
extend the action of $\alpha(0) \in \on{Heis}_\Lambda$ by
\begin{align}\label{heis_deg_0_wt}
  \alpha(0) (v\otimes e^\beta) = \langle \alpha, \beta\rangle v \otimes e^\beta.
\end{align}

Note that for an appropriate choice of cocycle $V_{\Lambda_1 \oplus \Lambda_2} = V_{\Lambda_1}\otimes V_{\Lambda_2}$. For example $V_M = V_{\on{NS}(S)} \otimes V_U$ where $U$ is the hyperbolic lattice corresponding to the pairing with matrix
\[\begin{pmatrix} 0 & 1\\ 1 & 0\end{pmatrix}.\]
Next define
\[\phi_\alpha(z) = \sum_{n\in \Z \backslash \{0\}} \alpha(n) \frac{z^{-n}}{-n} + \alpha(0) \log(z) + \alpha\]
and the vertex operator
\begin{align*}
X(e^\alpha, z) &= :e^{\phi_\alpha(z)}:\\
&= \exp(\phi_\alpha(z)_-)e^\alpha z^{\alpha(0)} \exp(\phi_\alpha(z)_+)\\
&= \sum_{n\in \Z} x_n(\alpha) z^{-n}
\end{align*}
as an element of $\on{End} V_\Lambda \otimes \Q [[z, z^{-1}]]$.
The vertex operators $X(v, z)$ for other $v\in V_\Lambda$ are recovered by the reconstruction theorem (see e.g. \cite{frenkel_vertex_2004} ). In fact, this equation for $X(e^\alpha, z)$ is essentially uniquely determined by the requirement that $V_{\Lambda}$ form a vertex algebra. This is captured in the following proposition, for which \cite[Prop. 5.4]{kac_vertex_1998} is a good reference. 
\begin{prop}[Frenkel-Kac]\label{prop:frenkel_kac_follows_from_VOA}
Let $\Lambda$ be an even lattice and let $V_\Lambda$ admit the structure of a vertex algebra such that 
\[Y(h(-1)|\mathrm{vac}\rangle \otimes 1,z) = \sum_{n\in \Z} h(n) z^{-n-1} \text{ for } h\in \mc{F}_{\Lambda}(1)\] 
and $Y(e^{\alpha}, w)$ has the same OPE (or a forteriori the same commutation relations between Fourier coefficients) with $h(z)$ as does $X(e^{\alpha}, w)$. Then these vertex operators in fact determine a VOA structure on $V_\Lambda$ such that $Y(e^\alpha, w) = X(e^\alpha, w)$, and this determination is unique up to the choice of cocycle $\epsilon$. 
\end{prop}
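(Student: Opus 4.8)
The plan is to show that the hypotheses force $Y(e^\alpha, z)$ to equal $X(e^\alpha, z)$ for \emph{some} choice of $2$-cocycle $\epsilon$, and then to invoke the reconstruction theorem to deduce that the remaining vertex algebra structure on $V_\Lambda$ is thereby determined. To begin, the first hypothesis $Y(h(-1)|\mathrm{vac}\rangle\otimes 1, z) = \sum_n h(n) z^{-n-1}$ identifies the sub-vertex algebra generated by the currents $h(z)$ with the standard free-boson (Heisenberg) vertex algebra on $\mc{F}_\Lambda(1)$, so that piece of the structure — in particular its conformal vector $\tfrac12\sum_i h^i(-1) h_i(-1)(|\mathrm{vac}\rangle\otimes1)$ and hence the conformal grading of $V_\Lambda$ — is already pinned down. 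Next, feeding the second hypothesis, that $Y(e^\alpha, w)$ has the same OPE with $h(z)$ as $X(e^\alpha, w)$, into the Borcherds commutator formula yields, for all $h\in\Lambda$ and all $m\in\Z$, the mode relations $[h(m), Y(e^\alpha, z)] = \langle h,\alpha\rangle\, z^m\, Y(e^\alpha, z)$, which are exactly those satisfied by $X(e^\alpha, z)$.

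The exponential factors of $X(e^\alpha, z)$ are invertible operator-valued series, and $z^{\alpha(0)}$ and $e^\alpha$ act invertibly on $V_\Lambda$, so $X(e^\alpha, z)$ is invertible and $A_\alpha(z) := X(e^\alpha, z)^{-1} Y(e^\alpha, z)$ is well defined. By the matching mode relations above (including $m=0$), $A_\alpha(z)$ commutes with every Heisenberg mode and therefore preserves each summand of the decomposition $V_\Lambda = \bigoplus_{\beta\in\Lambda}\mc{F}_\Lambda(1)\otimes e^\beta$. Each such summand is irreducible over the Heisenberg algebra (zero modes included), so by Schur's lemma $A_\alpha(z)$ acts on it as multiplication by a scalar series $f_{\alpha,\beta}(z)$; hence $Y(e^\alpha, z)$ can differ from $X(e^\alpha, z)$ only through these scalars, i.e.\ only in its action on the group algebra $\Q[\Lambda]$.

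To identify the $f_{\alpha,\beta}$: a conformal weight count — using the grading fixed in the first step, in which $1\otimes e^\beta$ has weight $\tfrac12\langle\beta,\beta\rangle$ — shows that $Y(e^\alpha, z)(1\otimes e^\beta)\in z^{\langle\alpha,\beta\rangle}\,V_\Lambda[[z]]$ with leading coefficient a scalar multiple of $e^{\alpha+\beta}$, so $f_{\alpha,\beta}(z)$ is forced to be a constant $\epsilon(\alpha,\beta)\in\Q^\times$, and the creation axiom applied to the vacuum gives $\epsilon(\alpha,0)=1$. Locality of the pair $\bigl(Y(e^\alpha, z), Y(e^\beta, w)\bigr)$ then forces the expected sign relation between $\epsilon(\alpha,\beta)$ and $\epsilon(\beta,\alpha)$, and the Jacobi identity for $Y(e^\alpha,z), Y(e^\beta, w), Y(e^\gamma, x)$ forces the $2$-cocycle identity $\epsilon(\alpha,\beta)\epsilon(\alpha+\beta,\gamma) = \epsilon(\alpha,\beta+\gamma)\epsilon(\beta,\gamma)$. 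Hence $Y(e^\alpha, z) = X(e^\alpha, z)$ computed with the twisted group algebra $\Q[\Lambda]_\epsilon$ for a genuine $2$-cocycle $\epsilon$.

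Finally, the fields $\{h(z)\}_{h\in\Lambda}$ and $\{Y(e^\alpha, z)\}_{\alpha\in\Lambda}$ are pairwise mutually local (the pairwise locality of the $X(e^\alpha, z)$, and their locality with the $h(z)$, being the standard explicit computation with the exponential factors — this is where the cocycle enters), and their Fourier modes span $V_\Lambda$, so the reconstruction theorem \cite{frenkel_vertex_2004} shows that these fields determine a \emph{unique} vertex algebra structure on $V_\Lambda$; replacing $\epsilon$ by a cohomologous cocycle yields an isomorphic structure, which is the asserted uniqueness up to the choice of $\epsilon$. The step I expect to be the main obstacle is the determination of the constants $\epsilon(\alpha,\beta)$ and the verification of their sign and $2$-cocycle identities \emph{purely from the vertex algebra axioms} rather than from a prescribed formula — ruling out spurious $z$-dependence in the $f_{\alpha,\beta}$, normalizing compatibly as $\beta$ ranges over all of $\Lambda$, and extracting the cocycle relations from associativity and locality. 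This is precisely the content of \cite[Prop.~5.4]{kac_vertex_1998}, which we cite in lieu of reproducing the computation.
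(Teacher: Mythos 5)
Your outline is correct and coincides with the standard Frenkel--Kac uniqueness argument; the paper offers no proof of its own, recording the proposition as the content of \cite[Prop.~5.4]{kac_vertex_1998}, which is exactly where you also defer the key computation, so the two treatments agree. The only point to tighten is the formation of $A_\alpha(z) = X(e^\alpha,z)^{-1}Y(e^\alpha,z)$: a product of two doubly infinite operator-valued formal distributions is not automatically defined, so one should instead conjugate $Y(e^\alpha,z)$ by $\exp(-\phi_\alpha(z)_-)$ on the left and by $\exp(-\phi_\alpha(z)_+)\,z^{-\alpha(0)}e^{-\alpha}$ on the right (the ordering used in the cited argument, which keeps every product well defined on each vector), after which your Schur-lemma and conformal-weight-counting steps go through verbatim.
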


Let $\mf h = \Lambda \otimes_{\Z}\Q$. The Fourier coefficients of vertex operators can be seen to have the following commutation relations:
\begin{align}
\label{hx_fourier_rel}[h(n), x_m(\alpha)] &= \langle h, \alpha\rangle x_{n+m}(\alpha), ~h\in \mf{h}\\
\label{xx_fourier_rel}[x_n(\alpha), x_m(\beta)] &= \begin{cases} 0 & \langle \alpha, \beta \rangle \ge 0\\
x_{n+m}(\alpha+ \beta) & \langle \alpha, \beta \rangle = -1\\
\alpha(n+m) + n\delta_{n+m,0} & \alpha + \beta = 0 \text{ and } \ab{\alpha}{\alpha}  = 2.
\end{cases}
\end{align}
\paragraph{Lie algebra from some Fourier Coefficients}
Again recall the restriction that $\on{NS}(S)$ has a basis of irreducible -2 curves so the negative of the intersection matrix has diagonal entries 2 in this basis. Further assume that for any two curves $C_1, C_2$ in this basis they have intersection 1 or 0 so non-zero off diagonal entries are -1. Then $V_{NS(S)}$ is a representation of ${\mf g}(\on{NS}(S))$. 

Using this lemma and the relations between Fourier coefficients of vertex operators we deduce
\begin{prop}\label{prop:main_from_rep_theory}
Let $S$ be a K3 surfaces with generators $\{\alpha_i\}$ for $NS(S)$ of irreducible -2 curves which have intersections with each other 1 or 0. Then $\tilde{\mf{g}}(\on{NS}(S))$ acts on $V_{\on{NS}(S)}$ by
\begin{align*}
  h_i\otimes t^n &\mapsto h_i(n)\\
  e_i\otimes t^n &\mapsto x_{n}(\alpha_i)\\
  f_i\otimes t^n &\mapsto x_{n}(-\alpha_i)\\
  c&\mapsto 1
\end{align*}
and $d$ acts by the degree operator.
\end{prop}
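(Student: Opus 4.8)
The plan is to verify that the assignment is a Lie algebra homomorphism by checking the defining relations of $\tilde{\mf g}(\on{NS}(S))$ one at a time against the commutation relations \eqref{hx_fourier_rel}--\eqref{xx_fourier_rel} of the Fourier coefficients of vertex operators. The key structural input is that $V_{\on{NS}(S)}$ is already a module over $\mf g(\on{NS}(S))$ — this is the "lemma" referenced just before the statement, which follows from the generalized Frenkel-Kac construction applied to the lattice $\on{NS}(S)$ together with the hypothesis that the Gram matrix is a (generalized) Cartan matrix with diagonal $2$ and off-diagonal entries $0$ or $-1$. In particular $e_i \mapsto x_0(\alpha_i)$, $f_i \mapsto x_0(-\alpha_i)$, $h_i \mapsto h_i(0) = \alpha_i(0)$ already defines the $\mf g(\on{NS}(S))$-action, so the content of the proposition is purely about the $t$-grading and the central term.

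First I would record that the assignment respects the natural $\Z$-grading: $x_n(\alpha)$ and $h_i(n)$ have degree $n$ under the degree operator $d$, which immediately gives the relations $[d, x\otimes t^n] = n\, x\otimes t^n$ as well as consistency of all brackets with the $t$-exponent bookkeeping. Next, the Cartan-type relations: \eqref{hx_fourier_rel} with $h = h_i$ gives $[h_i(n), x_m(\alpha_j)] = \langle \alpha_i, \alpha_j\rangle x_{n+m}(\alpha_j)$, matching $[h_i \otimes t^n, e_j \otimes t^m] = \langle\alpha_i,\alpha_j\rangle e_j \otimes t^{n+m}$, and similarly for $f_j$ with a sign; the bracket $[h_i(n), h_j(m)]$ is $\langle\alpha_i,\alpha_j\rangle\,$ times a Heisenberg central term $n\delta_{n+m,0}$, which is exactly the central term of \eqref{eq:affine} for $x = h_i$, $y = h_j$ (note $\langle h_i, h_j\rangle = \langle\alpha_i,\alpha_j\rangle$ under the identification of $\mf h$ with its dual via the form). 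Then the $e$-$f$ bracket: the third case of \eqref{xx_fourier_rel}, applicable since $\langle\alpha_i,\alpha_i\rangle = 2$, gives $[x_n(\alpha_i), x_m(-\alpha_i)] = \alpha_i(n+m) + n\delta_{n+m,0}$, which is precisely $h_i \otimes t^{n+m} + n\,\delta_{n+m,0}\,c$ since $\langle e_i, f_i\rangle$ is normalized to $1$; and for $i \neq j$ we have $\langle\alpha_i,-\alpha_j\rangle = -\langle\alpha_i,\alpha_j\rangle \ge 0$, so the first case of \eqref{xx_fourier_rel} gives $[x_n(\alpha_i), x_m(-\alpha_j)] = 0$, matching $\delta_{ij}$ in \eqref{bkm3}. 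The remaining relations \eqref{bkm4} and \eqref{bkm5} on the positive (resp. negative) side: when $\langle\alpha_i,\alpha_j\rangle = 0$, case one of \eqref{xx_fourier_rel} gives $[x_n(\alpha_i), x_m(\alpha_j)] = 0$; when $\langle\alpha_i,\alpha_j\rangle = -1$, iterated brackets produce $x(\alpha_i + \alpha_j)$ and then $x(2\alpha_i + \alpha_j)$, but $\langle\alpha_i, \alpha_i + \alpha_j\rangle = 2 + (-1) = 1 \ge 0$ forces the next bracket to vanish, yielding the Serre relation $(\on{ad} e_i)^2 e_j = 0$; the $f$-side is identical.

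The main obstacle — really the only subtle point — is to ensure the Serre relations and the cocycle-dependent signs are consistent with having a genuine Lie algebra map rather than a projective one, i.e. that the $2$-cocycle $\epsilon$ chosen in constructing $V_{\on{NS}(S)}$ can be taken so that all the structure constants match $\tilde{\mf g}(\on{NS}(S))$ on the nose. This is handled by invoking the uniqueness clause of Proposition \ref{prop:frenkel_kac_follows_from_VOA}: the vertex operators $X(e^{\alpha_i}, z)$ are determined (up to cocycle) by their OPE with $h(z)$, and the standard choice of asymmetric bilinear cocycle on $\on{NS}(S)$ makes $x_0(\alpha_i), x_0(-\alpha_i), h_i(0)$ into Chevalley generators of $\mf g(\on{NS}(S))$ with the correct signs — this is exactly the classical Frenkel-Kac theorem, now applied to the Lorentzian lattice $\on{NS}(S)$ where it produces the Borcherds-Kac-Moody-type relations \eqref{bkm1}--\eqref{bkm5} rather than finite-type ones. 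Once the degree-zero part is a bona fide $\mf g(\on{NS}(S))$-action, extending across all Fourier modes via \eqref{hx_fourier_rel}--\eqref{xx_fourier_rel} introduces no new sign ambiguities, since those relations are already sign-fixed. Finally I would remark that surjectivity of the Heisenberg and vertex modes acting on $V_{\on{NS}(S)}$ shows the module is generated appropriately, though the proposition as stated asserts only that the action exists, so this is not strictly needed.
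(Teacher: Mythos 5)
Your proposal is correct, and it reaches the conclusion by a genuinely different route at the one step the paper considers non-trivial. The paper's proof dismisses all relations except \eqref{bkm4} as immediate and establishes \eqref{bkm4} for the loop coefficients by an integrability argument in the style of Nakajima: for any pair of real simple roots, $V_{\on{NS}(S)}$ decomposes into finite-dimensional modules for the corresponding rank-two subalgebra, and $\mathfrak{sl}_2$-theory (Kac, ch.~3) then forces the Serre elements to act by zero. You instead verify \eqref{bkm4} directly from the mode relations \eqref{xx_fourier_rel}: with off-diagonal entries $0$ or $-1$, the inner bracket is (up to cocycle sign) $x_{n+m}(\alpha_i+\alpha_j)$, and the outer bracket vanishes because $\langle \alpha_i,\alpha_i+\alpha_j\rangle = 1 \ge 0$ — note that, contrary to your wording, no $x(2\alpha_i+\alpha_j)$ term ever appears, which is exactly the point. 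Your computation is more elementary and self-contained (it needs only the stated OPE cases, and it makes visible where the hypothesis that intersections are $0$ or $1$ enters), whereas the paper's integrability argument is more robust: it does not require an explicit commutator formula and would survive more general Cartan data where $\langle\alpha_i,\alpha_i+\alpha_j\rangle$ could be negative. Your additional care with the central terms (vanishing of $\langle h_i,e_j\rangle$, the $\delta_{ij}$ case of \eqref{bkm3}) and with the cocycle via Proposition \ref{prop:frenkel_kac_follows_from_VOA} fills in details the paper leaves implicit; both you and the paper take the degree-zero $\mathfrak{g}(\on{NS}(S))$-action on $V_{\on{NS}(S)}$ as the prior input, and both work at the same level of informality in treating the affinization as controlled by relations among the loop Chevalley generators.
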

\begin{proof}
  The only non-trivial part is the relation \eqref{bkm4} between $\mf{g}(\NS(S))$ coefficients of the polynomials in $t$. 
  This follows from  a standard argument based on integrability, see e.g. \cite[\S 9.iii]{nakajima1998quiver}.  Any pair of real simple roots $\alpha_1, \alpha_2$ the representation $V$ decomposes into finite dimensional representations of the Lie algebra generated by $e_{\alpha_i}, f_{\alpha_i}$ for $i = 1,2$. Then \cite[ch. 3]{kac_infinite-dimensional_1990} implies \eqref{bkm4} for coefficients. 
\end{proof}

\paragraph{Slightly larger algebra}
As a consequence of the previous we also get an action of $\tilde{\mf{g}}(\on{NS}(S))$ on $V_M$ for the Mukai lattice $M = \on{NS(S)} \oplus U$ by
$x(v\otimes w) = x(v) \otimes w$ for $v\otimes w \in V_{\on{NS}(S)}\otimes V_{U} \simeq V_M$, and similarly on $V_{\Lambda_{K3}}$.

It will be useful for us, since we care about all cohomology and not just algebraic cohomology or middle dimensional algebraic cohomology to consider slightly larger algebras. Currently we have an algebra $\hat{\mf{g}}(\NS(S))$ generated by Fourier coefficients of vertex operators related to algebraic curve classes and Nakajima operators related to algebraic curve classes. We would like to capture
 Fourier coefficients of vertex operators related to algebraic curve classes and Nakajima operators related to more general classes, either algebraic classes lying in the Mukai lattice or any cohomology classes.

 To this end consider the Heisenberg algebras $\Heis(H^0(S) \oplus H^4(S))$ and $\Heis(T(S))$ (taken with the \emph{negative} pairing $\langle -, - \rangle$) where $T(S) = NS(S)^{\perp}\subset H^2(S, \Z)$ is the transcendental lattice. Then define the lie algebras
 \begin{align}
   \label{eq:g_alg} \tilde{\mf{g}}_{alg}(S) &:= \Heis(H^0(S) \oplus H^4(S)) \oplus \hat{\mf{g}}(\NS(S))\\
   \label{eq:g_k3} \tilde{\mf{g}}(S) &:= \Heis(T(S)) \oplus \hat{\mf{g}}_{alg}(S)
\end{align}
Note that $\tilde{\mf{g}}_{alg}(S)$ contains the algebra $\Heis(M)$ modelled on the Mukai lattice and
$\tilde{\mf{g}}(S)$ likewise contains $\Heis(\Lambda_{K3})$ modelled on the entire K3 lattice. 

\paragraph{Fock space as Cohomology of Moduli Spaces}
In general, moduli spaces of sheaves on K3 surfaces, if smooth, are hyperk\"ahler varieties deformation equivalent to Hilbert schemes on points on a K3. For the time being, we will consider moduli spaces of torsion free rank 1 sheaves which are naturally isomorphic to Hilbert schemes of points.  The standard notation in the study of moduli of sheaves on K3 surfaces is to label the discrete invariants of a sheaf $\mc{E}$ by its \emph{Mukai vector} given by
\begin{align*}
  v(\mc E) &= \on{ch}(\mc E)\sqrt{\on{Td}_X}\\
  &= (r, c_1, \on{ch}_2).(1,0,1)\\
  &= (r, c_1,  c_1^2/2 - c_2 + r)\\
  &\in H^0(X, \Z) \oplus H^{1,1}(X,\Z) \oplus H^4(X, \Z)
\end{align*}
then given a Mukai vector $v$ and a polarization $H$ of $X$ there is a moduli space $M_H(v)$ of Gieseker semistable sheaves of Mukai vector $v$. If non-empty, this space has dimension $\ab{v}{v} + 2$ and the stable locus is smooth. The rank one moduli spaces do not depend on the polarization, so we suppress the notation in this case. Supposing $L_\alpha$ is a line bundle with $c_1(L_\alpha) = \alpha\in \on{NS}(X)$ then $M(1, \alpha, \on{ch}_2 + 1)$ consists of subsheaves $\mc E$ of $L_\alpha = \mc{E}^{\vee \vee}$ such that the quotient has finite length $ n := \on{ch}_2 (\mc{E}^{\vee\vee}) - \on{ch}_2(\mc{E})$. It follows that we can also consider the moduli space as parametrizing the quotient, and so
\begin{equation}\label{eq:hilb_eq_m}
  M(1, \alpha, \on{ch}_2 + 1) \simeq S^{[n]} .
\end{equation}
On the other hand, we know that
\[ H_*(\bigsqcup_{n\ge 0} S^{[n]}) \simeq \mathcal{F}_{H_*(S)}(1) = \mc{F}_{\Lambda_{K3}}(1)\]
so
\begin{equation}\label{eq:rk_1_torsion_free_V}
  V:= H_*(\bigsqcup_{\substack{\alpha \in \on{NS}(S)\\ \on{ch_2}\in \Z}} M(1, \alpha, \on{ch}_2 + 1)) \simeq \mathcal{F}_{\Lambda_{K3}}(1) \otimes \Q[\on{NS}(S)]
\end{equation}
but this space $V$ lies inside $V_{\Lambda_{K3}}$ and is preserved by the action of the whole Heisenberg subalgebra $\Heis(\Lambda_{K3})$ (which acts by constants or in the first factor) as well as all of $\hat{\mf g}(\on{NS}(S))$ and therefore is preserved by $\hat{\mf g}(S)$. We write $V$ without a subscript because it is of fundamental importance.

\paragraph{Notation for cohomology classes}
We record notation for the Nakajima basis on the Fock space $\mc{F}_{\Lambda_{K3}}(1)$. Let $\mathfrak{p}_{-i}(\gamma), \mathfrak{p}_{i}(\gamma)$ for $i \in \Z_{> 0}$, $\gamma \in H^*(X, \Z)$ denote the Nakajima creation and annihilation operators respectively. Let $\omega_X$ denote the class dual to a point. Further, if $|\text{vac}\rangle\in H^*(X^{[0]})$ is the vacuum of the Fock space we use the notation
\begin{align*}
  |n_{1, \alpha_1}^{k_1}, \ldots, n_{j, \alpha_j}^{k_j}\rangle := \mf{p}_{-n_1} (\alpha_1)^{k_1}\cdots \mf{p}_{-n_j} (\beta)^{k_j} |\text{vac}\rangle
\end{align*}
for elements of the Fock space, where in addition we drop the subscript $\alpha$ if $\alpha = \omega_X$ is the class of a point.  Further, we know that if $d\ge 2$ then the classes
\begin{align*}
  C_\beta := |1_{\beta},  1^{d-1}\rangle &= \mf{p}_{-1} (\beta)\mf{p}_{-1} (\omega_X)^{d-1} |\text{vac}\rangle\\
  D := |2,   1^{d-2}\rangle &= \mf{p}_{-2} (\omega_X)\mf{p}_{-1} (\omega_X)^{d-2} |\text{vac}\rangle
\end{align*}
generate $H_2(X^{[d]}; \Z)$ in the sense that
\begin{equation}\label{eq:h2xn}
  H_2(X^{[d]}; \Z) = \{ |1_{\beta},  1^{d-1}\rangle + k |2,   1^{d-2}\rangle  ~ | k\in \Z, \beta \in H^2(X, \Z)\}
\end{equation}

\paragraph{Virasoro algebra}
The usual construction to produce a Virasoro action in $V_{\Lambda}$ works for any choice of $\Lambda$, e.g. $\Lambda = \on{NS}(S)$ and $\Lambda = \Lambda_{K3}$. For our purposes the most sensible one is the one coming from $\Lambda_{K3}$. Then $L(0)$ has commutation relations with elements of $\tilde{\mf g}(S)$ given by
\[[L(0), h(n)] = -n h(n) ~~~ [L(0), x_\alpha(n)] = -nx_\alpha(n)\]
and in this way we extend our Lie algebra to $\hat{\mf g}(S)$ which contains $\tilde{\mf g}(S)$ and also $\hat{\mf g}(\on{NS}(S))$ by the equality $ d = -L(0)|_{\tilde{\mf g}(\on{NS}(S))}$. We will now extend this and call $d =-L(0)$.
\paragraph{Roots}
Our Lie algebra $\hat{\mf g}(S)$ has Cartan subalgebra
\begin{align*}
\mf h &:= H^{*}(S,\Q) \oplus \Q c \oplus \Q d\\
&= \Lambda_{K3}\otimes \Q \oplus \Q c \oplus \Q d
\end{align*}
with dual
\begin{align*}
\mf h^* &= \Lambda_{K3}^{\vee}\otimes \Q \oplus \Q \Lambda \oplus \Q \delta
\end{align*}
Now let $\mathring{\Delta} = \mathring{\Delta}_{+, re} \cup\mathring{\Delta}_{-, re} \cup\mathring{\Delta}_{+, im} \cup\mathring{\Delta}_{-, im}$ be the root system for the Kac-Moody algebra $\mf{g}(\on{NS}(S))$ and $\mf{g}(\on{NS}(S)) = \mf{g}_0\oplus\bigoplus \mf{g}_{\alpha}$ the root space decomposition. Then
\begin{equation}\label{affinization_root_decomp}
  \hat{\mf g}(S) = \mf{h} \oplus \bigoplus_{\alpha\in \mathring{\Delta}, n\in \Z} \mf{g}_{\alpha}\otimes t^n\oplus \bigoplus_{n\in \Z\backslash\{0\}} \Lambda_{K3, \Q}\otimes t^n
\end{equation}
is a root space decomposition (just as in building the affine algebra from a finite dimensional Lie algebra)
and so our roots $\Delta \subset \mf{h}^*$ are given by
\[\Delta = \{ \alpha + n\delta |\alpha \in \mathring{\Delta}, n\in \Z \}\cup \{n \delta | n \in \Z \backslash \{0\}\}.\]
\paragraph{Weights in $V_M$}
Using \eqref{heis_deg_0_wt} we know the weight decomposition of $V$ with respect to
$\Lambda^{\vee}_{K3,\Q}$ is just the grading on $\on{NS}(S)$. If $h\in \Lambda_{K3, \Q}^{\vee}$ then $h$
acts by $\ab{h}{\alpha}$ on $\mathcal{F}_{\Lambda_{K3}}(1) \otimes e^{\alpha}$. But on $1\otimes
e^\alpha$ we know that the $L(0)_{\on{NS}}$ operator coming from $V_{\on{NS}(S)}$ acts by $-
\on{deg}(v) - \ab{\alpha}{ \alpha}/2$ on $v\otimes e^\alpha$ but choosing an orthonormal basis
$\{b_i\}_{i\in I}$ for $U_\Q\oplus T(S)_{\Q}$ our $L(0)$ is given as
\[L(0) = L(0)_{\on{NS}} + \frac{1}{2}\sum_{k\in \Z, i\in I} : b_i(-k)b_i(k):\]
and so $L(0)$ acts according to the same formula, as $-\deg(v) - \ab{\alpha}{\alpha}/2$. Thus $d$ acts by the scalar $\on{ch}_2$ on the component $H_*(M(1,c_1, \on{ch_2}+1))$. Note that under \eqref{eq:hilb_eq_m} increasing the number of points by 1 decreases $\on{ch_2}$ by $1$. Thus our grading implicit in the indices on $M(1, \alpha, ch_2 + 1)$ is is the weight grading on $V$.

\paragraph{Useful choices of Picard lattices}
Some choices of Picard lattice will be very useful for proving general things about K3 surfaces, and also to serve as examples. The first one is a generic elliptic K3 with a section whose intersection pairing has matrix
\[ \ttmx{0}{1}{1}{-2}.\]
in this case, $\NS(S)$ is not generated by $-2$ classes. It is interesting to note that if we look at the Lie algebra spanned by the Fourier coefficients of the vertex operators for the $-2$ class $S$ for the section and the norm $0$ fibre class $F$, instead of getting the Borcherds-Kac-Moody algebra with the Cartan matrix, we get a Lie algebra with larger abelian symmetry.  In particular the Fourier coefficients $x_n(F)$ and $x_m(-F)$ commute with each other for all $n$ and $m$.  

It will be more useful to consider the case where there is one singular fiber in the elliptic fibration of type other than $\RN{1}_0$. For example, with one type $\RN{1}_n$ fiber for $n  = 2,3$ this will have intersection matrix
\[\begin{pmatrix} -2 & 2 & 0 \\ 2 & -2 & 1 \\ 0 & 1 & -2\end{pmatrix}, ~ \begin{pmatrix} -2 & 1 & 1 & 0  \\ 1 & -2 & 1 & 0 \\ 1 & 1 & -2 & 1  \\ 0 & 0 & 1 & -2  \end{pmatrix} \]
respectively. The existence of K3 surfaces with these two intersection pairings follows from \cite{morrison1984k3} (see also \cite[ch.~14]{huybrechts2016lectures}) where it is shown that every even Lorentzian lattice of rank $\rho \le 10$ occurs as $\on{Pic}(S)$ for a K3 surface $S$. More generally, the same argument will show that if we consider the intersection pairing of the form
\[\left( \begin{array}{ccc | c}
&&& 0\\
&-A&&1\\
&&&\vdots\\\hline
0&1&\cdots&-2
\end{array}\right)\]
where $A$ is the Cartan matrix of affine type $\tilde{A}_n$ for $n \le 9$, $\tilde{D}_n$ for $4 \le n \le
9$ or $\tilde{E}_6, \tilde{E}_7, \tilde{E}_8$ and the single one in the off-diagonal blocks is
placed in such a way that it corresponds to adding a single extra node with one edge connecting
the new node to a node of the affine Dynkin diagram which has label 1, i.e. the corresponding irreducible component of the singular fiber has multiplicity 1.  Then a K3 surface $S$ exists
with this intersection pairing on $\on{Pic}(S)$, the null root corresponds to class of the elliptic
fiber, which admits one singular fiber which we take to be of type $\RN{1}_n$, $\RN{1}_n^*,
\RN{2}^*, \RN{3}^*, \RN{4}^*$ respectively, and the additional node corresponds to the class of
the section. Technically this argument doesn't show that we can produce $\RN{1}_2$ and  $\RN{1}_3$
fibers instead of type $\RN{2}, \RN{2}$ or $\RN{4}$, but it is true that these K3 surfaces
exists with the desired fibers, and in fact is is shown that there exist K3 surfaces with a single $\RN{1}_n$ fiber and a section for $2\le n \le 19$ in \cite{miranda1989configurations}. Because the total Euler characteristic of the singular fibers must sum to 24, there is a bound on which singular fibers may occur. For example, the Euler characteristic of a $\RN{1}_n^*$ fiber, corresponding to $\tilde{D}_{n+4}$ is $n + 6$, so the largest possible $n$ for which this occurs as the fiber of a elliptic K3 with section is $18$. I am not sure whether an elliptic K3 exists with a section, one $\RN{1}_n^*$ fiber and $24 - (n + 6)$ $\RN{1}_0$ fibers when $10 \le n \le 18$.

Even more generally we can consider elliptic K3 surfaces with one or more sections and with multiple singular fibers of type $\RN{1}_n$, $\RN{1}_m^*, \RN{2}^*, \RN{3}^*, \RN{4}^*$ which, if $n\ge 3$ correspond to Cartan matrices with $-2$ along the diagonal and $1$ everywhere else if we ignore the effect of the linear relations stemming from the fact that the fiber lies in the same cohomology class regardless of whether it comes from a linear combination of $(-2)$-curves in one singular fiber or in a different one.

\section{Nakajima Quiver Varieties and Hilbert Schemes on ADE surfaces}\label{sec:quivers}
We first recall from \cite{nakajima1994instantons, nakajima1998quiver} the general construction of Nakajima quiver varieties. Let $Q$ be a quiver, i.e. a directed graph with vertex set $I$ and edges $E$ where loops and multiple edges are allowed. Consider the doubled quiver $\overline{Q}$ with vertices $I \sqcup \overline I$ doubled and edges $E \sqcup E^T \sqcup \{ i \to \overline i | i\in I\} \sqcup  \{ \overline i \to i | i\in \overline I\}$ with transposes included and one edge to and from each doubled vertex to the original vertex.

Given a graded vector space $V$ let $\dim_I V\in \N^I$ denote its graded dimension vector. Given $\mathbf{v}, \mathbf{w}\in \N^{I}$ let $\on{Rep}_{\overline Q}(\mathbf{v}, \mathbf{w})$  denote the variety of representations of $\overline Q$ where the dimensions of vertices in the index set $I$ are given by the coordinate in $\mathbf{v}$ (the dimension vector) and the dimensions of coordinates in $\overline I$ are given by coordinates of $\mathbf{w}$ (the framing vector). Let $V = \oplus_{i\in I} V_i$ and $W = \oplus_{i\in I} W_i$ be graded vector spaces with dimension vectors $\mathbf{v}$ and $\mathbf{w}$. Then
\[\on{Rep}_{\overline Q}(\mathbf{v}, \mathbf{w}) = \big[\bigoplus_{e\in E} \Hom(V_{s(e)}, V_{t(e)}) \oplus  \Hom(V_{t(e)}, V_{s(e)}) \big] \oplus \Hom(W, V) \oplus \Hom(V,W) \]
where $\Hom$s between graded vector spaces are taken in the graded sense. A point in $\on{Rep}_{\overline Q}(\mathbf{v}, \mathbf{w}) $ will be denoted $(B, i, j)$.

The groups $G_{\mathbf v} := \prod \GL(v_i)$ and $G_{\mathbf w} := \prod\GL(w_i)$ with lie algebras $\mf{g}_{\mathbf v}, \mf{g}_{\mathbf w}$ act on $\on{Rep}_{\overline Q}(\mathbf{v}, \mathbf{w})$ preserving the symplectic form $\omega$ which arises because $\on{Rep}_{\overline Q}(\mathbf{v}, \mathbf{w})$
is the cotangent bundle to the representations of the quiver with half the edges.  Finally consider the moment map
\[ \mu : \on{Rep}_{\overline Q}(\mathbf{v}, \mathbf{w})\to \mf{g}_{\mathbf v}^*\]
and a vector $\theta \in \Z^I$ corresponding to the character
\[ \prod \det(g_i)^{-\theta_i} \in \C^\times\]
of $G_{\mathbf{v}}$. Values of $\theta$ in $\R^I$ are also considered but the quiver variety is defined with respect to a Hyperk\"ahler quotient rather than how it's defined below. Then for an appropriate value of $\zeta$ (i.e., $\zeta$ is a fixed point of the coadjoint action)
\begin{defn}
  a Nakajima quiver variety is the GIT quotient
  \[ \mathfrak{M}_{\theta, \zeta}(\mathbf{v}, \mathbf{w}) := \mu^{-1}(\zeta)/\!\!/_{\theta} G_\mathbf{v}.\]
\end{defn}
The cases where $\zeta = 0$ play a central role for us, so if there is only one subscript we assume $\zeta = 0$, i.e.
\[  \mathfrak{M}_{\theta}(\mathbf{v}, \mathbf{w}) :=  \mathfrak{M}_{\theta, 0}(\mathbf{v}, \mathbf{w}).\]
Also, letting $\mathfrak{z}_{\mathbf{v}}$ denote the fixed points of the coadjoint orbit, we will use the fact that we may first perform the algebraic quotient on $\mu^{-1}(\mathfrak{z}_{\mathbf{v}})$ and then take the fiber over $\zeta$ and recover $\quivvar$, meaning that for fixed $\theta$, different quiver varieties fit into a family
\begin{equation}\label{eq:quiverfamily}
  \widetilde{\mf{M}}_{\theta}(\mathbf{v}, \mathbf{w}) \to \mf{z}_{\mathbf{v}}
\end{equation}
whose fiber over $\zeta$ is $\quivvar$.

Let $\mf{M}^{reg}_{\theta, \zeta}(\mathbf{v}, \mathbf{w})\subset \quivvar$ denote the regular locus, which coincides with the locus of stable quiver representations.

\paragraph{Tautological bundles} The trivial bundles of rank $v_i$ on $\mu^{-1}(\zeta)$ pass to the quotient via descent where we consider them with the the defining action of $\GL(v_i)$ on $\C^{v_i}$ and the trivial action for other factors of $G_{\mathbf{v}}$. In this way we get a \emph{tautological bundle} denoted
\begin{equation}\label{eq:tautological_bundle}
  \mc{V}_i \to \quivvar
\end{equation}  of rank $\mathbf{v}_i$ for every $i\in I$. For every edge $e$ (including the doubles ones) between $i, j\in I$ there is also a bundle map the \emph{tautological map} denoted $\phi_e : \mc V_i \to \mc V_j$ which is also constructed by descent.

\paragraph{Wall and chamber structure} How are quiver varieties with different stability parameters related? First, we know that $\mathfrak{M}_{0, \zeta}(\mathbf{v}, \mathbf{w}) = \Spec \C[\mu^{-1}(\zeta)]^{G_{\mathbf{v}}}$ is an affine variety and by the general theory of GIT quotients there is a projective map
\[\mathfrak{M}_{\theta, \zeta}(\mathbf{v}, \mathbf{w}) \to \mathfrak{M}_{0, \zeta}(\mathbf{v}, \mathbf{w})\]
which is often an equivariant symplectic resolution. We also have the following key result. Let $A$ be the adjacency matrix of $Q$ and $C = 2I - A$ the Cartan matrix. Then define the \emph{positive roots}
of which only a few are relevant to a given dimension vector:
\[R_+ := \{ \theta \in \N^ I \mid \theta \cdot  C \theta \le 2, \theta \neq 0\}\]
\[R_+ (\mathbf{v}) := \{ \theta \in R_+ \mid  \theta_i \le v_i\} \]
\[D_\theta := \{ \alpha\in \R^I \mid \alpha \cdot \theta = 0 \}\]
note that the walls $D_\theta$ for $\theta \in R_+(\mathbf{v})$ give a polyhedral decomposition of $\R^I$, and a face will refer to any dimensional face of this decomposition.
\begin{thm}[Nakajima, \cite{nakajima1994instantons, nakajima2009quiver}]\label{thm:partial_resolutions} With the above definitions
\begin{enumerate}
  \item[(i)] When $\alpha \cdot \theta \neq 0$ for all $\alpha \in R_+ (\mathbf{v})$ or $\alpha \cdot \zeta \neq 0 $ for all $\alpha \in R_+ (\mathbf{v})$, there are no strictly semistable points in $\quivvar$. Further, for two different generic stability parameters the corresponding varieties are $\C^*$-equivariantly diffeomorphic.
  \item[(ii)] If $\zeta = 0$, and $\theta, \theta'$ lie in the same face, stability (semistability) for $\theta$ is equivalent to stability (semistability) for $\theta'$ and the corresponding quiver varieties are naturally isomorphic.
  \item[(iii)] If $\zeta = 0$ and $F' \subset \overline F$ with $\theta' \in F', \theta \in F$, then $\theta$-semistable $\implies \theta'$-semistable and $\theta'$-stable $\implies \theta$-stable. Further, there exists a natural projective map
  \begin{equation}\label{eq:semisimplification}
    \pi_{\theta, \theta'} :   \mathfrak{M}_{\theta}(\mathbf{v}, \mathbf{w}) \to  \mathfrak{M}_{\theta'}(\mathbf{v}, \mathbf{w})
  \end{equation}
  sending a quiver representation $V$ to the direct sum of its Jordan H\"older factors $\gr_{\theta'}(V)$ with respect to $\theta'$.
\end{enumerate}
\end{thm}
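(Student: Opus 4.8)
The statement is a variation-of-GIT (VGIT) result for the linear $G_{\mathbf v}$-action on the affine variety $\mu^{-1}(\zeta)$, so the plan is to reduce everything to King's numerical criterion for (semi)stability of framed quiver representations, together with the moment-map constraint that controls which dimension vectors can occur. The setup: pass to the extended quiver obtained by adjoining a vertex $\infty$ of dimension $1$ recording the maps $i$ and $j$, and extend $\theta$ to a character $\hat\theta$ with $\hat\theta\cdot(\mathbf v,1)=0$; then $\theta$-(semi)stability of a point $(B,i,j)$ is exactly King $\hat\theta$-(semi)stability in the extended quiver, slope-$0$ sub-objects have dimension vectors $(\mathbf u,\epsilon)$ with $\epsilon\in\{0,1\}$ and $0\le\mathbf u\le\mathbf v$, and $\theta$-$S$-equivalence is generated by passing to the associated graded of a slope-$0$ Jordan--H\"older filtration. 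The arithmetic input beyond this is the following: if a $\theta$-stable slope-$0$ sub-object misses the framing vertex, then restricting $\mu$ to it exhibits it as an indecomposable representation of the doubled quiver lying in $\mu^{-1}(\zeta|_{\mathbf u})$, so by Crawley--Boevey its dimension vector is a positive root $\le\mathbf v$, i.e.\ $\mathbf u\in R_+(\mathbf v)$, and moreover $\zeta\cdot\mathbf u=0$ and $\theta\cdot\mathbf u=0$.

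Given this, part (i) follows quickly: a strictly $\theta$-semistable point has a slope-$0$ Jordan--H\"older filtration with at least two factors, at least one of which misses $\infty$, producing some $\mathbf u\in R_+(\mathbf v)$ with $\theta\cdot\mathbf u=0$ (resp.\ $\zeta\cdot\mathbf u=0$), contradicting the genericity hypothesis; hence semistable $=$ stable. For the $\C^*$-equivariant diffeomorphism of two generic quotients I would invoke hyperk\"ahler rotation: $\mf M_{\theta}(\mathbf v,\mathbf w)$ is the hyperk\"ahler quotient of $\Rep_{\overline Q}(\mathbf v,\mathbf w)$ at the level $(\theta,0,0)\in\mf z_{\mathbf v}\otimes\R^3$, genericity of $\theta$ says exactly that this level avoids the real-codimension-$3$ loci $\{\vec\zeta:\vec\zeta\cdot\alpha=0\}$ for $\alpha\in R_+(\mathbf v)$, the complement of those loci in $\mf z_{\mathbf v}\otimes\R^3$ is connected, and over that complement the hyperk\"ahler quotients form a smooth family carrying a fibrewise $\C^*$-action; so all generic fibres --- in particular all generic $\mf M_{\theta}(\mathbf v,\mathbf w)$ --- are $\C^*$-equivariantly diffeomorphic.

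Parts (ii) and (iii) are then continuity statements about the finitely many hyperplanes $D_\alpha$, $\alpha\in R_+(\mathbf v)$. If $\theta,\theta'$ lie in the same open face, then $\alpha\cdot\theta$ and $\alpha\cdot\theta'$ have the same sign for every $\alpha\in R_+(\mathbf v)$, and since by the moment-map constraint only such $\alpha$ enter King's inequalities and the description of $S$-equivalence, the semistable locus in $\mu^{-1}(\zeta)$ and its $S$-equivalence relation are literally unchanged, so the two GIT quotients are canonically (indeed identically) isomorphic. If $\theta\in F$ and $\theta'\in F'$ with $F'\subseteq\overline F$, then $\alpha\cdot\theta=0$ on $F$ forces $\alpha\cdot\theta'=0$ on $F'$, whereas $\alpha\cdot\theta'\neq0$ forces $F\not\subseteq D_\alpha$ and hence, by continuity along the connected set $\overline F$, that $\alpha\cdot\theta$ has the same nonzero sign on $F$ as $\alpha\cdot\theta'$; substituting these into King's inequalities yields $\theta$-semistable $\Rightarrow\theta'$-semistable and $\theta'$-stable $\Rightarrow\theta$-stable. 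The resulting open $G_{\mathbf v}$-equivariant inclusion of semistable loci descends to a morphism $\pi_{\theta,\theta'}:\mf M_{\theta}(\mathbf v,\mathbf w)\to\mf M_{\theta'}(\mathbf v,\mathbf w)$, projective because $\mf M_{\theta}(\mathbf v,\mathbf w)\to\mf M_{0}(\mathbf v,\mathbf w)$ is projective, $\mf M_{\theta'}(\mathbf v,\mathbf w)\to\mf M_{0}(\mathbf v,\mathbf w)$ is separated, and $\mf M_{0}(\mathbf v,\mathbf w)$ is affine; on closed points it sends an orbit to the unique closed orbit in its closure, the $\theta'$-polystable representative, i.e.\ $\gr_{\theta'}$ of a $\theta'$-Jordan--H\"older filtration.

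The step that is not formal VGIT in the style of Dolgachev--Hu and Thaddeus --- and the one I expect to be the main obstacle --- is the moment-map constraint itself: showing that the dimension vectors of the stable slope-$0$ sub-objects, and of the Jordan--H\"older factors avoiding $\infty$, are forced into the finite set $R_+(\mathbf v)$, which is exactly what makes the finitely many walls $D_\alpha$ govern the whole wall-and-chamber structure. Carrying this out requires tracking carefully which graded piece of a Jordan--H\"older filtration carries the framing vertex and checking that the subquotients one peels off still lie in the appropriate fibre of $\mu$, after which Crawley--Boevey's characterization of dimension vectors of indecomposable representations in $\mu^{-1}(\zeta)$ applies.
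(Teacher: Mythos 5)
The paper does not prove this theorem at all: it is quoted as background from Nakajima's papers (with the refined description of which walls actually move things deferred to Bellamy--Craw in Section \ref{ssec:bellamy_craw}), so there is no internal proof to compare against, and your sketch should be measured against the cited sources. Most of it is exactly the standard route and is sound: the Crawley--Boevey vertex at infinity reducing $\theta$-stability to King stability, the observation that a stable slope-zero subquotient missing $\infty$ is a simple module over the deformed preprojective algebra so that taking the trace of the moment-map equation gives $\zeta\cdot\mathbf u=0$ while Crawley--Boevey forces $\mathbf u$ to be a positive root, hence $\mathbf u\in R_+(\mathbf v)$; this is indeed the crux that makes the finitely many hyperplanes $D_\alpha$ govern everything, and you are right to flag it as the main step. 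One small caution for (ii): King's inequalities are indexed by \emph{all} $B$-invariant subspaces, not only those with root dimension vectors, so "only such $\alpha$ enter" needs the refinement that the locus of parameters where a fixed $V$ is semistable is a polyhedral cone whose bounding hyperplanes lie on the $D_\alpha$ (obtained by passing to the Jordan--H\"older factors of a slope-zero destabilizer); your phrasing gestures at this and it does go through. The projectivity of $\pi_{\theta,\theta'}$ by factoring the projective morphism to the affine quotient through the separated one, and the identification of the map on closed points with $\gr_{\theta'}$, are fine.

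The genuine gap is the $\C^*$-equivariance in (i). The $\C^*$ relevant here (and used later, e.g.\ in Theorem \ref{thm:nagao} and Proposition \ref{prop:flop_intertwines}) acts with nonzero weight on the holomorphic symplectic form, hence scales the complex moment map: $\mu_{\C}(t\cdot x)=t^2\mu_{\C}(x)$ (and even the circle $|t|=1$ rotates the level $\zeta_{\C}$). Consequently the fibers of your family over the complement of the codimension-three walls in $\mf z_{\mathbf v}\otimes\R^3$ carry no such action whenever $\zeta_{\C}\neq 0$, and since the whole point of passing to nonzero complex parameters is to connect chambers separated by real walls, the Ehresmann-type trivialization along such a path produces a diffeomorphism but gives you no equivariance. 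So as written your argument proves "diffeomorphic" but not "$\C^*$-equivariantly diffeomorphic"; recovering the equivariant statement requires an additional mechanism (e.g.\ making $\C^*$ act on the total space \emph{and} on the base by scaling $\zeta_{\C}$ and arguing compatibly with parallel transport, or the arguments in the cited Nakajima references), and that is precisely where the references do work your sketch omits.
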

\begin{rmk}
The set of walls $W$ where the map $\pi_{\theta, \theta'}$ is not generically an isomoprhism onto its image is contained in the set $\{D_\theta \mid \theta \in R_+(\mb{v})\}$ but when the quiver is not a finite type quiver there may be walls $D_\theta$ where the map $\pi_{\theta, \theta'}$ is not surjective, and is an isomoprhism onto its image. Section \ref{ssec:bellamy_craw} following \cite{bellamy2018birational} describes the walls inducing non-trivial contractions in the cases relevant to the present work.
\end{rmk}
\paragraph{Reduction to $\mb{w} = 0$} We review a formulation of quiver representations due to Crawley-Boevey \cite{crawley2001geometry} which replaces the framing nodes with a single extra vertex which is useful in many circumstances.

Let $Q$ be a quiver with vertices $I$ and edges $E$. Given dimension vector $\mathbf{v}$ and framing vector $\mathbf{w}$ let $Q_\infty(\mathbf{w})$ be the quiver with vertices $\{\infty\}\sqcup I$ and edges $E\sqcup E^T\sqcup W\sqcup W^T$ where $W$ consists of $w_i$ edges from $\infty$ to the $i$th vertex of $I$, and $W^T$ are the same edges with orientation reversed. We write $Q_\infty$ for $Q_\infty(\mb{w})$ when the framing vector is understood.  Then there is a $G_{\mathbf{v}}$ and $G_{\mathbf{w}}$ equivariant isomorphism
\begin{equation}\label{eq:cb_trick_rep_equiv}
   \on{Rep}_{\overline Q}(\mathbf{v}, \mathbf{w}) \simeq  \on{Rep}_{ Q_\infty(\mathbf{w})}((1, \mathbf{v}))
 \end{equation}
where $G_{\mb{w}}$ now acts on the edge spaces.  Thus the moment maps for the $G_{\mb{v}}$ actions coincide but we refer to the moment map $\on{Rep}_{ Q_\infty(\mathbf{w})}((1, \mathbf{v}))$ as $\mu_\infty$ for clarity. Pick a complex stability parameter $\zeta\in \mf{z}_{\mb{v}}$. The natural group to act on $\on{Rep}_{ Q_\infty(\mathbf{w})}((1, \mathbf{v}))$ is $\GL(1) \times G_{\mb{v}}$ but the diagonal $\C^*$ acts trivially so we take $G_{\mb{v}, \infty} = \GL(1) \times G_{\mb{v}}/ \GL(1)$. Because of this the natural characters correspond to vectors $\theta^\infty = (\theta_\infty, \theta_1, \ldots, \theta_r)$ with $\theta ^\infty( (1, \mb{v}) )= 0$ giving the rational character
\[ \prod \det (g_i)^{-\theta_i}\]
of $G_{\mb{v}, \infty}$. There is a 1-1  correspondence between rational characters $\theta = (\theta_1, \ldots, \theta_r)$ of $G_{\mb{v}}$ and characters $\theta^\infty = ( - \theta(\mb{v}), \theta_1, \ldots, \theta_r)$ of $G_{\mb{v}, \infty}$ such that
\[\quivb{\theta, \zeta}{v}{w} = \mu_\infty^{-1}(\zeta)\GIT_{\theta^\infty} G_{\mb{v}, \infty}.  \]

\paragraph{Algebraic descritption of stability} We recall an algebraic formulation of stability for quiver representations introduced by King in \cite{king1994moduli} equivalent to stability defining the GIT quotient. See also \cite{ginzburg2009lectures}.

\begin{prop}[King]\label{prop:king_stability}
A point $(B,i,j) \in \mu^{-1}(\zeta)$ corresponding to a representation $V$ is $\theta$-semistable if and only if for every $B$-invariant subspace $S\subset V$ we have
\begin{align*}
S \subset \ker(j) &\Rightarrow \theta\cdot \dim_{I} S \le 0\\
S  \supset \im(i) &\Rightarrow  \theta \cdot \dim_I S \le \theta \cdot \dim_I V.
\end{align*}
The point is $\theta$-stable if in addition for non-zero proper subrepresentations $S\subset V$ the inequalities are strict.
\end{prop}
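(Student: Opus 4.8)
The plan is to reduce to the unframed Crawley--Boevey picture and then invoke the Hilbert--Mumford numerical criterion for affine GIT linearized by a character. First I would pass through the isomorphism \eqref{eq:cb_trick_rep_equiv}, replacing the framed datum $(B,i,j)$ by a representation $\tilde V$ of $Q_\infty(\mb w)$ with dimension vector $(1,\mb v)$, and use the character $\theta^\infty = (-\theta(\mb v), \theta_1, \ldots, \theta_r)$ of $G_{\mb v, \infty}$, which by construction satisfies $\theta^\infty\cdot (1, \mb v) = 0$. The content then becomes the uniform statement that $\tilde V$ is semistable for the character defined by $\theta^\infty$ if and only if $\theta^\infty\cdot \dim \tilde S \le 0$ for every subrepresentation $\tilde S \subset \tilde V$ (and stable iff the inequality is strict for proper nonzero $\tilde S$). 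The two cases in the proposition are recovered at the very end from the two possible values $0$ and $1$ of the dimension of $\tilde S$ at the vertex $\infty$: a subrepresentation with $\infty$-dimension $0$ is exactly a $B$-invariant $S\subset V$ with $S\subset \ker(j)$, and satisfies $\theta^\infty\cdot \dim \tilde S = \theta\cdot \dim_I S$; one with $\infty$-dimension $1$ is exactly a $B$-invariant $S\supset \im(i)$, and satisfies $\theta^\infty\cdot \dim\tilde S = \theta\cdot \dim_I S - \theta\cdot \mb v$. Substituting these identities into the uniform inequality reproduces the two displayed conditions.

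To prove the uniform criterion I would apply the Hilbert--Mumford criterion in the form adapted to an affine quotient with linearization on the trivial bundle twisted by a character (write $\chi$ for the character $\prod \det(g_i)^{-\theta^\infty_i}$ of $G_{\mb v, \infty}$): a point $x$ of $\on{Rep}_{ Q_\infty(\mathbf{w})}((1, \mathbf{v}))$ is $\chi$-semistable if and only if $\langle \chi, \lambda\rangle \ge 0$ for every one-parameter subgroup $\lambda$ of $G_{\mb v, \infty}$ for which $\lim_{t\to 0}\lambda(t)\cdot x$ exists. A one-parameter subgroup is the same datum as a $\Z$-grading $V_i = \bigoplus_n V_i^{(n)}$ of each vertex space, and an elementary computation on the edge maps (the $(m,n)$-block of an edge map scales by $t^{m-n}$) shows that the limit exists precisely when every step $V^{\ge n} := \bigoplus_{m\ge n} V^{(m)}$ of the associated descending filtration is a subrepresentation, the limit itself being the associated graded.

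The key bookkeeping step is the identity $\langle \chi, \lambda\rangle = -\sum_{n} \theta^\infty\cdot \dim V^{\ge n}$, obtained from $\langle \chi, \lambda\rangle = -\sum_{i,n} \theta^\infty_i\, n\, \dim V_i^{(n)}$ by summation by parts; here the normalization $\theta^\infty\cdot (1, \mb v) = 0$ is exactly what makes $\theta^\infty\cdot \dim V^{\ge n}$ vanish for $n\ll 0$ as well as for $n\gg 0$, so that the sum is finite and unchanged by an overall shift of the grading. Since each $V^{\ge n}$ is a subrepresentation, nonnegativity of $\langle \chi, \lambda\rangle$ for all admissible $\lambda$ is equivalent to $\theta^\infty\cdot \dim \tilde S \le 0$ for every subrepresentation $\tilde S$: one direction specializes to the two-step grading attached to a single $\tilde S$, and the other sums the inequalities over $n$. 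For the stable case I would rerun the argument tracking when equality is forced, using that an admissible $\lambda$ fixes $x$ exactly when its filtration is trivial; modulo the diagonal $\C^*$ that acts trivially in $G_{\mb v, \infty}$, an equality $\langle \chi, \lambda\rangle = 0$ for a nontrivial admissible $\lambda$ corresponds precisely to a proper nonzero subrepresentation with $\theta^\infty\cdot \dim \tilde S = 0$, so strictness for all such $\tilde S$ is equivalent to stability.

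The main obstacle I expect is the second step: pinning down the precise form of the numerical criterion for affine quotients linearized by a nontrivial character (as opposed to the projective, trivial-character case) together with the attendant sign and normalization conventions. In particular the semistable-versus-stable dichotomy is delicate, since one must verify that a destabilizing one-parameter subgroup of weight zero is accounted for by an honest proper subrepresentation achieving equality, and conversely that such subrepresentations are the only obstruction to the orbit being closed with finite stabilizer inside the semistable locus.
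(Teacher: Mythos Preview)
The paper does not prove this proposition; it is stated as a recollection from \cite{king1994moduli} (with a pointer also to \cite{ginzburg2009lectures}) and no argument is given. Your proposal is a correct outline of the standard proof, and in fact is essentially King's original argument: pass to the unframed Crawley--Boevey quiver $Q_\infty$, apply the Hilbert--Mumford criterion for the character linearization, translate one-parameter subgroups into $\Z$-filtrations by subrepresentations, and use summation by parts together with the normalization $\theta^\infty\cdot(1,\mb v)=0$ to reduce to the single-subrepresentation inequality. The unpacking of the two cases according to the $\infty$-dimension of $\tilde S$ is exactly right and matches the paper's conventions for $Q_\infty$ and $\theta^\infty$.

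Your caveat about the stable case is well placed but not a genuine gap: once one works modulo the diagonal $\C^*$ in $G_{\mb v,\infty}$, the equivalence between ``no nontrivial admissible $\lambda$ with $\langle\chi,\lambda\rangle=0$ whose limit is not in the orbit'' and ``no proper nonzero subrepresentation with $\theta^\infty\cdot\dim\tilde S=0$'' follows from the same two-step-grading specialization you already used, together with the observation that the limit lies in the orbit precisely when the filtration splits compatibly with all edge maps, which for a two-step filtration forces the subrepresentation to be a direct summand. The sign conventions you worry about are consistent with the paper's convention $\chi=\prod\det(g_i)^{-\theta_i}$, under which semistability requires $\langle\chi,\lambda\rangle\ge 0$ for limiting $\lambda$; your computation of $\langle\chi,\lambda\rangle$ has the correct sign.
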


\subsection{Affine ADE quivers} One of the most important cases for present applications is when the quiver $Q$ is of affine ADE type such that the affine root is the zeroth index in $I = \{0, \ldots, r\}$. Unless otherwise stated, a quiver variety will correspond to a quiver of this type for the remainder of the paper.  These quiver varieties for some stability parameters have interpretations as moduli spaces of framed sheaves on ADE surfaces or as $\Gamma$-equivariant sheaves on $\C^2$ for a finite group $\Gamma$ acting on $\C^2$. We briefly recall the relevant isomorphism between quiver representations and rank 1 torsion free sheaves, see \cite{nakajima1994instantons, nakajima2007sheaves}.

\paragraph{Gieseker/Hilbert choice of stability parameter} There is a close relationship between the wall and chamber structure in this case with the Weyl chambers of the corresponding affine root system. Let $\delta = (1,\delta_1, \ldots,\delta_r)$ denote the dimension vector corresponding to the null vector for the Cartan matrix with all positive entries and value $1$ on the affine root. Note that this is also the multiplicities of components in a singular elliptic fiber with the corresponding Dynkin diagram. Let $\mathbf{w_0}$ be the framing vector $(1,0,\ldots, 0)$ corresponding to the affine root. The complex stability parameter $\zeta$ can vary in the space $\C^I$ and $\theta$ can vary in $\Q^I$ (or $\R^I$ if we consider Hyper-K\"ahler quotients). First we fix $\zeta = 0$. As long as $\theta \not\in D_\delta$ then $\mf{M}_\theta(\delta, \mathbf{w_0})$ is isomorphic to the ADE surface $X_\Gamma = \widetilde{\C^2/\Gamma}$ where $\Gamma$ is the finite group corresponding to $Q$ \cite{kronheimer1989construction}. This is also true whenever $\zeta$ lies in $\C\otimes D_{\delta}$ and $\theta\not \in D_\theta$.

Now we intend to find a stability condition $\theta$ which results in $\mf{M}_\theta(\mathbf{v}, \mathbf{w})$ having the desired interpretation as a moduli space of sheaves. There is not a single $\theta$ which works for all $\mathbf{v}$ because the set of walls is not locally finite in the region of $\R^I$ adjacent to $D_\delta$. The set of walls is however finite for any fixed $\mathbf{v}$. The real root hyperplanes partition $D_\delta$ according to the finite ADE root system. Let $C$ denote the usual choice of positive chamber
\[ C = \{ \theta \in D_\delta \mid \theta \cdot \alpha_i > 0, ~ i = 1, \ldots, r\}\]
and given $\mathbf{v}$ let $C(\mathbf{v})$ be the unique chamber of the decomposition of $\R^I$ with respect to the roots in $R_+(\mathbf{v})$ such that $C(\mathbf{v})$ has face $C$ and such that $\theta \cdot \delta > 0$ for $\theta \in C(\mathbf{v})$. Let $\theta_{\text{Hilb}}(\mathbf{v})$ denote a fixed stability condition in this chamber, and $\theta_U$ a fixed stability condition in $C$.
\paragraph{Moduli of torsion free sheaves}
First let $\overline{X_\Gamma}$ denote the orbifold compactification of $X_\Gamma$ from \cite{nakajima2007sheaves}, i.e. we have an action of  $\Gamma$ on $\mathbb{P}^2$ and we resolve the singularity at $[0:0:1]$. Let $\ell_\infty$ be the divisor at infinity. Then
\begin{thm}[Nakajima \cite{nakajima2007sheaves}]
  \label{thm:quivers_are_torsionfree_moduli}
  There is a correspondence between stable quiver representations and framed torsion free sheaves such that quiver variety $\mf{M}_{\theta_{\text{Hilb}}(\mathbf{v})}(\mathbf{v}, \mathbf{w})$ is isomorphic to the moduli space of framed torsion free sheave $(E, \Phi)$. Further,
  \begin{enumerate}[(i)]
    \item Let $\mathbf{u} = \mathbf{w} - C\mathbf{v}$. The chern classes of $E$ are given by
    \begin{align*}
      \on{c}_1(E) &= \sum_{i \neq 0} u_i c_1(\mc{V}_i)\\
      \on{ch}_2(E) &= \sum_i u_i \on{ch}_2(\mc{V}_i) + 2\mathbf{v} \cdot \delta \on{ch}_2(\mc{O}(\ell_\infty))
    \end{align*}
    \item The framing $\Phi$ gives an isomorphism $E\simeq E_\infty$ over the end of $X_\Gamma$ such that the representation of $\Gamma$ at $\infty$ on $E_\infty$ decomposes as $\oplus \rho_i^{\oplus w_i}$ where $\rho_i$ is the irreducible representation of $\Gamma$ corresponding to vertex $i\in I$ under the McKay correspondence.
    \item The variety $\mf{M}_{\theta_{U}}(\mathbf{v}, \mathbf{w})$ is isomorphic to the Uhlenbeck compactification of $\mf{M}^{reg}_{\theta_{\text{Hilb}}(\mathbf{v})}(\mathbf{v}, \mathbf{w})$, so we have an identification
    \begin{equation}\label{eq:uhl}
      \mf{M}_{\theta_{\text{U}}}(\mathbf{v}, \mathbf{w}) =
      \bigsqcup_{k \ge 0}  \mf{M}^{reg}_{\theta_{U}}(\mathbf{v}- k\delta, \mathbf{w})\times S^{k}X_{\Gamma}
    \end{equation}
    such that the natural map
    \begin{equation}\label{eq:geis_to_uhl}
            \mf{M}_{\theta_{\text{Hilb}}(\mathbf{v})}(\mathbf{v}, \mathbf{w}) \to \mf{M}_{\theta_{\text{U}}}(\mathbf{v}, \mathbf{w})
    \end{equation}
    coincides with the Gieseker-Uhlenbeck map of \cite{li1993algebraic}.
  \end{enumerate}
\end{thm}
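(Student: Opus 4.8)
The plan is to realize both moduli problems through a common monad (ADHM-type) description obtained from the McKay correspondence, and then to match stability, Chern classes, and the compactification term by term. First I would fix the orbifold compactification $\overline{X_\Gamma}$ together with the line at infinity $\ell_\infty$, and encode a framed torsion-free sheaf $(E,\Phi)$ by its $\Gamma$-equivariant cohomology. Concretely, lift $E$ to a $\Gamma$-equivariant torsion-free sheaf on $\PP^2$ (via the minimal resolution $X_\Gamma \to \C^2/\Gamma$ and the $\Gamma$-equivariant derived correspondence of Bridgeland--King--Reid / Kapranov--Vasserot), set $V_i := H^1(E(-\ell_\infty)\otimes \rho_i^\vee)^\Gamma$, and read off the framing as the $\rho_i$-isotypic multiplicities $w_i$ of $E|_{\ell_\infty}$. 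Multiplication by the two coordinate functions on $\C^2$, which transform in the standard two-dimensional representation of $\Gamma\subset SU(2)$, shifts the $\Gamma$-isotypic grading exactly along the edges of the doubled McKay quiver $\overline Q$, and thereby produces the linear maps $B$; the framing $\Phi$ and the sections across $\ell_\infty$ produce $i,j\in \Hom(W,V)\oplus\Hom(V,W)$. The resulting Beilinson-type monad has $E$ as its middle cohomology, and the requirement that the monad be a complex with no other cohomology is precisely the ADHM/moment-map relation $\mu(B,i,j)=0$, i.e. the datum lands in $\mu^{-1}(0)$. Running this in reverse — building a sheaf as the cohomology of the monad attached to $(B,i,j)$ — gives the inverse construction.

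Next I would match stability. Using Proposition \ref{prop:king_stability}, a $B$-invariant subspace $S\subset V$ corresponds under the monad to a saturated subsheaf $E'\subset E$ (the two cases $S\subset\ker j$ and $S\supset\im i$ corresponding respectively to destabilizing subsheaves and to destabilizing quotients of $E$); the numerical inequalities $\theta\cdot\dim_I S\le 0$ become exactly the Gieseker-slope inequalities for the framed sheaf once $\theta$ is taken in the chamber $C(\mathbf v)$, since this is the chamber adjacent to $D_\delta$ cut out by the real-root hyperplanes as in the discussion preceding the statement. Hence $\theta_{\text{Hilb}}(\mathbf v)$-stability of $(B,i,j)$ is equivalent to stability of $(E,\Phi)$, which gives the asserted isomorphism of fine moduli spaces. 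Part (ii) then falls out of the construction: the framing isomorphism $\Phi\colon E|_{\ell_\infty}\xrightarrow{\sim}E_\infty$ is exactly the datum recording $W=\oplus_i W_i$ as a $\Gamma$-module, so $E_\infty\cong\oplus_i\rho_i^{\oplus w_i}$ by the definition of $\mathbf w$.

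For the Chern-class formulas (i) I would apply Riemann--Roch to the tautological monad. The alternating sum of the monad terms expresses $\on{ch}(E)$ as a $\Z$-linear combination of the Chern characters of the tautological bundles $\mc V_i$ of \eqref{eq:tautological_bundle}, with coefficients governed by the incidence of the monad, which after invoking the Cartan relation collapse to the vector $\mathbf u=\mathbf w-C\mathbf v$; tracking degrees gives $c_1(E)=\sum_{i\neq 0}u_i\,c_1(\mc V_i)$ and the stated $\on{ch}_2$, the extra term $2\mathbf v\cdot\delta\,\on{ch}_2(\mc O(\ell_\infty))$ being the contribution of the twist at infinity needed to make the framing well defined. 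This step is essentially bookkeeping once the monad is in hand.

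The main obstacle is part (iii), the identification with the Uhlenbeck compactification. Here I would analyze the variation of GIT from the chamber $C(\mathbf v)$ to its bounding face $C\subset D_\delta$ using Theorem \ref{thm:partial_resolutions}(iii): the contraction \eqref{eq:semisimplification} sends a $\theta_{\text{Hilb}}(\mathbf v)$-stable representation to the direct sum of its $\theta_U$-Jordan--H\"older factors. The key computation is that, for such a stable datum, the extra simple factors acquired on the wall all have dimension vector $\delta$ and carry no framing, so each records a point of the ALE space $X_\Gamma$; a representation whose framed factor has dimension $\mathbf v-k\delta$ together with $k$ such $\delta$-factors therefore maps to a point of $\mf M^{reg}_{\theta_U}(\mathbf v-k\delta,\mathbf w)\times S^kX_\Gamma$, producing the stratification \eqref{eq:uhl}. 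The genuinely delicate step is to promote this set-theoretic bijection to the identification \eqref{eq:geis_to_uhl} with Li's Gieseker--Uhlenbeck morphism \cite{li1993algebraic}: one must match scheme structures and show that the bubbling off of $k$ charge-$\delta$ instantons onto $S^kX_\Gamma$ corresponds to the loss of $k$ units of $\on{ch}_2$, equivalently length-$k$ torsion, in the sheaf picture. I would carry this out by comparing the two morphisms through the universal sheaf and the associated $\mu$-map (Donaldson) determinant line bundle, reducing the comparison to the statement that both coincide with the canonical map to $\on{Proj}$ of the section ring of the determinant bundle that becomes semiample exactly on the wall $D_\delta$.
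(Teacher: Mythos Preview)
The paper does not prove this theorem: it is stated as a background result and attributed to Nakajima \cite{nakajima2007sheaves}, with no proof given in the text. So there is nothing in the paper's own argument to compare your proposal against.

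That said, your sketch is a reasonable outline of how the cited result is actually established in Nakajima's paper: the ADHM/monad description via $\Gamma$-equivariant sheaves on $\PP^2$, the translation of King stability into Gieseker stability for the chamber $C(\mathbf v)$, the Chern-class bookkeeping through the tautological bundles, and the wall-crossing analysis identifying the $\theta_U$-quotient with the Uhlenbeck space. One caution on part (iii): your description of the Jordan--H\"older factors on the wall $C\subset D_\delta$ as having dimension vector $\delta$ and recording points of $X_\Gamma$ is correct at the level of the stratification, but the identification of the scheme structure with Li's construction is more than just matching determinant line bundles; Nakajima's argument in \cite{nakajima2007sheaves} goes through a careful comparison of the GIT quotient with the functorial description of the Uhlenbeck space, and your last paragraph understates the work involved there. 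If you were to write this up as an actual proof rather than a survey of a cited result, that step would need to be expanded substantially.
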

In particular $\mf{M}_{\theta_{Hilb}(\mathbf{n\delta})}(n\delta, \mathbf{w_0})$ is isomorphic to the Hilbert scheme of $n$ points on the corresponding smooth ADE surface.

\paragraph{Equivariant Hilbert Scheme}
There is another well studied stability condition on the affine ADE quivers, especially useful for geometric actions of various algebras and quantum groups. This corresponds to the choice $\theta =: \theta_+$ a stability condition in the positive chamber $C_+$ where $\theta_i > 0$ for all $i = 0, \ldots, n$. Define the analogous negative chamber $C_- = -C_+$.

Then we have the following modular description of this quiver variety if we identify framing/dimension vectors as representations of the finite group $\Gamma$ using the McKay correspondence.
\begin{thm}[\cite{nakajima1999lectures}]
There is an isomorphism $\mf{M}_{\theta_+}(\mathbf{v}, \mathbf{w}) \simeq M(\mathbf{v}, \mathbf{w})$ where $M(\mathbf{v}, \mathbf{w})$ is the moduli space of framed torsion free sheaves $(E, \phi)$ on $\mathbb{P}^2$ with $H^1(\mathbb{P}^2, E(-1)) = \mathbf{v}$ as a representation of $\Gamma$ and the induced action on $E|_{\ell_\infty}$ under $\phi: E|_{\ell_\infty} \simeq \mc{O}_{\ell_\infty}^{\oplus |\mathbf{w}|}$ corresponds to the representation $\mathbf{w}$.
\end{thm}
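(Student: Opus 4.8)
The plan is to realize both sides as the GIT quotient of one and the same affine variety by one and the same group, by taking $\Gamma$-invariants in the ADHM (monad) description of framed torsion-free sheaves on $\PP^2$; the affine ADE quiver then appears automatically through the McKay correspondence.

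First I would recall the ADHM description \cite{nakajima1999lectures}: a framed torsion-free sheaf $(E,\phi)$ on $\PP^2$ of rank $r$ is the middle cohomology of a monad
\[ V\otimes\mcO(-1) \xrightarrow{\ a\ } (V\otimes Q \oplus W)\otimes\mcO \xrightarrow{\ b\ } V\otimes \mcO(1), \]
where $V = H^1(\PP^2, E(-1))$, $W\cong\C^r$ is the framing fibre, $Q\cong\C^2$ is the two-dimensional space $\PP^2\setminus\ell_\infty$, and $a,b$ are built from ADHM data $(B_1,B_2,I,J)\in\End(V)^{\oplus 2}\oplus\Hom(W,V)\oplus\Hom(V,W)$ subject to $[B_1,B_2]+IJ=0$ and the stability condition that no proper $(B_1,B_2)$-invariant subspace of $V$ contains $\im I$; the moduli space of framed torsion-free sheaves is then the free quotient of this locus by $\GL(V)$. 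Since $\Gamma\subset SL_2(\C)$ acts linearly on $\PP^2$ preserving $\ell_\infty$, every vector space in the monad is a functorially defined cohomology group and so inherits a canonical $\Gamma$-action; hence, after fixing a $\Gamma$-equivariant structure on the framing target, a $\Gamma$-equivariant structure on $(E,\phi)$ is exactly a pair of $\Gamma$-representation structures on $V$ and $W$ together with $\Gamma$-equivariance of $B=(B_1,B_2)\colon V\to V\otimes Q$, $I$, and $J$, where $Q$ now carries the standard two-dimensional representation of $\Gamma$.

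Next I would pass to isotypic components. Writing $V=\bigoplus_{i\in I}V_i\otimes\rho_i$ and $W=\bigoplus_{i\in I}W_i\otimes\rho_i$ and using the McKay isomorphisms $Q\otimes\rho_i\simeq\bigoplus_j\rho_j^{\oplus a_{ij}}$, with $(a_{ij})$ the adjacency matrix of the affine ADE Dynkin diagram and $\rho_0$ the trivial representation, the space of $\Gamma$-equivariant ADHM data is canonically identified with $\on{Rep}_{\overline Q}(\mathbf v,\mathbf w)$ for the affine ADE quiver $Q$, with $\mathbf v_i=\dim V_i$, $\mathbf w_i=\dim W_i$, and residual group $\GL(V)^\Gamma=\prod_i\GL(V_i)=G_{\mathbf v}$. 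The $\Gamma$-invariant symplectic form on $Q$ (which exists precisely because $\Gamma\subset SL_2$) induces the standard symplectic pairing on $\on{Rep}_{\overline Q}(\mathbf v,\mathbf w)$ matching each edge with its reverse, and under $\End_\Gamma(V)=\bigoplus_i\End(V_i)=\mf{g}_{\mathbf v}$ the ADHM equation $[B_1,B_2]+IJ=0$, an identity in $\End(V)\otimes\Lambda^2 Q\simeq\End(V)$, becomes exactly the complex moment map equation, so the $\Gamma$-equivariant solutions form $\mu^{-1}(0)\subset\on{Rep}_{\overline Q}(\mathbf v,\mathbf w)$. For the stability condition, an averaging argument shows that for $\Gamma$-equivariant data a destabilizing subspace can be taken $\Gamma$-invariant, hence of the form $\bigoplus_i S_i\otimes\rho_i$, and one then checks that the ADHM stability condition is precisely King's criterion (Proposition \ref{prop:king_stability}) for $\theta$ in the positive chamber $C_+$. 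By Theorem \ref{thm:partial_resolutions}(i) the $\theta_+$-semistable and $\theta_+$-stable loci in $\mu^{-1}(0)$ coincide, and since the $\GL(V)$-action on stable ADHM data is free, passing to $\Gamma$-invariants is compatible with the quotients; so set-theoretically
\[ \mf{M}_{\theta_+}(\mathbf v,\mathbf w)=\{\,\Gamma\text{-equivariant stable ADHM data}\,\}/\GL(V)^\Gamma=M(\mathbf v,\mathbf w), \]
with $\mathbf v$ the multiplicity vector of $H^1(\PP^2,E(-1))$ and $\mathbf w$ that of the $\Gamma$-action on $E|_{\ell_\infty}$ induced by $\phi$.

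Finally I would promote this bijection to an isomorphism of varieties. Over $\mf{M}_{\theta_+}(\mathbf v,\mathbf w)\times\PP^2$ the tautological bundles $\mc{V}_i$ of \eqref{eq:tautological_bundle} and the tautological maps $\phi_e$ assemble into a universal $\Gamma$-equivariant monad whose cohomology is a flat family of $\Gamma$-equivariant framed torsion-free sheaves with the prescribed invariants, and so yields a morphism $\mf{M}_{\theta_+}(\mathbf v,\mathbf w)\to M(\mathbf v,\mathbf w)$; applying $E\mapsto\bigl(H^1(E(-1)),\,E|_{\ell_\infty}\bigr)$ to the universal sheaf on $M(\mathbf v,\mathbf w)\times\PP^2$ produces the inverse (equivalently, both spaces are GIT quotients of the affine scheme $\mu^{-1}(0)$ by $G_{\mathbf v}$ for equivalent linearizations with coinciding semistable loci, so the natural comparison map is an isomorphism). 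I expect the main difficulty to be precisely this last ``family'' step together with the bookkeeping in the preceding paragraph: pinning down the $\Gamma$-linearizations of $\mcO(1)$ and of $W$ so that the multiplicity vector of $W$ is exactly $\mathbf w$ and the trivial representation matches the affine node, and verifying that the sign conventions in the character $\prod\det(g_i)^{-\theta_i}$ really place the ADHM stability condition in the chamber $C_+$ rather than $C_-$; the moment-map identification and the $\C^*$-equivariant diffeomorphism statement that accompanies it are then formal.
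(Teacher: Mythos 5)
The paper does not actually prove this statement—it is quoted from \cite{nakajima1999lectures}—and your argument is precisely the standard proof from that source and its antecedents: impose $\Gamma$-equivariance on the ADHM/monad data, decompose into isotypic pieces via the McKay correspondence to get the doubled affine quiver, identify $[B_1,B_2]+IJ=0$ with the moment map equation, use the averaging/intersection argument to reduce the sheaf-theoretic stability to King stability over graded subspaces, and globalize with the universal monad built from the tautological bundles. The one point you flag at the end is genuinely the only delicate bookkeeping: with the paper's stated King criterion (character $\prod\det(g_i)^{-\theta_i}$), the all-positive chamber corresponds to the costability condition ($S\subset\ker j\Rightarrow S=0$) while cyclicity over $\im i$ corresponds to the all-negative chamber, the two being exchanged by the transpose isomorphism of quiver varieties, so matching the torsion-free-sheaf condition with $C_+$ rather than $C_-$ is a finite convention check rather than a gap.
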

Consider the case $\mathbf{w} = \mathbf{w_0}$ of the trivial representation at $\ell_\infty$. Here the map to the affine quotient
\[ \mf{M}_{\theta_+}(\mathbf{v}, \mathbf{w_0})\to \mf{M}_{0}(\mathbf{v}, \mathbf{w_0})
\]
is an analogue of the Hilbert-Chow map (or Gieseker-Uhlenbeck in the higher rank case).

It is helpful to consider all of the affine quotients together in a stratified space
\[\mf{M}_0(\infty, \mathbf{w_0}) := \bigcup_{n\ge 0} Sym^n(\C^2)^\Gamma = \bigcup_{n \ge 0} Sym^n(\C^2/ \Gamma)\]

of symmetric powers of the affine quotient, where we include $Sym^n(\C^2/ \Gamma)\hookrightarrow
Sym^{n+k}(\C^2/ \Gamma)$ by adding $k>0$ points at the origin. The image of one
$\mf{M}_{\theta_+}(\mathbf{w}, \mathbf{v})$ surjects onto a maximal stratum. One way of finding
this stratum is by noting that the affine quotients agree and it is the corresponding affine
quotient for the quiver variety with the same dimension vector but the stability condition
$\theta_{Hilb}(\mathbf{v})$. Then the number of points is the length of the double dual of an
element of $\mf{M}_{\theta_{Hilb}(\mathbf{v})}(\mathbf{v}, \mathbf{w_0})$, namely $ n =  v_0 - v^T C v/2$ where $C$ is the Cartan matrix.

\paragraph{Stratifications}

There is an essential structure on quiver varieties induced from their definition as GIT quotients, which is a stratification by the conjugacy class of the stabilizer at a point see \cite[\S 6]{nakajima1994instantons}\cite[\S 3]{nakajima1998quiver} \cite[\S 2.6]{nakajima2009quiver}. This is a more general description of the stratification \eqref{eq:uhl} of the Uhlenbeck moduli space. Let $\hat{G}$ run over conjugacy classes of subgroups of $G_{\mathbf{v}}$ then
\begin{equation}\label{eq:strata_quiver}
  \quivb{\theta}{v}{w} = \bigsqcup_{\hat{G}} \quivb{\theta}{v}{w}_{\hat{G}}
\end{equation}
where $\quivb{\theta}{v}{w}_{\hat{G}} $ consists of points represented by $(B,i,j)$ with stabilizer
$\hat{G}$. The locus with trivial stabilizer, or $\hat{G} = 1$ is $\quivreg{\theta}{\mb{v}}{\mb{w}}$.
Possible choices of $\hat{G}$ are all isomorphic to $\prod_{j = 1}^{k}\GL(n_j)$ for some choices of
$n_j$,  and a polystable representative $(B,i,j)$ of a point in the stratum corresponding to a
quiver representation in $V$ decomposes as
\begin{equation}\label{eq:statum_decomp}
  \begin{split}
V &\simeq V^\infty \oplus (V^1)^{\oplus n_1} \oplus \cdots \oplus (V^k)^{\oplus n_k}\\
(B, i, j) & \simeq (B^\infty,  i^\infty, j^\infty) \oplus (B^1, 0 , 0)^{\oplus n_1} \oplus \cdots \oplus (B^k, 0, 0)^{\oplus n_k }
\end{split}
\end{equation}
where the representations $(V^i, (B^i, 0,0))$ are simple and pairwise non-isomorphic. Under the equivalence \eqref{eq:cb_trick_rep_equiv}, the summand $(B^\infty, i^\infty, j^\infty)$ corresponds to the unique summand as a representation of $Q_\infty$ which contains the one dimensional subspace at the infinite vertex.

 Let $\C^{2}_{\circ}/\Gamma = (\C^2\backslash\{0\})/\Gamma$.  When $\theta = 0$ for an affine ADE  quiver this stratification is the decomposition
\[\quivb{0}{v}{w} = \bigsqcup_{\substack{\mb{v'} \le \mb{v}\\ |\mb{v'}| + |\lambda| + k = |\mb{v}|}} \mf{M}^{reg}_0(\mb{v'}, \mb{w})\times S_\lambda (\C^{2}_{\circ}/\Gamma )\times \{ k [0]\} \]
where $k\ge 0$,  $\mf{M}^{reg}_0(\mb{v'}, \mb{w})\subset \quivb{0}{v'}{w}$ consists of framed equivariant \emph{locally free} sheaves on $\PP^2$ (which may be empty) and
\[S_\lambda(Y) = \{ \sum \lambda_i [x_i] \in Sym^n Y \mid \lambda \vdash n, ~ x_i \neq x_j \text{ if } i \neq j\}.  \]
When $\mb{w} = \mb{w_0}$ the only trivially framed line bundle on $\PP^2$ is the trivial line bundle so the only non-empty $\mf{M}^{reg}_0(\mb{v'}, \mb{w})\subset \quivb{0}{v'}{w}$ occurs when $\mb{v'} = 0$, and in this case it is a point. Thus the decomposition becomes
\begin{equation}\label{eq:pts_quiv_stratification}
  \quivb{0}{v}{w_0} = \bigsqcup_{\substack{k \ge 0\\ |\lambda| + k = |\mb{v}|}} S_\lambda (\C^{2}_{\circ}/\Gamma )\times \{ k [0]\}.
\end{equation}
which also is a stratification of $\mf{M}_0(\infty, \mathbf{w_0})$ where we identify strata by identifying factors $\{k[0]\}$ and $\{k'[0]\}$ for $k \neq k'$.

\paragraph{Local description of resolution} We recall a local analytic description of the
resolution or partial resolution \eqref{eq:semisimplification} from variation of GIT parameters following \cite[\S 6]{nakajima1994instantons}, \cite{crawley2003normality} near a point $x$ in terms of the the map to the affine quiver variety near the central point for a different quiver and different data. Our notation follows \cite{bellamy2018birational}.

Let $\theta$ be a generic stability parameter in a chamber $\mf C$ and let $\theta'$ be a stability parameter in $\overline{\mf C}$, and let $\pi_{\theta, \theta'} : \quivb{\theta}{v}{w} \to \quivb{\theta'}{v}{w}$ be the corresponding map. Let $C$ be the Cartan matrix for the quiver $Q_\infty$, with vertex set $I_\infty$ and let $(-,- )$ be the pairing defined by this matrix, and let $p(\alpha) := 1  - 1/2(\alpha, \alpha)$.

\begin{defn}\label{defn:ext_quiver_quiv}
Let $(B,i,j)$ be a $\theta_0$-polystable representative of a point $x$ in $\quivb{\theta'}{v}{w}$ with decomposition, $V^\infty, V^i$ and $n_i$ for $i = 1, \ldots, k$ defined as in \eqref{eq:statum_decomp}. Let $\beta^i\in \Z_{\ge 0}^{I_\infty}$ be the dimension vectors of $V^i$
for $i \in \{\infty, 1, \ldots, k\}$. Let $Q'$ be the quiver with
\begin{itemize}
  \item vertices $\{1, \ldots, k\}$,
  \item  $p(\beta^i)$ loops at vertex $i$,
  \item  and $-(\beta^i, \beta^j)$ edges between vertices $i$ and $j$.
  \end{itemize} Then $Q'$ is called the \emph{Ext-quiver} of $(B,i,j)$ or of $x$.
\end{defn}
Now choose dimension vector $\mb{n} = (n_1, \ldots, n_k)$
and framing vector
$ \mb{m} = (m_1, \ldots, m_k)$
with $n_i$ as above and where $m_i = -(\beta^\infty, \beta^i)$ for $i \in \{1, \ldots, k\}$. The stability parameter $\theta'$ will correspond to the 0 stability parameter for this quiver. Next we find stability parameter to correspond to $\theta$.  Pick a real stability parameter $\rho$ corresponding to a rational character of $G_{\mb{n}}$ (which is $\hat{G}$ for the stratum containing $(B,i,j)$) by restricting the character $\theta$ to $G_{\mb{n}}$, namely
\begin{align*}
  \rho &= \res_{G_{\mb{n}}}^{G_{\mb{v}}} \theta \\
   \rho(\gamma) &=  \theta\big( \sum_{i = 1}^k \gamma_i \beta^i \big).
\end{align*}
\begin{thm}[Nakajima \cite{nakajima1994instantons} Crawley-Boevey \cite{crawley2003normality}]\label{thm:local_quiver}
Consider a point $x \in \quivb{\theta'}{v}{w}$ with Ext-quiver and data $\beta^i, \mb{n}, \mb{m}, \rho$ as above. Let $\ell = p(\beta^\infty)\ge 0$.  There are local analytic neighborhoods $U$ of $x\in \quivb{\theta'}{v}{w}$ and $V$ of $0\times 0 \in \quivb{0}{n}{m}\times \C^\ell$ and isomorphisms fitting into a commutative diagram
\begin{figure}[h!]
  \centering
\begin{tikzcd}
 \pi_{\theta, \theta'}^{-1}(U)\arrow[r, "\sim"]\arrow[d, "\pi_{\theta, \theta'}"]& (\pi_{\rho, 0} \times \id)^{-1}(V)\arrow[d, "\pi_{\rho, 0}\times \id"] \\
 U \arrow[r,  "\sim"]& V
 \end{tikzcd}
\end{figure}

such that in particular, the fibers $\quivb{\theta}{v}{w}_x$ over $x$ and $\quivb{\rho}{n}{m}_0\times \{0\}$ over $0\times 0$ are identified.
\end{thm}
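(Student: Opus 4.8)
The plan is to deduce this from Luna's \'etale slice theorem, in the version that simultaneously respects the moment-map structure and the GIT stability, and then to analytify. First I would pass through the Crawley--Boevey equivalence \eqref{eq:cb_trick_rep_equiv} and regard $\quivb{\theta'}{v}{w}$ as $\mu_\infty^{-1}(0)\GIT_{\theta'}G_{\mb v,\infty}$, where the level set $\mu_\infty^{-1}(0)$ is the representation variety of the preprojective algebra $\Pi = \Pi(Q_\infty)$; this turns the statement into deformation theory of $\Pi$-modules. Let $M$ be the $\Pi$-module given by the chosen $\theta'$-polystable representative $(B,i,j)$ of $x$, so that its orbit is closed in $\mu_\infty^{-1}(0)$ with reductive stabilizer $\hat G \cong G_{\mb n}$ (as in \eqref{eq:statum_decomp}). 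Luna's theorem --- in the form valid for singular GIT quotients, used by Crawley--Boevey in \cite{crawley2003normality} --- then produces a $\hat G$-stable affine slice $\mc N \ni (B,i,j)$ transverse to the orbit, a strongly \'etale map $G_{\mb v,\infty}\times_{\hat G}\mc N \to \mu_\infty^{-1}(0)$, and a descended \'etale map $\mc N\GIT_{\rho}\hat G \to \quivb{\theta'}{v}{w}$ onto a neighborhood of $x$, with $\rho = \res^{G_{\mb v,\infty}}_{\hat G}\theta$ exactly the restricted character of the statement. Analytifying yields the horizontal isomorphisms of the diagram, provided $\mc N$ is identified.

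The second step is to identify $\mc N$, as a $\hat G$-variety, with $\quivb{0}{n}{m}\times\C^\ell$. Transversality forces the Zariski tangent space of $\mc N$ at $(B,i,j)$ to be a $\hat G$-equivariant complement to the orbit tangent space inside $\ker d\mu_\infty$, i.e. it computes $\Ext^1_\Pi(M,M)$. Decomposing $M = V^\infty\oplus(V^1)^{\oplus n_1}\oplus\cdots\oplus(V^k)^{\oplus n_k}$ as in \eqref{eq:statum_decomp}, this $\prod_i\GL(n_i)$-module splits into blocks $\Hom(\C^{n_i},\C^{n_j})\otimes\Ext^1_\Pi(V^i,V^j)$, and I would use the $2$-Calabi--Yau property of $\Pi$ (valid since $Q_\infty$ is not Dynkin), namely $\dim\Ext^1_\Pi(V^i,V^j) = \dim\Hom_\Pi(V^i,V^j)+\dim\Hom_\Pi(V^j,V^i)-(\beta^i,\beta^j)$. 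Since the summands $V^\infty,V^1,\ldots,V^k$ are pairwise non-isomorphic $\theta'$-stable modules, the off-diagonal $\Hom$'s vanish, so these dimensions are $2p(\beta^i)$ on the diagonal ($i\ge 1$), $-(\beta^i,\beta^j)$ between distinct $V^i,V^j$, and $-(\beta^\infty,\beta^i)=m_i$ in the blocks touching $V^\infty$ --- matching the loop, edge and framing data of Definition \ref{defn:ext_quiver_quiv}. The part of $\mc N$ on which $\hat G$ acts nontrivially is thus $\Rep_{\overline{Q'}}(\mb n,\mb m)$, while the self-extensions of the multiplicity-one summand $V^\infty$, on which $\hat G$ acts trivially, account for the remaining factor, recorded in the statement as $\C^\ell$ with $\ell = p(\beta^\infty)$. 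To promote this from tangent spaces to an isomorphism of slices I would invoke the Hamiltonian compatibility of the slice theorem: under the slice, the restriction of $\mu_\infty$ agrees \'etale-locally with the quadratic moment map for the $\hat G$-action on $\Rep_{\overline{Q'}}(\mb n,\mb m)\times\C^\ell$, so the preprojective relations cut $\mc N$ out exactly as $\mu_{Q'}^{-1}(0)$, giving $\mc N\GIT_{\rho}\hat G \cong \quivb{\rho}{n}{m}\times\C^\ell$ near the origin.

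Matching the vertical maps is then essentially formal: both $\pi_{\theta,\theta'}$ and $\pi_{\rho,0}\times\id$ are the canonical projective morphisms from a GIT quotient at a nontrivial character to the affine quotient at the trivial one, and since the affine quotient is $\Spec$ of the invariant ring --- which Luna's theorem identifies \'etale-locally on the base with that of $\mc N\GIT_{\rho}\hat G$ --- the horizontal isomorphisms automatically intertwine the two vertical maps. Taking the fiber of the resulting square over $x \leftrightarrow 0\times 0$ then gives $\quivb{\theta}{v}{w}_x\cong\quivb{\rho}{n}{m}_0\times\{0\}$.

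I expect the main obstacle to be making Luna's slice theorem work simultaneously with the symplectic structure and the stability data on the singular level set $\mu_\infty^{-1}(0)$: one must control the singularities of $\Rep_\Pi(1,\mb v)$ enough for the \'etale slice to exist and descend (the flatness/normality input of \cite{crawley2003normality}), show that the restriction of $\mu_\infty$ to the slice is literally the moment map of the smaller quiver problem rather than merely agreeing to first order, and --- most delicately --- prove that $\theta$-(semi)stability near the closed orbit is equivalent to $\rho$-(semi)stability on the slice, which comes down to comparing the Hilbert--Mumford numerical function for $G_{\mb v,\infty}$ and for $\hat G$ along one-parameter subgroups of $\hat G$ and is the heart of the GIT form of the slice theorem.
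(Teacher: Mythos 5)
The paper does not actually prove this statement: it is quoted from Nakajima \cite{nakajima1994instantons} and Crawley--Boevey \cite{crawley2003normality} (with the extension to non-generic $\theta$ in \cite{bellamy2018birational}), and your route --- the Crawley--Boevey one-vertex reformulation, Luna's \'etale slice at the closed orbit of the polystable representative, identification of the slice via $\Ext^1_\Pi$ computations and the $2$-Calabi--Yau property of the preprojective algebra, and the Hilbert--Mumford comparison of $\theta$-stability upstairs with $\rho$-stability on the slice --- is exactly the argument those sources use, with the genuinely delicate inputs (quadraticity/normality of the moment-map fiber along the slice, stability matching) correctly singled out rather than hidden.

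One bookkeeping point should not be glossed over. The trivial factor your slice construction produces is $\Ext^1_\Pi(V^\infty,V^\infty)$, whose dimension is $2p(\beta^\infty)=2\ell$, not $\ell$; equivalently, each of the $\ell$ loops at the framing vertex of the Ext-quiver contributes a \emph{doubled} arrow on a one-dimensional space, hence a factor $\C^2$. A dimension count confirms this: writing $v_\infty=\beta^\infty+\sum_i n_i\beta^i$ one gets $2p(v_\infty)=2p(\beta^\infty)+\dim\quivb{\rho}{n}{m}$, so the local model must be $\quivb{\rho}{n}{m}\times\C^{2p(\beta^\infty)}$ (for instance, for the Jordan quiver with $\mb w=(2)$ the stratum of a charge-one ideal instanton has a four-dimensional trivial factor, matching $2p(\beta^\infty)=4$, not $p(\beta^\infty)=2$). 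So when you say the self-extensions of $V^\infty$ ``account for the remaining factor $\C^\ell$'' you are silently halving that space to fit the statement as transcribed; the statement should be read with $\C^{2\ell}$ (equivalently with $\ell$ redefined as $\dim\Ext^1_\Pi(V^\infty,V^\infty)$), and provided you carry $\C^{2p(\beta^\infty)}$ through, your argument is the correct and standard one.
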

\begin{rmk}
  \begin{enumerate}[(i)]
\item The value $\ell = p(\beta^\infty)$ may equivalently be thought of as the number of loops at the framing vertex. 
\item   This result was extended in \cite{bellamy2018birational} to non-generic 
$\theta$, i.e. for any map of the form \eqref{eq:semisimplification}.
  \end{enumerate}
\end{rmk}
\paragraph{Other stability conditions} In addition to those specific stability conditions already described, there are additional stability conditions which are important to consider.  We fix notation for these. A key tool in understanding the birational geometry of quiver varieties is the fact that the polyhedral structure of the stability space captures partial symplectic resolutions of the corresponding quiver variety as in Theorem \ref{thm:partial_resolutions}.

Fix an affine ADE graph $\Gamma$ and let $I_0$ be a set of indices other than the one corresponding to the affine root. Recall the positive chamber $C_+ = \{ \theta \mid \theta_i > 0, ~ i  = 0, \ldots, r\}$. Let $\theta_{I_0, +}$ be a stability condition generic in the face $C_{I_0, +} \subset \overline{C_+}$ defined by
\begin{equation}\label{eq:theta_io_plus}
  \theta_{I_0, +}\in C_{I_0, +} := \{ \theta \mid \theta_i = 0 \text{ if }  i \in I_0, \theta_j > 0 \text{ if } j\not\in I_0\}.
\end{equation}
This is a non-generic stability parameter but we take it to be generic within its face. Therefore we have a projective resolution of symplectic singularities specializing \eqref{eq:semisimplification} for any pair $I_1 \subset I_0 \subset  \Gamma$
\[\pi_{I_1, I_0} : \quivb{\theta_{I_0',+}}{v}{w} \to \quivb{\theta_{I_0, +}}{v}{w}\]
which are compatible in the sense that for a chain $I_2 \subset I_1 \subset I_0 \subset \Gamma$ we have
\[\pi_{I_2, I_0} = \pi_{I_1, I_0}\circ \pi_{I_2, I_1}.\]
These maps are also compatible with the map to the affine quotient. This gives a chain of partial symplectic resolutions which when $\mathbf{w} = \mathbf{w_0}$ all resolve to the unique resolution $\quivb{\theta_+}{v}{w_0}$.

\subsection{Universal enveloping algebra action}\label{sec:quiver_action}
There is a geometric description of the universal enveloping algebra of an affine Kac-Moody algebra
via Steinberg correspondences on the quiver varieties with stability parameter $\theta_+$. We
combine the quiver varieties with one framing vector and different dimension vectors
\[\mf{M}_{\theta_+}(\mathbf{w}) := \bigsqcup_{\mathbf{v}} \mf{M}_{\theta_+}(\mathbf{v},
\mathbf{w}).\]
We again restrict to the case $\mathbf{w_0}$ and consider
\[Z := \mf{M}_{\theta_+}(\mathbf{w_0}) \times_{\mf{M}_{0}(\infty, \mathbf{w_0})} \mf{M}_{\theta_+}(\mathbf{w_0})\]
which is an analogue of the Steinberg variety for the Springer resolution. We now state the result on the action of an affine Lie algebra in the specific case of the equivariant hilbert scheme. Given $x\in
\mf{M}_0(\mathbf{v}, \mathbf{w_0})$ consider the fiber
$\mf{M}_{\theta_+}(\mathbf{v}, \mathbf{w_0})_x$ over the point $x$ of the map
$\mf{M}_{\theta_+}(\mathbf{v}, \mathbf{w_0})\to \mf{M}_0(\mathbf{v}, \mathbf{w_0})$. When $x  =
0$ we denote the fiber $\mf{L}(\mathbf{v}, \mathbf{w_0})$. Extending our previous notation let
$\mf{M}(\mathbf{w_0})_x = \sqcup_{\mathbf{v}} \quivb{\theta_+}{v}{w_0}_x$.
\begin{thm}[Nakajima \cite{nakajima1994instantons}]\label{thm:uea_action_nakajima}
There is an algebra morphism
\[
 U(\hat{\mf g}) \to H_{top}^{BM}(Z, \C)\]
where $\hat{\mf g}$ is the affine lie algebra corresponding to $\Gamma$, $H_{top}^{BM}(Z)$ is top dimensional Borel-Moore homology of $Z$ with the algebra action given by convolution.
\begin{enumerate}[(i)]
  \item The images of $h_i, d, c$ are multiplies of diagonal subvarieties.
  \item  Restricted to a pair of quiver varieties, the image of $e_i$ is the fundamental class of the Hecke correspondence
\begin{equation}\label{eq:hecke}
\{ (E, E') \in\quivb{\theta_+}{v}{w_0}\times\quiv{\theta_+}{\mathbf{v} + \rho_i}{\mathbf{w_0}} \mid E \subset E'\}
\end{equation}
and the image of $f_i$ is, up to sign, the same class in the opposite direction. Each connected component is a nonsingular Lagrangian subvariety of $\quivb{\theta_+}{v}{w_0}\times\quiv{\theta_+}{\mathbf{v} + \rho_i}{\mathbf{w_0}}$.
\item
Under the convolution action on $H_{top}(\mf{M}(\mathbf{w_0})_x)$, this space is an irreducible integrable highest weight representation with weight spaces $H_{top}(\mf{M}_{\theta_+}(\mathbf{v}, \mathbf{w_0})_x)$. When $ x = 0$ the highest weight is $\Lambda_0$.
\item The representation only depends on the stratum in which $x$ lies in the decomposition \eqref{eq:pts_quiv_stratification}.
\end{enumerate}
\end{thm}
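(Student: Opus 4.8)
The plan is to follow Nakajima's original argument \cite{nakajima1994instantons, nakajima1998quiver}: realize the generators of $\hat{\mf g}$ as convolution operators attached to Hecke correspondences, verify the defining relations of $\hat{\mf g}$ by intersection theory on fiber products of those correspondences, and then identify the modules obtained on the fibers over points of $\mf{M}_0(\infty, \mathbf{w_0})$. First I would record that the maps $\mf{M}_{\theta_+}(\mathbf{w_0}) \to \mf{M}_0(\infty, \mathbf{w_0})$ are projective, so that $H^{BM}_{top}(Z,\C)$ is an associative algebra under convolution in the sense of \cite{chriss_representation_2010}. For each vertex $i$ and each $\mathbf v$, let $\mf{P}_i(\mathbf v) \subset \quivb{\theta_+}{v}{w_0}\times\quiv{\theta_+}{\mathbf{v} + \rho_i}{\mathbf{w_0}}$ be the incidence locus \eqref{eq:hecke}; using King's criterion (Proposition \ref{prop:king_stability}) and the explicit description of $\theta_+$-stable points, one checks that each connected component of $\mf{P}_i(\mathbf v)$ is a smooth Lagrangian subvariety of the product, so its fundamental class lies in $H^{BM}_{top}(Z)$. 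Then define $e_i$ to act by convolution with a sign-normalized $[\mf{P}_i(\mathbf v)]$, $f_i$ by convolution with the transposed class, $h_i$ by multiplication on $H_{top}(\quivb{\theta_+}{v}{w_0})$ by the integer $(\mathbf{w_0} - C\mathbf v)_i$, and $c, d$ by the evident scalars determined by $\mathbf{w_0}$ and $\mathbf v$; these last give part (i) (multiples of diagonal subvarieties), and the correspondences $\mf{P}_i$ give part (ii).

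Next I would verify the relations, which promotes these classes to a Lie algebra map $\hat{\mf g}\to H^{BM}_{top}(Z)$ and hence to a homomorphism from $U(\hat{\mf g})$. Relations \eqref{bkm1}--\eqref{bkm2} are immediate, since $h_i, c, d$ act by integers depending linearly on $\mathbf v$. The essential computation is \eqref{bkm3}, $[e_i, f_j] = \delta_{ij}h_i$: one forms the two iterated fiber products of $\mf{P}_i$ with the transpose of $\mf{P}_j$ over the intermediate quiver variety, in the two orders, and compares them. For $i\neq j$ the two elementary modifications at different vertices commute, the fiber products are isomorphic reduced correspondences of the expected dimension, and the convolutions cancel. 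For $i = j$ one of the two fiber products acquires an extra component along the diagonal where the intersection is not transverse; the difference $e_i f_i - f_i e_i$ equals the contribution of this diagonal component, computed by a local analysis near the diagonal (e.g. a transverse $\C^*$-action via hyperbolic localization, or an Euler class of the excess normal bundle as in \cite[\S 5]{nakajima1998quiver}), and it comes out exactly to the multiple of the diagonal defining $h_i$. The Serre relations \eqref{bkm4} and the commutativity \eqref{bkm5} then follow exactly as in Proposition \ref{prop:main_from_rep_theory}, from integrability of the modules discussed next together with the $\mathfrak{sl}_2$-theory of \cite[ch.~3]{kac_infinite-dimensional_1990}.

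For part (iii), fix $x\in\mf{M}_0(\infty, \mathbf{w_0})$ and consider $\bigoplus_{\mathbf u} H_{top}(\quivb{\theta_+}{u}{w_0}_x)$; since $Z$ is a fiber product over $\mf{M}_0(\infty, \mathbf{w_0})$ and the Hecke correspondences preserve fibers over $x$, this is a module for the image algebra. The fiber over $x$ is empty outside a cone of dimension vectors with a minimal element, so the weight spaces vanish outside a cone; hence $e_i, f_i$ act locally nilpotently, the module is integrable (closing the loop above), and the fundamental class $v_x$ of the minimal nonempty fiber over $x$ is a highest weight vector. That $v_x$ generates the module and that the module is irreducible then follows from the standard fact that an integrable highest weight module generated by a single highest weight vector and carrying a non-degenerate contravariant form is the irreducible $L(\Lambda)$; here the contravariant form is, up to sign, induced by Poincar\'e duality on the ambient smooth quiver varieties, and its non-degeneracy on the $H_{top}$'s of the fibers — with the fundamental classes of the irreducible components as a basis — is established along the way. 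When $x = 0$ the minimal quiver variety is $\quivb{\theta_+}{0}{w_0}$, a point, and since $\mathbf{w_0} = (1,0,\dots,0)$ sits at the affine node the weight of its class is $\Lambda_0$. Finally part (iv) is an application of Theorem \ref{thm:local_quiver}: a neighborhood of the fiber over $x$ is isomorphic, compatibly with the Hecke correspondences, to a neighborhood of the central fiber of a quiver variety determined only by the Ext-quiver data of $x$, hence only by the stratum of $x$ in \eqref{eq:pts_quiv_stratification}.

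I expect the main obstacle to be the $i = j$ case of \eqref{bkm3}. Unlike the rest of the argument, it is not a dimension count or an appeal to known structure theory: it requires confronting a genuinely non-transverse fiber product of Hecke correspondences and extracting the correct, weight-dependent diagonal term from the excess intersection. Pinning down that normalization — including the signs built into the definitions of $e_i$ and $f_i$ — is exactly what makes the whole assignment a Lie algebra homomorphism.
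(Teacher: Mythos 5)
The paper does not prove this statement at all --- it is imported verbatim as Nakajima's theorem from \cite{nakajima1994instantons} (see also \cite{nakajima1998quiver}) --- so the only meaningful comparison is with the cited original, and your outline follows it: Hecke correspondences as smooth Lagrangian cycles acting by convolution on $H^{BM}_{top}(Z)$, the relation $[e_i,f_j]=\delta_{ij}h_i$ extracted from the non-transverse diagonal term, integrable highest-weight theory on the fiber modules $H_{top}(\mf{M}(\mathbf{w_0})_x)$, and the local (Ext-quiver) description of Theorem \ref{thm:local_quiver} for the stratum-independence in (iv). This is essentially the same approach as the source; the two points you leave schematic --- the $i=j$ commutator normalization, which you flag, and, in (iii), the fact that the class of the minimal nonempty fiber actually generates the whole module (once generation and integrability are known, irreducibility is automatic from \cite[ch.~10]{kac_infinite-dimensional_1990}, so the contravariant form is not the crux) --- are exactly the substantive geometric inputs supplied in Nakajima's papers.
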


We denote the component of the Hecke correspondence between $\quivb{\theta_+}{v}{w_0}$ and $\quiv{\theta_+}{\mathbf{v} + \rho_i}{\mathbf{w_0}} $ by $\mf{P}_i(\mb{v})$, and more generally the one between $\quivb{\theta_+}{v}{w}$ and $\quiv{\theta_+}{\mathbf{v} + \rho_i}{\mathbf{w}} $ by $\mf{P}_i(\mb{v}; \mb{w})$.

\paragraph{Equivalent descriptions of Hecke correspondence}

We collect a few description of the Hecke correspondences.

First calculate how it acts on fibers $H_{top}(\mf{M}(\mathbf{w_0})_x)$ for $x \neq 0$ under the equivalence of Theorem \ref{thm:local_quiver}. If the variety is positive dimensional, the Ext quiver is the original quiver and some number of copies of the Jordan quiver (i.e. with one vertex and on loop). The map $\pi_{\theta_+, 0}$ near $x$ corresponds to the product of $\pi_{\theta_+, 0}$ on the first factor and the Hilbert-Chow map on the Jordan quiver factors. For the Jordan quiver the quiver varieties are $\Hilb^n(\C^2)$ for non-zero stability parameter and $Sym^n(\C^2)$ for zero stability parameter. Let $x$ lie in the stratum of \eqref{eq:pts_quiv_stratification} corresponding to the partition $\lambda$.  Thus we have
\begin{equation}\label{eq:factor_fiber}
   \mf{M}(\mb{v}, \mathbf{w_0})_x \simeq \mf{L}(\mb{v} - |\lambda| \delta, \mb{w_0}) \times \prod_{i} \Hilb_0^{\lambda_i}(\C^2)
\end{equation}
where $\Hilb_0^{\lambda_i}(\C^2)$ is the punctural Hilbert scheme (i.e. the fiber over $k\cdot[0]$ of the Hilbert-Chow map), which is irreducible.

We need the following description, which is used in \cite{nakajima2009quiver}.
 Pick a stability condition $\theta_{\alpha_i}$ which is on a maximal dimensional face of the positive cone $\C_+$ and also on the wall $\alpha_i^\perp$, and generic in this face. Let $a_i$ be a simple root of the affine ADE root system and let $S_i$ be simple quiver representation which is $1$ dimensional and supported in degree $i$, on the $i$th vertex. Form the ind-variety
 \[\quiv{\theta_{\alpha_i}}{\mb{v} + \infty \rho_i}{ \mb{w}} = \bigcup_{k = 0}^\infty \quiv{\theta_{\alpha_i}}{\mb{v} + k \rho_i}{ \mb{w}}\]
 where we include $\quiv{\theta_{\alpha_i}}{\mb{v} + k\rho_i }{ \mb{w}}\hookrightarrow \quiv{\theta_{\alpha_i}}{\mb{v} + (k+n) \rho_i}{ \mb{w}}$ for $n > 0$ by sending the representation $V$ to $V \oplus S_i^{\oplus n}$. For convenience in stating the following, we consider quiver varieties for non-generic stability parameter as parametrizing polystable quiver representations. 
\begin{prop}[Lemma 5.12 \cite{nakajima1998quiver}]
  \label{prop:hecke_from_1_wall}In this situation we have that the Hecke correspondence
  $\mf{P}_i(\mb{v}; \mb{w})$ is an irreducible component of the fiber product
   \[\quiv{\theta_+}{\mb{v}}{ \mb{w}}\times_{\quiv{\theta_{\alpha_i}}{\mb{v} + \infty \rho_i}{ \mb{w}}}\quiv{\theta_+}{\mb{v} +  \rho_i}{ \mb{w}}\]
where the fiber product comes from the maps $\pi_{\theta_+, \theta_{\alpha_i}}$ from  \eqref{eq:semisimplification} composed with the inclusion into $\quiv{\theta_{\alpha_i}}{\mb{v} + \infty \rho_i}{ \mb{w}}$. More specifically, it is the unique irreducible component such that 
\begin{itemize}
\item For a point $x\in \quiv{\theta_+}{\mb{v}}{ \mb{w}}$ such that $\pi_{\theta_+, \theta_{\alpha_i}}(x)$ contains a direct summand of exactly $r$ copies of $S_i$, the fiber of $\mf{P}_i(\mb v, \mb w)$ over $x$ under the first projection is a projective space of dimension $\rho_i^T (\mb w - C\mb v) + r -1$. 
\item For a point $y\in \quiv{\theta_+}{\mb{v} + \rho_i }{ \mb{w}}$ such that $\pi_{\theta_+, \theta_{\alpha_i}}(y)$ contains a direct summand of exactly $r$ copies of $S_i$, the fiber of $\mf{P}_i(\mb v, \mb w)$ over $y$ under the second projection is a projective space of dimension $r -1$. 
\end{itemize}
\end{prop}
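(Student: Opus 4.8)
The plan is to identify the fiber product set-theoretically, embed the Hecke correspondence $\mf{P}_i(\mb v; \mb w)$ into it, and then match the fibers of the two projections using the local analytic model of Theorem~\ref{thm:local_quiver}. First I would unwind the fiber product. Using \eqref{eq:cb_trick_rep_equiv}, a point of $\quiv{\theta_+}{\mb v}{\mb w}$ is a $\theta_+$-stable representation, while a point of $\mf{M}_{\theta_{\alpha_i}}$ for the non-generic parameter $\theta_{\alpha_i}$ is a $\theta_{\alpha_i}$-polystable one. A pair $(E, E')$ with $E \in \quiv{\theta_+}{\mb v}{\mb w}$ and $E' \in \quiv{\theta_+}{\mb v + \rho_i}{\mb w}$ lies in the fiber product exactly when $E$ and $E'$ have the same image in $\quiv{\theta_{\alpha_i}}{\mb v + \infty\rho_i}{\mb w}$; since the transition maps of that ind-variety are $V \mapsto V \oplus S_i^{\oplus n}$ and $S_i$ is the only representation of dimension vector $\rho_i$ (it is one-dimensional, supported at vertex $i$, with all arrows zero), this condition reads
\[\gr_{\theta_{\alpha_i}}(E') \simeq \gr_{\theta_{\alpha_i}}(E) \oplus S_i,\]
so the fiber product is precisely the incidence locus of such pairs.

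Next I would check that $\mf{P}_i(\mb v; \mb w)$ maps into this locus. By Theorem~\ref{thm:partial_resolutions}(iii), since $\theta_{\alpha_i}$ lies in the closure of the chamber of $\theta_+$, every $\theta_+$-stable representation is $\theta_{\alpha_i}$-semistable; $S_i$ is $\theta_{\alpha_i}$-semistable of the same (zero) value of the King functional because $(\theta_{\alpha_i})_i = 0$. Hence for $(E, E') \in \mf{P}_i(\mb v; \mb w)$, realized by a short exact sequence $0 \to E \to E' \to S_i \to 0$ as in \eqref{eq:hecke}, the representation $E'$ is an extension of two $\theta_{\alpha_i}$-semistable objects of equal value, so it is itself $\theta_{\alpha_i}$-semistable and $\gr_{\theta_{\alpha_i}}(E') = \gr_{\theta_{\alpha_i}}(E) \oplus S_i$. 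Thus $(E, E')$ lies in the fiber product, and since $\mf{P}_i(\mb v; \mb w)$ is irreducible (Theorem~\ref{thm:uea_action_nakajima}(ii)) it is contained in a unique irreducible component of it.

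To promote this inclusion to an equality and to read off the fiber dimensions, I would stratify $\mf{M}_{\theta_{\alpha_i}}$ by the number $r$ of $S_i$-summands of $\gr_{\theta_{\alpha_i}}$ and apply Theorem~\ref{thm:local_quiver} stratum by stratum. At a polystable point with $r$ copies of $S_i$ split off, the Ext-quiver of Definition~\ref{defn:ext_quiver_quiv} acquires one extra vertex carrying $p(\alpha_i) = 0$ loops (since $\alpha_i$ is a real root) and a framing built from $\mb w - C\mb v$, so near such a point $\pi_{\theta_+, \theta_{\alpha_i}}$ is, up to the local analytic isomorphism, a product of a fixed symplectic resolution with a one-vertex no-loop model, of the kind appearing in \eqref{eq:pts_quiv_stratification} and \eqref{eq:factor_fiber}. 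The local fiber count then shows that the fiber of the fiber product over a point $E$ for which $\pi_{\theta_+, \theta_{\alpha_i}}(E)$ has exactly $r$ copies of $S_i$ as a direct summand is an open part of $\PP(\Ext^1(S_i, E))$ of dimension $\rho_i^T(\mb w - C\mb v) + r - 1$ — the term $\rho_i^T(\mb w - C\mb v)$ being the generic Hecke fiber and the extra $r$ coming from the copies of $S_i$ already present in $\gr_{\theta_{\alpha_i}}(E)$ — while the fiber over a point $E'$ for which $\pi_{\theta_+, \theta_{\alpha_i}}(E')$ has exactly $r$ copies of $S_i$ parametrizes the choices of an $S_i$-quotient, namely a $\PP^{r-1}$. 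These coincide with the fiber dimensions of $\mf{P}_i(\mb v; \mb w)$ itself, so $\mf{P}_i(\mb v; \mb w)$, being closed, irreducible and of full dimension inside the component containing it, equals that component; the stated fiber-dimension conditions then single it out among all components.

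The hardest part will be this last step. Theorem~\ref{thm:local_quiver} is only analytically local, so the local fiber computations must be upgraded to a global irreducibility-and-dimension comparison, and one must control $\Ext^1(S_i, E)$ together with the $\theta_+$-stability of the extension $E'$ — in particular, how a newly adjoined copy of $S_i$ interacts with those already present in $\gr_{\theta_{\alpha_i}}(E)$. Establishing that $\mf{P}_i(\mb v; \mb w)$ is \emph{dense} in the component, rather than merely contained in it, amounts to checking that a generic pair $(E, E')$ there — where $r = 0$ on the $E$-side — genuinely arises from an honest inclusion $E \subset E'$, which is the equal-value argument above applied in reverse.
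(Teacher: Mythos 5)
The paper offers no argument for this statement beyond the citation to Nakajima's Lemma 5.12, so your proposal has to stand on its own. Its first two steps are correct and are the natural opening moves: the fiber product is exactly the locus of pairs $(E,E')$ with $\gr_{\theta_{\alpha_i}}(E')\simeq \gr_{\theta_{\alpha_i}}(E)\oplus S_i$ (Krull--Schmidt cancellation in the ind-variety), and the equal-value extension argument correctly shows $\mf{P}_i(\mb v;\mb w)$ is contained in it.

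The genuine gap is in your third step: you compute "the fiber of the fiber product" over $E$ as (an open part of) $\PP(\Ext^1(S_i,E))$ and over $E'$ as the $\PP^{r-1}$ of $S_i$-quotients. That identification is false on every stratum with $r\ge 1$: a point of the fiber product only requires equal $\theta_{\alpha_i}$-associated graded, not an inclusion. Concretely, writing $E_0$ for the big Jordan--H\"older factor and $d=\rho_i^T(\mb w - C\mb v)$, an object $E$ with exactly $r$ copies of $S_i$ corresponds (since $\Hom(S_i,E)=0$ by $\theta_+$-stability and $\Ext^1(S_i,S_i)=0$) to an $r$-plane $U\subset \Ext^1(S_i,E_0)$, and the fiber of the fiber product over $E$ is the locus of all stable $(r+1)$-planes $U'$, of dimension roughly $(r+1)(d+r-1)$, whereas the Hecke fiber is only the $\PP^{d+r-1}$ of those $U'\supset U$. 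This discrepancy is precisely why the fiber product has several irreducible components and why the proposition needs the uniqueness clause; so your dimension count establishes neither that $\mf{P}_i$ is full-dimensional in its component nor that the fiber conditions single it out. What the argument actually requires, and what you only flag at the end, is: (a) over the stratum $r=0$ the two fibers coincide, because a subobject $S_i\subset E'$ would be a nonzero subrepresentation in $\ker j$, contradicting $\theta_+$-stability, so any $E'$ with $\gr(E')=E\oplus S_i$ contains $E$; (b) the part of $\mf{P}_i$ over that stratum is dense in $\mf{P}_i$ and $\mf{P}_i$ is irreducible --- neither is supplied by Theorem \ref{thm:uea_action_nakajima}(ii), which only gives smoothness and Lagrangianness of components, and this route breaks down entirely when $\rho_i^T(\mb w - C\mb v)\le 0$, in which case $\mf{P}_i$ lies over the deeper strata only; and (c) the stated fibers of $\mf{P}_i$ itself on all strata require showing $\dim\Hom(E,S_i)=r$ for $\theta_+$-stable $E$ (socle argument, or the local model $T^*Gr(r,d+2r)$ of Theorem \ref{thm:local_quiver}), while uniqueness requires comparing these fiber dimensions with the codimensions of the strata for the other components. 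In short, the stratum-by-stratum local computation you gesture at must be carried out for $\mf{P}_i$ and for the competing components; as written, the concluding claims are asserted rather than proved.
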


It will also be useful to know how the Hecke correspondence restricts to fibers of the map $\pi_{\theta_+,0}$.

\begin{prop}\label{prop:action_on_x_fiber}
  The restriction of $\mf{P}_i(\mb{v})$ to $\mf{M}(\mb{v}, \mathbf{w_0})_x\times \mf{M}(\mb{v}+ \rho_i, \mathbf{w_0})_x$ under the decomposition \eqref{eq:factor_fiber} is
  \[ \{ (E, \xi, E', \xi') \in \mf{M}(\mb{v}, \mathbf{w_0})_x\times \mf{M}(\mb{v}+ \rho_i, \mathbf{w_0})_x \mid (E, E') \in \mf{P}_i(\mb{v} - |\lambda| \delta), \zeta = \zeta' \} \]
  or in other words
  \[
  \mf{P}_i(\mb{v} - |\lambda| \delta)\times \Delta \subset \mf{L}(\mb{v} - |\lambda| \delta, \mb{w_0})\times \mf{L}(\mb{v} - |\lambda| \delta, \mb{w_0}) \times\prod_{i} \Hilb_0^{\lambda_i}(\C^2)\times \prod_{i} \Hilb_0^{\lambda_i}(\C^2).
  \]
\end{prop}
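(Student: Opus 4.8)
The plan is to reduce everything to the étale-local model of the contraction $\pi_{\theta_+,0}$ at the point $x$ furnished by Theorem \ref{thm:local_quiver}, and then to exploit that a Hecke modification by $S_i$ is "supported at the origin": it only touches the summand of a polystable representative which lies at the singular point of $\C^2/\Gamma$, so it acts as the identity on the punctual Hilbert scheme factors coming from the smooth points of the cycle $x$.

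First I would recall why the Ext-quiver at $x$ has the shape underlying \eqref{eq:factor_fiber}. Since $x$ lies in the stratum $S_\lambda(\C^2_\circ/\Gamma)\times\{k[0]\}$ of \eqref{eq:pts_quiv_stratification}, a $0$-polystable representative of $x$ decomposes, as in \eqref{eq:statum_decomp}, as $V^\infty\oplus\big(\bigoplus_{a}S_a^{\oplus n_a}\big)\oplus\bigoplus_{j=1}^{\ell(\lambda)}(V^j)^{\oplus\lambda_j}$, where the $S_a$ are the simple $\Pi_Q$-modules supported at the singular point (carrying the $k[0]$ part, with $(\mb v-|\lambda|\delta)_a=n_a$) and each $V^j$ is the simple $\delta$-dimensional module located at one of the distinct smooth points $\bar x_j$. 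Computing $p(\delta)=1$, $p(\alpha_a)=0$, $p(\beta^\infty)=0$ and the relevant pairings $-(\beta^\bullet,\beta^\bullet)$, and $m_j=-(\beta^\infty,\delta)=1$, one sees that the Ext-quiver of $x$ is the disjoint union of the affine ADE quiver $Q$ (with framing $\mathbf{w_0}$ and dimension vector $\mb v-|\lambda|\delta$) with $\ell(\lambda)$ copies of the Jordan quiver (each with framing $1$ and dimension $\lambda_j$), and that $\ell=p(\beta^\infty)=0$. Theorem \ref{thm:local_quiver} then identifies $\pi_{\theta_+,0}$ near $x$ with $\pi_{\rho,0}$ near $0$ for $\mf{M}_\bullet(\mb v-|\lambda|\delta,\mathbf{w_0})\times\prod_j\Hilb^{\lambda_j}(\C^2)$ (using $\Hilb^{\lambda_j}(\C^2)=\mf{M}_{\theta_+}(\lambda_j,1)$ for the Jordan quiver), with $\rho=\res_{G_{\mb n}}^{G_{\mb v}}\theta_+$ generic; restricting to the fiber over $x$ recovers \eqref{eq:factor_fiber}, and the same applies to $\mb v+\rho_i$ since $(\rho_i)_0=0$, so adding $\rho_i$ only enlarges the origin block and leaves the partition $\lambda$ and the smooth-point data unchanged.

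Next I would push the Hecke correspondence through this factorization. The quotient $S_i=E'/E$ in \eqref{eq:hecke} is a simple $\Pi_Q$-module supported at the singular point $\bar 0\in\C^2/\Gamma$; hence for every $(E,E')\in\mf{P}_i(\mb v)$ the images $\pi_{\theta_+,0}(E)$ and $\pi_{\theta_+,0}(E')$ in $\mf{M}_0(\infty,\mathbf{w_0})$ agree away from $\bar 0$, so $\mf{P}_i(\mb v)$ lies over $U\times U'$ for compatible neighbourhoods $U,U'$ of $x$ in the two affine quotients (compatible meaning they share the same "smooth-point directions"). Since the identifications of Theorem \ref{thm:local_quiver} are compatible with the direct-sum decomposition of polystable representatives, an inclusion with $S_i$-quotient affects only the summand assembled from the origin simples, i.e. only the $\mf{M}_\bullet(\mb v-|\lambda|\delta,\mathbf{w_0})$ factor, and is the identity on each Jordan-quiver factor. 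Therefore the restriction of $\mf{P}_i(\mb v)$ to $\pi_{\theta_+,0}^{-1}(U)\times\pi_{\theta_+,0}^{-1}(U')$ equals $\mf{P}_i(\mb v-|\lambda|\delta)\times\Delta$, with $\Delta$ the diagonal of $\prod_j\Hilb^{\lambda_j}(\C^2)\times\prod_j\Hilb^{\lambda_j}(\C^2)$; intersecting with $\mf{M}(\mb v,\mathbf{w_0})_x\times\mf{M}(\mb v+\rho_i,\mathbf{w_0})_x$ and restricting the first factor to its fiber $\mf{L}(\mb v-|\lambda|\delta,\mathbf{w_0})$ over the deepest point and the others to the punctual Hilbert schemes $\Hilb_0^{\lambda_j}(\C^2)$ gives exactly the asserted description; the two formulations in the statement coincide since the condition on a quadruple is precisely that $(E,E')\in\mf{P}_i(\mb v-|\lambda|\delta)$ and $\xi=\xi'$.

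The hard part will be the compatibility claim in the third paragraph: that the étale-local factorization of Theorem \ref{thm:local_quiver} intertwines the globally defined $\mf{P}_i(\mb v)$ with $(\text{Hecke on the }Q\text{-block})\times(\text{diagonal on the Jordan blocks})$. Making this rigorous requires re-entering the construction of the local model — the Luna-type slice around the polystable representative and the deformation of $(B,i,j)$ — and checking that a subrepresentation with $S_i$-quotient, being annihilated by a power of $\mf{m}_{\bar 0}$, is carried to a pair differing only in the $Q$-block, i.e. that $\mf{P}_i(\mb v)$ is cut out in the slice by the same equations as the product correspondence. This is essentially what is used in \cite{nakajima2009quiver}; alternatively one can invoke Proposition \ref{prop:hecke_from_1_wall}, since both the contraction $\pi_{\theta_+,\theta_{\alpha_i}}$ and the addition of copies of $S_i$ take place entirely in the origin block of the local model, so the characterization of $\mf{P}_i(\mb v)$ by its fiber dimensions over each projection transports verbatim to the $\mf{M}_\bullet(\mb v-|\lambda|\delta,\mathbf{w_0})$ factor while the Jordan factors are untouched.
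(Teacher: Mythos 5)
Your proposal is correct, and the key geometric input is the same as the paper's: the Hecke quotient $E'/E$ is a copy of the vertex simple $S_i$, which is supported at the origin, so the modification leaves the free-orbit (punctual Hilbert scheme) part of a point untouched. Where you differ is in how this is turned into the stated identification. The paper's proof is a two-line modular argument: viewing points of $\mf{M}_{\theta_+}(\mb v,\mb{w_0})$ as $\Gamma$-invariant subschemes of $\C^2$, two nested subschemes whose difference has length less than a free orbit must have cycles agreeing away from $0$, so the correspondence is the diagonal on the $\Hilb_0^{\lambda_j}(\C^2)$ factors and the computation collapses to the central-fiber case. You instead re-derive the factorization \eqref{eq:factor_fiber} from the Ext-quiver local model of Theorem \ref{thm:local_quiver} (your bookkeeping there is right: $\beta^\infty=(1,\mb 0)$, $\ell=p(\beta^\infty)=0$, $n_a=(\mb v-|\lambda|\delta)_a$, Jordan framings $1$) and then pin down the intertwining of the global $\mf{P}_i(\mb v)$ with $(\text{Hecke})\times\Delta$ via the fiber-dimension characterization of Proposition \ref{prop:hecke_from_1_wall}, noting that both the contraction across the $\alpha_i^\perp$ wall and the addition of copies of $S_i$ happen in the origin block (and that $\rho_i^T(\mb w-C\mb v)$ is unchanged by subtracting $|\lambda|\delta$, so the dimension counts transport). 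That route is heavier but has the virtue of making explicit the compatibility step the paper treats as immediate from the modular interpretation; it is a legitimate alternative, essentially the argument of \cite{nakajima2009quiver}, and buys a proof that does not lean on the sheaf-theoretic description of the $\theta_+$ chamber, at the cost of invoking the local slice theorem where the paper needs none.
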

\begin{proof}
  Suppose $Z, Z'$ are $\Gamma$-fixed zero dimensional subschemes of $\C^2$ with $\on{len}(Z) +1 = \on{lem}(Z')$. Then the cycles must differ only at the origin, by exactly 1. Therefore the Hecke correspondence is trivial on the punctural Hilbert scheme factors, and the computation is reduced to the central fiber case, which is exactly what needed to be shown. Note that in the second line of the proposition we have denoted the Hecke correspondence and its restriction to the product of the central fibers by the same letter.
\end{proof}

\subsection{Compatibility of Lie algebra actions}
Fix an affine quiver $Q$ corresponding to a finite group $\Gamma$ and smooth surface $X_\Gamma$.
We know the Frenkel-Kac construction described in Section \ref{sec:VOA} gives a representation of the corresponding affine lie algebra $\widehat{\mf{g}}$ on the Fock space $V_{H^2(X_\Gamma)}$ from \eqref{eq:VOA} modeled on the middle-dimensional cohomology of the surface $X_\Gamma$. This is an irreducible highest weight representation. It also gives an irreducible highest weight representation of a larger algebra
\[\widehat{\mf{g}}_{H^*} := \Heis(H^4(X_\Gamma)) \oplus \hat{\mf{g}}\]
 on the analogue of \eqref{eq:rk_1_torsion_free_V} for our present context, namely
 \[V := \bigoplus_{c_1, \mathrm{ch}_2} H^*(M(c_1, ch_2))\]
 where $M(c_1, \mathrm{ch}_2)$ is the moduli space of rank 1 torsion free sheaves on $X_\Gamma$ with these characteristic classes.  When $\Gamma = \Z/n\Z$ these algebras are $\hat{\mf{sl}}_{n-1}$ and $\hat{\mf{gl}}_{n-1}$. The algebra $\widehat{\mf{g}}_{H^*}$ contains the entire Heisenberg algebra $\mc{H}eis(H^*(X_\Gamma))$ and for fixed $c_1$ each $\oplus_{ch_2}  H^*(M(c_1, ch_2))$ is the usual level 1 Fock module for this algebra. We have identifications
 \begin{align*}
   V_{H^2(X_\Gamma)} &= \bigoplus_{\mathbf{v}}H^{mid}(\mf{M}_{\theta_{Hilb}(\mathbf{v})}(\mathbf{v}, \mathbf{w_0}))\\
   V &= \bigoplus_{\mathbf{v}}H^*(\mf{M}_{\theta_{Hilb}(\mathbf{v})}(\mathbf{v}, \mathbf{w_0}))
 \end{align*}
 between the Fock spaces and cohomology groups of spaces of rank 1 torsion free sheaves, which are Nakajima quiver varieties for a specific choice of stability parameter.

The action of Theorem \ref{thm:uea_action_nakajima} of the universal enveloping algebra of $\widehat{\mf{g}}$ on cohomologies is for quiver varieties for a different stability parameter, namely on the space
\[ \bigoplus_{\mathbf{v}}H^{*}(\mf{M}_{\theta_+}(\mathbf{v}, \mathbf{w_0}))\]
preserving the subspace
\[ \bigoplus_{\mathbf{v}}H^{mid}(\mf{M}_{\theta_+}(\mathbf{v}, \mathbf{w_0}))\] making the latter an irreducible highest weight representation. Here $\theta_+$ lies in the positive chamber for the corresponding affine Weyl group, which is separated by walls from the Hilbert scheme chamber for each $\mb{v}$, and the number of walls tends to infinity.  However by the flop relating the quiver varieties in the different chambers, there is a $\C^*$-equivariant diffeomorphism between the corresponding quiver varieties so in particular the corresponding vector spaces are isomorphic. It is a natural question to ask how these two lie algebra actions relate to one another under this isomorphism. By an detailed analysis of the fixed points of a torus action in the $A_n$ case, they were shown to coincide in this case.
\begin{thm}[Nagao \cite{nagao2009quiver}]\label{thm:nagao}
Consider the $\C^*$ equivariant diffeomorphism corresponding to a flop $F$ between quiver varieties corresponding to the $\widetilde{A}_{n}$ with stability parameters $\theta_+$ and $\theta_{Hilb}(\mathbf{v})$. Denoting the isomorphism on cohomology by the same letter $F$ we have that
\[ \bigoplus_{\mathbf{v}}H^{mid}(\mf{M}_{\theta_+}(\mathbf{v}, \mathbf{w_0})) \xrightarrow{F} H^{mid}(\mf{M}_{\theta_{Hilb}(\mathbf{v})}(\mathbf{v}, \mathbf{w_0}))\]
intertwines the Nakajima action with the Frenkel-Kac action for a specific choice of cocycle determining the latter.
\end{thm}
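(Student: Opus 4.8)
The plan is to prove this by equivariant localization: introduce the torus acting on the surface, observe that the flop diffeomorphism $F$ is equivariant for it, and reduce the equality of the two actions to an identity of explicit combinatorial operators in the basis of torus--fixed--point classes --- the two sides being the Hecke operators of Theorem~\ref{thm:uea_action_nakajima} and the Fourier modes of the vertex operators $X(e^{\pm\alpha_i},z)$. Concretely, let $T=(\C^*)^2$ act on $\C^2$ commuting with $\Gamma = \Z/(n+1)\Z$; this induces a $T$-action on $X_\Gamma$ and, since the framing vector $\mathbf{w_0}$ is the trivial representation, on every quiver variety $\mf{M}_\theta(\mathbf{v},\mathbf{w_0})$, compatibly with the maps to the affine quotient and with the Hecke correspondences $\mf{P}_i(\mathbf{v})$. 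Because all these quiver varieties are, up to $T$ and the scaling $\C^*$, one hyperk\"ahler manifold carried by different complex structures, the diffeomorphism of Theorem~\ref{thm:partial_resolutions}(i) can be taken $T$-equivariant, so $F$ induces a bijection on $T$-fixed loci and an isomorphism matching fixed--point classes up to invertible scalars. Each $\mf{M}_\theta(\mathbf{v},\mathbf{w_0})$ has isolated $T$-fixed points and a Bia\l ynicki--Birula paving, hence $H^{mid}$ has a basis indexed by these fixed points. On the $\theta_{Hilb}(\mathbf{v})$ side $\mf{M}_{\theta_{Hilb}(\mathbf{v})}(\mathbf{v},\mathbf{w_0})$ is a Hilbert scheme of points on $X_\Gamma$ whose fixed points are the $\Gamma$-invariant monomial ideals, described by the core--and--quotient combinatorics of partitions (tuples of Young diagrams with residue content $\mathbf{v}$, via the McKay correspondence for $\Gamma$); on the $\theta_+$ side the fixed points of $\mf{M}_{\theta_+}(\mathbf{v},\mathbf{w_0})$ carry the same parametrization, read off from the $T$-weights of the tautological bundles. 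The first real step is then to show that the bijection on fixed points coming from $F$ is exactly the canonical identification of these two labelings; since $F$ preserves $T$-characters at fixed points this is forced, but to verify it concretely one tracks fixed points (and BB cells) through the finite chain of walls separating the two chambers for the given $\mathbf{v}$, using the local model of Theorem~\ref{thm:local_quiver} at each wall, where the flop is locally a Jordan--quiver (punctual Hilbert scheme) flop or the reflection flop of a single simple root.

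With that identification in hand, I would compute each action in the fixed--point basis. For the geometric action of Theorem~\ref{thm:uea_action_nakajima}, the Chevalley generators $e_i,f_i$ (for all affine simple roots $\alpha_i$, $i=0,\dots,n$) act by convolution with $\mf{P}_i(\mathbf{v})$; applying the Atiyah--Bott localization formula, and using Proposition~\ref{prop:hecke_from_1_wall} to resolve the correspondence as a tower of projective bundles, their matrix coefficients evaluate to the standard ``add / remove one $i$-coloured box'' operators on coloured partitions with explicit rational weights --- this is Nakajima's identification of the geometric action on fixed points with the abstract basic representation of $\hat{\mf g}$. For the Frenkel--Kac side, the homogeneous Heisenberg operators inside $\hat{\mf g}$ are Nakajima's and act by the classical bosonic formulas in the fixed--point basis, while $e_i,f_i$ are the Fourier modes $x_m(\pm\alpha_i)$ of $X(e^{\pm\alpha_i},z)$; unwinding these through the boson--fermion correspondence and the core--quotient dictionary gives again ``add / remove one $i$-coloured box'', with weights agreeing with the geometric ones up to an overall sign $\epsilon(\alpha_i,-)$ coming from the twisted group algebra $\Q[L]_\epsilon$ in \eqref{eq:VOA}. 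Comparing under the fixed--point identification, $F\,e_i^{\,\mathrm{geom}}F^{-1}$ equals the $i$-th Frenkel--Kac generator up to the sign $\epsilon(\alpha_i,-)$, and similarly for $f_i$. Since $F$ already intertwines the Cartan --- the weight grading being the dimension--vector grading, which $F$ preserves on each summand, cf.\ the weight computation in Section~\ref{sec:VOA} --- and the $e_i,f_i$ generate $\hat{\mf g}$, the relations satisfied by the conjugated operators pin down the cohomology class of $\epsilon$ uniquely, and adjusting within that class by a diagonal change of basis on $\Q[L]$ turns the equalities up to sign into genuine operator equalities; for that specific $\epsilon$, $F$ is an isomorphism of $\hat{\mf g}$-modules, as claimed.

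The main obstacle is the second half of the first step together with the sign bookkeeping of the second: the flop between the $\theta_+$ chamber and the $\theta_{Hilb}(\mathbf{v})$ chamber crosses a number of walls that grows with $\mathbf{v}$, so controlling both the fixed--point relabeling and the accumulation of signs in the localization formulas requires an induction on $\mathbf{v}$ that crosses one wall at a time, analyses the local Jordan--quiver flop via Theorem~\ref{thm:local_quiver}, and checks compatibility of the Euler--class contributions across the wall; this wall--by--wall fixed--point enumeration in the $\widetilde{A}_n$ case is the technical core of \cite{nagao2009quiver}. A shorter but less self--contained route, worth recording, is that both $\bigoplus_{\mathbf v}H^{mid}(\mf{M}_{\theta_+}(\mathbf{v},\mathbf{w_0}))$ and $\bigoplus_{\mathbf v}H^{mid}(\mf{M}_{\theta_{Hilb}(\mathbf{v})}(\mathbf{v},\mathbf{w_0}))$ are irreducible integrable highest--weight $\hat{\mf g}$-modules of highest weight $\Lambda_0$ and $F$ intertwines the Cartan, so by Schur's lemma it suffices to check compatibility with the generators $e_0,\dots,e_n$ on the lowest non-trivial weight spaces, reducing the fixed--point computation to a handful of small quiver varieties.
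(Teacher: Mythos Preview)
The paper does not give its own proof of this theorem: it is stated as a result of Nagao with the one-line gloss ``By a detailed analysis of the fixed points of a torus action in the $A_n$ case, they were shown to coincide in this case,'' followed by the citation \cite{nagao2009quiver}. Your proposal is a faithful and reasonably detailed reconstruction of exactly that approach --- equivariant localization with respect to the torus $T=(\C^*)^2$ acting on $\C^2$ compatibly with $\Gamma=\Z/(n+1)\Z$, identification of the $T$-fixed points on both sides via coloured-partition combinatorics, computation of the Hecke operators and the vertex-operator Fourier modes in the fixed-point basis, and matching them up to a cocycle --- so there is nothing to compare beyond noting that you have correctly identified the method the paper attributes to Nagao.

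One small comment: your ``shorter route'' at the end (both sides are irreducible highest-weight modules of weight $\Lambda_0$, $F$ intertwines the Cartan, hence by Schur it suffices to check a generator on one weight space) is a genuine simplification over the full wall-by-wall induction, and is in the spirit of how the present paper handles the extension to all cohomology degrees in Proposition~\ref{prop:flop_intertwines}. It does, however, still require knowing that $F$ sends the highest-weight vector to a scalar multiple of the highest-weight vector, which is immediate here since both are the class of a point in the zero-dimensional quiver variety for $\mathbf v = 0$.
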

Since we are interested in all cohomology groups we need
\begin{prop}\label{prop:flop_intertwines} Under the same flop
  \begin{equation}\label{eq:flop_hilb_an} \bigoplus_{\mathbf{v}}H^{*}(\mf{M}_{\theta_+}(\mathbf{v}, \mathbf{w_0})) \xrightarrow{F} H^{*}(\mf{M}_{\theta_{Hilb}(\mathbf{v})}(\mathbf{v}, \mathbf{w_0}))\end{equation}
  the $\hat{\mf{sl}}_{n-1}$ actions are also intertwined.
\end{prop}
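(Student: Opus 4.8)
\emph{Proof plan.} The plan is to bootstrap from Nagao's identification on middle cohomology (Theorem~\ref{thm:nagao}), enlarging $\hat{\mf{sl}}_{n-1}$ to the algebra $\widehat{\mf g}_{H^*}=\Heis(H^4(X_\Gamma))\oplus\hat{\mf g}$ (here $\hat{\mf g}$ denotes the affine Lie algebra of $\Gamma$, i.e.\ $\hat{\mf{sl}}_{n-1}$) and using that all of $H^*$ is generated from $H^{mid}$ by the extra Heisenberg directions. Recall that both graded vector spaces
\[
\bigoplus_{\mb v}H^*(\mf M_{\theta_+}(\mb v,\mb{w_0}))\qquad\text{and}\qquad\bigoplus_{\mb v}H^*(\mf M_{\theta_{\mathrm{Hilb}}(\mb v)}(\mb v,\mb{w_0}))=V
\]
carry actions of $\widehat{\mf g}_{H^*}$: on the right the Frenkel--Kac action together with the Nakajima--Grojnowski Heisenberg on Hilbert schemes of $X_\Gamma$, on the left the convolution action of Theorem~\ref{thm:uea_action_nakajima} together with its Heisenberg enhancement on moduli of framed sheaves on $\PP^2$; their restrictions to $\hat{\mf g}\subset\widehat{\mf g}_{H^*}$ are the two $\hat{\mf{sl}}_{n-1}$-actions of the statement. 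Since the class of a point is orthogonal to $\NS(X_\Gamma)$, the summand $\Heis(H^4(X_\Gamma))$ commutes with $\hat{\mf g}$, and in each case the $\hat{\mf g}$-submodule generated by the vacuum is exactly $H^{mid}$, the basic representation (Theorem~\ref{thm:uea_action_nakajima}(iii) and the identifications recorded before the proposition). As $\Heis(H^2(X_\Gamma))\subset\hat{\mf g}$ preserves $H^{mid}$ and the highest weight vector of each Fock module $\bigoplus_{ch_2}H^*(M(c_1,ch_2))$ lies in $H^{mid}$, it follows on either side that $H^*=U\bigl(\Heis(H^4(X_\Gamma))\bigr)\cdot H^{mid}$.

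Next I would show that $F$ intertwines the two $\Heis(H^4(X_\Gamma))$-actions on all of $H^*$. For each $\mb v$ the diffeomorphism $F$ is compatible with the projections of $\mf M_{\theta_+}(\mb v,\mb{w_0})$ and $\mf M_{\theta_{\mathrm{Hilb}}(\mb v)}(\mb v,\mb{w_0})$ to their common affine quotient inside $\mf M_0(\infty,\mb{w_0})=\bigcup_n\mathrm{Sym}^n(\C^2/\Gamma)$, and restricts to a biregular isomorphism over the open stratum where neither symplectic resolution does anything (Theorem~\ref{thm:partial_resolutions}(i)). The Heisenberg creation operators in the $H^4$-direction are, on both sides, convolution with cycles whose general point records a length-$m$ sheaf occurring over a chosen \emph{smooth} point of $\C^2/\Gamma$; such a cycle is the closure of its restriction to that open stratum and is therefore carried to the analogous cycle by $F\times F$, so the two actions agree.

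Finally, combine the inputs. The Chevalley generators $e_i,f_i$ of $\hat{\mf g}$ commute with $\Heis(H^4(X_\Gamma))$ and preserve $H^{mid}$, while $h_i$, $c$ and $d=-L(0)$ act by the same scalars on corresponding weight and cohomological-degree pieces on the two sides. Writing a class as $v=P\cdot w$ with $P\in U(\Heis(H^4(X_\Gamma)))$ and $w\in H^{mid}$, one has on the $\mf M_{\theta_+}$ side $e_i v=P(e_i w)$ with $e_i w\in H^{mid}$; applying $F$, moving it past $P$ by the previous paragraph and past $e_i$ on $H^{mid}$ by Theorem~\ref{thm:nagao}, and commuting $P$ back on the $\mf M_{\theta_{\mathrm{Hilb}}}$ side, gives $F(e_i v)=e_i\,F(v)$; the same works for $f_i$, and the cases of $h_i,c,d$ are immediate. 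As these generate $\hat{\mf g}$, i.e.\ $\hat{\mf{sl}}_{n-1}$, the actions are intertwined on all of $H^*$.

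The main obstacle is the Heisenberg compatibility of $F$ in the second step: one must check that the relevant Nakajima--Grojnowski correspondence is genuinely represented by a cycle supported over the stratum untouched by both wall-crossings, so that $\C^*$-equivariance of $F$ and compatibility with the maps to $\mf M_0(\infty,\mb{w_0})$ suffice, and that replacing $\Heis(H^*(X_\Gamma))$ by its summand $\Heis(H^4(X_\Gamma))$ loses nothing because $\Heis(H^0(X_\Gamma)\oplus H^2(X_\Gamma))$ already lies in $\hat{\mf g}$.
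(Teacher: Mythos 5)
Your plan is a genuinely different route from the paper's, but as written it has a real gap at exactly the point you flag. The argument needs two inputs that are not available in the paper and are not supplied by your sketch: (a) a \emph{geometrically defined} point-class Heisenberg action on $\bigoplus_{\mb v}H^{*}(\mf{M}_{\theta_+}(\mb v, \mb{w_0}))$ which commutes with the convolution $\hat{\mf{sl}}_{n-1}$-action of Theorem \ref{thm:uea_action_nakajima} and generates $H^*$ from $H^{mid}$, and (b) the statement that $F$ intertwines this Heisenberg with the Nakajima point-class operators on the $\theta_{Hilb}$ side. Your generation claim ``$H^*=U(\Heis(H^4))\cdot H^{mid}$'' is immediate on the Hilbert-scheme side from the Fock structure $V = V_{H^2}\otimes\mc F$, but on the $\theta_+$ side it presupposes precisely such a Fock structure for a geometric Heisenberg, which the paper never establishes; if instead you define the $\theta_+$-side Heisenberg by transporting along $F$, then (b) becomes a tautology but the commutation in (a) with the geometric Hecke correspondences becomes an unproven geometric statement that is morally of the same difficulty as the proposition itself. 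Finally, your proposed resolution of (b) --- that the point-class correspondences are closures of their restrictions over the open stratum of $\mf{M}_0(\infty,\mb{w_0})$, where $F$ is an isomorphism --- is not yet a proof: to conclude $(F\times F)_*$ sends one correspondence class to the other you must control the cycles (and the pushforward) over the deeper strata, where $F$ is only a $\C^*$-equivariant diffeomorphism coming from a flop, and that is where all the content lies.

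For comparison, the paper avoids both issues by never invoking a Heisenberg on the $\theta_+$ side: it decomposes the \emph{full} cohomology on both sides using the semismall contractions to the common affine base and the decomposition theorem (no shifts, trivial local systems, IC sheaves constant on the strata \eqref{eq:pts_quiv_stratification}), so that $H_*$ is a direct sum of $H_{top}$ of fibers over points $y$ in the various strata; Theorem \ref{thm:local_quiver} shows the flop acts near each fiber as $F\times\Delta$, and Proposition \ref{prop:action_on_x_fiber} shows the Hecke correspondences restrict to fibers as (smaller Hecke correspondence) $\times$ (diagonal on punctual Hilbert scheme factors). This reduces the statement in every cohomological degree to Nagao's Theorem \ref{thm:nagao} applied fiberwise to central fibers of smaller dimension vectors. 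If you want to pursue your route, you would need to either import and verify external results (Heisenberg actions on these quiver varieties, their commutation with the Hecke operators, and their behavior under wall-crossing) or prove them by the same local-model analysis the paper uses --- at which point the paper's stratified argument is the shorter path.
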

\begin{proof}
  On the right hand side of \eqref{eq:flop_hilb_an}, the action of $\widehat{\mathfrak{sl}}_{n-1}$ on $V$ is induced by its action on $V_{H^2(X_\Gamma)}$ and the isomorphism
    $V =V_{H^2(X_\Gamma)} \otimes \mc{F}$ where $\mc{F}$ is the Fock module for the rank 1 free boson, modeled on the line spanned by the class of a point. This $\mc F$ has a basis labeled by all partitions $\lambda$ and we have
    \[
    V = \bigoplus_{\lambda} V_{H^2(X_\Gamma)}\otimes |\lambda_1, \ldots, \lambda_n\rangle.
    \]
    We need a geometric description of this decomposition. Let $\pi_{\theta_{Hilb}(\mb{v}), 0}$ be the semisimplification map and we decompose based on the inclusion of a fiber of this map into $\bigsqcup_{\mb v} \mf{M}_{\theta_{Hilb}(\mathbf{v})}(\mathbf{v}, \mathbf{w_0})$. Namely we have
    \begin{equation}\label{eq:hilb_decomp_strata}
V = \bigoplus_{\lambda} H_{top}(\mf{M}_{\theta_{Hilb}(\mb{v})}(\mb{w_0})_y)
    \end{equation}
    where $y\in \mc{O}_\lambda$ and $\mc{O}_\lambda$  runs over the strata \eqref{eq:pts_quiv_stratification} so $\lambda$ is a partition for any integer and the empty partition corresponds to the central point, and again we combine fibers for different $\mb{v}$ and define $\mf{M}_{\theta}(\mb{w_0})_x$ for any stability condition $\theta$. Let $\mf{L}_\theta(\mb{w_0})$ be the central fiber. The isomoprhsim $V_{H^2(X_\Gamma)}\otimes |\lambda_1, \ldots, \lambda_n\rangle \simeq  H_{top}(\mf{M}_{\theta_{Hilb}(\mb{v})}(\mb{w_0})_y)$ arises by noting that a basis for the left hand side coincides with a basis for the right hand side in terms of the Nakajima basis.

    The action on the other side of \eqref{eq:flop_hilb_an} is induced by
    a similar factorization\cite[\S ~5.2]{nakajima2002geometric}. Recall that the central fiber of the map to the affine quotient $\mf{L}(\mathbf{v}, \mathbf{w_0})$ is a Lagrangian subvariety homotopic to $\mf{M}_{\theta_+}(\mathbf{v},
    \mathbf{w_0})$ so $H_{*}(\mf{M}_{\theta'}(\mathbf{v}, \mathbf{w_0})) \simeq H_{*}(\mf{L}(\mathbf{v},
    \mathbf{w_0}))$ and in particular
    $$H_{mid}(\mf{M}_{\theta_+}(\mathbf{v}, \mathbf{w_0})) \simeq H_{top}(\mf{L}(\mathbf{v},
    \mathbf{w_0})).$$
  The lower degree terms may be found via the decomposition theorem, which is particularly nice in this case.  In particular, the map is semismall so no shifts appear, and it has also been shown that no non-trivial local systems appear \cite{nakajima2001quiver_finite}. Thus we have a decomposition
  \[
      H_{top - d} (\mf{L}_{\theta_+}(\mb{w_0}), \C) = \bigoplus_{\lambda} H^d (i_0^! IC(\mc{O}_\lambda)) \otimes H_{top}(\mf{M}_{\theta_+}(\mb{w_0})_y, \C)
    \]
       Then $i_0: \{0\} \to \quiv{0}{\infty}{\mb{w_0}}$ is the inclusion. This is a decomposition as $\hat{\mf {sl}}_{n-1}$ modules, where the action is trivial on the cohomology of the IC complex. In this case the strata are $S_\lambda(\C^2_\circ / \Gamma)$  and the $IC$ complexes are constant sheaves $\C_{\overline{\mc{O}_y}}[\dim]$ because $S_\lambda(\C^2_\circ / \Gamma)$ have only finite quotient singularities. Thus our decomposition becomes
    \[
        H_* (\mf{L}_{\theta_+}(\mb{w_0}), \C) = \bigoplus_{\lambda} H_{top}(\mf{M}_{\theta_+}(\mb{w_0})_y, \C)
    \]
    The flop $F$ induces by Theorem \ref{thm:local_quiver} the flop $F\times \Delta$ between neighborhoods of the fibers $\mf{M}_{\theta_+}(\mb{w_0})_y$ and $\mf{M}_{\theta_{Hilb}(\mb{v})}(\mb{w_0})_y$, where $F$ acts on the components which quiver varieties for affine quivers and $\Delta$ is the diagonal on Jordan quiver components. Then Proposition \ref{prop:action_on_x_fiber} implies that the action of the Hecke correspondences coincides with the action on the first component of the product in the fibers. Thus  this flop intertwines the $\hat{\mf{sl}}_{n-1}$ actions as in Theorem \ref{thm:nagao} for each chosen point $y$.  This decomposition for the $\theta_+$ parameter coincides with the one from \eqref{eq:hilb_decomp_strata}. Thus the actions are intertwined by $F$ on all degrees of cohomology.

\end{proof}
We will not explicitly record a proof or statement of the analogous result for types D and E but the proof is essentially contained in the proof of Theorem \ref{thm:main}. 

\section{Birational geometry of hyperk\"ahler varieties}\label{sec:hk_birational}
We review structures on $\NS(M)$ for a hyperk\"ahler variety $M$ and their relation to birational geometry, developed in numerous sources. See \cite{hassett2009moving} and references therein.
\begin{defn}
  Let $M$ by a  hyperk\"ahler variety with Beauville-Bogomolov form $(, )$ on $H^2(M, \Z)$.
  \begin{itemize}
    \item The \emph{cone of curves} of $M$ is the cone $\NE_\R(M)$ generated by effective curves.
    \item The \emph{ample cone} is the cone  $\Amp(M)$ generated by ample classes.
    \item The \emph{nef cone} is the cone $\Nef(M)$ dual to $\overline{\NE(M)}$.
    \item The \emph{positive cone} of $M$ is the component
    \[\Pos(M) \subset \{ \alpha \in \NS_\R(M) \mid (\alpha,\alpha) > 0\}\]
    of the locus of positive-self-pairing classes such that it contains an ample class.
    \item The \emph{movable cone} is the cone $\Mov(M)$ generated by divisors $D$ such that there is an $N> 0$ with $ND$ having no fixed components (i.e. with fixed locus having codimension $\ge 2$).
  \end{itemize}
\end{defn}
We also have a number of interesting wall and chamber structures on these cones. The first comes from the following proposition
\begin{prop}[Markman \cite{markman2013prime}]\label{prop:markman_refl}
  Let $D$ be an exceptional divisor, and let $\rho_D$ be the corresponding reflection, which is an integral involution of $\NS(M)$. Let $W_{Exc}$ be the Weyl group of exceptional reflections. Then the cone $\Mov(M)\cap \Pos(M)$ of big moveable divisors is the fundamental chamber for $W_{Exc}$ on $\Pos(M)$.
\end{prop}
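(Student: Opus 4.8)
\emph{Proof plan.} The statement fits a general framework for reflection groups: if $\mathcal{H}$ is a collection of linear hyperplanes in a real vector space, each carrying an integral involution that fixes it pointwise, preserves a given open convex cone, and permutes $\mathcal{H}$, and if $\mathcal{H}$ is locally finite inside the cone, then the closure of any connected component of the complement of $\bigcup\mathcal{H}$ in the cone is a fundamental domain for the group generated by those involutions (Bourbaki; Vinberg's theory of discrete reflection groups). The plan is to apply this with the cone $\Pos(M)$, the hyperplanes $\mathcal{H} = \{D^\perp : D \text{ prime exceptional}\}$, and the reflections $\rho_D$, and to recognise $\Mov(M)\cap\Pos(M)$ as the distinguished component containing an ample class.

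First I would record the lattice-theoretic input on prime exceptional divisors. By work of Boucksom and Druel, a prime exceptional divisor $D$ has negative Beauville-Bogomolov square $(D,D)<0$, so $\rho_D(x) = x - \tfrac{2(x,D)}{(D,D)}D$ is well defined, involutive, and fixes $D^\perp$ pointwise. Because $(D,D)<0$, the hyperplane $D^\perp$ meets the positive cone; hence $\rho_D$ fixes a point of $\Pos(M)$ and therefore preserves $\Pos(M)$. The two remaining properties I need --- that $\rho_D$ is defined over $\mathbb{Z}$, i.e.\ $\tfrac{2(x,D)}{(D,D)}\in\mathbb{Z}$ for $x\in\NS(M)$, and that $W_{Exc}$ permutes the collection of mirrors $\{D^\perp\}$ --- are exactly Markman's results that exceptional reflections are monodromy operators and that prime exceptional classes form a monodromy-invariant family; I would quote these.

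Next I would identify the chamber: I claim $\Mov(M)\cap\Pos(M) = \{\alpha\in\Pos(M) : (\alpha,D)\ge 0 \text{ for every prime exceptional } D\}$. For the inclusion ``$\subseteq$'', a movable class has no fixed divisorial component, so it pairs non-negatively with every prime divisor. For ``$\supseteq$'' I would invoke Boucksom's divisorial Zariski decomposition on hyperkähler manifolds: a big class in $\Pos(M)$ pairing non-negatively with all prime exceptional divisors has vanishing negative part and hence is movable. The right-hand side is exactly the closed chamber of the $\mathcal{H}$-decomposition of $\Pos(M)$ lying on the ample side of every wall, and it contains $\Amp(M)$ since ample divisors are movable, so it is the distinguished component in the general statement.

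The main obstacle is proving that $\mathcal{H}$ is locally finite in $\Pos(M)$, without which neither the chamber decomposition nor the fundamental-domain conclusion is available. This rests on a boundedness statement: for a fixed deformation type only finitely many values of the square $(D,D)$ (and of the divisibility of $D$) occur among prime exceptional classes, so $|(D,D)|$ is bounded; in a lattice of fixed signature, only finitely many primitive integral classes $D$ with $|(D,D)|$ bounded can have $D^\perp$ meeting a given compact subset of the positive cone. Granting local finiteness, the abstract reflection-group fact applies verbatim and gives that $W_{Exc}$ acts on $\Pos(M)$ with fundamental domain $\Mov(M)\cap\Pos(M)$, which is the assertion of the proposition.
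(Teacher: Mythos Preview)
The paper does not give its own proof of this proposition: it is stated with the attribution ``Markman \cite{markman2013prime}'' and used as a black box, so there is no argument in the text to compare against. Your outline is a reasonable sketch of the strategy in Markman's paper (reflections are monodromy operators, prime exceptional classes have bounded square and divisibility so the mirror arrangement is locally finite in $\Pos(M)$, and Boucksom's divisorial Zariski decomposition identifies the movable cone as the closed chamber on the ample side), but since the present paper simply cites the result, there is nothing here to match or contrast.
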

Also given a birational hyperk\"ahler $M'$ the map induced from graph $\Gamma \subset M\times M'$ on cohomology $\Gamma_*: H^2(M, \Z) \to H^2(M', \Z)$ is an isomorphism preserving $(,)$ and $\Gamma_*\alpha \in \Amp(M')$.
We can thus identify the ample cone of a birational model $M'$ of $M$ with a subset of the positive cone of $M$. Varieties whose ample classes are in the same orbit of $W_{Exc}$ are isomorphic, so we can consider the ample cone of every birational model as a subset of the movable cone.  If the \emph{birational ample cone} $\mc{B}\Amp(M)$ is the union of the images of all ample cones of birational models of $M$ under the maps induced on cohomology groups then
\begin{prop}[\cite{hassett2009moving}]
  We have inclusions
    \[\mc{B}\Amp(M) \subset \Mov(M)\subset \overline{\mc{B}\Amp}(M).\]
\end{prop}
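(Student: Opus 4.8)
The plan is to prove the two inclusions separately; the first is essentially formal, while the second carries all of the content.

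For $\mc{B}\Amp(M)\subset\Mov(M)$: I would use that any birational map $f:M\dashrightarrow M'$ between hyperk\"ahler varieties is an isomorphism in codimension one, so — as recalled just before Proposition~\ref{prop:markman_refl} — its graph induces an isometry $\NS(M')_\R\xrightarrow{\sim}\NS(M)_\R$ and the indeterminacy loci on both sides have codimension $\ge 2$. Given an ample class $A'$ on $M'$, let $D\in\NS(M)_\R$ be its image, i.e. the class of the strict transform. Choose $N\gg 0$ with $NA'$ globally generated and $ND$ integral. Since $M$ is normal and $f$ is an isomorphism off a set of codimension $\ge 2$, restriction gives $H^0(M,ND)\cong H^0(M',NA')$, and the base locus of $|ND|$ on $M$ is contained in the exceptional locus of $f$, hence has codimension $\ge 2$; thus $D$ is movable. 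Movable $\R$-classes form a convex cone — if $D_1,D_2$ are movable then $\mathrm{Bs}\,|N(D_1+D_2)|\subset\mathrm{Bs}\,|ND_1|\cup\mathrm{Bs}\,|ND_2|$ has codimension $\ge 2$ — so this cone contains the image of $\Amp(M')$ for every birational model $M'$, and therefore contains $\mc{B}\Amp(M)$.

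For $\Mov(M)\subset\overline{\mc{B}\Amp}(M)$: since the right-hand side is closed, it suffices to place each rational big movable class $D$ inside it, these being dense in $\Mov(M)$. I would run a $D$-minimal model program on $M$. As $K_M=0$, one first perturbs: take a general member $\Delta_0\in|mD|$ for suitable $m$ and set $\Delta=\tfrac{\epsilon}{m}\Delta_0$ with $\epsilon$ small, so that $(M,\Delta)$ is klt and $K_M+\Delta\equiv\Delta$ is numerically a small positive multiple of $D$; then a $(K_M+\Delta)$-MMP with scaling is a $D$-MMP, and it terminates by the usual machinery since $\Delta$ is big. Because $K_M=0$, each $(K_M+\Delta)$-negative contraction is $D$-negative, hence small (the relevant curves lie in the codimension-$\ge 2$ stable base locus of $D$), so every step is a $D$-flop and the models $M=M_0\dashrightarrow M_1\dashrightarrow\cdots$ remain hyperk\"ahler and isomorphic to $M$ in codimension one. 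The program ends at a model $M'$ on which the strict transform of $D$ is nef; a nef class is a limit of ample classes, so it lies in $\overline{\Amp(M')}$, and transporting back along the codimension-one isomorphism $M\dashrightarrow M'$ yields $D\in\overline{\mc{B}\Amp}(M)$. Passing to the closed convex cone gives the inclusion. Alternatively, one can avoid the explicit MMP and invoke Proposition~\ref{prop:markman_refl}: inside the fundamental chamber $\Mov(M)\cap\Pos(M)$ the finer decomposition by K\"ahler-type walls is locally finite in the interior, each open chamber is the transported ample cone of a birational model, and these chambers exhaust the interior of $\Mov(M)$ — but establishing that each chamber is realized by an actual birational model again needs the MMP existence statement.

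The main obstacle is exactly this termination/existence input for the second inclusion: that the $D$-MMP runs and stops, equivalently that the decomposition of the interior of $\Mov(M)$ into ample cones of birational models is locally finite and exhaustive. For hyperk\"ahler varieties this is available — all of the relevant modifications are small, termination of flops holds in this setting, and there are only finitely many birational models up to isomorphism — but it is the real content, together with the perturbation device needed to fit $K_M=0$ into the log-MMP framework. The first inclusion, by contrast, needs nothing beyond the codimension-one isomorphism property and section extension.
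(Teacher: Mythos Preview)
The paper does not supply its own proof of this proposition; it is quoted as a known result from \cite{hassett2009moving} and used without further argument. Your proposal is essentially the standard MMP-based proof from that reference: the first inclusion from the codimension-one isomorphism property of birational hyperk\"ahler varieties, the second by running a log MMP with scaling on a klt perturbation $(M,\epsilon\Delta_0)$ and using that all extremal contractions are small because $D$ is movable.

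One point deserves care. You write that ``termination of flops holds in this setting'' as if it were a general fact about hyperk\"ahler varieties; in fact termination of arbitrary sequences of flops is not known in general. What makes your argument go through is the more specific input you already named: the $(K_M+\Delta)$-MMP \emph{with scaling} for a klt pair with $\Delta$ big terminates by BCHM. That is enough here, since you reduced to big movable $D$, but the closing sentence blurs this distinction and should be sharpened. With that adjustment the argument is sound and matches the approach of the cited source.
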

It follows that $\overline{\Mov}(M) = \overline{\mc{B}\Amp}(M)$ and so  the chamber decomposition
\begin{equation}\label{eq:amp_decomp}
\bigcup_{M' \sim M} \overline{\Amp}(M') = \overline{\mc{B}\Amp}(M)
\end{equation}
is also a chamber decomposition for $\overline{\Mov}(M)$. It is locally polyhedral \cite{hassett2009moving}.

A similar structure is known for the relevant quiver varieties, in particular affine ADE quiver varieties for the framing vector $\mb w_0$. Consider a crepant resolution $X\xrightarrow{\pi} Y$ of an affine variety $Y$, let $N^1(X/Y) = \NS(X/Y)_{\Q}$ be the vector space of $\Q$-cartier divisors up to numerical equivalence, with cones $\Mov(X/Y)$ as above, $\Nef(X/Y)$ consisting of divisors $D$ with $D\cdot \ell \ge 0$ if $\ell$ is a curve contracted by $\pi$, and $\Amp(X/Y)$ the interior of the Nef cone. We will write $\Amp(X)$ for $\Amp(X/Y)$ when the base is clear.

\subsection{Birational geometry of $X_\Gamma^{[n]}/ Sym^n \C^2/\Gamma$}\label{ssec:bellamy_craw}
We now follow the description in \cite{bellamy2018birational}, which shows that all crepant resolution of $Sym^n \C^2/\Gamma$ are given by variation of stability for a quiver variety, and the cone structure on \[N^1(X_\Gamma^{[n]}/ Sym^n \C^2/\Gamma)\] coincides with the wall and chamber structure on the space of stability conditions. In particular, a particular description is provided of the walls $D_\theta$ for $\theta\in R_+(\mb{v})$ which actually induce birational contractions. 

 Fix framing vector $\mathbf{w} = \mathbf{w_0}$ and dimension vector $\mathbf{v} = n\delta$. let $F$ be the``quadrant"
\[ F = \{ \theta  \mid \theta\cdot \alpha_i \ge 0 \text{ for } i = 1, \ldots, r, \theta\cdot \delta \ge 0\} \] and let
\[L_F: F\to N^1(X_\Gamma^{[n]}/Sym^n(\C^2/\Gamma)))\]
 be defined by $\theta \mapsto \bigotimes \mc{R}_i^{\theta_i}$ sending a stability parameter to a line bundle on the Hilbert scheme on the resolved surface. This map corresponds to the map \eqref{eq:lmap} for K3 surfaces and the following theorem is the analogue of Theorem \ref{thm:bm_mmp}.
\begin{thm}[Bellamy, Craw \cite{bellamy2018birational}]\label{thm:birat_sym}
The map $\theta \mapsto L_F(\theta)$ identifies $F$ with the relative movable cone $\Mov(X_\Gamma^{[n]}/Sym^n(\C^2/\Gamma))$ such that
\begin{enumerate}[(i)]
  \item The Namikawa Weyl group $W = \langle \rho_{\delta}, \rho_{\alpha_1}, \ldots, \rho_{\alpha_r}\rangle$ generated by reflections through the roots of the finite root system and through the imaginary root $\delta$ acts on $\mf{h} \simeq H^2(X_\Gamma^{[n]}, \Q)$ with fundamental chamber $F$ and quiver varieties with parameters $\theta, \theta'$  in the same $W$-orbit are isomorphic.
  \item Let $\Delta_f$ denote the set of roots for the finite root system (i.e. they have no $\alpha_0$ component). The walls $W\subset \R^I$ which induce birational contractions are the hyperplanes
  \[ \{ \delta^\perp, (m\delta + \alpha)^\perp \mid 0 \le m < n, \alpha \in \Delta_f\}\]
  so for a stability parameter $\theta \in \R^I$, the condition of not lying on one of these hyperplanes is equivalent to the quiver variety $\quiv{\theta}{n\delta}{\mb{w_0}}$ being smooth.
  \item The image of a stability chamber $C$ under the map $L_F$ (with the wall and chamber structure given by the aforementioned hyperplanes) is exactly the ample cone $\Amp(\quiv{\theta}{n\delta}{\mathbf{w_0}})$ for the corresponding birational model.
  \item For a stability condition $\theta_0$ generic on one of the boundary walls $\delta^\perp$ or  $\alpha_i^\perp$, for $i  = 1, \ldots, r$  of the movable cone and $\theta$ in an adjacent chamber, the map $\pi_{\theta, \theta_0}$ contracts an irreducible exceptional divisor. All other walls (i.e. on the interior of $F$) correspond to flops.
\end{enumerate}
\end{thm}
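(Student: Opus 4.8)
The plan is to identify everything with the combinatorics coming from Namikawa's description of the movable cone of a conical symplectic resolution, applied to $Sym^n(\C^2/\Gamma)$. First I would recall that $\mf{M}_0(n\delta, \mb{w_0}) = Sym^n(\C^2/\Gamma)$ and that for $\theta$ lying off every wall $D_\beta$, $\beta \in R_+(n\delta)$, the projective map $\quiv{\theta}{n\delta}{\mb{w_0}} \to \mf{M}_0(n\delta, \mb{w_0})$ is a crepant (indeed conical symplectic) resolution, which for $\theta$ in the chamber $C(n\delta)$ is the Hilbert scheme $X_\Gamma^{[n]}$ by Theorem \ref{thm:quivers_are_torsionfree_moduli}. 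The tautological bundles $\mc{V}_i$ descend to line bundles $\mc{R}_i = \det \mc{V}_i$, and one checks through the McKay correspondence (as in \cite{nakajima2007sheaves}) that the classes of the $\mc{R}_i$, $i \in I$, form a basis of $N^1(X_\Gamma^{[n]}/Sym^n(\C^2/\Gamma))$, so that $\dim N^1 = |I| = r+1$ and $L_F$ is determined; testing this basis against the obvious curve classes --- an exceptional curve of $X_\Gamma$ inside a Hilbert--Chow fibre, and a rational curve in the half-diagonal --- shows it is a genuine lattice basis rather than a proper sublattice.

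Next I would compute the Namikawa Weyl group of $Sym^n(\C^2/\Gamma)$ by enumerating its codimension-two symplectic leaves. There are exactly two: the locus where two of the $n$ points collide away from the singular point of $\C^2/\Gamma$, whose transverse slice is the Kleinian $A_1$ singularity $\C^2/(\Z/2)$ and which contributes the reflection $\rho_\delta$; and the locus where one point approaches the singular point while the rest stay generic, whose transverse slice is the full du Val singularity $\C^2/\Gamma$ and which contributes the reflections $\rho_{\alpha_1}, \ldots, \rho_{\alpha_r}$ through the simple roots of the finite root system. Hence the Namikawa Weyl group is $W = \langle \rho_\delta, \rho_{\alpha_1}, \ldots, \rho_{\alpha_r}\rangle$. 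Namikawa's theorem then gives that $\Mov(X_\Gamma^{[n]}/Sym^n(\C^2/\Gamma))$ is a closed fundamental domain for $W$ on $N^1_\R$ and is the closure of the union of the relative ample cones of all crepant resolutions, these ample cones being the chambers of a locally finite hyperplane arrangement; matching the closed fundamental domain $F$ of $W$ with $\Mov$ gives (i). For (iii), one direction is the classical variation-of-GIT statement that $\bigotimes \mc{R}_i^{\theta_i}$ is relatively ample on $\quiv{\theta}{n\delta}{\mb{w_0}}$ when $\theta$ is in a chamber, while the reverse inclusion follows once one knows every crepant resolution of $Sym^n(\C^2/\Gamma)$ is such a quiver variety \cite{bellamy2018birational}, so the GIT chambers already exhaust $\Mov$.

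For (ii) and (iv) I would run the local analysis of Theorem \ref{thm:local_quiver}: a wall $D_\beta$ with $\beta \in R_+(n\delta)$ induces a nontrivial contraction precisely when, at a generic point of a stratum whose polystable decomposition has a summand with dimension vector a multiple of $\beta$, the corresponding Ext-quiver variety $\quiv{\rho}{\mb{n}}{\mb{m}}$ has positive-dimensional fibres over the origin. Because $\delta \cdot C \delta = 0$ and $\beta \le n\delta$, the only hyperplanes that occur are $\delta^\perp$ --- note $(k\delta)^\perp = \delta^\perp$ for every $k \le n$, which is why it appears only once --- and the real-root hyperplanes $(m\delta + \alpha)^\perp$ with $\alpha \in \Delta_f$ and $0 \le m < n$, the bound coming from the requirement $m\delta + \alpha \le n\delta$; in each such case the Ext-quiver is a finite or affine ADE quiver of strictly smaller rank whose resolution is manifestly nontrivial (the $A_1$ slice for $\delta^\perp$, the slice $\C^2/\Gamma$ for $\alpha_i^\perp$, and $A_1$-type flopping slices in the interior), and conversely for $\theta$ off all these walls the local model is an isomorphism, so $\quiv{\theta}{n\delta}{\mb{w_0}}$ is smooth. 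Part (iv) then follows because $\delta^\perp$ and the $\alpha_i^\perp$ are exactly the facets of the cone $F$: crossing one leaves $\Mov$, so by Namikawa's theory the associated contraction is divisorial rather than a flop, and the exceptional divisor is irreducible because the transverse slice is (the $A_1$ surface for $\delta^\perp$, giving the half-diagonal divisor; the single exceptional curve of $X_\Gamma \to \C^2/\Gamma$ swept out in families for $\alpha_i^\perp$), while every interior wall of $F$ gives a flop.

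The step I expect to be the main obstacle is exactly the claim, used in (iii), that variation of GIT realizes \emph{all} crepant resolutions of $Sym^n(\C^2/\Gamma)$ --- without it one only learns that the GIT chambers cover a subcone of $\Mov$. This is the core of \cite{bellamy2018birational}: it requires Namikawa's count of the number of projective crepant resolutions together with a precise enumeration of the GIT chambers inside $F$ cut out by the hyperplane list in (ii), and it is what pins that list down exactly. A secondary, more bookkeeping, difficulty is verifying the identification of the Ext-quiver data at generic points of each codimension-two stratum with the smaller ADE quivers claimed, which is where the McKay correspondence for the nilpotent representation at the origin of $\C^2/\Gamma$ enters.
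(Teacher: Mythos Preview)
The paper does not give its own proof of this statement: Theorem~\ref{thm:birat_sym} is quoted verbatim as a result of Bellamy--Craw \cite{bellamy2018birational} and is used as a black box throughout (for instance in Propositions~\ref{prop:affine_quiver_shift} and~\ref{prop:stab_corner_arranged}). So there is no in-paper argument to compare your proposal against.

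That said, your sketch is a faithful outline of the strategy in \cite{bellamy2018birational}: compute the Namikawa Weyl group from the codimension-two leaves of $Sym^n(\C^2/\Gamma)$, invoke Namikawa's general theory to identify $\Mov$ with a fundamental domain, use the tautological line bundles to realise $L_F$, and then run the local Ext-quiver analysis (your Theorem~\ref{thm:local_quiver}) to classify which hyperplanes actually induce contractions. You have also correctly isolated the nontrivial step, namely that every projective crepant resolution of $Sym^n(\C^2/\Gamma)$ arises from GIT; in \cite{bellamy2018birational} this is obtained by counting chambers inside $F$ and matching against Namikawa's count of resolutions, exactly as you suggest. One small correction: in your wall enumeration you write ``$\beta \le n\delta$'' to justify the bound $0 \le m < n$, but not every positive root $m\delta + \alpha$ with $m < n$ satisfies $m\delta + \alpha \le n\delta$ componentwise (take $\alpha$ the highest finite root and $m = n-1$); the actual argument in \cite{bellamy2018birational} is more delicate and goes via the explicit stratification of $\mf{M}_0$ rather than a naive bound on dimension vectors.
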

\begin{cor}[Bellamy, Craw \cite{bellamy2018birational}]
   Every crepant resolution of $Sym^n(\C^2/\Gamma)$ is given by a quiver variety. And every partial symplectic resolution between a nonsingular quiver variety $\quivb{\theta}{v}{w_0}$ and $Sym^n(\C^2/\Gamma)$ is given by a map $\pi_{\theta', 0}$ \eqref{eq:semisimplification} for some $\theta'$.
 \end{cor}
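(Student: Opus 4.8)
The plan is to read the statement off Theorem~\ref{thm:birat_sym} together with the description, recalled in Section~\ref{sec:hk_birational}, of crepant resolutions of a symplectic singularity by the chamber decomposition of its movable cone. I would first make a harmless reduction on the dimension vector: any projective morphism $\quivb{\theta}{v}{w_0}\to Sym^n(\C^2/\Gamma)$ factors through the affine quotient $\mf{M}_0(\mathbf{v},\mathbf{w_0})=\Spec\C[\mu^{-1}(0)]^{G_{\mathbf{v}}}$, and for it to be part of a resolution the map $\mf{M}_0(\mathbf{v},\mathbf{w_0})\to Sym^n(\C^2/\Gamma)$ must be birational, hence an isomorphism since both sides are normal affine varieties; one then checks via \eqref{eq:pts_quiv_stratification} that $\mf{M}_0(\mathbf{v},\mathbf{w_0})\simeq Sym^n(\C^2/\Gamma)$ exactly when $\mathbf{v}$ is Weyl-equivalent to $n\delta$, so (using Theorem~\ref{thm:birat_sym}(i) and the reflection isomorphisms of quiver varieties) I may take $\mathbf{v}=n\delta$ and set $X:=X_\Gamma^{[n]}=\quiv{\theta_{\mathrm{Hilb}}(n\delta)}{n\delta}{\mathbf{w_0}}$.

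For the first assertion, let $Y\to Sym^n(\C^2/\Gamma)$ be any crepant resolution. Since $Y$ and $X$ are crepant resolutions of the same affine variety they are birational over the base and isomorphic in codimension one, so under the induced identification of N\'eron--Severi groups $\overline{\Amp}(Y/Sym^n(\C^2/\Gamma))$ is one of the chambers of $\overline{\Mov}(X/Sym^n(\C^2/\Gamma))$. By Theorem~\ref{thm:birat_sym}(i),(iii) the map $L_F$ identifies this movable cone with $F$, its chambers (for the finite wall structure of Theorem~\ref{thm:birat_sym}(ii)) are the $L_F$-images of the stability chambers, and $L_F$ carries a stability chamber $C$ onto $\Amp(\quiv{\theta}{n\delta}{\mathbf{w_0}}/Sym^n(\C^2/\Gamma))$ for $\theta\in C$. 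Hence $\overline{\Amp}(Y/Sym^n(\C^2/\Gamma))=\overline{\Amp}(\quiv{\theta}{n\delta}{\mathbf{w_0}}/Sym^n(\C^2/\Gamma))$ for a suitable $\theta$, and since two crepant resolutions of an affine variety with equal ample cone inside their common movable cone are isomorphic over the base (pull back an ample divisor along the codimension-one-trivial birational map and compare section rings), $Y\simeq\quiv{\theta}{n\delta}{\mathbf{w_0}}$; smoothness of $Y$ forces $\theta$ to lie off the finitely many contraction walls of Theorem~\ref{thm:birat_sym}(ii). Conversely, for each such $\theta$ the variety $\quiv{\theta}{n\delta}{\mathbf{w_0}}$ is smooth and $\pi_{\theta,0}$ is a projective crepant (in fact symplectic) resolution of $\mf{M}_0(n\delta,\mathbf{w_0})=Sym^n(\C^2/\Gamma)$ by $K$-triviality of quiver varieties and Theorem~\ref{thm:partial_resolutions}, so the two classes of objects coincide.

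For the second assertion, take a factorization $\quivb{\theta}{v}{w_0}\xrightarrow{f}Z\xrightarrow{g}Sym^n(\C^2/\Gamma)$ with $\quivb{\theta}{v}{w_0}$ smooth (so $\mathbf{v}=n\delta$ as above) and $g$ a partial symplectic resolution; then $f$ is a projective crepant morphism. Pulling back a $g$-ample divisor on $Z$ gives a semiample class $D$ on $\quiv{\theta}{n\delta}{\mathbf{w_0}}$ lying on a face $\tau$ of its nef cone, and $Z\simeq\on{Proj}\bigoplus_{m\ge0}H^0(\quiv{\theta}{n\delta}{\mathbf{w_0}},\mc{O}(mD))$ with $f$ the contraction associated to $D$. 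By Theorem~\ref{thm:birat_sym}(iii) the map $L_F$ sends the closure of the stability chamber $C\ni\theta$ onto $\overline{\Amp}(\quiv{\theta}{n\delta}{\mathbf{w_0}}/Sym^n(\C^2/\Gamma))$, so $D=L_F(\theta')$ for some $\theta'$ on the face of $\overline C$ corresponding to $\tau$, and Theorem~\ref{thm:partial_resolutions}(iii) provides the natural morphism $\pi_{\theta,\theta'}:\quiv{\theta}{n\delta}{\mathbf{w_0}}\to\quiv{\theta'}{n\delta}{\mathbf{w_0}}$. Since $L_F(\theta')$ is by construction the line bundle descended from the $\theta'$-linearization, the variation-of-GIT comparison of section rings identifies $\bigoplus_{m\ge0}H^0(\quiv{\theta}{n\delta}{\mathbf{w_0}},\mc{O}(mL_F(\theta')))$ with the ring of $\theta'$-semi-invariants in $\C[\mu^{-1}(0)]$, whose $\on{Proj}$ is $\quiv{\theta'}{n\delta}{\mathbf{w_0}}$; hence $Z\simeq\quiv{\theta'}{n\delta}{\mathbf{w_0}}$, $f=\pi_{\theta,\theta'}$, and so $g=\pi_{\theta',0}$ with $\pi_{\theta,0}=\pi_{\theta',0}\circ\pi_{\theta,\theta'}$, as claimed.

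The step I expect to demand the most care is precisely this last identification of the abstract contraction $\on{Proj}\bigoplus_m H^0(\mc{O}(mD))$ with the GIT quotient $\quiv{\theta'}{n\delta}{\mathbf{w_0}}$: this is where one genuinely uses that the $L_F$ of Theorem~\ref{thm:birat_sym}(iii) is the ``right'' map --- it sends $\theta'$ to the very polarization whose ring of semi-invariants builds the $\theta'$-quotient --- together with the Dolgachev--Hu/Thaddeus variation-of-GIT formalism. Everything else (the reduction to $\mathbf{v}=n\delta$, the appeal to the movable-cone decomposition \eqref{eq:amp_decomp} and its relative analogue for quiver varieties, the uniqueness of a crepant resolution with a prescribed ample cone, and the crepancy bookkeeping) is formal once Theorem~\ref{thm:birat_sym} is in hand.
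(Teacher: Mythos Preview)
The paper gives no proof of this corollary: it is simply quoted from \cite{bellamy2018birational} immediately after Theorem~\ref{thm:birat_sym}, so there is nothing to compare against on the paper's side. Your proposal is a correct and standard way to extract the statement from Theorem~\ref{thm:birat_sym} together with the general Mori-theoretic facts of Section~\ref{sec:hk_birational}: identify the movable cone with $F$ via $L_F$, match chambers with ample cones of birational models, and for partial resolutions identify the face contraction with the GIT map $\pi_{\theta,\theta'}$ via the section-ring/semi-invariant comparison. This is essentially how Bellamy and Craw themselves deduce the corollary.

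One minor remark: your opening reduction to $\mathbf{v}=n\delta$ is unnecessary in the paper's context, since the entire subsection~\ref{ssec:bellamy_craw} has already fixed $\mathbf{v}=n\delta$ (see the sentence just before the statement of Theorem~\ref{thm:birat_sym}); the bold $\mathbf{v}$ in the corollary is that fixed vector, not a free one. Your reduction argument is plausible but would need more care to be airtight (the claim that $\mf{M}_0(\mathbf{v},\mathbf{w_0})\simeq Sym^n(\C^2/\Gamma)$ forces $\mathbf{v}$ Weyl-equivalent to $n\delta$ is not immediate from \eqref{eq:pts_quiv_stratification} alone), so it is simplest to drop it and read the corollary with $\mathbf{v}=n\delta$ understood.
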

\begin{eg}\label{eg:2ptsA2}
Consider $\quiv{\theta_{Hilb}}{2\delta}{ \mb{w_0}} \simeq (T^*\PP^1)^{[2]}$. Then the walls are the hyperplanes orthogonal to
\[\{(1,0),(0,1), (1,0), (1,2)\} \]
while the potential walls are hyperplanes orthogonal to elements of $R_+ (2\delta)$, namely the set
\[ \{(1,0), (0,1), (1,0), (1,2), (2,1), (1,1), (2,2)\}\]
in particular the hyperplane $(2,1)^\perp$ is not a wall for the GIT stability space for this quiver variety. One way to see this is as follows: suppose $\theta$ is a stability condition generically on the wall $(2,1)^\perp$ with a strictly polystable quiver representation $V$ with a stable summand $V'$ of graded dimension $(2,1)$. Then because there are no edges between the framing node and the non-affine root node, $V$ must also have a stable summand with dimension vector $(0,1)$ which contradicts semistability from \ref{prop:king_stability}if, say, $\theta\cdot (0,1) > 0$.
\end{eg}

\begin{figure}[h!]
  \centering
\includegraphics[width=.7\linewidth]{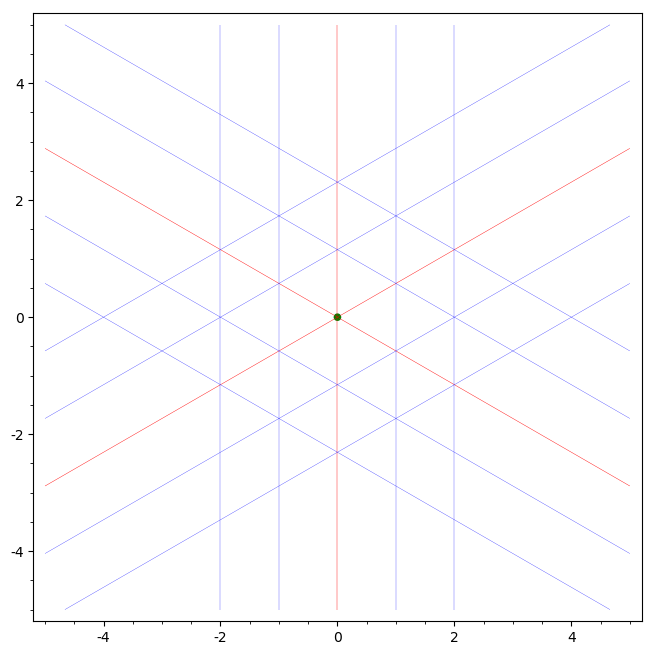}
\caption{\label{fig:3delta} Level 1 hyperplane for the stability space of the affine $A_2$ quiver for framing vector $\mb{w_0}$ and dimension vector  $\mb{v} = 3\delta$. The red lines correspond to the walls $e_\alpha^\perp$ for finite roots $e_{\alpha}$. }
\end{figure}

\subsection{Other dimension vectors}
The previous section completely describes the birational geometry of $\quiv{\theta}{n\delta}{\mb{w_0}}$ over $\quiv{0}{n\delta}{\mb{w_0}}$, which is provided by variation of GIT stability. On the other hand, for a different dimension vector $\mb{v}$ the result of Theorem \ref{thm:quivers_are_torsionfree_moduli} says $\quiv{\theta_{Hilb}(\mb{v})}{\mb{v}}{\mb{w_0}}$ over its image in $\quiv{0}{\mb{v}}{\mb{w_0}}$ is
isomorphic under \eqref{eq:hilb_eq_m} to $\quiv{\theta_{Hilb}(n\delta)}{n\delta}{\mb{w_0}}$ over $\quiv{0}{n\delta}{\mb{w_0}}$. Therefore variation of GIT stability for the dimension vector $n\delta$ controls the birational geometry over the affine quotient of quiver varieties with framing vector $\mb{w_0}$ and other dimension vectors. In particular, birational transformations given by variation of stability for quiver varieties $\quiv{\theta}{\mb{v}}{\mb{w_0}}$ must also be given under this isomorphism by variation of stability for the dimension vector $n\delta$.

However, it is not true that for some stability vector $\theta$ there is always an isomorphism between $\quivb{\theta}{v}{w_0}$ and $\quiv{\theta}{n\delta}{\mb{w_0}}$; there is a modification involved in the stability condition, which at a slice $\theta\cdot \delta = 1$ corresponds to a shift by $c_1(\mc{L})$ where $\mc{L}$ is the line bundle which one tensors with to obtain the specific isomorphism between the moduli space of rank 1 torsion free sheaves and the Hilbert scheme. 

The proof we give in probably not the most natural way to identify the stability spaces of $\quivb{\theta}{v}{w_0}$ and $\quiv{\theta}{n\delta}{\mb{w_0}}$, which would involve looking at where the determinant line bundle is sent, explicitly identifying a specific vector in the ample cones of $\quivb{\theta}{v}{w_0}$ and $\quiv{\theta}{n\delta}{\mb{w_0}}$. This proof highlights the strange interplay between variation of GIT stability for quiver representations and the birational geometry of the moduli spaces.

\begin{prop}\label{prop:affine_quiver_shift}
Let $Q$ be an affine ADE quiver, with fixed framing vector $\mb{w_0}$. For any dimension vector $\mb{v}$, let $\mb{u} = \mb{w} - C \mb{v}$ so that Theorem \ref{thm:quivers_are_torsionfree_moduli} implies that $\quivb{\theta_{Hilb}(\mb{v})}{v}{w_0}$ is isomorphic to the moduli space of rank 1 torsion-free subsheaves $E$ of a line bundle $\mc{L}_\mb{v} = E^{\vee \vee}$ with $c_1(E) = \sum_{i \neq 0} u_i c_1(\mc{V}_i) = c_1(\mc{L}_{\mb{v}})$ and quotient $ \mc{L}_{\mb{v}}/E$ of length $n = v_0 - \mb{v}^T C \mb{v}/2$. Consider the isomorphism 
\[\phi:  \quiv{\theta_{Hilb}(n\delta)}{n\delta}{\mb{w_0}} \to \quivb{\theta_{Hilb}(\mb{v})}{v}{w_0} \]
given on points representing torsion-free sheaves by 
\[\phi: E \mapsto E \otimes \mc{L}_{\mb{v}}.\]
Let $\Theta_{\mb{v}} \simeq \R^I$ denote the stability space for dimension vector $\mb{v}$.
\begin{enumerate}[(i)]
  \item If $n > 1$ then under the identifications 
  \[H^2(\quivb{\theta_{Hilb}(\mb{v})}{v}{w_0}, \R) = \Theta_{\mb{v}}\] and \[H^2(\quivb{\theta_{Hilb}(n\delta )}{n\delta }{w_0}, \R) = \Theta_{\mb{n\delta}}\] the isomorphism 
  \[H^2(\quivb{\theta_{Hilb}(n\delta )}{n\delta }{w_0}, \R) \simeq H^2(\quivb{\theta_{Hilb}(\mb{v})}{v}{w_0}, \R)\]
   induced by $\phi$ is (up to a global scaling) the unique map $\phi :  \Theta_{n\delta}\to  \Theta_{\mb{v}} $ which acts by the identity on the level $0$ hyperplane $H_0 := \{\theta\cdot \delta = 0 \}$ and sends $e_0 = (1, \ldots, 0)$ to
   \[e_0 + (-\sum_{i\neq 0} u_i, u_1, u_2, \ldots,u_n). \]
   In other words, $\phi$ preserves the level 1 hyperplane $H_1 := \{\delta \cdot \theta = 1\}$ and acts by a shift by $(u_1, \ldots, u_n)$ on this hyperplane. 
   \item If $n = 1$ the map $\phi$ induces an isomorphism $\phi: \Theta_{n\delta} \to \Theta_{\mb{v}}$ given by the same formula and the corresponding isomorphism between $H^2(\quivb{\theta_{Hilb}(n\delta )}{n\delta }{w_0}, \R) \simeq H^2(\quivb{\theta_{Hilb}(\mb{v})}{v}{w_0}, \R)$ is given (up to a scale) by the isomorphism $\phi: \Theta_{n\delta} / e_0 \to \Theta_{\mb{v}} / \phi(e_0).$ 
\end{enumerate}
\end{prop}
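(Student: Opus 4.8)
The plan is to realise both quiver varieties as the same Hilbert scheme, observe that $\phi$ becomes the identity map there, and thereby reduce the statement to comparing the two resulting identifications of $H^2$ with the stability space. First I would use Theorem \ref{thm:quivers_are_torsionfree_moduli}, together with the identification of $\mathfrak{M}_{\theta_{Hilb}(n\delta)}(n\delta,\mathbf{w_0})$ with $X_\Gamma^{[n]}$ recorded after it, to describe the source as parametrising ideal sheaves $I_Z\subset\mathcal{O}$ with $\operatorname{len}(\mathcal{O}/I_Z)=n$ and the target $\mathfrak{M}_{\theta_{Hilb}(\mathbf{v})}(\mathbf{v},\mathbf{w_0})$ as parametrising subsheaves $E=\mathcal{L}_{\mathbf{v}}\otimes I_Z\subset\mathcal{L}_{\mathbf{v}}$ with $\operatorname{len}(\mathcal{L}_{\mathbf{v}}/E)=n$; both are canonically $X_\Gamma^{[n]}$ through the subscheme $Z$. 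Under these descriptions the given isomorphism $\phi\colon E\mapsto E\otimes\mathcal{L}_{\mathbf{v}}$, i.e.\ $I_Z\mapsto I_Z\otimes\mathcal{L}_{\mathbf{v}}$, is simply the identity of $X_\Gamma^{[n]}$ (the subscheme does not move, and $\mathcal{L}_{\mathbf{v}}$ extends trivially over $\ell_\infty$ so the framing is preserved). In particular $\phi$ commutes with the two contractions onto $\operatorname{Sym}^n(\C^2/\Gamma)$ (a subscheme being sent to its support cycle), so the map it induces on $N^1\big(X_\Gamma^{[n]}/\operatorname{Sym}^n(\C^2/\Gamma)\big)$ is the identity; and since for $X_\Gamma^{[n]}$ all of $H^2$ is relative over this affine quotient — it is spanned by the exceptional curve classes of $X_\Gamma$ together with the Hilbert--Chow half-exceptional class — the map $\phi$ induces on $H^2(X_\Gamma^{[n]},\R)$ is the identity.

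It follows that the isomorphism to be computed is $\phi=\ell_{\mathbf{v}}^{-1}\circ\ell_{n\delta}$, where $\ell_{\bullet}\colon\Theta_{\bullet}\xrightarrow{\sim}H^2(X_\Gamma^{[n]},\R)$ sends the basis vector $e_i$ to $c_1(\mathcal{V}_i)$ for the tautological bundle on the corresponding quiver variety (these span $H^2$ since the framing is $\mathbf{w_0}$, with one relation when $n=1$; the framing character contributes only a single global twist that drops out of the comparison). The two families of tautological bundles arise by the same recipe from the two universal framed sheaves, which are related by $\mathcal{E}^{(\mathbf{v})}=\mathcal{E}^{(n\delta)}\otimes q^{*}\overline{\mathcal{L}}_{\mathbf{v}}$, with $q$ the projection to $\overline{X_\Gamma}$ and $\overline{\mathcal{L}}_{\mathbf{v}}$ an extension of $\mathcal{L}_{\mathbf{v}}$. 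So the core step is a Grothendieck--Riemann--Roch computation: $c_1(\mathcal{V}_i^{(\mathbf{v})})-c_1(\mathcal{V}_i^{(n\delta)})$ is a class on $X_\Gamma^{[n]}$ depending only on the (universal, $\mathbf{v}$-independent) Chern data of $\mathcal{E}^{(n\delta)}$, on $\operatorname{Td}(\overline{X_\Gamma})$, on the McKay representation $\rho_i$, and on $c_1(\overline{\mathcal{L}}_{\mathbf{v}})$. Feeding in $c_1(\mathcal{L}_{\mathbf{v}})=\sum_{j\neq 0}u_j\,c_1(\mathcal{V}_j)$ and the $\operatorname{ch}_2$ formula from Theorem \ref{thm:quivers_are_torsionfree_moduli}(i), and organising the answer in the basis $\{e_i\}$, one reads off that the induced map on $\Theta$ fixes $H_0=\delta^{\perp}$ and sends $e_0$ to $e_0+(-\sum_{j\neq 0}u_j,u_1,\dots,u_r)$ — and this is the unique linear map with those two properties, which is assertion (i). (One can shorten the bookkeeping by using the functorial determinant line bundle $\lambda_\theta$ in place of $\bigotimes(\det\mathcal{V}_i)^{\theta_i}$: twisting the universal sheaf by $\mathcal{L}_{\mathbf{v}}$ acts on $\lambda_\theta$ through the multiplication action of $\operatorname{ch}(\mathcal{L}_{\mathbf{v}})$ on the Mukai lattice, which is precisely a shear by $c_1(\mathcal{L}_{\mathbf{v}})$ corrected back to the relevant level hyperplane.) Compatibility with the wall-and-chamber structure is then immediate from Theorem \ref{thm:birat_sym}, since $\phi$ is an isomorphism over $\operatorname{Sym}^n(\C^2/\Gamma)$ and hence respects the chamber decomposition of $\overline{\operatorname{Mov}}\big(X_\Gamma^{[n]}/\operatorname{Sym}^n(\C^2/\Gamma)\big)$.

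For (ii), when $n=1$ one has $X_\Gamma^{[1]}=X_\Gamma$, so $H^2(X_\Gamma,\R)$ has dimension $|I|-1$ and the classes $c_1(\mathcal{V}_i)$ satisfy a single linear relation; on the $n\delta$-side this is exactly the collapse of the $e_0$-direction (the Hilbert--Chow direction, which degenerates when $n=1$), and on the $\mathbf{v}$-side it is the collapse of the $\phi(e_0)$-direction. Hence $\ell_{n\delta}$ and $\ell_{\mathbf{v}}$ are well-defined only after passing to $\Theta_{n\delta}/\langle e_0\rangle$ and $\Theta_{\mathbf{v}}/\langle\phi(e_0)\rangle$, and the computation of the previous paragraph descends verbatim to these quotients, which is the claim. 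The step I expect to be the main obstacle is the Grothendieck--Riemann--Roch (equivalently, determinant-line-bundle) bookkeeping: one must track the framing-character and determinant normalisations carefully enough that the correction to $e_0$ comes out exactly as $(-\sum_{j\neq 0}u_j,u_1,\dots,u_r)$, i.e.\ to match the GRR correction term with precisely that integral combination of the $e_i$. Everything else — that $\phi$ is the identity of $X_\Gamma^{[n]}$ and an isomorphism over the affine quotient, the identification of the two $\ell$-maps as the only content, and the $n=1$ reduction to a quotient — is formal.
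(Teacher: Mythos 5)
Your route is the ``more natural'' one that the paper explicitly mentions and then declines to take: identify both quiver varieties with $X_\Gamma^{[n]}$ so that $\phi$ becomes the identity, and reduce everything to comparing the two linearization maps $\Theta_{n\delta}\to H^2(X_\Gamma^{[n]},\R)$ and $\Theta_{\mb v}\to H^2(X_\Gamma^{[n]},\R)$ coming from (determinants of) the tautological bundles, i.e.\ to tracking how $c_1(\mc V_i)$ changes when the universal sheaf is twisted by $\mc L_{\mb v}$. That reduction is sound, and your treatment of (ii) as the collapse of the kernel directions $e_0$ and $\phi(e_0)$ is the right picture. The paper instead works entirely on the birational-geometry side: it uses Theorem \ref{thm:birat_sym} to know which hyperplanes can be divisorial, then for each finite root $\alpha$ it computes, via the local Ext-quiver model of Theorem \ref{thm:local_quiver}, that a wall $((\tau+k)\delta\pm\alpha)^\perp$ would have generic contracted fiber a Grassmannian $Gr(\ell,\pm k+2\ell)$, and uses semismallness ($\PP^1$ fibers for a divisorial wall, and matching fiber dimensions across adjacent walls) to pin down the unique divisorial member of each family; the common intersection of these divisorial walls is then the line through $e_0+(-\sum_{i\neq 0}u_i,u_1,\ldots,u_n)$, which yields the shift. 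So the two arguments are genuinely different, and yours, if completed, would arguably be cleaner and would identify the map on all of $\Theta$ directly rather than through the wall pattern.

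The gap is that the step carrying all of the content is not done: you assert that the GRR / determinant-line-bundle bookkeeping ``reads off'' that the induced map is the identity on $H_0$ and shifts $e_0$ by exactly $(-\sum_{i\neq 0}u_i,u_1,\ldots,u_n)$, but you never produce the identity $\sum_i\theta_i\,c_1(\mc V_i^{(\mb v)})=\sum_i\theta'_i\,c_1(\mc V_i^{(n\delta)})$ with $\theta'-\theta$ the claimed vector, and you yourself flag this as the expected obstacle. To close it you would need (a) the precise relation between the tautological bundles on the quiver variety and the universal framed sheaf on $X_\Gamma^{[n]}\times\overline{X_\Gamma}$ (in Nakajima's correspondence $\mc V_i$ is a first derived pushforward of the universal sheaf twisted by the McKay tautological bundle $\mc R_i^{\vee}$ and by $\mc O(-\ell_\infty)$, not a twist of the universal sheaf itself), (b) the verification that the identification $\Theta_{\bullet}\simeq H^2$ implicitly used in the statement is exactly the descended-determinant map, including the framing/character normalization and the sign conventions, and (c) the actual Chern-character computation with $c_1(\mc L_{\mb v})=\sum_{i\neq 0}u_i c_1(\mc V_i)$ and the $\on{ch}_2$ formula of Theorem \ref{thm:quivers_are_torsionfree_moduli}, organized so that the correction lands on the integral vector claimed. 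None of this can be borrowed from the paper, whose proof deliberately avoids this computation, so as written your proposal is a plausible programme rather than a proof; the final sentence about compatibility with walls and the $n=1$ quotient are indeed formal once (a)--(c) are in place.
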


\begin{proof}
For $n> 1$, under the identification $\Theta_{v} = H^2(\quivb{\theta_{Hilb}(\mb{v})}{v}{w_0}, \R)$ we know that a stability condition $\theta$ corresponds to an ample bundle on the corresponding birational model. Theorem \ref{thm:birat_sym} and the isomorphism 
\[H^2(\quivb{\theta_{Hilb}(n\delta )}{n\delta }{w_0}, \R) \simeq H^2(\quivb{\theta_{Hilb}(\mb{v})}{v}{w_0}, \R) \] also imply that every birational model for $\quivb{\theta_{Hilb}(\mb{v})}{v}{w_0}$ over $Sym^n(\C^2/ \Gamma)$ is given by variation of $\theta \in \Theta_v$ such that the birational model corresponding to $\theta$ is the birational model associated to the image of $\theta$ in $H^2(\quivb{\theta_{Hilb}(n\delta )}{n\delta }{w_0}, \R)$. In particular, if $\theta$ lies on a wall for GIT stability on $\Theta_{\mb{v}}$ but $\phi(\theta) \in \Theta_{n\delta}$ does not lie on a wall for GIT stability, then the map 
$\pi_{\theta', \theta}$ from \eqref{eq:semisimplification} for generic adjacent $\theta'$ is an isomorphism onto its image. 

We can also deduce that $\phi: \Theta_{\mb{v}} \to \Theta_{n\delta}$ is an isomorphism the level $0$ hyperplane $H_0$ since one chamber in $H_0$ is a wall of the nef cone of the Hilbert scheme chamber corresponding to $Sym^n(X_\Gamma)$ by Theorem \ref{thm:quivers_are_torsionfree_moduli}, and the other chambers correspond to $Sym^n(\on{Flop}(X_\Gamma))$ the symmetric power on the (isomorphic) surface obtained by flopping some $-2$ curves. 

The remainder of the identification essentially arises by noting that for each finite root wall $\alpha^\perp \subset H_0$ where $\alpha$ is a finite root, there is a unique hyperplane $H$ in the family $\{ (k \delta + \alpha)^\perp | k \in \Z\}$ of all hyperplanes in $\Theta_{\mb{v}}$ intersecting $H_0$ transversally at $\alpha^\perp\cap H_0$ which induces a divisorial contraction. This fact follows from the case $\mb{v} = n\delta$ where it is part of Theorem \ref{thm:birat_sym}. In this case, the unique hyperplane is $\alpha^\perp$, and these hyperplanes intersect at the line spanned by $e_0$. For other $\mb{v}$ we show that the divisorial hyperplanes intersect at the line spanned by $e_0 +(-\sum_{i\neq 0} u_i, u_1, u_2, \ldots,u_n)$.

To show that it actually preserves $H_1$ it is actually necessary to identify something about the birational contractions induced by the other hyperplanes intersecting $\alpha^\perp \cap H_0$. To this end, fix a positive finite root $\alpha = \sum_{i = 1}^n a_i \alpha_i$ where $\alpha_i$ are the simple positive roots.  Let 
\[\tau := a\cdot u = \sum_{i = 1}^n a_i u_i = -a^T C \mb{v}\]

Then for $k \in \Z$, a generic $\theta \in ((\tau + k)\delta + \alpha)^\perp$, if it induces a birational contraction, by Theorem \ref{thm:local_quiver} induces a contraction whose generic singular fiber is the central fiber of $\pi_{\rho, 0}$ for the Ext quiver of the decomposition $\mb v = (\mb v - \ell \beta_k) \oplus  \beta_k^{\oplus \ell} $ where $ \ell \ge 1$ is an integer, $\rho$ is a generic stability condition and 
\[ \beta_k = \begin{cases}(\tau + k)\delta - \alpha & \tau + k > 0 \\ -(\tau + k)\delta + \alpha & \tau + k \le 0\end{cases}. \] 
  This Ext quiver has only a single dimension node and no loops, and dimension $\ell$ at this node.  Also the framing dimension $w_\ell$ is $ - ((1,\mb{v} - \ell \beta_k), (0,\beta_k) )$ where $(-,-)$ is the Cartan pairing for the quiver $Q_\infty$ with Cartan matrix 
  \[C_\infty = \left( \begin{array}{c|cc c}
    2&-1&& \\ \hline
    -1&&&\\
    &&C&\\
    &&&
    \end{array}\right).\]
 Let $v_\infty = (1, \mb v)$ and identify $\beta_k$ with $(0, \beta_k)$. Thus we can calculate (in the $\tau + k > 0$ case) 
  \begin{align*}
     w_\ell &= - (v_\infty - \ell\beta_k)^T C_\infty\beta_k\\
    &=  -v_\infty^T C_\infty\beta_k + 2\ell \\
&= (\tau + k) - \mb{v}^T C ((\tau + k)\delta - \alpha)  + 2\ell \\
&= \tau + k + \mb{v}^T C \alpha + 2\ell \\
&= k + 2\ell
  \end{align*}
by the fact that $\tau = -\mb{v}^T C \alpha$. In the case $\tau + k \le 0$ the same calculation shows that $w_\ell = -k + 2\ell $.  The central fiber of $\pi_{\rho , 0}$ of a quiver with one node, dimension $\mb{v} = \ell$ and framing $\mb{w_\ell} = \pm k + 2\ell$ is a Grassmannian $Gr(\ell, \pm k + 2\ell)$. We know from the $\mb{v} = n\delta$ case that all of these fibers, if the relevant wall induces a contraction, are $\PP^r$ for some $r$. Thus either $\ell = 1$ and $\pm k \ge 0$ or $\ell = \pm k + 2\ell - 1$ so $\ell = \mp k  + 1$. If $\ell \neq 1$ we must have $\pm k < 0$.  

Since for a semismall map, a divisorial contraction must have a curve as its generic positive dimensional fiber, the fiber must be $\PP^1$ and we can identify which $k$ corresponds to the unique wall in the family inducing a divisorial contraction. Namely, the divisorial wall must be when $k = 0$, with $\ell = 1$.  

Then since walls for adjacent $k$ have the same difference in dimension of generic positive dimensional fibers as do walls for adjacent $k$ for the dimension vector $n\delta$, the level 1 hyperplane is preserved. This actually only needs to be checked for $k \ge 0$, in which case adjacent walls $((\tau + k)\delta - \alpha)^\perp$ and $((\tau + k+1)\delta - \alpha)^\perp$ induce contractions with generic positive dimensional fibers $\PP^{k+1}$ and $\PP^{k+2}$ respectively, the same as for $\mb{v} = n\delta$.

But by letting $\alpha$ range over $\alpha_i$, we know that the shift at the level 1 hyperplane is exactly by $u$, since the line spanned by $\phi(e_0) = \cap_{i =1}^n \phi(\alpha_i^\perp)$ is the intersection of $(\tau_i \delta - \alpha)^\perp$ for $i = 1, \ldots, n$. 

For $n = 1$ the exact same argument gives the identification between $\Theta_{\mb v}$ and $\Theta_{n\delta}$ and the rest of the result follows from the fact that the map from a stability vector in $\Theta_{n\delta}$ to the line bundle in $H^2(X_\Gamma)$ is equivalent to forgetting the  $e_0$ component of $\theta$.  
\end{proof}

\begin{figure}[h!]
    \centering
  \includegraphics[width=.72\linewidth]{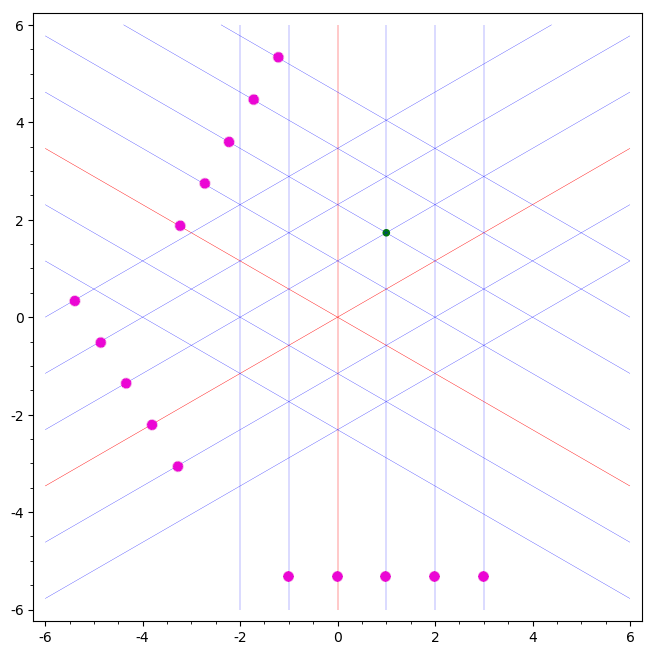}
  \caption{Level 1 hyperplanes for stability space of the affine $A_2$ quiver for framing vector $\mb{w_0}$ and dimension vector $\mb v = 3\delta + e_0 = (4,3,3)$. The red lines correspond to the walls $e_\alpha^\perp$ for finite roots $e_{\alpha}$ and the green dot is placed in the center of all of the divisorial walls other than the $\delta^\perp$ wall. A purple dot is placed on every wall actually inducing a birational contraction. Compare with Figure \ref{fig:3delta}}.
\end{figure}
\newpage
\section{Birational geometry of Hilbert schemes of points on K3 surfaces}\label{sec:birational_hilb}
The work of Bellamy and Craw in \cite{bellamy2018birational} was intended to describe the birational geometry for $X_\Gamma^{[n]}$ in a way analogously to how it had been done for moduli spaces of complexes of sheaves on a K3 surface in \cite{bayer2014mmp, bayer2014projectivity}, which we now recall. Throughout this section $S$ denotes a K3 surface. 

\subsection{Stability conditions for K3 surfaces}\label{sec:stabk3}
Let $\beta, \omega \in \NS(S) \otimes \R$ with $\omega \in \Amp(S)\otimes \R$. Define
\begin{equation}\label{eq:cc}
  Z_{\omega, \beta}(E) = (e^{\beta + i\omega}, v(E))
\end{equation}
to be the pairing of the exponential of the complexified K\"ahler class with the Mukai vector of $E \in D^b(S)$.
Let
\begin{equation}\label{eq:p_stab}
  \mc{P}_0^+(S)\subset H^*_{alg}(S, \Z)\otimes \C
\end{equation} 
be the set of $\Omega$ such that
\begin{itemize}
  \item The real and imaginary parts of $\Omega$ span a positive definite $2$-plane in $H^*_{alg}(S, \Z)\otimes \R$.
  \item For $\Omega\in \mc{P}_0^+(S)$, we have $(\Omega, s) \neq 0$ for all $s$ spherical, i.e. with $(s,s) = -2$.
  \item The orientation of the 2-plane spanned by $\Omega$ agrees with that of $\Omega = e^{\beta + i\omega}$.
\end{itemize}
This allows us to write down a description of the \emph{Bridgeland component} of the space of stability conditions on $D^b(S)$. Let 
\[ \mc{Z}: \Stab(S) \to  H^*_{alg}(S, \Z)\otimes \C\]
denote the map sending a stability condition $\sigma$ to the vector $\mc{Z}(\sigma)$ such that the central charge of $\sigma$ is $( \mc{Z}(\sigma), -)$.  

\begin{thm}[Bridgeland \cite{bridgeland2008stability}]\label{thm:bridgeland_cmp}
There is a connected component $\Stab^\dagger(S) \subset \Stab(S)$ such that $\mc{Z}: \Stab^\dagger(S) \to H^*_{alg}(S, \Z) \otimes \C$ is a covering map over $\mc{P}_0^+(S)$.
\end{thm}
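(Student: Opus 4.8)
The plan is to reproduce Bridgeland's argument, which has three ingredients: an explicit open family of stability conditions produced by tilting, the identification of $\Stab^\dagger(S)$ as the connected component these span (together with the verification that $\mc{Z}$ carries it into $\mc{P}_0^+(S)$), and the passage from ``local homeomorphism'' (supplied by deformation theory) to ``covering map''. \emph{Geometric stability conditions.} Fix $\beta,\omega\in\NS(S)\otimes\R$ with $\omega$ ample. Using $\omega$-slope stability on $\on{Coh}(S)$, build the standard torsion pair with threshold $\beta\cdot\omega$ and tilt to obtain a heart $\mc{A}_{\omega,\beta}\subset D^b(S)$. When $\omega^2$ is large enough --- equivalently, when the central charge $Z_{\omega,\beta}$ of \eqref{eq:cc} annihilates no nonzero object of $\mc{A}_{\omega,\beta}$ --- one verifies the positivity axiom and the Harder--Narasimhan property, so $\sigma_{\omega,\beta}=(Z_{\omega,\beta},\mc{A}_{\omega,\beta})$ is a stability condition; the skyscrapers $\mc{O}_x$ are $\sigma_{\omega,\beta}$-stable of equal phase, which fixes the orientation, and one computes $\mc{Z}(\sigma_{\omega,\beta})=e^{\beta+i\omega}\in\mc{P}_0^+(S)$. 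Letting $\beta,\omega$ vary and acting by the standard $\widetilde{\mathrm{GL}}^+(2,\R)$-action produces an open set $U(S)\subset\Stab(S)$ on which $\mc{Z}$ is injective with explicitly described open image inside $\mc{P}_0^+(S)$.

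\emph{The component and its image.} Set $\Stab^\dagger(S)$ to be the connected component of $\Stab(S)$ containing $U(S)$. By Bridgeland's deformation theorem $\mc{Z}$ is a local homeomorphism onto an open subset of $H^*_{alg}(S,\Z)\otimes\C$, hence so is $\mc{Z}|_{\Stab^\dagger(S)}$. Its image lies in $\mc{P}_0^+(S)$: positive-definiteness of the $2$-plane spanned by the real and imaginary parts of $\mc{Z}(\sigma)$, and the orientation condition, are open and locally constant, so they are inherited from $U(S)$; and $(\mc{Z}(\sigma),s)\neq 0$ for every spherical class $s$, since after applying a suitable autoequivalence (or by a semistable-reduction argument) $s$ is represented by a $\sigma$-semistable object of positive mass, whose central charge cannot vanish.

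\emph{Covering.} It remains to upgrade ``local homeomorphism into $\mc{P}_0^+(S)$'' to ``covering onto $\mc{P}_0^+(S)$'', i.e.\ to establish surjectivity and path lifting. The group generated by shifts, tensoring by $\on{Pic}(S)$, and spherical twists acts on $\Stab^\dagger(S)$ covering an action on $\mc{P}_0^+(S)$ generated by reflections in the hyperplanes $s^\perp$ (for $s$ spherical) together with lattice translations; one shows that crossing a wall of $U(S)$ --- where some spherical object acquires the phase of an adjacent stable factor --- is realized by a spherical twist, so the translates of the explicit chamber $\mc{Z}(U(S))$ cover $\mc{P}_0^+(S)$ and $\mc{Z}$ is surjective. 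For path lifting, given a path $\Omega_t$ in $\mc{P}_0^+(S)$ and a lift $\sigma_0$ of $\Omega_0$, the only obstruction to extending the lift over all of $[0,1]$ is the lifted path leaving every compact subset of $\Stab^\dagger(S)$ while $\Omega_t$ stays in a compact $K\subset\mc{P}_0^+(S)$; this is precluded by a support-property bound of the form $|Z(E)|\ge c\,\|v(E)\|$, uniform over $\mc{Z}^{-1}(K)$, which forces the wall-and-chamber decomposition of $\mc{Z}^{-1}(K)$ to be locally finite. Combining surjectivity with path lifting gives the covering property.

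\emph{Main obstacle.} As in Bridgeland's original proof, the substantive points are the verification of the Harder--Narasimhan property for $\sigma_{\omega,\beta}$ in the first step, and the support-property/local-finiteness input in the third step --- without the latter the lifted path could a priori escape to the frontier of $\Stab^\dagger(S)$ over a compact set in $\mc{P}_0^+(S)$, so that $\mc{Z}$ would be only a local homeomorphism and not a covering. A further subtlety is the wall-crossing bookkeeping needed to confirm that the autoequivalence translates of the geometric chamber tile $\mc{P}_0^+(S)$ with no gaps, which is what pins the image down to be all of $\mc{P}_0^+(S)$ rather than a proper open subcovering.
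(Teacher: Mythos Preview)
The paper does not give its own proof of this statement: it is quoted as a result of Bridgeland \cite{bridgeland2008stability} and used as background, with no argument supplied beyond the citation. Your proposal is a reasonable high-level outline of Bridgeland's original proof, so in that sense there is nothing to compare against in the present paper --- the ``paper's own proof'' is simply the reference.

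That said, one caution about your sketch on its own terms: the surjectivity/tiling step is not quite how Bridgeland establishes the covering property. You invoke an action of autoequivalences ``covering an action on $\mc{P}_0^+(S)$ generated by reflections in the hyperplanes $s^\perp$'' and assert that translates of the geometric chamber tile $\mc{P}_0^+(S)$; this tiling picture is essentially Bridgeland's \emph{conjecture} (that the covering is the universal cover, with deck group the relevant autoequivalence group), not the proven input. What Bridgeland actually proves is more modest: he analyzes the codimension-one boundary of $U(S)$ (the $(A^\pm)$ and $(C_k)$ walls the present paper recalls around \eqref{eq:uwalls_stab}), shows that each boundary component is carried into $\overline{U(S)}$ by an explicit autoequivalence, and from this deduces that $\mc{Z}$ restricted to $\Stab^\dagger(S)$ has image exactly $\mc{P}_0^+(S)$ and satisfies path lifting. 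So your ``main obstacle'' paragraph correctly flags the delicate point, but the resolution you sketch (global tiling by reflections) overstates what is available; the actual argument is a local boundary analysis rather than a global group-theoretic one.
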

Let $U(S)$ denote the subset of $\Stab^\dagger(S)$ such that all skyscraper sheaves are stable of the same phase. In \cite{bridgeland2008stability} it is shown that the universal cover $\widetilde{\GL}^+(2, \R)$ (which doesn't change the classification of objects as stable, semistable, etc.) acts freely on $U(S)$, and for every $\sigma\in U(S)$ there is a unique element of $g$ such that $g\sigma$ has central charge $Z_{\omega, \beta}$ given by \eqref{eq:cc} and skyscraper sheaves are stable of phase 1. Thus following Bridgeland define 
\begin{equation}
  \label{eq:stab_v}
  V(S) := \{ \sigma \in U(S) \mid \mc{Z}(\sigma) = e^{\beta + i\omega}, \text{ each } \mc{O}_x \text{ is stable of phase } 1 \}
\end{equation}
where $\beta$ and $\omega$ run over $\NS(S)\otimes \R$ with $\omega$ positive.
Restricted to $V(S)$ the map $\mc{Z}$ is a homeomorphism 
\[ \mc{Z}: V(S) \to \mc{L}(S)\]
where 
\begin{equation}
  \label{eq:Lstab}
  \mc{L}(S) := \{e^{\beta + i\omega} \mid \omega \in \Amp(S),~ (e^{\beta + i\omega}, \delta)\not \in \R_{\le 0} \text{ if } \delta^2 = -2 \text{ and } r(\delta) > 0 \}
\end{equation} 
where $v(\delta) = (r(\delta), c_1(\delta), s(\delta))$ gives the $r(\delta)$ component of the Mukai vector of $\delta$. 

The boundary $\partial U$ of this set of stability conditions is understood by Bridgeland \cite[Theorem 12.1]{bridgeland2008stability} in terms of walls which correspond to destabilizing sequences for skyscraper sheaves with respect to spherical twists.
Then walls are denoted 
\begin{equation}
(A_+), (A_-), (C_k)\label{eq:uwalls_stab}
\end{equation}
depending on the destabilizing object. The $(A_+)$ and$(A_-)$ cases corresponds to a spherical vector bundle where all skyscraper sheaves are destabilized and is not relevant to us, and when $\sigma$ lies generically on the $(C_k)$ wall for $k \in \Z$ and a smooth rational curve $C\in S$, then 
$k(x)$ is $\sigma$-stable for $x\not \in C$ and if $x\in \C$ then the destabilizing triangle is 
\[ \mc{O}_C(k+1)\to k(x) \to \mc{O}_C(k)[1] \to  \]
which is exactly the triangle defining the spherical twist of $k(x)$ by $\mc{O}_C(k+1)$.
\subsection{Matsuki-Wentworth twisted stability}
Under mild hypotheses, Bridgeland stability reduces at large volume $\omega^2 \gg 0$ to a twisted version of Gieseker stability, introduced earlier by Matsuki and Wentworth \cite{matsuki_mumford_1997}. This fact will be used so we recall the relevant definitions and equivalence at large volume. 

Given $\beta, \omega \in \NS(S)\otimes \R$ with $\omega$ ample, and a torsion-free sheaf $E$ with $v(E) = (r, c_1, s)$, let
\begin{equation}\label{eq:slope_mw}\begin{split}
   \mu_{\beta, \omega}(E) &= \frac{(c_1 - r\beta) \cdot \omega }{r}\\
   \nu_{\beta, \omega}(E) &= \frac{s - c_1 \cdot \beta }{r}
\end{split}
\end{equation}
\begin{defn}\label{def:matsuki_wentworth}
The torsion-free sheaf $E$ is $(\beta, \omega)$-twisted semistable if for every subsheaf $0 \neq A \subset E$ either $\mu_{\beta, \omega}(A) < \mu_{\beta, \omega}(E)$ or ($\mu_{\beta, \omega}(A) = \mu_{\beta, \omega}(E)$ and $\nu_{\beta, \omega}(A) \le \nu_{\beta, \omega}(E)$). 
\end{defn}
With this definition
\begin{prop}[Bridgeland \cite{bridgeland2008stability}~ Prop. 14.2]\label{prop:bridge_is_twist}
For $\omega$ ample with $\omega^2 \gg 0$ and fixed $\beta$ there is a unique stability condition $\sigma \in V(S)$ with $\mc{Z}(\sigma) = e^{\beta + i \omega}$ and if $E$ is an object with Mukai vector $(r, c_1, s)$ with $r > 0$ and $(c_1 - r\beta)\cdot \omega > 0$ then 
\[ E \text{ is }\sigma\text{-semistable} \Leftrightarrow E[k] \text { is } (\beta, \omega)\text{-twisted semistable for some } k. \]
\end{prop}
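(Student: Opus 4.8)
The plan is to follow Bridgeland's original argument, which splits into a uniqueness statement and a large-volume identification of the semistable objects. For uniqueness one checks that $e^{\beta+i\omega}$ lies in the region $\mathcal{L}(S)$ of \eqref{eq:Lstab} as soon as $\omega$ is ample with $\omega^2\gg 0$: the class $\omega$ is ample by hypothesis, and for a spherical $\delta$ with $r(\delta)>0$ one reads off from \eqref{eq:cc} that $\mathrm{Im}(e^{\beta+i\omega},\delta)=(c_1(\delta)-r(\delta)\beta)\cdot\omega$, while using $\delta^2=-2$ and the Hodge index theorem one sees that $\mathrm{Re}(e^{\beta+i\omega},\delta)$ is dominated by the term $r(\delta)\omega^2/2>0$ when the imaginary part vanishes; hence $(e^{\beta+i\omega},\delta)\notin\mathbb{R}_{\le 0}$ once $\omega^2$ is large. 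The homeomorphism $\mathcal{Z}\colon V(S)\to\mathcal{L}(S)$ recalled above, together with the defining property \eqref{eq:stab_v} of $V(S)$, then produces a unique $\sigma=\sigma_{\beta,\omega}\in V(S)$ with $\mathcal{Z}(\sigma)=e^{\beta+i\omega}$; this is where largeness of the volume first enters.

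Next I would recall the description of $\sigma_{\beta,\omega}$: its heart $\mathcal{A}(\beta,\omega)$ is the tilt of $\operatorname{Coh}(S)$ at the torsion pair whose torsion part $\mathcal{T}$ is generated by all torsion sheaves and by the $\mu_{\beta,\omega}$-semistable sheaves of positive slope, and whose torsion-free part $\mathcal{F}$ consists of the $\mu_{\beta,\omega}$-semistable sheaves of non-positive slope. Writing $\omega=t\omega_0$, formula \eqref{eq:cc} gives $\mathrm{Im}Z_{t\omega_0,\beta}(E)=t(c_1-r\beta)\cdot\omega_0$ and $\mathrm{Re}Z_{t\omega_0,\beta}(E)=rt^2\omega_0^2/2+O(1)$, so any object of positive rank and positive twisted degree has phase tending to $0^+$ as $t\to\infty$. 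I would then argue in two moves. First, a $\sigma$-semistable $E$ with $v(E)=(r,c_1,s)$, $r>0$ and $(c_1-r\beta)\cdot\omega>0$ lies, up to shift, in $\mathcal{A}$, and from the exact sequence $0\to H^{-1}(E)[1]\to E\to H^0(E)\to 0$ in $\mathcal{A}$ one sees that $H^{-1}(E)[1]$ has phase $\to 1$ and any torsion subsheaf of $H^0(E)$ has phase bounded away from $0$, whereas $\phi_\sigma(E)\to 0$; hence for $t$ large $E$ is a torsion-free sheaf in $\mathcal{T}$, and conversely such sheaves are objects of $\mathcal{A}$. Second, for $E$ a torsion-free sheaf in $\mathcal{T}$ a subobject $A\hookrightarrow E$ in $\mathcal{A}$ is a sheaf equipped with a map $A\to E$ whose kernel lies in $\mathcal{F}$ and whose cokernel automatically lies in $\mathcal{T}$ (as $\mathcal{T}$ is closed under quotients and $E\in\mathcal{T}$, so $\mu_{\beta,\omega_0}(E)>0$); since objects of $\mathcal{F}$ have non-positive twisted degree, a leading-order slope comparison shows that if such an $A$ destabilizes $E$ then so does its image in $E$, an honest subsheaf. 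Thus $\sigma$-semistability of $E$ becomes equivalent to $\phi_\sigma(A')\le\phi_\sigma(E)$ for all subsheaves $A'\subseteq E$.

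It then remains to compare $\phi_\sigma$ with twisted Gieseker stability. Expanding $\arg Z_{t\omega_0,\beta}$ in powers of $1/t$, the leading term orders sheaves by $\mu_{\beta,\omega_0}$, and among sheaves of equal $\mu_{\beta,\omega_0}$ the next term orders them by $\nu_{\beta,\omega_0}$ up to an additive constant depending only on $\beta$; this is precisely the lexicographic order in Definition \ref{def:matsuki_wentworth}. Hence for $t$ sufficiently large the inequality from the previous paragraph holds for all subsheaves of $E$ if and only if $E$ is $(\beta,\omega_0)$-twisted semistable, and the same expansion run in the opposite direction handles the converse implication.

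The main obstacle is uniformity in $t$: the quantifier ``$\omega^2\gg 0$'' must be chosen once and for all, valid simultaneously for every potential destabilizer of every $\sigma$-semistable object of the fixed Mukai vector $(r,c_1,s)$. This follows from boundedness — the stack of $\sigma$-semistable objects of a fixed class is of finite type, and the $\mu_{\beta,\omega_0}$-semistable subsheaves of $E$ whose slope equals that of $E$ have bounded discrete invariants by the Bogomolov inequality together with Grothendieck's boundedness theorem — so only finitely many Mukai vectors of sub- and quotient objects can occur, and for each of them the sign of the relevant $1/t$-comparison stabilizes. Everything else is routine tilting bookkeeping together with the central-charge expansion of \eqref{eq:cc}.
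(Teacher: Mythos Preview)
The paper does not give its own proof of this proposition: it is stated with attribution to Bridgeland \cite{bridgeland2008stability}, Prop.~14.2, and used as a black box. So there is nothing in the paper to compare your argument against.

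That said, your proposal is a faithful reconstruction of Bridgeland's original argument and is essentially correct. The uniqueness step via membership in $\mathcal{L}(S)$, the identification of $\sigma$-semistable objects of small phase with torsion-free sheaves in $\mathcal{T}$, the reduction from subobjects in the tilted heart to honest subsheaves, and the $1/t$-expansion matching the phase ordering with the lexicographic $(\mu_{\beta,\omega},\nu_{\beta,\omega})$-ordering are all the right moves. One small wrinkle: in your reduction step, when you pass from a subobject $A\hookrightarrow E$ in $\mathcal{A}$ to its sheaf-theoretic image $I\subset E$, you should note that $I$ need not lie in $\mathcal{T}$, so one compares $\phi_\sigma(I)$ and $\phi_\sigma(E)$ directly via the central charge rather than via membership in the torsion pair; the see-saw inequality for the sheaf sequence $0\to K\to A\to I\to 0$ with $K\in\mathcal{F}$ then gives $\phi_\sigma(A)\le\phi_\sigma(I)$ for large $t$ (since $\phi_\sigma(K[1])\to 1$ would go the wrong way --- rather, one uses that $\mathrm{Im}\,Z(K)\le 0$ and $\mathrm{Re}\,Z(K)>0$ for $t\gg 0$ to control the phase of $A$). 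This is a bookkeeping point and does not affect the overall structure. Your handling of uniformity via boundedness is also the right idea.
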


\subsection{Moduli spaces of stable complexes}
It is interesting that if one is only interested in studying the birational geometry of $S^{[n]}$ it is necessary to understand the moduli space of stable objects in $D^b(S)$. We briefly review the construction of projective moduli spaces of Bridgeland stable objects in $D^b(S)$ completed in \cite{bayer2014projectivity} and important facts about these moduli spaces.
\paragraph{Mukai homomorphism}
Let $\sigma\in \Stab^\dagger(S)$ be a stability condition on $S$, and let $\mc{E}\to B$ be a family of semistable objects of the same phase in $D^b(S)$ with Mukai vector $v$. Consider $\Phi_\mc{E}: D^b(S) \to D^b(B, \alpha)$, the convolution, or Fourier-Mukai functor, with image in $\alpha$-twisted objects on $B$. Let the \emph{Mukai homomorphism}
\begin{equation}\label{eq:mukai_homo}
  \theta_v : v^\perp \to \NS(B)
\end{equation}
be defined by requiring that
\[\theta_v(w)\cdot C = (w, \mathbf{v}(\Phi_{\mc E}(\mc O_C))).\]
A key property proved for Moduli spaces of sheaves in \cite{yoshioka2001moduli} is that if $v^2 \ge 0$ and $B$ is the moduli space $M_\sigma(S)$ for generic $\sigma$, such that $M_\sigma(v)$ is equivalent to a moduli spaces of sheaves under a derived equivalence, then
\[ \theta_v : v^\perp \xrightarrow{~} \NS(M_\sigma(v))\]
is an isomorphism.
\begin{rmk}
\begin{enumerate}[(i)]
  \item When $v^2 = 0$ this is really an isomorphism $\theta_v : v^\perp/ v \xrightarrow{~} \NS(M_\sigma(v))$.
  \item For any primitive $v$ and generic $\sigma$ since $M_\sigma(v)$ is an irreducible holomorphic symplectic variety, $\NS(M_\sigma(v))$ admits on general grounds the Beauville-Bogomolov form, which is identified with the Mukai pairing under $\theta_v$. Thus for example we can interpret constructions such as Proposition \ref{prop:markman_refl} defined in $\NS(M_\sigma(v))$ with respect to the Beauville-Bogomolov form concretely with respect to the Mukai lattice $H^*_{alg}(S, \Z)$.
\end{enumerate}
\end{rmk}

\paragraph{Construction of projective moduli spaces} We now give some details on the construction of the moduli spaces of stable objects, which will be necessary later.
\begin{const}\label{const:bridgeland_stable_spaces} Let $\sigma\in \Stab^\dagger(S)$ be a stability condition and $v = mv_0\in H^*_{alg}(S, \Z)$ a Mukai vector with $m > 0$ and $v_0$ primitive. Assume $v_0^2 \ge - 2$ so the space is non-empty. We construct the moduli space $M_\sigma(v)$ of $\sigma$-semistable objects with Mukai vector $v$ together with an ample line bundle, in some cases including those relevant to our situation.
  \subparagraph{Step 1} If $v_0^2 = -2$ then all semistable objects are $S$-equivalent and the
  corresponding moduli space $M_\sigma(v)$ is a point. Likewise, if $\sigma$ is generic and $v_0^2
  = 0$ then when $m = 1$, $M_\sigma(v)$ is a projective K3 surface and there is a derived
  equivalence $D^b(S) \simeq D^b(M_\sigma(v), \alpha)$ between $S$ and twisted derived category on
  $M_\sigma(v)$. When $m > 1$ we have $M_\sigma(v) \simeq \on{Sym}^n(M_\sigma(v_0))$.
  \subparagraph{Step 2} (Finding coarse moduli space for generic $\sigma$) Now we can assume $v^2 > 0$. Assume $\sigma$ is generic. Deforming $\sigma$
  slightly within the chamber (which doesn't affect stability, or therefore the moduli stacks) we
  can find primitive $w$ such that $w^2 = 0$, and such that $Z(w)$ and $Z(v)$ are positive real scalar multiples
  of each other. Further, there is a  Fourier-Mukai transform  $\Phi : D^b(S) \simeq D^b(M_\sigma(w),
  \alpha)$ as in step 1 and under $\Phi$ we have  $\Phi (\sigma) \in U \subset \Stab^\dagger(M_\sigma(w))$, so let it be
  equivalent up to $\widetilde{\GL}(2, \R)$ to one with central charge $Z_{\omega', \beta'}$. It
  turns out that under $\Phi$ composed with a shift $[-1]$, $\sigma$ stability for objects of Mukai
  vector $v$ is equivalent to $\omega'$-Gieseker stability for objects of Mukai vector $-\Phi(v)$ essentially due to Proposition \ref{prop:bridge_is_twist},
  so the Gieseker moduli space $M_{\omega'}(-\Phi(v))$ is the desired coarse moduli space.
  \subparagraph{Step 3} (Ample line bundle for generic $\sigma$ and $v$ primitive) Suppose $v^2 \ge 0$ with $v$ primitive. We have that $M_\sigma(v)$ from Step 2 is an irreducible holomorphic symplectic manifold with universal family $\mc E$, and Fourier-Mukai transform $\Phi_{\mc{E}} : D^b(M_\sigma(v)) \to D^b(S)$.
Pick a stability condition equivalent to $\sigma$ up to $\widetilde{\GL}(2, \R)$ with $Z(v) = -1$, and let $\Omega_Z$ be defined by the central charge by requiring that $Z(~) = (\Omega_Z, ~)$. Then a key result of \cite{bayer2014projectivity} is that under the Mukai homomorphism \eqref{eq:mukai_homo} the class
\begin{equation}\label{eq:bm_ample}
\ell_\sigma := \theta_v(\on{im}(\Omega_Z))\in \NS(M_\sigma(v))
\end{equation}
is ample. In particular, $M_\sigma(v)$ is projective.
\subparagraph{Step 4} (What happens for non-generic stability conditions, primitive $v$) Now let $\sigma_0$ lie on a wall but be generic in this wall, and let $\sigma_+, \sigma_-$ lie in chambers on either side of the wall. Since
the definition \eqref{eq:bm_ample} makes sense when the base is any family $S$ instead of the moduli space $M_\sigma(v)$, and since $\mc{E}_\pm \to M_{\sigma_\pm}(v)$ are in particular families of $\sigma_0$ semistable objects, we get line bundles $\ell_{\sigma_0, \mc{E}_\pm}$ on $M_{\sigma_\pm}(v)$, which are big and nef, and induce morphisms
\[\pi^{\pm}: M_{\sigma_\pm}(v) \to M_{\pm}\]
to normal projective irreducible varieties contracting $S$-equivalent objects. Even when there is a natural identification $M_+ = M_-$, this $M_+ = M_-$ is not really a natural definition for $M_{\sigma_0}(v)$, because there may be polystable sheaves which are not accessible as sums of Jordan-Holder factors of $\sigma_+$-stable objects of Mukai vector $v$. For example, take a spherical object $S$ with Mukai vector $s$ and any primitive Mukai vector $v$ with $v^2 \ge -2$. Then in a stability condition $\sigma_0$ where the phase of $v$ overlaps with that of $S$, and $S$ is $\sigma_0$-stable, and any $\sigma_0$-semistable object $\mc{E}$ of Mukai vector $v$, we have that $\mc{E} \oplus S^{\oplus k}$ is $\sigma_0$-semistable for any $k > 0$. But $(v + ks)^2 = v^2 + 2k(v,s) -2k^2$ so for large enough $k$ there are no semistable objects of Mukai vector $(v + ks)$ for an adjacent generic stability condition.  Compare this with the quiver variety case where the map \eqref{eq:semisimplification} is not in general surjective. 

\end{const}
\subsection{Bayer-Macr\`i description of MMP} In addition to the projectivity of the moduli spaces, Bayer and Macr\`i give an comprehensive description of the birational geometry of the same moduli spaces\cite{bayer2014mmp} based on the analytic map for a chamber $\mc{C}\subset \Stab^\dagger(S)$ and $\sigma \in \mc{C}$
\begin{equation}\label{eq:lmap}\begin{split}
\ell : \mc{C} &\to \NS(M_\sigma(v))\\
\sigma' &\mapsto \ell_{\sigma'}\end{split}
\end{equation}
sending a stability condition in the chamber to the corresponding line bundle from \eqref{eq:bm_ample}. The following description follows very closely \cite{bayer2014mmp, bayer2014projectivity}, with the modification described in Remark \ref{rmk:no_bounce}.
These results relate this description of cones with its wall and chamber structure to the wall and chamber structure of $\Stab^\dagger(S)$ via the map \eqref{eq:lmap}.
\begin{thm}[Bayer-Macr\`i\cite{bayer2014mmp} Thm. 1.1, Thm. 1.2]\label{thm:bm_mmp} Fix a generic basepoint $\sigma \in \Stab^\dagger(S)$ and $v\in H^*_{alg}(S, \Z)$ primitive with $v^2 > 0$.
  \begin{enumerate}[(i)]
    \item Given $\tau \in \Stab^\dagger(S)$ generic there is a birational transformation $M_\tau(v) \simeq M_\sigma(v)$. These birational transformations can be chosen so that if we identify $H^2(M_{\tau}(v), \Z)$ for different $\tau$ using these birational transformations, the maps $\ell: \mc{C} \to \NS(M_\sigma(v))$ for different chambers $\mc{C}$ glue to give an analytic map
    \begin{equation}\label{eq:lstab}
      \ell : \Stab^\dagger(S) \to \NS(M_\sigma(v)).
    \end{equation}
    \item The image of $\ell$ is the positive cone $\Pos(M_\sigma(v))$ in $\NS(M_\sigma(v))$.
    \item The map $\ell$ is compatible with the decomposition of $\Pos(M_\sigma(v))$ where $\Pos(M_\sigma(v))$ if first decomposed into chambers for the group $W_{Exc}$ from Proposition \ref{prop:markman_refl} and each chamber is then decomposed into (the Weyl reflection of) the decomposition \eqref{eq:amp_decomp} of the moveable cone.  The image $\ell(\sigma')$ of a generic stability condition $\sigma'$ corresponds to the birational model $M_{\sigma'}(v)$.
    \item The image of a chamber $\mc{C}$ is exactly the ample cone $\Amp(M_{\mc{C}}(v))$ of the corresponding birational model.
  \end{enumerate}
\end{thm}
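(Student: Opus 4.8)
The plan is to follow Bayer and Macr\`i \cite{bayer2014mmp}, whose two essential inputs are already in hand: the Positivity Lemma of \cite{bayer2014projectivity}, asserting that the class $\ell_\sigma$ of \eqref{eq:bm_ample} is ample on $M_\sigma(v)$ for generic $\sigma$ and is big and nef --- with strictly positive degree on curves not contracted by the $S$-equivalence map --- when $\sigma$ lies generically on a wall; and the Mukai homomorphism \eqref{eq:mukai_homo}, which for primitive $v$ with $v^2>0$ is an isometry $\theta_v\colon v^\perp \xrightarrow{\sim} \NS(M_\sigma(v))$ intertwining the Mukai pairing with the Beauville--Bogomolov form. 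These reduce the theorem to a bookkeeping statement about how $\ell$ transports the (known) wall structure on $\Stab^\dagger(S)$ to $\NS(M_\sigma(v))$.

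For part (i), I would connect the generic basepoint $\sigma$ to an arbitrary generic $\tau$ by a path meeting finitely many walls, and at each wall $\mc W$ produce a canonical birational identification: for $\sigma_0$ generic on $\mc W$ and $\sigma_\pm$ in the two adjacent chambers, the locus of $\sigma_0$-stable objects of Mukai vector $v$ is a common open subset of $M_{\sigma_+}(v)$ and $M_{\sigma_-}(v)$, and since both are smooth projective of dimension $v^2+2$ this gives a birational map $M_{\sigma_+}(v)\dashrightarrow M_{\sigma_-}(v)$ which is an isomorphism outside codimension $\ge 2$ unless $\mc W$ is divisorial, in which case the contractions $\pi^\pm\colon M_{\sigma_\pm}(v)\to M_\pm$ from Step~4 of Construction~\ref{const:bridgeland_stable_spaces} still provide a compatible identification of smooth models. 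Using these identifications of $H^2(M_{\sigma_\pm}(v),\Z)$, and the facts that $\Omega_Z$ varies analytically with $\sigma$ and $\theta_v$ is linear, one sees $\ell$ is analytic on each chamber and glues to the global analytic map \eqref{eq:lstab}, taking on each wall the nef (non-ample) value supplied by the Positivity Lemma. For part (ii), Theorem~\ref{thm:bridgeland_cmp} gives that $\mc Z$ surjects $\Stab^\dagger(S)$ onto $\mc P_0^+(S)$; after normalizing $Z(v)=-1$, the class $\im(\Omega_Z)$ ranges over exactly the component of $\{u\in v^\perp\otimes\R \mid (u,u)>0\}$ containing the imaginary part of a large-volume central charge, and $\theta_v$ carries this isometrically onto $\Pos(M_\sigma(v))$, so $\ell(\Stab^\dagger(S))=\Pos(M_\sigma(v))$; combined with (i), each chamber maps into the ample cone of its model and each wall onto a shared boundary of two such cones.

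The main obstacle is parts (iii) and (iv): matching the wall structure on $\Stab^\dagger(S)$ with the decomposition of $\Pos(M_\sigma(v))$ into $W_{Exc}$-chambers, each further subdivided into ample cones of birational models as in \eqref{eq:amp_decomp}. This rests on the lattice-theoretic classification of walls: a wall $\mc W$ is controlled by the rank-$2$ primitive sublattice $H_{\mc W}\subset H^*_{alg}(S,\Z)$ spanned by $v$ and the class of a $\sigma_0$-destabilizing subobject, and one must enumerate the possible classes $w$ in $H_{\mc W}$ according to the signs of $w^2$, $(v-w)^2$ and $(w,v-w)$. Reproducing the case analysis of \cite{bayer2014mmp}, one finds exactly four types of wall --- totally semistable (no $\sigma$-stable object of class $v$ on $\mc W$, forced by a spherical class in $H_{\mc W}$, which via its spherical reflection yields an exceptional reflection in $W_{Exc}$ by Proposition~\ref{prop:markman_refl}), divisorial (Hilbert--Chow, Li--Gieseker--Uhlenbeck or Brill--Noether contractions of an exceptional divisor), flopping (small contractions), and fake (no contraction, $\ell$ extends to an isomorphism) --- and that under $\ell$ the totally semistable walls map precisely to the mirrors of $W_{Exc}$, while the divisorial and flopping walls map onto the walls of \eqref{eq:amp_decomp}. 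Establishing that no further walls occur and that each chamber's image fills the entire ample cone of its model (hence $\ell$ is open onto $\Pos$) is where essentially all the work lies: the Positivity Lemma ensures the nef cone genuinely changes across every non-fake wall, Markman's theory of prime exceptional divisors organizes the totally semistable walls into the $W_{Exc}$-action, and the modification recorded in Remark~\ref{rmk:no_bounce} is used to make the gluing in (i) consistent across divisorial walls.
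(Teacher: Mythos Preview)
The paper does not prove this theorem; it is quoted from \cite{bayer2014mmp} as background, so there is no ``paper's own proof'' to compare against. Your proposal is a faithful high-level outline of the Bayer--Macr\`i argument and would be appropriate as a guide to the literature.

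One inaccuracy worth correcting: you write that ``totally semistable walls map precisely to the mirrors of $W_{Exc}$.'' This is not how the dictionary works. In Bayer--Macr\`i's classification a wall can be simultaneously totally semistable and divisorial, or totally semistable and flopping, etc.; the properties are not mutually exclusive. The walls that map to the mirrors of $W_{Exc}$ (equivalently, to the boundary of the movable cone and its $W_{Exc}$-translates) are the \emph{divisorial} walls, exactly as recorded in Remark~\ref{rmk:no_bounce}: crossing such a wall in $\Stab^\dagger(S)$ corresponds, under the analytic $\ell$, to passing from one $W_{Exc}$-chamber into an adjacent one, and the two chambers are related by the Markman reflection $\rho_D$ in the contracted divisor $D$. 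Totally semistable walls that are not divisorial do not produce new $W_{Exc}$-mirrors; rather, the induced birational map is an isomorphism (possibly after a spherical twist on the derived category side), and $\ell$ simply continues across. So in your case analysis, the role you assign to ``totally semistable'' should be reassigned to ``divisorial,'' and the totally semistable property should be treated as an additional flag governing whether the birational identification across the wall requires a spherical twist.
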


\begin{rmk}
\label{rmk:no_bounce} Our convention for the map $\ell: \mc{C}\to \NS(M_\sigma(v))$ differs slightly from the one in Thm. 1.1, Thm. 1.2 in \cite{bayer2014mmp}, which we will denote $\ell_{BM}$. The key differences are 
\begin{itemize}
  \item The image of $\ell$ is the entire positive cone. The image of $\ell_{BM}$ is the cone $\Mov(S) \cap \Pos(S)$ of big movable divisors.
  \item This $\ell$ is analytic, while $\ell_{BM}$ is continuous and \emph{piecewise} analytic.
  \item  When the image of $\ell_{BM}$ hits the wall of the movable cone, it bounces back into $\Mov(S)$ while $\ell$ continues across the wall. 
\end{itemize}
Lemma 10.1 from \cite{bayer2014mmp} relates the two descriptions, since if two chambers $\mc{C}, \mc{C}'\subset \Stab^\dagger(S)$ are separated by a wall inducing a contraction of the divisor $D$ then $\ell_{BM}$ restricted to $\mc{C}$ and $\mc{C'}$ are analytic continuations of each other after the Markman reflection $\rho_D$. 

The map $\ell_{BM}$ is more natural for the purposes of the minimal model program where it is desirable to identify a complete set $\{M_i\}$ of minimal models of a variety $M$. On the other hand, $\ell$ is useful for the present application in geometric representation theory where the action on cohomology induced by a birational transformation is of central importance, even if the two varieties are isomorphic. 
\end{rmk}

We will also need an explicit formulation of the map sending a stability condition to an element of $\NS(M_\sigma(v))$, which involves writing out \eqref{eq:bm_ample} explicitly.

\begin{prop}[Bayer-Macri \cite{bayer2014projectivity} Lemma 9.2]
  \label{prop:explicit_stab_to_pos}
  Let $v = (r, c, s)$ be a primitive Mukai vector with $v^2 \ge -2$ and fix a generic basepoint $\sigma\in \Stab^\dagger(S)$.  The map $\ell$ of \eqref{eq:lstab} restricted to the set $V(S)$ of \eqref{eq:stab_limit_small_exc} sends a stability condition $\sigma_{\beta, \omega} \in \mc{V}(S)$ with central charge $Z(-) = (e^{\beta + \omega}, -)$ to $\ell(\sigma_{\beta, \omega})$ which is a real positive multiple of  $\theta_v((r_{\beta, \omega}, C_{\beta, \omega}, s_{\beta, \omega}))$ where 
  \begin{align*}
    r_{\omega, \beta} &= c\cdot \omega - r\beta\cdot \omega\\
    C_{\omega, \beta} &= (c\cdot \omega - r\beta \cdot\omega)\beta + \left(s - c\cdot \beta + r \frac{\beta^2 - \omega^2}{2}\right)\omega\\
    s_{\omega, \beta} &= c\cdot \omega\frac{\beta^2 - \omega^2}{2} + s\beta\cdot \omega - (c\cdot \omega)(\beta\cdot \omega).
  \end{align*}
\end{prop}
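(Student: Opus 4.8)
The plan is to unwind the definition of $\ell_\sigma$ from Step~3 of Construction~\ref{const:bridgeland_stable_spaces} and carry out the resulting linear-algebra computation in the Mukai lattice. Recall from equation~\eqref{eq:bm_ample} that $\ell_\sigma = \im(\Omega_Z)$ transported by $\theta_v$, where $\Omega_Z \in H^*_{alg}(S,\Z)\otimes\C$ is the vector representing the central charge, $Z(-) = (\Omega_Z,-)$, after acting by an element of $\widetilde{\GL}^+(2,\R)$ so that $Z(v) = -1$. Since the residual positive rescaling only rescales $\im\Omega_Z$, the class $\ell_\sigma$ is defined only up to a positive real multiple, and it is a positive multiple of $\theta_v(\im(z\,\Omega))$ for \emph{any} complex vector $\Omega$ with $Z(-) = (\Omega,-)$ and any $z\in\C^\times$ with $(z\,\Omega, v)\in\R_{<0}$. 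For $\sigma_{\beta,\omega}\in V(S)$, equation~\eqref{eq:cc} gives $\Omega = e^{\beta+i\omega}$, and the orientation clause in the definition~\eqref{eq:p_stab} of $\mc{P}_0^+(S)$ (hence of $V(S)$ in~\eqref{eq:stab_v}) is what will pin down the correct sign of $z$ so that $\theta_v(\im(z\,\Omega))$ lands in $\Pos(M_{\sigma}(v))$ as required by Theorem~\ref{thm:bm_mmp}.

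Concretely, I would write $e^{\beta+i\omega} = \bigl(1,\ \beta+i\omega,\ \tfrac{(\beta+i\omega)^2}{2}\bigr)$ in $H^0\oplus H^2\oplus H^4$ and split it as $P + iQ$ with
\[ P = \Bigl(1,\ \beta,\ \tfrac{\beta^2-\omega^2}{2}\Bigr), \qquad Q = (0,\ \omega,\ \beta\cdot\omega). \]
Using the Mukai pairing $\bigl((r_1,c_1,s_1),(r_2,c_2,s_2)\bigr) = c_1\cdot c_2 - r_1 s_2 - r_2 s_1$ and $v = (r,c,s)$, a short computation gives
\[ w := (e^{\beta+i\omega},v) = \Bigl(c\cdot\beta - s - r\tfrac{\beta^2-\omega^2}{2}\Bigr) + i\,(c\cdot\omega - r\,\beta\cdot\omega), \]
so that $\im w = r_{\omega,\beta}$; set $a := \re w$. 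Note that $(P,v) = a$ and $(Q,v) = r_{\omega,\beta}$, so $r_{\omega,\beta}\,P - a\,Q$ lies automatically in $v^\perp$, which is exactly what is needed for the Mukai homomorphism~\eqref{eq:mukai_homo} to apply. Choosing $z = -\bar w$, so that $(z\,\Omega,v) = -|w|^2 < 0$, one computes
\[ \im\bigl(z\,e^{\beta+i\omega}\bigr) = \im\bigl(-(a - i\,r_{\omega,\beta})(P+iQ)\bigr) = r_{\omega,\beta}\,P - a\,Q. \]

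It then remains to expand $r_{\omega,\beta}\,P - a\,Q$ coordinate by coordinate and identify it with $(r_{\omega,\beta},\,C_{\omega,\beta},\,s_{\omega,\beta})$: the $H^0$-coordinate is $r_{\omega,\beta}$; the $H^2$-coordinate is $r_{\omega,\beta}\,\beta - a\,\omega = r_{\omega,\beta}\,\beta + \bigl(s - c\cdot\beta + r\tfrac{\beta^2-\omega^2}{2}\bigr)\omega = C_{\omega,\beta}$; and in the $H^4$-coordinate $r_{\omega,\beta}\,\tfrac{\beta^2-\omega^2}{2} - a\,(\beta\cdot\omega)$ the two summands proportional to $r(\beta\cdot\omega)(\beta^2-\omega^2)$ cancel, leaving $s_{\omega,\beta}$. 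This coordinate expansion is the only genuine calculation, and it is purely formal. I expect the one point that needs care to be the sign analysis justifying the choice $z = -\bar w$ rather than $z = \bar w$ — i.e. checking that it is $\im\Omega_Z$, and not $-\im\Omega_Z$, that maps into the positive cone — which is controlled by the orientation condition in~\eqref{eq:p_stab}; everything else is unwinding definitions from Construction~\ref{const:bridgeland_stable_spaces}.
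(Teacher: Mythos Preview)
The paper does not supply its own proof of this proposition; it simply records the result and cites Bayer--Macr\`i \cite{bayer2014projectivity}, Lemma~9.2. Your approach---split $e^{\beta+i\omega}=P+iQ$, rotate by $z=-\bar w$ so that $(z\Omega,v)\in\R_{<0}$, and read off $\im(z\Omega)=r_{\omega,\beta}P-aQ$---is exactly the computation carried out in the cited source, and it is correct.

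One small point worth flagging: your $H^4$-expansion in fact yields
\[
r_{\omega,\beta}\,\tfrac{\beta^2-\omega^2}{2}-a\,(\beta\cdot\omega)
= (c\cdot\omega)\tfrac{\beta^2-\omega^2}{2}+s(\beta\cdot\omega)-(c\cdot\beta)(\beta\cdot\omega),
\]
with $(c\cdot\beta)$ in the last term, whereas the displayed formula in the paper has $(c\cdot\omega)$ there. Your version is the correct one (and is what appears in \cite{bayer2014projectivity}): one can check directly that the vector with the paper's $s_{\omega,\beta}$ fails to lie in $v^\perp$ unless $c\cdot\beta=c\cdot\omega$, while your vector $r_{\omega,\beta}P-aQ$ is orthogonal to $v$ by construction. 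The discrepancy is invisible in the paper's only application (Proposition~\ref{prop:walls_bstab_hilb}), where $c=0$, so it is a harmless typo---but you should not claim your computation reproduces the stated $s_{\omega,\beta}$ verbatim.
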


They also provide a description of relevant cones in $\overline{\Pos}(M)$. We use implicitly the duality between curves and divisor classes induced by the Beauville-Bogomolov form.
\begin{thm}[Bayer-Macr\`i\cite{bayer2014mmp} \S 12]\label{thm:bm_cones} Let $v, \sigma$ be as in Theorem \ref{thm:bm_mmp}. Let $M := M_\sigma(v)$.
  \begin{enumerate}[(i)]
    \item The nef cone $\Nef(M)\subset \overline{\Pos}(M)$ is cut out by linear subspaces
    \[ \{\theta_v(v^\perp \cap \alpha^\perp) \mid \alpha \in H_{alg}^*(S, \Z), ~\alpha^2 \ge -2,~ 0\le (v,  \alpha) \le \frac{v^2}{2}\}.\]
    \item Dually, the cone of curves $\NE_\R(M)$ is generated by positive curves (i.e. $C$ such that $(C, C) > 0$ and $(C.A) > 0$ for fixed ample class $A$) and classes
    \[\{\theta(a) \mid \alpha \in H_{alg}^*(S, \Z), ~a^2 \ge -2, ~ |(v, a)| \le v^2/2, ~ (\theta(a), A) > 0\}\]
    \item The movable cone $\Mov(M)\subset \overline{\Pos}(M)$ is cut out by linear subspaces
    \[ \{\theta_v(v^\perp \cap s^\perp) \mid  s\in v^\perp \text{ spherical}\}\]
    and subspaces
    \[ \{\theta(v^\perp \cap w^\perp) \mid w \in H_{alg}^*(S, \Z),  w^2 = 0, ~ 1\le (w, v) \le 2\}\]
    \item The effective cone $\on{Eff}(M)$ is generated by $\overline{\Pos}(M)$ and the exceptional divisors
    \[ \{D :=  \theta(s)\mid s\in v^\perp, ~s\text{ spherical}, ~ (D, A) > 0\}\]
    and
    \[ \{D := \theta(v^2 \cdot w - (v, w) \cdot v) \mid w \in H_{alg}^*(S, \Z), ~ w^2 = 0, ~ 1\le (w, v) \le 2, (D, A) > 0\}.\]
  \end{enumerate}
\end{thm}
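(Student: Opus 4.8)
The plan is to deduce everything from Theorem \ref{thm:bm_mmp} together with the explicit formula of Proposition \ref{prop:explicit_stab_to_pos}, reducing each cone statement to a local analysis at a single wall of $\Stab^\dagger(S)$. By Theorem \ref{thm:bm_mmp}(iv), $\Nef(M)$ is the closure of the image $\ell(\mathcal C)$ of the chamber $\mathcal C \ni \sigma$, and by part (iii) $\Mov(M)$ is the image of the fundamental $W_{Exc}$-chamber; so it suffices to identify the walls of these chambers inside $\Stab^\dagger(S)$ and then push them forward by $\ell$. First I would fix a stability condition $\sigma_0$ generic on a wall $W$ and record the standard fact that the Mukai vectors of the Jordan--H\"older factors of the $\sigma_0$-semistable objects of class $v$ span a saturated rank $\le 2$ sublattice $\mathcal H_W \ni v$ of $H^*_{alg}(S,\Z)$, which when $W$ is a genuine wall has signature $(1,1)$. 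Any class $\alpha$ of a $\sigma_0$-stable sub-object of such an object then lies in $\mathcal H_W$, is the Mukai vector of a semistable object so $\alpha^2 \ge -2$, and --- after possibly replacing $\alpha$ by $v-\alpha$ and using that JH phases interleave --- satisfies $0 \le (v,\alpha) \le v^2/2$.

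For the nef and curve cones I would then compute, using Proposition \ref{prop:explicit_stab_to_pos}, that $\ell(\sigma_0)$ is orthogonal to $\theta_v(\alpha)$, i.e. $\ell(\sigma_0) \in \theta_v(v^\perp \cap \alpha^\perp)$; conversely, a direct construction produces, for each $\alpha$ with $\alpha^2 \ge -2$ and $0 \le (v,\alpha) \le v^2/2$, a wall whose image under $\ell$ is exactly that hyperplane. This yields (i). Statement (ii) is the dual cone: a curve contracted by a nef-but-not-ample boundary class pairs to zero with $\ell(\sigma_0)$, hence lies in $\theta_v(\mathcal H_W \cap v^\perp)$, and identifying which primitive class $\theta_v(a)$ this is gives the bounds $a^2 \ge -2$, $|(v,a)| \le v^2/2$; classes of positive square (the ``positive curves'') are never contracted since the exceptional loci of these contractions are uniruled, so they must already lie in $\overline{\NE}_\R(M)$.

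The movable and effective cone statements (iii) and (iv) require the finer \emph{trichotomy} of walls into fake, flopping, and divisorial. A genuine wall $W$ is divisorial precisely when the contraction $\pi^{\pm}\colon M_{\sigma_\pm}(v)\to \overline M$ contracts a divisor, and I would show, via the local structure of this contraction --- e.g. by the Ext-quiver / local-quiver description of Theorem \ref{thm:local_quiver} applied to a polystable object on the wall --- that this happens exactly when $\mathcal H_W$ contains, up to the reflection identifying the two chambers, either a spherical class $s \in v^\perp$ or an isotropic class $w$ with $1 \le (w,v) \le 2$; all remaining genuine walls with $\mathcal H_W$ hyperbolic are flopping and hence do not bound $\Mov(M)$. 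Intersecting the hyperplanes coming from the divisorial walls cuts out $\Mov(M)$, giving (iii). For (iv), the exceptional divisor of a divisorial contraction has a class computed through the Mukai homomorphism: $\theta_v(s)$ in the spherical (Brill--Noether) case and $\theta_v(v^2 w - (v,w)v)$ in the isotropic (Hilbert--Chow / Li--Gieseker--Uhlenbeck) case; together with $\overline{\Pos}(M)$ --- every big class of nonnegative square on a smooth projective hyperk\"ahler manifold being effective --- these generate $\on{Eff}(M)$.

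The main obstacle is the local wall classification itself: proving that every destabilizing class lies in the rank-$2$ hyperbolic lattice $\mathcal H_W$, and pinning down the trichotomy with the sharp arithmetic bounds. One must check that the endpoints $(v,\alpha)=0$ and $(v,\alpha)=v^2/2$ produce only fake walls (no object is actually destabilized), and that $1\le(w,v)\le 2$ is exactly the range for which the isotropic contraction is divisorial rather than a flop; this comes down to a fiber-dimension computation for $\pi^\pm$, using that its generic fibers are projective spaces or symmetric products as in the quiver picture of Section \ref{ssec:bellamy_craw}. The surjectivity of $\ell$ onto $\Pos(M)$, already granted by Theorem \ref{thm:bm_mmp}, is precisely what upgrades this per-wall analysis into the global cone description.
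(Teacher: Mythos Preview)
This theorem is not proved in the paper at all; it is stated with the attribution ``Bayer--Macr\`i \cite{bayer2014mmp} \S 12'' and followed immediately by a remark, with no argument supplied. There is therefore no proof in the present paper to compare your proposal against.

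For what it is worth, your outline is broadly the Bayer--Macr\`i strategy: associate to each wall the rank-two hyperbolic sublattice $\mathcal H_W$ spanned by Mukai vectors of Jordan--H\"older factors, read off the numerical constraints $\alpha^2\ge -2$, $0\le(v,\alpha)\le v^2/2$ from the existence of stable factors, and then classify walls as fake, flopping, or divisorial to separate $\Nef$ from $\Mov$. One point in your sketch is not right, however: you assert that the endpoints $(v,\alpha)=0$ and $(v,\alpha)=v^2/2$ ``produce only fake walls''. The case $(v,s)=0$ with $s$ spherical is exactly the Brill--Noether divisorial wall appearing in (iii) and (iv), and the isotropic cases $(v,w)=1,2$ give the Hilbert--Chow and Li--Gieseker--Uhlenbeck divisorial contractions. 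Fake walls in Bayer--Macr\`i are detected not by the numerics hitting an endpoint but by the absence of any $\sigma_+$-stable object actually destabilized at $\sigma_0$; the trichotomy is governed by a more delicate case analysis on the effective classes in $\mathcal H_W$ than the one you indicate.
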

\begin{rmk}
  Not all of the classes described form extremal rays or walls of the corresponding cones.
\end{rmk}

\subsection{Local structure of singularities via Ext-quivers}\label{sec:local_k3_ext_quiver}
 A key tool in the study of singularities of moduli spaces of sheaves is the local analytic description of singularities in terms of $\Ext$ quivers \cite{kaledin2007local, arbarello2018singularities, toda2018moduli}. 
 
 It will turn out that the situation described in this paper is a certain negative result to a certain conjecture that chambers in 
 $\Stab^\dagger(S)$ around a specific $\sigma_0$ correspond 1-1 with chambers in the stability space for an Ext-quiver of a polystable object so that the resolution for $M_\sigma(v)$ is locally around this point given by the resolution of the corresponding Ext quiver. 
 It will turn out instead of matching chambers in $\Stab^\dagger(S)$ adjacent to a given $\sigma_0$ with chambers in the space $\Stab^\dagger(S)$ is a certain resolution of the stability space of the corresponding local affine ADE quiver (c.f. Section \ref{sec:k3_stability_conditions_corner}) where the map $\ell: \Stab^\dagger(S) \to \NS(M_{\sigma}(v))$ acts as the resoluton. There will, however, for the stability conditions under consideration, be a 1-1 correspondence between chambers in $\Pos(M_\sigma(v))$ around a given point and chambers for the corresponding quiver variety giving a local description of the singularity. 
 
 Thus instead of using a general argument that says the local structure of moduli space around a polystable object is described by its Ext-quiver, this paper uses a more brute-force technique 
  (c.f. Section \ref{sec:local_analytic_brute_force}) to describe the
   local analytic structure of singularities for the case of Mukai vectors corresponding to rank 1 torsion free sheaves. We will however state and use results about Lie algebra actions which hold when we do have a description of a singularity around a polystable object as an Ext quiver.

 A key property to understand completely the local singularity of a moduli space of sheaves or of complexes at a sheaf $F$ is formality which is only known in some cases \cite{arbarello2018singularities, budur2019formality}.
\begin{defn}\label{defn:formality}
  \item[(1)] A DG-algebra $A$ is \emph{formal} if there is a quasi-isomorphism
  \[ A\simeq H^*(A)\]
  where $H^*(A)$ is the cohomology complex of $A$ taken with $0$ differentials.
\item[(2)] A sheaf or object $F\in D^b(S)$ satisfies the \emph{formality condition} if $R\Hom(F, F)$ is formal, i.e. $R\Hom(F, F)\simeq \Ext^*(F,F)$.
\end{defn}
The following definition extends Definition \ref{defn:ext_quiver_quiv} to the present context of objects in the derived category.
\begin{defn}\label{defn:ext_quiver_object}
  Let $\sigma$ be a Bridgeland stability condition on $S$ and let $F = \bigoplus_{i = 1}^s F_i ^{\oplus n_i}$ be a polystable object where the $F_i$ are pairwise distinct $\sigma$-stable objects. The $\Ext$-quiver $Q(F)$ has vertices labelled by $I = \{1, \ldots, s\}$, one for each stable factor of $F$, and the number of edges between vertices $i$ and $j$ is
  \[
  \begin{cases}
    \ext^1(F_i, F_i)/ 2 &  i = j\\
    \ext^1(F_i, F_j) &  i \neq j
  \end{cases}
  \]
  and given a stability parameter $\theta$ the corresponding quiver variety is denoted
  \[ \mf{M}_\theta(F) := \mf{M}_\theta(\mathbf{n}, 0).\]
\end{defn}
\begin{rmk}
  In this situation $\Rep_{\overline{Q(F)}}(\mathbf{n}, 0)\simeq \Ext^1(F, F)$ and $\mf{g}(\mathbf{n})^* \simeq \Ext^2(F,F)$ such that the moment map is given by the Yoneda product $e\mapsto e\cup e$, which is also the quadratic part of the \emph{Kuranishi map} governing obstructions to deformations.
\end{rmk}
These definitions allows us to state the key result on the local description of singularities in the case of sheaves.
\begin{thm}[Arbarello-Sacc\`a \cite{arbarello2018singularities}]
Let $H_0$ be a polarization of $S$ and $F = \bigoplus_{i = 1}^s F_i ^{\oplus n_i}$ as in Definition \ref{defn:ext_quiver_object}, except with stability taken with respect to $H_0$, with Mukai vector $v$ and Ext-quiver $Q(F)$ such that $F$ satisfies the formality property Definition \ref{defn:formality}. Then
\begin{enumerate}[(i)]
  \item There is a local analytic isomorphism
  \[ \psi: (\mf{M}_0(F), 0) \simeq (M_{H_0}(v), [F]).\]
  \item Given a chamber $C\subset \Amp(S)$ there is a chamber $D$ of stability parameters for $\Rep_{\overline{Q(F)}}(\mathbf{n}, 0)$, for every $H\in C$ and $\theta \in D$ the natural maps
  \[ \xi: \mf{M}_\theta(F) \to \mf{M}_0(F) ~\text{  and  } ~ h: M_H(v) \to M_{H_0}(v)\]
  coincide over neighborhoods of $0$ and $[F]$ as long as $h$ is regular over $[F]$.
  \item The correspondence between chambers $D$ and $C$ is given by the assignment $H \mapsto \theta = (\theta_1, \ldots, \theta_s)$ where
  \[ \theta_i = c_1(F_i) \cdot ( H - H_0)\]
  as long as $h$ is regular over $[F]$.
\end{enumerate}
\end{thm}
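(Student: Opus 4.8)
The plan is to combine the Kuranishi description of the deformations of $F$ with Luna's \'etale slice theorem applied to the GIT construction of $M_{H_0}(v)$, and then to track how the GIT linearization on the slice is twisted when the polarization is varied. First I would realize $M_{H_0}(v)$ in the usual way: fix $m\gg 0$ so that every $H_0$-semistable sheaf with Mukai vector $v$ is $m$-regular, form the Quot scheme $\mathbf{Q}$ of quotients $\mcO(-m)^{\oplus P(m)}\twoheadrightarrow E$, and set $M_{H_0}(v) = \mathbf{Q}^{ss}\GIT SL(P(m))$ with the linearization coming from the Simpson embedding into a Grassmannian determined by $H_0$. Choose $q\in\mathbf{Q}$ lying over $[F]$; its isotropy group is $\on{Aut}(F) = \prod_i\GL(n_i) =: G_{\mathbf{n}}$ (up to scalars, which I suppress throughout, and which correspond precisely to the diagonal $\C^\times\subset G_{\mathbf{n}}$ acting trivially on $\Rep_{\overline{Q(F)}}(\mathbf{n},0)$). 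Since $F$ is $H_0$-polystable the orbit $G_{\mathbf{n}}\cdot q$ is closed in $\mathbf{Q}^{ss}$, so Luna's slice theorem produces a locally closed $G_{\mathbf{n}}$-invariant slice $\mathbf{S}\ni q$ with $(M_{H_0}(v), [F])$ analytically isomorphic to $(\mathbf{S}\GIT_{0} G_{\mathbf{n}}, \bar q)$, the subscript $0$ recording that the linearization is trivial in this central case. Standard deformation theory of the Quot scheme (using $m$-regularity and the vanishing of $\Ext^{>2}$ on a surface) identifies $T_q\mathbf{S}$ with $\Ext^1(F,F)$ $G_{\mathbf{n}}$-equivariantly, with $\mathbf{S}$ cut out near $q$ by the Kuranishi map $\kappa:\Ext^1(F,F)\to\Ext^2(F,F)$.

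Next the formality hypothesis (Definition \ref{defn:formality}) enters: when $R\on{Hom}(F,F)$ is formal one may take $\kappa$ to be homogeneous of degree two, equal to its quadratic part, the Yoneda square $e\mapsto e\cup e$ (cf.\ \cite{kaledin2007local, budur2019formality, arbarello2018singularities}). By the remark following Definition \ref{defn:ext_quiver_object}, $\Ext^1(F,F)\cong\Rep_{\overline{Q(F)}}(\mathbf{n},0)$ as $G_{\mathbf{n}}$-representations, $\mf{g}(\mathbf{n})^*\cong\Ext^2(F,F)$, and under these isomorphisms the Yoneda square is exactly the quiver moment map $\mu$. Hence near $q$ the slice $\mathbf{S}$ is analytically $G_{\mathbf{n}}$-isomorphic to $\mu^{-1}(0)$, so
\[ (M_{H_0}(v), [F]) \simeq (\mu^{-1}(0)\GIT_{0} G_{\mathbf{n}}, 0) = (\mf{M}_0(F), 0), \]
which is (i).

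For (ii) and (iii), vary $H$ inside a small chamber $C$ around $H_0$. Replacing $H_0$ by $H$ changes the Simpson embedding of $\mathbf{Q}$ and therefore twists the linearization on the slice $\mathbf{S}\cong\mu^{-1}(0)$ by a character of $G_{\mathbf{n}}$. Computing the $\GL(n_i)$-weight of this twist — using that Gieseker stability detects $c_1\cdot H$ in the sub-leading coefficient of the (twisted) Hilbert polynomial, together with the fact that the $i$-th factor of $\on{Aut}(F)$ acts on the summand of $\mcO(-m)^{\oplus P(m)}$ generating $F_i^{\oplus n_i}$ — shows the twisting character is $(\theta_1,\dots,\theta_s)$ with $\theta_i = c_1(F_i)\cdot(H - H_0)$, which is (iii). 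With this linearization, the relative form of Luna's theorem identifies $(M_H(v), h^{-1}([F]))$ with $(\mf{M}_\theta(F), \xi^{-1}(0))$ compatibly with the projective contractions $h$ and $\xi$ onto the two affine ($0$-linearization) quotients, which are canonically identified; this gives (ii). The hypothesis that $h$ be regular over $[F]$ forces $\theta$ to be generic (no strictly semistable quiver representations over $0$), so $\mf{M}_\theta(F)$ is smooth and the comparison of the two maps is unambiguous.

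The main obstacle is making the slice step rigorous in the sheaf setting: one must verify that the deformation theory at the point $q$ of the Quot scheme agrees, modulo the $SL(P(m))$-action, with the intrinsic deformation theory of $F$ — i.e.\ that the slice representation genuinely is $\Ext^1(F,F)$ with obstruction the Kuranishi map — and then invoke formality to replace the a priori only formal, infinite-order map $\kappa$ by its quadratic part, which is precisely the quiver moment map. A secondary difficulty is pinning down the twisting character in (iii) with the correct normalization, rather than merely up to a positive scalar or a sign, which requires care with how the Simpson linearization depends on the auxiliary integer $m$ and on the polarization.
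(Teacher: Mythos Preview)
The paper does not give its own proof of this theorem: it is stated as a result of Arbarello--Sacc\`a and attributed to \cite{arbarello2018singularities} without argument. Your outline --- Luna's \'etale slice on the Quot scheme GIT presentation of $M_{H_0}(v)$, identification of the slice at a polystable point with the Kuranishi space in $\Ext^1(F,F)$, use of formality to reduce the Kuranishi map to its quadratic part (the Yoneda square, i.e.\ the quiver moment map), and then tracking the twist of the GIT linearization under variation of $H$ to obtain the character $\theta_i = c_1(F_i)\cdot(H-H_0)$ --- is the standard strategy and is essentially what is carried out in the cited reference. There is therefore nothing in the present paper to compare your sketch against; your approach is the expected one and matches the source.
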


\section{Stability conditions and birational geometry for specific K3s}\label{sec:stabk3_specific}
We now restrict to the case of a K3 surface $S$ such that $\bar{\NE}(S)$ is the cone spanned irreducible $-2$ curves, and any pair of these either don't intersect or intersect transversely at a single point.

Thus $\Nef(S)$ is a locally polyhedral cone in $\NS(S)_\R \simeq \R^{1, \rho(S) -1}$, and $\Nef(S)\cap \mc{C}_+$ is a fundamental domain for the action of $W_S$ on $C_+$.  Consider the wall and chamber structure on $C_+$ induced by all $-2$ classes, and let a face denote a face of any dimension of this decomposition. Let $W_1, W_2$ be two distinct walls of this cone.  Then the dihedral angle between $W_1$ and $W_2$ is $\pi/2$ or $\pi/3$. If we consider a big and nef divisor on the boundary of $\Nef(S)$ it induces a contraction of the $-2$ curves corresponding to the walls it lies on. Thus faces of the nef cone correspond to contractions of some disjoint set of ADE systems of -2 curves, and locally near a big and nef divisor $\ell$ the wall and chamber decomposition corresponds to the product of a Euclidean space and the wall and chamber structure around 0 induced by a finite ADE root system.

We are interested in not just the wall and chamber decomposition for $S$ but also for $S^{[n]}$ given by Theorem \ref{thm:bm_cones}. The Hilbert scheme we view as parametrizing ideal sheaves, so the Mukai vector of $S^{[n]}$ is $(1,0, 1-n)$. We know the Mukai homomorphism \eqref{eq:mukai_homo} says that for $n > 1$, the Neron Severi group of $S^{[n]}$ is the orthogonal direct sum
\begin{equation}\label{eq:ns_direct_sum}
  \NS(S^{[n]}) = \theta((1,0,1-n)^\perp) \simeq H^2(S, \Z) \oplus \Z |2, 1^{n-2}\rangle 
\end{equation}
where $|2, 1^{n-2}\rangle = \theta((-1,0,1-n))$ (c.f. \eqref{eq:h2xn}) is the locus of a moving double point, and is $2|2,1^{n-2}\rangle$ is the exceptional divisor of the Hilbert-Chow contraction. 

There is an interesting sign involved in the Mukai homomorphism on $H^2(S, \Z)$ so that if $H$ is ample on $S$ then $\theta((0,-H,0))$ is big and nef in $\NS(S^{[n]})$, rather than $\theta((0,H,0))$. This $\theta((0,-H,0))$ induces the Hilbert-Chow map and gives an ample bundle on $Sym^n(S)$. For this reason, given a class $\alpha\in H^2(S, \Z)$ we define 
\begin{equation}\label{eq:tilde_sign}
   \tilde{\alpha} := \theta((0,-\alpha, 0))
\end{equation}
so that $\tilde{H}$ is big and nef for $H$ ample. 

\subsection{Walls near $Sym^n(S)$}
We use Theorem \ref{thm:bm_mmp} in the present case to find all walls nearby the locus of movable  divisors on $S^{[n]}$ corresponding to $Sym^n(S)$. In particular, given $H\in \Amp(S)$, there is an $\epsilon > 0$ so that $\tilde H - \epsilon |2, 1^{n-2}\rangle \in \Amp_\R(S^{[n]})$. Thus $\theta(-\Nef(S))\subset |2, 1^{n-2}\rangle^\perp$ forms a maximal dimensional face of the cone $\Nef(S^{[n]})$, corresponding to big and nef divisors inducing the Hilbert-Chow map.  We investigate birational chambers in a short cylinder over this face. Denote the Hilbert-Chow wall $W_{HC} := |2, 1^{n-2}\rangle^\perp$, and call this face $\Nef(S^{[n]}) \cap W_{HC} \cap \Pos(S^{[n]})$ the $Sym^n (S)$ face of the Nef cone. Also denote this face $F_{Sym}$. See the figures in Section \ref{sec:figures} for an example. 

This subsection leads to Proposition \ref{prop:stab_corner_arranged} which says that around a point n the boundary of this face, the walls are arranged according to a set of roots in an affine root system. 

\begin{prop}
Let $W$ be a wall of $\Mov(S^{[n]})$ corresponding to a divisorial contraction intersecting $W_{HC}\cap \Pos(S^{[n]})$. Then $W = \tilde{\alpha}^\perp$ for $\alpha\in H^2(S, \Z)$ an irreducible -2 curve. In particular $W$ meets $W_{HC}$ perpendicularly.
\end{prop}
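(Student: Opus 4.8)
The plan is to combine the Bayer--Macr\`i cone picture for $M:=S^{[n]}$ (Theorems \ref{thm:bm_mmp} and \ref{thm:bm_cones}) with the fact that $\overline{\Mov}(S^{[n]})$ meets the Hilbert--Chow hyperplane exactly in the face $F_{Sym}$, and then to pin down $W$ by a short numerical argument. First I would record the lattice bookkeeping: writing $v=(1,0,1-n)$ for the Mukai vector of $M$, so $v^2=2(n-1)$ and $n\ge 2$, the Mukai homomorphism \eqref{eq:mukai_homo} is an isometry $\theta_v\colon v^\perp\xrightarrow{\sim}\NS(M)$, and \eqref{eq:ns_direct_sum} identifies $\NS(M)$ with the orthogonal direct sum $H^2(S,\Z)\oplus\Z|2,1^{n-2}\rangle$, where $\alpha\mapsto\tilde\alpha=\theta_v((0,-\alpha,0))$ embeds the first summand and $|2,1^{n-2}\rangle=\theta_v((-1,0,1-n))$ spans its orthogonal complement. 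In particular $(\tilde\alpha,|2,1^{n-2}\rangle)=0$ always, so once $W$ is known to be of the form $\tilde\alpha^\perp$ the perpendicularity assertion is automatic, and $W_{HC}=|2,1^{n-2}\rangle^\perp$ is exactly the $H^2(S,\Z)$-summand. Since $W$ is a wall of $\Mov(M)$ it lies in $\overline{\Mov}(M)$, and (from the description of $\overline{\Mov}(M)$ in Theorem \ref{thm:bm_cones}(iii), equivalently from Markman's Proposition \ref{prop:markman_refl}) one has $\overline{\Mov}(M)\cap W_{HC}\cap\Pos(M)=F_{Sym}=\{\tilde D: D\in\Nef(S)\}\cap\Pos(M)$; hence the hypothesis ``$W$ meets $W_{HC}\cap\Pos(M)$'' is equivalent to ``$W$ meets $F_{Sym}$''.

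Next I would split according to where $W$ meets $F_{Sym}$. If $W$ meets the relative interior of $F_{Sym}$, then near such a point the only component of $\partial\Mov(M)$ is $W_{HC}$, while a divisorial wall lies on $\partial\Mov(M)$; so $W=W_{HC}$ (which is not of the form $\tilde\alpha^\perp$ and is understood to be excluded from the statement). Otherwise $W$ meets $\partial F_{Sym}$. The walls of $F_{Sym}$ inside $W_{HC}$ are the images under $\alpha\mapsto\tilde\alpha$ of the walls of $\Nef(S)$, namely $\tilde C^\perp\cap W_{HC}$ for $C$ an \emph{irreducible} $-2$ curve on $S$. Thus $W$ contains the codimension-two subspace $\tilde C^\perp\cap W_{HC}$ for such a $C$, hence lies in the pencil of hyperplanes through it, i.e. $W=(\mu|2,1^{n-2}\rangle+\nu\tilde C)^\perp$ for some $(\mu:\nu)$, and the task is to show $\mu=0$.

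Finally I would use that $W$ is a \emph{divisorial} wall: by Theorem \ref{thm:bm_cones}(iii)--(iv), $W=D^\perp$ with $D$ a positive multiple of either $\theta_v(s)$ for a spherical class $s\in v^\perp$ (so $s^2=-2$), or $\theta_v(v^2 w-(v,w)v)$ for an isotropic $w$ with $1\le(v,w)\le 2$. Since $\mu|2,1^{n-2}\rangle+\nu\tilde C=\theta_v((-\mu,-\nu C,-\mu(n-1)))$, this forces $s$ (resp. the projection to $v^\perp$ of $w$) to be proportional to $(-\mu,-\nu C,-\mu(n-1))$; imposing $s^2=-2$ and primitivity of $s$ gives, after normalization, $\nu^2+(n-1)\mu^2=1$, whose only solutions for $n\ge 2$ are $(\mu,\nu)=(0,\pm1)$, which gives $W=\tilde C^\perp$, or $(\mu,\nu)=(\pm1,0)$ with $n=2$, which gives $W=W_{HC}$. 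In the isotropic case the same substitution together with $w^2=0$, $1\le(v,w)\le 2$ and integrality of $w$ again leaves only $W=\tilde C^\perp$ or $W=W_{HC}$. Excluding $W_{HC}$ we conclude $W=\tilde C^\perp$ with $C$ the irreducible $-2$ curve supplied by the wall of $F_{Sym}$, and perpendicularity to $W_{HC}$ follows from $(\tilde C,|2,1^{n-2}\rangle)=0$. The step I expect to require the most care is the identification $\overline{\Mov}(M)\cap W_{HC}\cap\Pos(M)=F_{Sym}$ (which for a general K3 of this type rests on controlling the birational models of $S^{[n]}$ carrying a $Sym^n$-type face), together with the isotropic sub-case of the final numerical step, where one must be mindful of the warning following Theorem \ref{thm:bm_cones} that not every class of the listed form actually bounds a divisorial wall.
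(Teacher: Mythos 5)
Your endgame is fine: granting that $W$ contains the codimension-two subspace $\tilde{C}^\perp\cap W_{HC}$ for an irreducible $-2$ curve $C$, the pencil argument plus the Diophantine constraints $s^2=-2$ (resp. $w^2=0$, $1\le (w,v)\le 2$, integrality) does leave only $W=\tilde{C}^\perp$ or $W=W_{HC}$; the isotropic case you left implicit also checks out. The genuine gap is in the reduction that produces the anchor curve $C$. You get it from the identification $\overline{\Mov}(S^{[n]})\cap W_{HC}\cap\Pos(S^{[n]})=F_{Sym}$, which you assert ``from Theorem \ref{thm:bm_cones}(iii), equivalently from Proposition \ref{prop:markman_refl}'' and yourself flag as the delicate step --- but this is exactly where the content of the proposition sits. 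Theorem \ref{thm:bm_cones}(iii) only lists candidate cutting subspaces; to see which of them pass through $W_{HC}$ and that $\overline{\Mov}\cap W_{HC}$ is no larger than $\{\tilde{D}\mid D\in\Nef(S)\}$, one must restrict those classes to $W_{HC}$, which is precisely the computation constituting the paper's proof. Invoking Markman instead requires knowing that each $\tilde{C}$ is a stably prime exceptional class (e.g.\ represented by the prime divisor of subschemes meeting $C$, contracted over $Sym^n$ of the blow-down), which you do not supply. Since in the paper the structure of $\Mov\cap W_{HC}$ and of the walls through $\partial F_{Sym}$ is deduced \emph{after} this proposition, your argument as written is close to circular.

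The paper avoids this by arguing directly on classes: if the hyperplane $\theta_v(v^\perp\cap a^\perp)$ contains any point $\tilde{H}\in W_{HC}\cap\Pos(S^{[n]})$, then the $H^2(S)$-component $a_S$ of $a$ satisfies $a_S\cdot H=0$ with $H^2>0$, so $a_S^2\le -2$ or $a_S=0$ by the signature, and the norm conditions ($a^2=-2$, resp.\ $a^2=0$ with $1\le(a,v)\le 2$) then kill the $|2,1^{n-2}\rangle$-component, giving $W=\tilde{a}_S^\perp$ at once. Note this uses only that the supporting hyperplane of $W$ meets $W_{HC}\cap\Pos$, not that the intersection point lies in $\overline{\Mov}$, so it also covers the reading in which $W$ is the hyperplane rather than the face of the movable cone; your reduction needs the intersection to happen inside $\overline{\Mov}$. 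If you repair the reduction (most economically by proving $\tilde{C}$ is prime exceptional for every irreducible $-2$ curve $C$ and then applying Proposition \ref{prop:markman_refl}), your route does have the merit of making the irreducibility of $\alpha$ transparent, a point the paper's own proof leaves implicit.
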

\begin{proof}
Recall from Theorem \ref{thm:bm_cones} that the wall and chamber structure on $\Pos(S^{[n]})$ cutting out the movable cone is given by walls in $\{\theta_v(v^\perp \cap s^\perp) \mid  s\in v^\perp \text{ spherical}\} \cup \{\theta(v^\perp \cap w^\perp) \mid w \in H_{alg}^*(S, \Z),  w^2 = 0, ~ 1\le (w, v) \le 2\}$ where $v = (1,0,1-n)$ is the Mukai vector. Suppose that one of these walls is $W = \theta_v(v^\perp \cap a^\perp)$ and $W$ meets $W_{HC}$. 
Then there is a positive class $H\in H^2(S, \Z)$ with $(a, H) = 0$. Suppose we are in the first case with $a$ spherical and $a\in (1,0,1-n)^\perp$. 
Then $\theta(a) = \tilde{a_S} + c|2, 1^{n-1}\rangle$ with $a_S \in H^2(S, \Z)$ and $(a_S , H) = 0$. Thus $a_S^2 < 0$ or $a_S = 0$. But $a_S = 0$ is excluded since $a^2 = -2$.
 Also 
\[ -2 = a^2 = a_S^2 + c^2(-1,0,1-n)^2 = a_S^2 + c^2(2-2n). \]
 Since $ n >1$  we have $c = 0$ and $\theta(a) = \tilde{a_S}$ as desired.
 
 Now suppose $W = \theta_v(v^\perp \cap a^\perp)$ with $a^2 = 0$ and $1\le (a,v) \le 2$. Write $a = (c_1, a_s, c_2)$ with $(a_s, H) = 0$ as before, so $a_s^2 < 0$ or $a_s = 0$. 
 Again $a_s = 0$ is excluded or we just get the Hilbert-Chow wall.
  Then $a^2 = 0$ implies $c_1c_2 < 0$. But then the condition $1 \le (a, (1,0,1-n)) \le 2$ becomes
 \[1 \le c_1(n-1) - c_2 \le 2\] 
 and the only solution which doesn't give the Hilbert-Chow wall is $ n = 2, c1 = 1, c2 = -1$ so $a = v + a_s$, which is equivalent to the previous case with $\theta(a) = \tilde{a_s}$. 
\end{proof}

Also, given $H$ big and every $\tilde{H} - \epsilon|2, 1^{n-2}\rangle$ is NEF for small enough $\epsilon>0$ so there are no flopping walls intersecting $W_{HC}\cap \Nef(S^{[n]})$ except at the place where divisorial walls intersect, and by Weyl group symmetry, no flopping walls intersecting $W_{HC}$ at all except where divisorial walls intersect $W_{HC}$. Thus the wall and chamber structure on $W_{HC}$ induced by the other walls in $\Pos(S^{[n]})$ coincides exactly with the wall and chamber structure on $\Pos(S)$ controlling the birational geometry of $S$. 

The only ingredient left to describe all of the birational geometry nearby the $Sym^n S$ face is to collect flopping walls which intersect $W_{HC}$ at the intersection $W_{HC}\cap \tilde{a}^{\perp}$ for $a \in H^2(S, \Z)$ spherical. 

\begin{defn}\label{def:arranged_hyperplanes}
  Consider a collection of hyperplanes $\mc{H}$ defining a wall and chamber structure on some cone $C$. Let $x$ be a point in the intersection $\cap_{H\in \mc{H}} H \cap C$. We say that the collection $\mc{H}$ is \emph{arranged according to the root system $\Delta$} at $x$ if 
  there is a affine subspace $H'$ of the vector space generated by $C$ with $x \in H'$ such that 
  \begin{itemize}
    \item  tangent vectors to $x$ parallel to $H'$ or $H\in \mc{H}$ together span $T_x C$, 
    \item $H \perp H'$ for all $H\in \mc{H}$, in the sense that  if $v \in T_x H, v\not\in T_x H', w\in T_x H', w\not \in T_x H$ then $v\cdot w = 0$. 
    \item  and the polyhedral decomposition in a neighborhood of $x \in H'$ induced by the hyperplanes $H'\cap H$ for $H \in \mc{H}$ coincides with the polyhedral decomposition induced by $\Delta$ in a neighborhood around $0$ in the ambient space of $\Delta$. 
  \end{itemize}
  More generally, given a collection of roots $\Delta_0 \subset \Delta$ we say that a hyperplanes are \emph{arranged according to the roots $\Delta_0$ } at $x$ if the same condition holds but we only consider the walls $\alpha^\perp$ for $\alpha \in \Delta_0$ for the wall and chamber structure on the ambient space of $\Delta$. 
\end{defn}

\begin{eg}\label{eg:associated_GIT}
  Consider the GIT stability space $\R^I$ for $X_\Gamma^{[n]}$, with walls $\mc{W}$. Let $\Delta$ be the root system for the affine Kac-Moody algebra associated to the quiver, $\Delta_f$ the associated finite root system. Then Theorem \ref{thm:birat_sym} says that the walls $\mc{W}\subset \R^I$ are arranged according to to the roots 
  \[\{\delta,  m \delta + \alpha \mid \alpha \in \Delta_f ,0 \le m < n \} \subset \Delta \]
  near the point $0\in \R^I$. 
\end{eg}

Now consider the wall and chamber structure on $\Pos(S)$ induced by $-2$ classes. Around a point $x \in \partial (\Nef(S)) \cap \Pos(S)$ there is a unique root system $\Delta$ so that the hyperplanes in 
$\{s^\perp \mid s\in H^2(S, \Z), s^2 = -2 \}$ meeting $x$ are arranged according to $\Delta$. Using the isomoprhism $\partial (\Nef(S)) \cap \Pos(S)\simeq \partial F_{Sym}$ between this boundary and the boundary of the $Sym^n(S)$ face We upgrade this to a map
\begin{equation}
  \label{eq:delta_of_x}
  \Delta : \partial F_{Sym} \to D
\end{equation}
where $D$ is the set of finite root systems whose irreducible factors are of type ADE. This map factors 
through the map (seen in the cone $\Pos(S)$) 
\[\on{Exc} : \partial (\Nef(S)) \cap \Pos(S) \to 2^\mc{C}\] sending a big and nef divisor to its exceptional locus, where $\mc{C}$ is the set of exceptional curves in $S$ and $2^{\mc{C}}$ is its power set. Thus $\Delta(H)$ describes the root system associated to the (possibly disconnected) Dynkin diagram formed as the intersection graph of $\on{Exc}(H)$.   Let $S_{H}$ denote the blown-down surface associated to $H$. Seen as an element of $\Pos(S^{[n]})$, we have that $\theta(H)$ is the big and nef divisor inducing the contraction $S^{[n]} \to Sym^n(S_H)$. 

\paragraph{Flopping walls through a point} Consider a point $x\in \partial F_{Sym}$, 
we investigate the walls which pass through this point.

To this end, let $\mc{W}$ be the set of walls intersecting $F_{Sym}$ and $\mc{W}_x\subset \mc{W}$ those walls which intersect the point $x$. Consider $\Delta(x)$ from \eqref{eq:delta_of_x} the root system associated to $x$. Let $W$ be one of these walls, distinct from $W_{HC}$. We know that $W$ intersects $W_{HC}$ in one of the walls of $\Pos(S)$ under the isomorphism $\Pos(S) \simeq W_{HC}$, so let $\alpha_x(W)$ be the root in $\Delta(x)$ associated to the wall $W$.

The wall $W$ intersects $W_{HC}$ in the locus $\tilde{s}^\perp \cap W_{HC}$ for $s \in H^2(S, \Z)$ a -2 class on the surface itself, but the walls $\tilde{s}^\perp$ are orthogonal to $W_{HC}$ while the flopping walls are not necessarily. The following lemma writes down the implication of Theorem \ref{thm:bm_cones} in this situation. 

\begin{lem}\label{lem:walls}
The walls in $\mc{W}_x$, i.e. those that intersect $x$ are given by 
\[\{ W_{HC}, \theta_v(v^\perp \cap (0, s, m)^\perp) \mid s\in H^2(S, \Z), s^2 = -2, 0 \le m < n \}.\]
\end{lem}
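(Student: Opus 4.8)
The plan is to run the Bayer--Macr\`i list of candidate walls through a single positivity/divisibility filter and then keep only those passing through $x$. First I would note that by Theorem~\ref{thm:bm_mmp} the chamber decomposition of $\Pos(S^{[n]})$ refines the decomposition into $W_{Exc}$-translates of ample cones of birational models, and every one of its walls appears among the hyperplanes of Theorem~\ref{thm:bm_cones}. Since the reflections generating $W_{Exc}$ are reflections in spherical classes of $v^\perp$ and hence fix $v=(1,0,1-n)$, I can write every such wall as $\theta_v(v^\perp\cap a^\perp)$ with $a=(a_0,a_S,a_\omega)\in H^*_{alg}(S,\Z)$ satisfying $a^2\ge -2$ and $0\le(v,a)\le v^2/2=n-1$; the movable-cone walls of Theorem~\ref{thm:bm_cones}(iii) are of this shape too (a spherical $s\in v^\perp$ has $(v,s)=0$, and an isotropic $w$ with $1\le(w,v)\le 2$ has $(w,v)\le n-1$ once $n\ge2$). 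Then, using that $x=\tilde A=\theta_v((0,-A,0))$ for $A\in\partial\Nef(S)\cap\Pos(S)$ the big and nef divisor on $S$ whose image contracts $S^{[n]}$ onto $Sym^n(S_A)$, the wall $\theta_v(v^\perp\cap a^\perp)$ passes through $x$ if and only if $(a,(0,-A,0))=0$, i.e. $A\cdot a_S=0$.

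Next I would invoke the standing hypothesis on $S$: since $A$ is big and nef and $\NS(S)$ is generated by irreducible $(-2)$-curves, a class orthogonal to $A$ lies in the negative-definite root lattice spanned by the $(-2)$-curves $A$ contracts, whose root system is exactly $\Delta(x)$ from \eqref{eq:delta_of_x}. So $a_S=0$, or $a_S^2\le -2$, with $a_S$ a root of $\Delta(x)$ when $a_S^2=-2$. After this, a short case analysis on $(a_0,a_S,a_\omega)$ finishes the forward direction, using $v^2=2n-2$, $a^2=a_S^2-2a_0a_\omega$, $(v,a)=a_0(n-1)-a_\omega$, and the fact that on $v^\perp$ one has $u_\omega=u_0(n-1)$, so $W_{HC}=\theta_v(v^\perp\cap\{u_0=0\})$. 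Namely: (i) if $a_S=0$, the inequalities force the functional $u\mapsto(u,a)$ on $v^\perp$ to be a nonzero multiple of $u\mapsto u_0$, so the wall is $W_{HC}$ in every case --- in particular the a priori tempting spherical class $v(\mcO_S)=(1,0,1)$, which does meet the range, pairs with $v^\perp$ as $-n\,u_0$ and only reproduces $W_{HC}$; (ii) if $a_S^2\le -4$, then $a^2\ge -2$ forces $a_0a_\omega\le -1$, incompatible with $0\le a_0(n-1)-a_\omega\le n-1$ for any integer $a_0$, so no wall occurs; (iii) if $a_S^2=-2$, say $a_S=s$, then $a^2\ge -2$ gives $a_0a_\omega\le 0$ and the range forces $a_0\in\{0,1\}$ with $a_0=1\Rightarrow a_\omega=0$, so $a$ is either $(0,s,a_\omega)$ with $-(n-1)\le a_\omega\le 0$ or $a=(1,s,0)=v+(0,s,n-1)$, and either way $\theta_v(v^\perp\cap a^\perp)=\theta_v(v^\perp\cap(0,s',m)^\perp)$ for a $(-2)$-class $s'$ and some $m\in\{0,\dots,n-1\}$.

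For the converse I would check that each hyperplane $\theta_v(v^\perp\cap(0,s,m)^\perp)$ through $x$ (so $s\in\Delta(x)$) is a genuine wall: the class $(0,-s,-m)$ lies on the Bayer--Macr\`i list, for $m=0$ it recovers the divisorial walls $\tilde s^\perp\perp W_{HC}$ identified in the preceding proposition, and for $m\ge1$ the generic destabilizing object on the corresponding wall of $\Stab^\dagger(S)$ is a twist $\mcO_C(j)$ of the line bundle on the curve $C$ with $[C]=s$, as in the $(C_k)$-walls \eqref{eq:uwalls_stab}, so the crossing is non-trivial. Combined with the case analysis this identifies $\mc{W}_x$ with the asserted set.

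The hard part will be exactly this converse: Theorem~\ref{thm:bm_cones} lists \emph{potential} walls, some of which are ``fake'' (inducing no change of birational model), so one must argue that precisely the claimed hyperplanes persist as genuine walls near $x$ and nothing spurious survives. The one seriously tempting spurious class, $v(\mcO_S)=(1,0,1)$, is harmless since its orthogonal inside $v^\perp$ collapses onto $W_{HC}$; the rest of the verification amounts to matching the local picture at $x$ with the wall structure of the affine ADE quiver variety for dimension vector $n\delta$ from Theorem~\ref{thm:birat_sym}, whose divisorial and flopping walls $\{\delta^\perp,(m\delta+\alpha)^\perp\mid 0\le m<n,\ \alpha\in\Delta_f\}$ correspond to $\{W_{HC},(0,s,m)^\perp\}$ under $\delta\leftrightarrow$ (the Hilbert--Chow class), $\alpha\leftrightarrow s$, $m\leftrightarrow m$ --- which is the very correspondence that the subsequent propositions of this section are designed to make precise.
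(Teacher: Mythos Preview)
Your proof is correct and follows essentially the same route as the paper's: start from the Bayer--Macr\`i list of candidate hyperplanes, use orthogonality to the big nef class $A$ to force $a_S$ into the root lattice of $\Delta(x)$, and then run a short case analysis on $(a_0,a_\omega)$ with the bounds $a^2\ge -2$ and $0\le (v,a)\le n-1$. You are in fact slightly more careful than the paper in two places: you explicitly eliminate the possibility $a_S^2\le -4$ (the paper simply asserts $a_s^2=-2$ once $a_s\ne 0$, which is not literally automatic for an arbitrary element of the contracted root lattice, though your case~(ii) shows the alternative is ruled out by the inequalities anyway), and you discuss the converse direction, which the paper does not argue here but effectively defers to the later matching with the quiver wall structure in Proposition~\ref{prop:stab_corner_arranged} and Proposition~\ref{prop:walls_bstab_hilb}.
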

\begin{proof}
  Recall from \ref{thm:bm_cones} that the walls of $\Nef(S^{[n]})$ are contained in \[\{\theta_v(v^\perp \cap \alpha^\perp) \mid \alpha \in H_{alg}^*(S, \Z), ~\alpha^2 \ge -2,~ 0\le (v,  \alpha) \le \frac{v^2}{2}\}.\] Let $\tilde{H} = x$ for a big and nef divisor $H$ on $S$. So if $\theta_v(v^\perp\cap \alpha^\perp)$ is a wall of $\Nef(S^{[n]})$ passing through $x$, and \[\alpha = (c_1, a_s, c_2)\] then $a_s \cdot H = 0$. Therefore either $a_s = 0$ (in which case we have the Hilbert-Chow wall as before) or up to an inconsequential sign, $a_s$ is the class of some subset of $\on{Exc}(x)$. In particular, $a_s^2 = -2$. The condition $\alpha^2 \ge -2$ reads $c_1c_2 \le 0$
  and the condition $0 \le (\alpha, v) \le v^2/2$, since $v^2/2 = n-1$, reads
\[
  0 \le c_1(n-1) - c_2  \le n-1
\]
so $c_1 \in \{ 0, 1\}$ and $c_1 = 1 \implies c_2 = 0$.  The other case is $c_1 = 0, c_2 \in \{0,-1, \ldots, 1-n\}$.  The case $\alpha = (1, a_s, 0)$ is equivalent to 
$v- \alpha = (0, -a_s, 1-n)$ so we can assume $c_1 = 0$, and we are left with exactly the proposed list. 
\end{proof}

It should now be clear that we have the following local-to-global type correspondence between chambers for $S^{[n]}$ near the big nef divisor $x = \tilde{H}$ corresponding to $Sym^n(S_H)$ for the symmetric power of the blowndown surface, and GIT 
chambers for quiver varieties $\quiv{\theta}{\mb{v}}{\mb{w}}$ for relevant quiver varieties. 

\begin{prop}\label{prop:stab_corner_arranged}
Let $x\in \partial F_{Sym}$ be a point on the boundary of the $Sym^n$ face such that $\on{Exc}(x)$ is a connected ADE collection of -2 curves. Pick a small open set $x\in U \subset \Pos(S^{[n]})$ so that any wall $W$ intersecting $U$ passes through $x$. Consider the walls
\[ \mc{W}_{Q, \mb{v}, \mb{w}} = \{ \delta^\perp, (m\delta + \alpha)^\perp \mid 0 \le m < n, \alpha \in \Delta_f\}\]
in the GIT stability space for the quiver $Q$ of affine ADE type, $\mb{v} = n\delta, w = \mb{w_0}$ where $\Delta_f$ is the corresponding finite root system.   Then
\begin{enumerate}[(i)]
  \item there is a 1-1 correspondence between faces in the polyhedral decomposition of $U$ with respect to the walls $\mc{W}_x$ and $\R^{I}$ with respect to $\mc{W}_{Q, \mb{v}, \mb{w}}$ sending the closed point $x$ to $0$
  \item This correspondence preserves the relation $F' \le F \Leftrightarrow F' \subset \bar{F}$.
\end{enumerate}
\end{prop}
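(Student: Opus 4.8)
The plan is to exhibit, after quotienting by a common flat linear factor, a single linear isomorphism identifying the central hyperplane arrangement $\mc W_x$ (restricted to a transverse slice) with the arrangement $\mc W_{Q,\mb v,\mb w}$ on $\R^I$; since a linear isomorphism of hyperplane arrangements induces a bijection of faces preserving the order $F'\le F\iff F'\subset\bar F$, and since restricting a cylindrical arrangement to a small ball and to a transverse slice are both incidence-preserving, this establishes (i) and (ii) at once. The two ingredients are Lemma~\ref{lem:walls} on the K3 side and Theorem~\ref{thm:birat_sym}(ii) (equivalently Example~\ref{eg:associated_GIT}) on the quiver side.

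Concretely, Theorem~\ref{thm:birat_sym}(ii) gives $\mc W_{Q,\mb v,\mb w}=\{\delta^\perp\}\cup\{(m\delta+\alpha)^\perp : 0\le m<n,\ \alpha\in\Delta_f\}$ in $\R^I$, an essential central arrangement in dimension $|I|=r+1$, $r=\rk\Delta_f$, where $\Delta_f$ is the irreducible ADE root system matched to $Q$. Lemma~\ref{lem:walls} gives $\mc W_x=\{W_{HC}\}\cup\{\theta_v(v^\perp\cap(0,s,m)^\perp): s^2=-2,\ 0\le m<n\}$ with $v=(1,0,1-n)$, where $s$ runs over the $-2$-classes supported on $\on{Exc}(x)$, i.e. the roots of $\Delta_f$. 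I would then make these walls explicit. The relation $(v,(a,\beta,b))=0$ is $b=a(n-1)$, which gives an orthogonal splitting $v^\perp=\Z e\oplus H^2(S,\Z)$ with $e=(1,0,n-1)$, $e^2=-2(n-1)$; under $\theta_v$, by \eqref{eq:ns_direct_sum} and the sign in \eqref{eq:tilde_sign}, $\theta_v(e)=-\delta_n$ with $\delta_n:=|2,1^{n-2}\rangle$ and $\theta_v((0,\beta,0))=-\tilde\beta$. Since $v^2=2(n-1)>0$, orthogonal projection of $(0,s,m)$ into $v^\perp$ yields $\tfrac{m}{2(n-1)}e+(0,s,0)$, so $\theta_v(v^\perp\cap(0,s,m)^\perp)$ is the Beauville--Bogomolov orthogonal complement of $2(n-1)\tilde s+m\delta_n$, while $W_{HC}=\delta_n^\perp$. (One can cross-check this with Proposition~\ref{prop:explicit_stab_to_pos}.)

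Now the matching. Every wall of $\mc W_x$ contains the subspace $L$ spanned by $\{\tilde\gamma : \gamma\in R^\perp\}$, where $R:=\Z\langle\on{Exc}(x)\rangle\subset\NS(S)$ and $R^\perp$ is its orthogonal complement in $H^2(S,\R)$; thus $L$ is a flat factor of dimension $\rho(S)-r$, and choosing a transverse complement $H'\ni x$ of dimension $r+1$ (e.g. the affine subspace through $x$ in the directions $\tilde s_1,\dots,\tilde s_r,\delta_n$), the arrangement $\mc W_x$ on $\NS(S^{[n]})_\R$ is the product of the trivial arrangement on $L$ with the induced arrangement $\mc W_x|_{H'}$. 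Hence the faces of the decomposition of $U$ biject, incidence-preservingly, with those of $\mc W_x|_{H'}$. Finally, in linear coordinates $c_0=\theta\cdot\delta,\ c_i=\theta\cdot\alpha_i$ on $\R^I$ and $y=d_0\delta_n+\sum_i d_i\tilde s_i$ on $H'$ (matching the simple $-2$-curve $s_i\in\on{Exc}(x)$ to the simple root $\alpha_i$), the quiver wall $(m\delta+\alpha)^\perp$, $\alpha=\sum a_i\alpha_i$, reads $mc_0+\sum a_ic_i=0$, while the K3 wall of index $(s,m)$, $s=\sum a_is_i$, reads $md_0-\sum_j(a^{T}C')_jd_j=0$ with $C'_{ij}=s_i\cdot s_j=-C_{ij}$, $C$ the Cartan matrix of $\Delta_f$. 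Therefore the linear isomorphism $d_0=c_0$, $(d_1,\dots,d_r)=-(C')^{-1}(c_1,\dots,c_r)$ (well defined, $C'$ being invertible) carries $\mc W_{Q,\mb v,\mb w}$ onto $\mc W_x|_{H'}$ and sends $0$ to $x$. Composing with the previous bijection gives (i); incidence preservation --- (ii) --- is then automatic.

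The only delicate point is the bookkeeping: pinning down $\theta_v$ on $v^\perp$ precisely (hence the role of the $(-2(n-1))$-class $\delta_n$ and the sign in \eqref{eq:tilde_sign}); noticing that the identification of arrangements is merely a linear --- not conformal --- isomorphism, since $\delta$ is isotropic whereas $\delta_n^2=-2(n-1)\ne0$, so no isometry exists but none is needed because only the hyperplanes must correspond; and checking that $L$ is exactly the common intersection $\bigcap_{H\in\mc W_x}H$, which is what legitimizes the reduction to $H'$ and the count $\dim H'=r+1$. With these in hand the face correspondence and its compatibility with $\le$ are formal.
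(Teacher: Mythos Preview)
Your proof is correct and follows essentially the same approach as the paper. The paper's argument is a two-line appeal to Definition~\ref{def:arranged_hyperplanes}: Theorem~\ref{thm:birat_sym} (via Example~\ref{eg:associated_GIT}) says the quiver arrangement is arranged according to the roots $\{\delta,\ m\delta+\alpha : 0\le m<n,\ \alpha\in\Delta_f\}$, and Lemma~\ref{lem:walls} says the arrangement $\mc W_x$ at $x$ is arranged according to the same set of roots; since both face lattices are then isomorphic to that of the common reference arrangement, they are isomorphic to each other. You unpack this abstraction by hand --- computing the projection of $(0,s,m)$ into $v^\perp$, writing the walls in explicit coordinates on the transverse slice $H'$, and exhibiting the linear isomorphism $d_0=c_0$, $d=-(C')^{-1}c$ directly --- which is more laborious but has the virtue of making the correspondence completely concrete and verifying along the way that no isometry is required (only a linear identification of hyperplane arrangements).
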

\begin{proof}
Theorem \ref{thm:birat_sym} says the wall and chamber decomposition of $\R^I$ is arranged according to some roots $\Delta_0$ in a root system $\Delta$, and Lemma \ref{lem:walls} says the wall and chamber structure around $x$ is arranged according to the same roots. 
\end{proof}

\subsection{Stability conditions}\label{sec:k3_stability_conditions_corner}
We describe specific Bridgeland stability conditions $\sigma\in \Stab^\dagger(S)$ which produce birational models $M_\sigma((1,0,1-n))$ for $S^{[n]}$ corresponding to chambers described previously. 

Our stability conditions will generally lie near a certain limit where the central charge $Z_{\beta, \omega}(-) = (e^{\beta+ i\omega}, -)$ satisfies
\begin{align}
  \omega^2  &\gg 0 \label{eq:stab_limit_lv} \\
  |\beta \cdot \omega | &< \lambda \label{eq:stab_limit_bdd_hc} \\
  0 \lneq  C \cdot \omega &< \frac{\kappa}{\omega^2} \label{eq:stab_limit_small_exc}
\end{align}
where $\lambda, \kappa$ are positive constants and $C$ lies in some possibly empty family $\mc C$ of $-2$ curves on the surface. We also consider limits where the first inequality in \eqref{eq:stab_limit_small_exc} is allowed to become an equality. 

From a metric perspective, the first condition \eqref{eq:stab_limit_lv} corresponds to the ``large volume limit'' of both the K3 surface $S$ and also the birational model of $S^{[n]}$, while the second condition 
\eqref{eq:stab_limit_bdd_hc} implies in particular that the volume in (the birational model of) $S^{[n]}$ of the (strict transform of the) exceptional locus of the Hilbert-Chow map is small compared to the volume of the moduli space, so that we are near a contraction of this locus. The third condition \eqref{eq:stab_limit_small_exc} 
implies that the volume of the -2 curves in $\mc{C}$ are extremely small in $S$ so that we are close to contracting them to a blowndown surface $S_{bd}$, and also close to contracting the moduli space to $Sym^n(S_{bd})$. 

As we vary the collection $\mc{C}$ we vary which which curves are allowed and required to contract by imposing that $C' \cdot \omega$ remains bounded below for $C'\not \in \mc{C}$. Note that by Proposition \ref{prop:bridge_is_twist}, if $\mc{C}$ is empty then we therefore remain in the Gieseker chamber and are unable to hit walls which induce birational contractions. We will also impose an additional numerical condition which essentially states that $\omega$ does not lie too close to the boundary of $\Pos(S)$.  Thus fixing $\mc{C}$ makes it so that only a fixed contractible collection of curves is relevant to how $\sigma$-stability differs from Gieseker stability. 

We record some useful calculations of central charges and slopes:

\begin{table}[h!]
  \centering
  \begin{tabular}{|l|l|l|l|}
  \hline
&$v(-)$&$\re (Z_{\beta,\omega}(-))$ &$\im(Z_{\beta, \omega}(-))$ \\\hline 
$k(x)$&$(0,0,1)$&$-1$& $0$ \\ \hline
$\mc{O}_C(k)$&$(0,C,1+k)$&$C\cdot \beta - 1 -k$&$C\cdot \omega$  \\ \hline
$\mc{O}_S$&$(1,0,1)$&$-1 +(\omega^2-\beta^2)/2$&$-\beta\cdot \omega$  \\ \hline
$\mc{L}$&$(1,c_1,1 + c_1^2/2)$&$-1+(\omega^2-\beta^2)/2 + c_1\cdot \beta - c_1^2/2$& $(c_1 - \beta )\cdot \omega$ \\ \hline
$\mc{I}_Y$&$(1,0,1-n)$&$-1 + (\omega^2-\beta^2)/2 +n$&  $-\beta\cdot\omega $ \\ \hline
$\mc{L}_Y$&$(1,c_1,1 + c_1^2/2 -n)$& $-1+(\omega^2-\beta^2)/2 + c_1\cdot \beta - c_1^2/2+n$ & $(c_1 - \beta )\cdot \omega$ \\ \hline
\end{tabular}
\caption{Mukai vectors and real and imaginary parts of central charges with respect to $Z_{\beta, \omega}(-) = (e^{\beta + i \omega}, v(-))$. Here $\mc{L}_Y$ is a torsion free subsheaf of the line bundle $\mc{L}$ with first chern class $c_1 = c_1(\mc{L})$ and with quotient $Y$ of finite length $n$.\label{tbl:slopes_z} }
\end{table}

\paragraph{Precise limit stability space} We give a precise version of our stability limit and describe the structure of the space near this limit, e.g. there are gaps in the space $\Stab^\dagger(S)$ near our limit corresponding to $Z_{\beta, \omega}(\delta) = 0$ for classes $\delta$ with $\delta^2 = -2$. Also the relevant region of $\Stab^\dagger(S)$ will actually depends on the specific chern characters we are considering, i.e. we may need to choose different constants for larger values of $n$ in $S^{[n]}$. 

For a collection $\mc{C}$ of $-2$ curves and real numbers $N, \xi, V > 0$ define the set 
\begin{equation}
  U_{\mc{C}, N, \xi, V}\subset \{(\omega, \beta) \mid \beta, \omega \in \NS(S)\otimes \R, ~ \omega^2 > 0\}\label{eq:Ulimit}
\end{equation}
 consisting of pairs $(\omega, \beta)$ such that $\omega$ is nef and
\begin{align*}
  |C\cdot \beta|&< N &  |\beta^2| &< N &-N < \beta\cdot \omega &\le 0   \\
  \omega^2 &> V & |\omega \cdot q| &> \xi & |\omega\cdot \alpha| &> \xi\\
  && 0 \le C\cdot \omega &< \frac{N}{\omega^2} & &
\end{align*}
for every $C\in \mc{C}$, every $-2$ class $\alpha\in H_2(S, \Z)$ not in $\mc{C}$ and every non-zero isotropic class $q\in H_2(S, \Z)$. Denote this set simply $U_{\mc{C}}$ if we are allowed to take $V$ as large as desired and there is no confusion. 

\begin{lem}\label{lem:stab_limit_gaps}
  Fix some contractible collection $\mc{C}$ of $-2$ curves on $S$. Given $N>0$ and $\xi> 0$ there is a  $V>0$ such that if $(\omega, \beta)$ is in $U_{\mc{C}} = U_{\mc{C}, N, \xi, V}$ then 
  \begin{enumerate}[(i)]
    \item $e^{\beta + i\omega} \in \mc{P}_0^+(S)$ where $\mc{P}_0^+(S)$ is defined in \eqref{eq:p_stab}  (i.e. $Z_{\beta, \omega}$ is the central charge for some stability condition $\sigma \in \Stab^\dagger(S)$) if and only if there is no $-2$ class $C = C_1 + \ldots C_k$ with each $C_i\in \mc{C}$ and
  \[\omega\cdot C = 0 \text{ and } \beta \cdot C = m  \]
  for some $m  \in \Z$.
  \item If $e^{\beta + i\omega} \in \mc{P}_0^+(S)$ then $e^{\beta + i\omega} \in \overline{\mc{L}(S)}$ with $\mc{L}(S)$ from \eqref{eq:Lstab}. 
  \item We have $e^{\beta + i\omega}\in \mc{L}(S)$ if and only if $\omega$ is ample. 
  \end{enumerate}
\end{lem}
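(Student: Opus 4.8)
The plan is to reduce the three parts to a single mechanism: on $U_\mc{C}$ the positive-definiteness and orientation clauses in the definition \eqref{eq:p_stab} of $\mc{P}_0^+(S)$ are automatic, and membership in $\mc{P}_0^+(S)$ (resp.\ in $\mc{L}(S)$) is governed entirely by which spherical classes are orthogonal to (resp.\ pair negatively with) $\Omega := e^{\beta+i\omega}$ — and that, in turn, is controlled by the Hodge index theorem together with the inequalities cutting out $U_\mc{C}$. First I would write down the components $\re\Omega = (1,\beta,\tfrac{\beta^2-\omega^2}{2})$ and $\im\Omega = (0,\omega,\beta\cdot\omega)$ in $H^*_{alg}(S,\Z)\otimes\R$ and record the routine computation that the $2\times 2$ Gram matrix of $\re\Omega,\im\Omega$ under the Mukai form equals $\omega^2\cdot\mathrm{Id}$; since $\omega^2 > V > 0$ on $U_\mc{C}$ the spanned plane is positive definite, and its orientation agrees with that of $e^{\beta+i\omega}$ by construction. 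Hence $\Omega\in\mc{P}_0^+(S)$ if and only if $(\Omega,\delta)\ne 0$ for every spherical $\delta$.

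Next I would analyze orthogonality class by class. Writing $\delta = (r,c,a)$ with $c^2 - 2ra = -2$, the computations to carry out are $\im(\Omega,\delta) = (c - r\beta)\cdot\omega$ and, for $r\ne 0$, the identity $2r\,\re(\Omega,\delta) = r^2\omega^2 - (c-r\beta)^2 - 2$ (obtained by eliminating $a$ via $2ra = c^2+2$). If $(\Omega,\delta) = 0$ with $r\ne 0$, then $c - r\beta \in \omega^\perp$, which is negative definite by Hodge index since $\omega^2>0$, so $(c-r\beta)^2\le 0$ and therefore $0 = 2r\,\re(\Omega,\delta)\ge r^2\omega^2 - 2\ge \omega^2 - 2 > V-2$, impossible once $V > 2$. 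So only rank-$0$ spherical classes can be orthogonal to $\Omega$; for $\delta = (0,c,a)$ with $c^2 = -2$, orthogonality says precisely $\omega\cdot c = 0$ and $a = \beta\cdot c$ (so in particular $\beta\cdot c\in\Z$). I would then show any $-2$ class $c$ with $\omega\cdot c = 0$ is, up to sign, a nonnegative combination of irreducible $(-2)$-curves lying in $\mc{C}$: replace $c$ by its effective representative (Riemann--Roch on $S$ gives $\chi(\mc{O}_S(c)) = 1$), decompose into irreducible components $D_i$, use that $\omega$ nef forces $\omega\cdot D_i = 0$, hence $D_i^2 < 0$ by Hodge index and so $D_i$ is a $(-2)$-curve, and finally $D_i\in\mc{C}$ since otherwise $|\omega\cdot D_i| > \xi$ by the defining inequalities of $U_\mc{C}$. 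Assembling this gives (i), with $V > 2$ the only constraint on $V$.

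For (iii), one direction is immediate: $\mc{L}(S)$ from \eqref{eq:Lstab} is by definition contained in $\{e^{\beta+i\omega}:\omega\in\Amp(S)\}$, and $\beta+i\omega\mapsto e^{\beta+i\omega}$ is injective, so $\Omega\in\mc{L}(S)$ forces $\omega$ ample. Conversely, if $\omega$ is ample I would reuse the $r\ne 0$ identity above: for every spherical $\delta$ with $r(\delta)>0$, either $\im(\Omega,\delta)\ne 0$, or $\im(\Omega,\delta)=0$ and then $(c-r\beta)^2\le 0$ gives $2r\,\re(\Omega,\delta) = r^2\omega^2 - (c-r\beta)^2 - 2 \ge \omega^2 - 2 > 0$; either way $(\Omega,\delta)\notin\R_{\le 0}$, so $\Omega\in\mc{L}(S)$. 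Part (ii) then follows by perturbation: for $\omega$ merely nef, fix an ample $h$, set $\omega_t = \omega + th$ (ample for $t>0$, with $\omega_t^2 > \omega^2 > 2$), apply the argument for (iii) with $\omega_t$ in place of $\omega$ to get $e^{\beta+i\omega_t}\in\mc{L}(S)$, and let $t\to 0^+$ so that $\Omega\in\overline{\mc{L}(S)}$; note this argument does not even use $\Omega\in\mc{P}_0^+(S)$.

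The computations are all elementary, so the only place genuine care is needed — the hard part such as it is — is the rank-$0$ bookkeeping in (i): one must verify that passing from $c$ to whichever of $\pm c$ is effective does not disturb the conditions $\omega\cdot c = 0$ and $\beta\cdot c\in\Z$, and one must use nefness of $\omega$ crucially (so that $\omega\cdot D_i\ge 0$ for each irreducible component) together with exactly the clause of the definition of $U_\mc{C}$ that separates $(-2)$-curves not in $\mc{C}$ from $\omega^\perp$. Everything else is the Hodge index theorem applied to $\omega^\perp$ and the explicit inequalities of $U_\mc{C}$.
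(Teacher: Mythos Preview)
Your argument is correct and follows essentially the same route as the paper: reduce to the spherical-class condition, eliminate rank $r\neq 0$ via the identity $2r\,\re(\Omega,\delta)=r^2\omega^2-(c-r\beta)^2-2$ together with negative-definiteness of $\omega^\perp$, and deduce (ii) by perturbing $\omega$ to an ample class. Your identity is exactly what the paper's computation simplifies to, so the core mechanism is identical.

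Two points of comparison are worth noting. First, you fill in the rank-$0$ step that the paper leaves implicit: the paper simply writes ``if $r=0$ then our Mukai vector is $\delta=(0,C,\ell)$ and we are done,'' whereas you actually prove that any $-2$ class $c$ with $\omega\cdot c=0$ decomposes (up to sign) into irreducible $(-2)$-curves lying in $\mc{C}$, using effectivity from Riemann--Roch, nefness of $\omega$, and the defining inequality $|\omega\cdot\alpha|>\xi$ for $(-2)$-classes outside $\mc{C}$. This is the substantive content of the ``only if'' direction of (i), and your treatment is more complete. Second, you invoke the Hodge index theorem directly to get $\omega^\perp$ negative definite (which is all that is needed, since $\omega^2>0$), while the paper remarks parenthetically that ``here we use the condition involving isotropic classes''; your justification is the cleaner one for this particular step.
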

\begin{proof}
This is a standard argument \cite{bridgeland2008stability,bayer2014projectivity}.  For the forward direction of (i), if there is such a $C$ then $Z_{\beta, \omega}((0,C, \ell))) = 0$ for some $\ell$. To prove the other direction, assume there is a class $\delta = (r, D, s)$ with 
$\delta^2 = D^2 -2 rs  = -2$ and $(e^{\beta + i \omega}, \delta) = 0$. Assume $r \ge 0$ without loss of generality. If $r = 0$ then our Mukai vector is $\delta = (0,C, \ell)$ and we are done. Thus $r> 0$. Since $\im(Z(\delta)) = (D - r\beta)\cdot \omega = 0$ i.e. $(D- r\beta)\in \omega^\perp$ which is a negative definite subspace (here we use the condition involving isotropic classes). Write $D = A + r\beta$ for $A \in \omega^\perp$. Then we can calculate 
\[ s = \frac{D^2 + 2}{2r}\]
so
\begin{align*}
   \re Z(\delta)) &= -r \left(\frac{\beta^2 - \omega^2}{2}\right) + D\cdot \beta - s\\
   &=  -r\left(\frac{\beta^2 - \omega^2}{2}\right) + (A + r\beta)\cdot \beta - \frac{(A+ r\beta)^2 + 2}{2r}
\end{align*}
which has no solutions $A$ for large enough $\omega^2$ and other coefficients bounded since the pairing restricted to $\omega^\perp$ is negative definite. To see (iii) note that the previous calculation is exactly the remaining condition for membership in $\mc{L}(S)$ beyond $\omega$ ample, and (ii) follows from finding and ample class $\omega'$ arbitrarily close to a given nef $\omega$.  
\end{proof}
\begin{rmk}
\begin{itemize}
  \item[(i)] Not all of the conditions on the set $U_{\mc{C}}$ are necessary for the lemma. 
  \item[(ii)] When the class $\omega$ tends to a class on the boundary of the ample cone, the stability condition $\sigma$ tends to $\partial U(S)$. If we hit a point such that $e^{\beta + i\omega}$  still lies in  $\mc{P}_0^+(S)$ then $\sigma$ lies on the intersection of walls of type $(C_{i, k_i})$ from \eqref{eq:uwalls_stab} where $C_i$ ranges over curves with $C_i \cdot \omega = 0$ and the integer $k_i$ is determined by the value of $\beta\cdot C_i$, namely it satisfies
  \[k_i < \beta\cdot C_i < k_i + 1.\] 
  The loci where $C_i \cdot \beta\in \Z$ are not in $\mc{P}_0^+(S)$ and are therefore do not correspond to Bridgeland stability conditions. 
\end{itemize}
\end{rmk}

We now describe the walls which induce birational contractions for moduli spaces $M_\sigma((1,0,1-n))$ for $\sigma$ near this limit. Given Mukai vectors $v,w$ let $W_{v, w} \subset \overline{\mc{L}(S)}$ be the locus of $(\beta, \omega)$ such that $Z_{\beta, \omega}(v)$ and $Z_{\beta, \omega}(w)$ are real positive multiples of each other, so that the phases $\phi(\mc{E})$ and $\phi(\mc{E'})$ of objects with mukai vectors $v$ and $w$ are potentially allowed to overlap in a corresponding stability condition. 

Thus in the specific case of $v = (1,0,1-n)$ and $w = (0, C, k)$ we can use Table \ref{tbl:slopes_z} and find that 
\begin{equation}
  \label{eq:wall_vcrv}
  W_{v, w} = \{ (\beta, \omega)\in \overline{\mc{L}}(S) \mid (C\cdot \beta -k)(-\beta\cdot \omega) = (C\cdot \omega)((\omega^2 - \beta^2)/2 + n -1) \}.
\end{equation}

Recall the set $V(S)\subset \Stab^\dagger(S)$ from \eqref{eq:stab_v} mapping homeomorphically onto $\mc{L}(S)$ by $\mc{Z}$ defined in Theorem \ref{thm:bridgeland_cmp}. 
Fix some $n$ and let $N> 2n$, $\xi> 0$ and $\mc{C}$ a contractible collection of $-2$ curves,  and take $V$ from Lemma \ref{lem:stab_limit_gaps}, and the space $U_{\mc{C}}$ of $(\beta, \omega)$ from \eqref{eq:Ulimit}.  Let $v = (1,0,1-n)$.  Define the set 
\[U_{Mov}\subset U_{\mc{C}}\]
consisting of $(\beta, \omega)$ with $\beta, \omega \in NS(S)_\R$ such that 
\begin{itemize}
\item $(\beta, \omega) \in U_{\mc{C}}$
\item For every $C\in \mc{C}$, which is an irreducible -2 curve, the point $(\beta, \omega)$ lies on the side of the wall $W_{v,w}$ for $w = (0, C, 0) = v(\mc{O}_C(-1))$ such that in the stability condition $\sigma_{\beta, \omega}\in V(S)$ with $\mc{Z}(\sigma_{\beta, \omega}) = e^{\beta + i\omega}$ the phases $\phi$ of $\mc{O}_C(-1)$ and $\mc{I}_Y$ for $Y$ of finite length $n$ satisfy
\[ \phi(\mc{O}_C(-1)) \ge \phi(\mc{I}_Y).\]
\end{itemize}

The next proposition will describe these sets as inverse images of certain sets in $\Pos(S^{[n]})$ under the map $\ell: \Stab^\dagger(S) \to \Pos(S^{[n]})$. To this end, for a contractible collection $\mc{C}$ let $H_{\mc{C}}$ denote a big and nef divisor on $S$ inducing a contraction $S_{Bd}$ of $S$ with exceptional locus $\mc{C}$. Thus $\tilde{H}_{\mc{C}}\in \NS(S^{[n]})$ is a big movable divisor corresponding to a contraction $ S^{[n]} \to Sym^n(S_{Bd})$.

\begin{prop}\label{prop:walls_bstab_hilb}
Take $n>1$, a contractible collection $\mc{C}$ of -2 curves on $S$, and Mukai vector $v= (1,0,1-n)$ and sets $U_{Mov}, U_{\mc{C}}$ as defined above.  
\begin{itemize}
  \item[(i)] The image of $U_{\mc{C}}$ under the map $\ell_v$ of \eqref{eq:lstab} consists of divisors $D\in \Pos(S^{[n]})$ such that $D\cdot |2, 1^{n-1}\rangle \ge 0$ and $D$ lies arbitrarily close to the ray spanned by  $\tilde{H}_{\mc{C}}$.
  \item[(ii)] The image of $U_{Mov}$ under the map $\ell_v$ consists of divisors $D$ satisfying the same conditions and also lying in $\overline{Mov}(S^{[n]})$.
  \item[(iii)] The walls $W\subset U_{\mc{C}}$ which are $\mc{Z}(\sigma)$ for $\sigma\in V(S)$ inducing birational contractions of $M_\sigma(v)$ are the loci 
\[\{\beta\cdot \omega = 0\}\cap U_{\mc{C}} \]
and  $W_{v,w}\cap U_{\mc{C}}$ for 
\[w \in \{ (0,C,k) \mid C\in \Z\mc{C},~ C^2 = -2,~ k = 0, -1, \ldots, 1-n \}.\]In other words they are 
\[ \{W_{v, w} \cap U_{\mc{C}} \mid w = v(\mc{O}_S) \text{ or } w =  (0, C, k) \}\]
for the same conditions on $C$ and $k$. 
\item[(iv)] Under the map $\ell_v$, the wall $W_{v,w}\cap U_{\mc{C}}$ for $w = (0, C, k)$ is sent to the wall 
\[\theta_v(v^\perp \cap (0, C, k)^\perp)\]
in $\Pos(S^{[n]})$. The wall $\{\beta\cap \omega = 0\}\cap U_{\mc{C}}$ is sent to the wall 
$|2, 1^{n-1}\rangle^\perp\cap \Nef(S^{[n]})$ inducing the Hilbert-Chow contraction. 
\end{itemize} 
\end{prop}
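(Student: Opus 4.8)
The plan is to route every part of the statement through the explicit formula for $\ell_v$ in Proposition~\ref{prop:explicit_stab_to_pos}, using Theorems~\ref{thm:bm_mmp} and~\ref{thm:bm_cones} to pass between $\Stab^\dagger(S)$ and $\overline{\Pos}(S^{[n]})$, with Lemma~\ref{lem:walls} as the template for the wall enumeration. Specialising Proposition~\ref{prop:explicit_stab_to_pos} to $v=(1,0,1-n)$ (so $r=1$, $c=0$, $s=1-n$, $v^2/2=n-1$) gives that $\ell_v(\sigma_{\beta,\omega})$ is a positive real multiple of $\theta_v(r_{\omega,\beta},C_{\omega,\beta},s_{\omega,\beta})$ with
\[
  r_{\omega,\beta}=-\beta\cdot\omega,\qquad
  C_{\omega,\beta}=-(\beta\cdot\omega)\,\beta+\Big(1-n+\tfrac{\beta^2-\omega^2}{2}\Big)\omega,\qquad
  s_{\omega,\beta}=(1-n)\,\beta\cdot\omega,
\]
and all four parts are read off from this together with the wall descriptions already in hand.

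I would prove (iv) first, since it is essentially the definition \eqref{eq:bm_ample} of $\ell_\sigma$: on $W_{v,w}$ the charges $Z_{\beta,\omega}(v)$ and $Z_{\beta,\omega}(w)$ are positive real multiples of one another, so rescaling $\sigma$ within its $\widetilde{\GL}(2,\R)$-orbit to $Z(v)=-1$ (as in Step~3 of Construction~\ref{const:bridgeland_stable_spaces}), the vector $\Omega_Z$ with $Z(-)=(\Omega_Z,-)$ has $(\Omega_Z,w)\in\R$, whence $\im\Omega_Z\in v^\perp\cap w^\perp$ and $\ell_v(\sigma)=\theta_v(\im\Omega_Z)\in\theta_v(v^\perp\cap w^\perp)$. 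For $w=v(\mc{O}_S)=(1,0,1)$, Table~\ref{tbl:slopes_z} shows $Z_{\beta,\omega}(\mc{O}_S)$ and $Z_{\beta,\omega}(\mc{I}_Y)$ have equal imaginary part $-\beta\cdot\omega$ and real parts differing by $n$, so $W_{v,v(\mc{O}_S)}=\{\beta\cdot\omega=0\}$; and since $(1,0,1)$ lies in the rational span of $v$ and $(-1,0,1-n)=\theta_v^{-1}|2,1^{n-1}\rangle$, one gets $\theta_v(v^\perp\cap(1,0,1)^\perp)=|2,1^{n-1}\rangle^\perp$, which is the Hilbert--Chow face of $\Nef(S^{[n]})$ by the discussion preceding Proposition~\ref{prop:stab_corner_arranged} and Theorem~\ref{thm:bm_cones}(i).

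For (iii), a locus $W\subset U_{\mc{C}}$ is a wall for $\sigma$-stability of objects of Mukai vector $v$ exactly when $W=W_{v,w}$ for a class $w$ spanning with $v$ a rank-two sublattice of $H^*_{alg}(S,\Z)$ admitting $w$ or $v-w$ as a potential sub- or quotient-object along $W$; by Theorem~\ref{thm:bm_mmp}(iii) such a $W$ induces a birational contraction of $M_\sigma(v)$ precisely when $\ell_v(W)$ is a genuine wall of $\overline{\Mov}(S^{[n]})$, i.e.\ one of the spherical or isotropic walls of Theorem~\ref{thm:bm_cones}(iii). The real content — and the main obstacle — is the enumeration: one must show that inside $U_{\mc{C}}$ (large $\omega^2$; $|\beta\cdot\omega|$, $|\beta^2|$, $|C\cdot\beta|$ bounded; $0\le C\cdot\omega<N/\omega^2$ for $C\in\mc{C}$; $|\omega\cdot\alpha|$, $|\omega\cdot q|>\xi$ for the remaining $-2$ classes $\alpha$ and isotropic classes $q$) the only $w=(r_w,c_w,s_w)$, up to $w\mapsto v-w$, for which $W_{v,w}$ meets $U_{\mc{C}}$ are $w=(1,0,1)$ and $w=(0,C,k)$ with $C\in\Z\mc{C}$, $C^2=-2$, $0\ge k\ge 1-n$. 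Concretely: requiring $\im Z_{\beta,\omega}(w)$ to be a positive multiple of $\im Z_{\beta,\omega}(v)=-\beta\cdot\omega$ forces $c_w-r_w\beta$ into $\omega^\perp$ up to a controlled error; negative-definiteness of $\omega^\perp$ (from $|\omega\cdot q|>\xi$), the $O(1/\omega^2)$ bound on $C\cdot\omega$, and boundedness of $\beta$ then pin $r_w$ down and force $c_w\in\Z\mc{C}$; finally $0\le(w,v)\le v^2/2=n-1$ restricts $s_w$ to the stated range, exactly as in Lemma~\ref{lem:walls}. The image statement for each such $w$ is then (iv).

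Finally (i) and (ii). From the displayed formula and the fact that $\theta_v$ is an isometry, $\ell_v(\sigma_{\beta,\omega})\cdot|2,1^{n-1}\rangle=\langle(r_{\omega,\beta},C_{\omega,\beta},s_{\omega,\beta}),(-1,0,1-n)\rangle=-2(n-1)\,\beta\cdot\omega\ge 0$ on $U_{\mc{C}}$ (since $\beta\cdot\omega\le 0$, $n>1$); and dividing the Mukai vector by $\omega^2/2$ and letting $\omega^2\to\infty$ with the remaining quantities bounded, its direction converges to that of $(0,-\omega,0)$, whose $\theta_v$-image tends to $\theta_v((0,-H_{\mc{C}},0))=\tilde{H}_{\mc{C}}$ because $C\cdot\omega\to 0$ for every $C\in\mc{C}$ while $\omega$ stays uniformly away from the other walls; hence $\ell_v(U_{\mc{C}})$ lies in the stated region. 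For the reverse inclusion I would solve the explicit equations for $(\omega,\beta)$ given a target $D$ near the ray $\tilde{H}_{\mc{C}}$ with $D\cdot|2,1^{n-1}\rangle\ge 0$: the $H^2(S)$-component of $D$ fixes $\omega$ up to scale and a bounded correction, the $|2,1^{n-1}\rangle$-component fixes $\beta\cdot\omega$, and the remaining freedom in $\beta$ can be arranged to meet every defining inequality of $U_{\mc{C}}$ once $D$ is close enough to the ray and $V$ is large, with membership in $\mc{L}(S)$ checked via Lemma~\ref{lem:stab_limit_gaps}. Part (ii) then follows from (iv) and Theorem~\ref{thm:bm_mmp}(iii): the extra requirement $\phi(\mc{O}_C(-1))\ge\phi(\mc{I}_Y)$ for each $C\in\mc{C}$ defining $U_{Mov}$ says precisely that $\sigma_{\beta,\omega}$ lies, for every such $C$, on the $\overline{\Mov}(S^{[n]})$-side of the spherical wall $\theta_v(v^\perp\cap(0,C,0)^\perp)$, so $\ell_v(U_{Mov})=\ell_v(U_{\mc{C}})\cap\overline{\Mov}(S^{[n]})$.
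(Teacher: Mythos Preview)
Your proposal is correct and follows largely the same route as the paper: specialise Proposition~\ref{prop:explicit_stab_to_pos} to $v=(1,0,1-n)$, read off (i) from the resulting formula and the bounds defining $U_{\mc C}$, and deduce (ii) from (iv) by matching the boundary walls of $U_{\Mov}$ with the divisorial walls of $\overline{\Mov}(S^{[n]})$.

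There is one genuine methodological difference. For (iv) you argue conceptually from the definition $\ell_\sigma=\theta_v(\im\Omega_Z)$: on $W_{v,w}$ the charges $Z(v),Z(w)$ are positive real multiples, so after normalising to $Z(v)=-1$ one has $(\im\Omega_Z,w)=0$, whence $\ell_v(\sigma)\in\theta_v(v^\perp\cap w^\perp)$. The paper instead computes the pairing $\ell(\sigma_{\beta,\omega})\cdot\theta_v((0,C,k))$ directly and matches it against \eqref{eq:wall_vcrv}. Your argument is cleaner and works uniformly for any $w$; the paper's explicit computation has the advantage of simultaneously confirming which wall goes to which. For (iii) the paper also takes a slightly different tack: rather than enumerating the admissible $w$ directly on the $\Stab$ side as you sketch, it transfers the enumeration already carried out on the $\Pos$ side in Lemma~\ref{lem:walls} via the dual-graph isomorphism $\Gamma_{\Stab}\simeq\Gamma_{\Pos}$ implied by Theorem~\ref{thm:bm_mmp}. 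Your direct approach is fine but would need the boundedness argument spelled out more carefully than you indicate.

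One citation slip: in (iii) you invoke Theorem~\ref{thm:bm_cones}(iii), which describes only the walls of $\overline{\Mov}(S^{[n]})$, i.e.\ divisorial walls. The walls in question include flopping walls interior to the movable cone, so the correct input is Theorem~\ref{thm:bm_cones}(i) (the $\Nef$/chamber walls), and indeed it is the bound $0\le(w,v)\le v^2/2$ from that part that you go on to use.
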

\begin{proof}
  Proposition \ref{prop:explicit_stab_to_pos} implies that the class $\ell(\sigma_{\beta, \omega})\in \Pos(S^{[n]})$ corresponding to $\sigma_{\beta, \omega} \in V(S)$ with central charge $\mc{Z}(\sigma) = e^{\beta + i\omega}$ is a positive multiple $\eta$ of $\theta_v(r_{\omega, \beta}, C_{\omega, \beta}, s_{\omega, \beta})$ with 
  \begin{align*}
     r_{\omega, \beta} &= - \beta\cdot \omega\\
  C_{\omega, \beta} &= -(\beta \cdot\omega)\beta + \left(1-n + \frac{\beta^2 - \omega^2}{2}\right)\omega\\
  s_{\omega, \beta} &= (1-n)\beta\cdot \omega
  \end{align*}
  or in the notation of \eqref{eq:ns_direct_sum}  and \eqref{eq:tilde_sign}
  \[ \frac 1 \eta \ell(\sigma_{\beta, \omega}) = (\beta\cdot \omega) \tilde{\beta} + \left(\frac{\omega^2 - \beta^2}{2} + n-1\right)\tilde{\omega} + \beta\cdot\omega |2, 1^{n-1}\rangle.\]
  The fact that we have upper bounds on the sizes all coefficients in this expression except for $\omega^2$, together with the $\beta\cdot\omega \le 0$ condition implies that for large enough $V = \omega^2$ this class is equivalent under rescaling by a positive multiple to one of the form 
  \[\tilde{H} -\epsilon |2, 1^{n-1}\rangle\]
  for arbitrarily small $\epsilon \ge 0$. The conditions on $\omega\cdot C$ for curves $C\in \mc{C}$ and $C\notin \mc{C}$ implies that $H$ can be any class which is arbitrarily close to $H_{\mc{C}}$, the big and nef divisor on $S$ which contracts $\mc{C}$. This proves (i). Since $\beta\cdot \omega = 0$  corresponds to $\epsilon = 0$ this proves the last line of (iv). 

  Recall that the walls passing through the point $x = \tilde{H_{\mc{C}}}$ are described by Lemma \ref{lem:walls} (see also Proposition \ref{prop:stab_corner_arranged} when the collection $\mc{C}$ is connected).  They are arranged according to the walls 
  \[ \mc{W} = \{ \delta^\perp, (m\delta + \alpha)^\perp \mid 0 \le m < n, \alpha \in \Delta_f\}\]
  in the root system of the affine lie algebra corresponding to the finite dimensional lie algebra $\mf{g}$ with root system $\Delta_f$ where $\Delta_f$ has associated (not necessarily connected) Dynkin diagram corresponding to the Dynkin diagram of the collection $\mc{C}$. Then those points also in the movable cone coincides with locus of $x$ where $\delta\cdot x \ge 0$, $\alpha_i \cdot x\ge 0$ for $\alpha_i$ simple. 

  Now consider the dual graph $\Gamma_{\Stab}$ of the chambers in the wall-and-chamber decomposition of $U_{\mc{C}}$ for the vector $v$, where chambers are adjacent if they share a codimension 1 face. Also consider the dual graph $\Gamma_{\Pos}$ of chambers in $\{ D\in \Pos(S^{[n]})\mid D\cdot |2, 1^{n-1}\rangle \ge 0\}$ adjacent to the point $x = \tilde{H_{\mc{C}}}$, where again chambers are adjacent if they share a codimension 1 face. Then Theorem \ref{thm:bm_mmp} implies that 
  \[\Gamma_{\Stab} \simeq \Gamma_{\Pos}.\]

  We can also explicitly match the walls. 
Using the formula for $ \ell(\sigma_{\beta, \omega})$, it is possible to calculate $\ell(\sigma_{\beta, \omega})\cdot \theta_v((0, C, k))$ for $C \in \Z\mc{C}$ with $C^2 = -2$ and $k \in \Z$. Namely up to the positive multiple $\eta$ which has no effect
\begin{align*}
  \ell(\sigma_{\beta, \omega})\cdot \theta_v((0, C, k)) &= ((r_{\omega, \beta}, C_{\omega, \beta}, s_{\omega, \beta}), (0, C, k))\\
  &= -(\beta\cdot \omega) (\beta\cdot C) + \left(1-n + \frac{\beta^2 - \omega^2}{2}\right)\omega\cdot C + k\beta\cdot\omega
\end{align*}
and comparing this with the formula \eqref{eq:wall_vcrv} for $W_{v,(0,C,k)}$ we find that the two walls coincide, proving (iii) and (iv). 

  Now note that we can match the boundaries \[\{\beta\cdot\omega = 0\}\cap U_{\mc{C}}, W_{v, (0,C,0)}\cap U_{\mc{C}}\] of $U_{\Mov}$ with the divisorial wall boundaries 
  \[ \{|2,1^{n-1}\rangle^\perp , \tilde{C}^\perp \mid C\in \mc{C}\} \]
  of the image $\ell(U_{\Mov})\subset \Pos(S^{[n]})$, and since the Gieseker chamber of $S^{[n]}$ intersects this image, $\ell(U_{\Mov})$ coincides with the intersection of $\ell_v(U_{\mc{C}})$ with $\overline{\Mov}(S^{[n]})$, showing (ii).
\end{proof} 

\subsection{Other rank 1 torsion-free sheaves}
We need to relate moduli spaces of torsion free sheaves $\mc{F}$ with different values of $c_1(\mc{F})$ to capture the entire Fock space, and also birational models for these moduli spaces. 
The condition \eqref{eq:stab_limit_small_exc} describing the limit is also designed to make it easy to compare the stability conditions for ideal sheaves $\mc{I}_Y$ of length $n$ zero dimensional subschemes and torsion free subsheaves of $\mc{L}$ where $\mc{L}$ is a line bundle with $c_1(\mc{L}) \in \Z \mc{C}$ for a contractible collection $\mc{C}$. 

If $\mc{L}_Y$ is a torsion-free subsheaf of a line bundle $\mc{L}$ with quotient $\mc{O}_Y$ of length $n$ then tensoring with $\mc{L}^{-1}$
\[ \ses{\mc{L}_Y}{\mc{L}}{\mc{O}_Y}\]
recovers
\[\ses{\mc{I}_Y}{\mc{O}_S}{\mc{O}_Y}\]
inducing the isomorphism between 
$M(1, c_1(\mc{L}),1 + c_1(\mc{L})^2/2 - n)$ and $S^{[n]}$ 
seen in \eqref{eq:hilb_eq_m}. This tensor product does not preserve the Bridgeland stability condition, even though Gieseker stability is the same for all Mukai vectors $(1, c_1, 1 + ch_2)$.  At least in the relevant limit, the modification to Bridgeland stability given by tensoring with $\mc{L}$ is tracked by the shift
\[ \beta \mapsto \beta - c_1 (\mc{L}).\]
We begin with the calculation on the central charge.
\begin{lem}\label{lem:cc_shift_c1}
  For $(\beta, \omega)$ corresponding to a stability condition in $V(S)$, and any $\mc{E} \in D(S)$, the action of tensoring with a line bundle on the central charge is given by the formula
  \[Z_{\beta, \omega} (\mc E\otimes \mc{L}) = Z_{\beta - c_1(\mc{L}), \omega}(\mc E).\]
\end{lem}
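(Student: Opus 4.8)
The plan is to reduce the identity to two elementary facts about the cohomology ring $H^*(S,\C)$: multiplicativity of the Mukai vector under twisting by a line bundle, and an adjointness property of the Mukai pairing with respect to multiplication by exponential classes. Both are standard, and I expect the only point requiring care to be the bookkeeping of signs coming from the Mukai involution.

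First I would recall that for any $\mc{E}\in D^b(S)$ and any line bundle $\mc{L}$ one has $\on{ch}(\mc{E}\otimes\mc{L}) = \on{ch}(\mc{E})\cdot\on{ch}(\mc{L}) = \on{ch}(\mc{E})\cdot e^{c_1(\mc{L})}$, and since $\sqrt{\on{Td}_S}$ is unaffected by the twist this gives $v(\mc{E}\otimes\mc{L}) = v(\mc{E})\cdot e^{c_1(\mc{L})}$, the product being cup product in $H^*(S,\C)$, which is commutative since it is concentrated in even degrees. Next I would record the identity
\[ (\,x\cdot e^{\gamma},\ y\,) = (\,x,\ e^{-\gamma}\cdot y\,)\qquad\text{for } x,y\in H^*(S,\C),\ \gamma\in H^2(S,\C), \]
which follows by writing the Mukai pairing as $(x,y) = -\int_S x^{\vee}\cup y$ with $x\mapsto x^{\vee}$ the Mukai involution, noting that $x\mapsto x^{\vee}$ is a ring automorphism of $H^*(S,\C)$ satisfying $(e^{\gamma})^{\vee} = e^{-\gamma}$, and then moving $e^{-\gamma}$ across the cup product inside the integral.

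Finally I would assemble the computation: using the definition \eqref{eq:cc} of the central charge together with the two facts above,
\[ Z_{\beta,\omega}(\mc{E}\otimes\mc{L}) = \bigl(e^{\beta+i\omega},\, v(\mc{E})\, e^{c_1(\mc{L})}\bigr) = \bigl(e^{\beta+i\omega}\, e^{-c_1(\mc{L})},\, v(\mc{E})\bigr) = \bigl(e^{(\beta - c_1(\mc{L})) + i\omega},\, v(\mc{E})\bigr) = Z_{\beta - c_1(\mc{L}),\,\omega}(\mc{E}), \]
where the third equality uses $e^{\beta + i\omega}\cdot e^{-c_1(\mc{L})} = e^{(\beta - c_1(\mc{L})) + i\omega}$, valid because all the exponents lie in the commutative even-degree subring. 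Note that the hypothesis that $(\beta,\omega)$ corresponds to a point of $V(S)$ plays no role in this chain of equalities, which is a purely cohomological identity; it is recorded only so that both sides are genuine central charges of Bridgeland stability conditions in the sense of Section \ref{sec:stabk3}. So there is essentially no obstacle beyond fixing the conventions for the Mukai involution and pairing carefully enough that the signs in the adjointness identity come out correctly.
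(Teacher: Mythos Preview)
Your proof is correct and is essentially the same argument as the paper's. The paper unpacks the Mukai pairing as $-\int_S e^{-(\beta+i\omega)}\on{ch}(-)\sqrt{\on{Td}(S)}$ and moves the factor $e^{c_1(\mc L)}$ from $\on{ch}(\mc E\otimes\mc L)$ into the exponential directly inside the integral, which is exactly your adjointness identity $(x\cdot e^{\gamma},y)=(x,e^{-\gamma}\cdot y)$ written out explicitly; the only difference is that you phrase this step as a standalone lemma about the Mukai pairing rather than performing the manipulation inline.
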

\begin{proof}
  For any object $\mc{E}\in D(S)$  we have
   \begin{align*}
     Z_{\beta, \omega}(\mc{E}\otimes \mc{L}) &= (e^{\beta + i\omega},v(\mc{E}\otimes \mc {L}) )\\
     &= -\int_S e^{-(\beta + i\omega)} ch(\mc{E}\otimes \mc{L})\sqrt{\on{Td}(S)}\\
     &= -\int_S e^{-(\beta + i\omega)}e^{c_1(\mc{L})} ch(\mc{E})\sqrt{\on{Td}(S)}\\
     &= (e^{\beta - c_1(\mc{L}) + i\omega}, v(\mc{E}))\\
     &= Z_{\beta - c_1(\mc{L}), \omega}(\mc{E}).
   \end{align*}
\end{proof}
We also identify the wall and chamber structure in the relevant region of $\Stab^\dagger(S)$. The following is the analogue of Proposition \ref{prop:affine_quiver_shift}. To this end, we need a generalization of the set $U_{\mc {C}}$ more applicable to Mukai vectors of the form $(1, D, s)$. 

\begin{defn}\label{defn:ucd} Let $D \in \NS(S)$ be a divisor. 
The set $U_{\mc C, D}$ depending on the same data $N, \xi, V$ as $U_{\mc C}$ is defined to be the subset 
\[ U_{\mc C, D} = \{(\beta, \omega) \in U_{\mc C} \mid -\beta \cdot \omega >0 \text{  and  } (D - \beta)\cdot \omega > 0  \}\]
of points in $U_{\mc C}$ satisfying these additional bounds. 
\end{defn}

\begin{prop}\label{prop:wall_chamber_vd}
  Let $\sigma_{\beta, \omega}$ be a generic stability condition in $V(S)$ (in particular, $\omega$ is ample) lying above a  point $(\beta, \omega) \in U_{\mc{C},D}$
     where $D = c_1(\mc{L})$ is $c_1$ of any line bundle on $S$. Let $v_{D} = (1, D,1 + D^2/2 - n )$.  Let $W_{v,w}$ for Mukai vectors $v$ and $w$ denote the locus where $Z_{\beta, \omega}(v)$ and $Z_{\beta, \omega}(w)$ are real positive multiples of each other.  Then after potentially increasing $V$  (which is the lower bound on $\omega^2$) and the other data defining $U_{\mc{C}}$, $\sigma_{\beta, \omega}$ lies on a wall for Mukai vector $v_{D}$ if and only if $\sigma_{\beta, \omega}$ lies on one of the walls $W_{w,v_{D}}$ for
      \[ w \in \{ (0, C, C\cdot D + k) \mid C\in \Z\mc{C},C^2 = -2, k = 0,-1 \ldots,1-n  \}.\]
\end{prop}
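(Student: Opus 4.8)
The plan is to reduce Proposition \ref{prop:wall_chamber_vd} to Proposition \ref{prop:walls_bstab_hilb} by transporting everything along the autoequivalence $\otimes\, \mc{L}$, where $\mc{L}$ is a line bundle with $c_1(\mc{L}) = D$. First I would observe that tensoring by $\mc{L}$ induces an isomorphism $M_{\sigma}(v_0) \simeq M_{\sigma'}(v_D)$ of moduli spaces, where $v_0 = (1,0,1-n)$, $v_D = (1,D,1+D^2/2-n)$, and $\sigma'$ is the stability condition with central charge obtained from that of $\sigma$ by the formula of Lemma \ref{lem:cc_shift_c1}, namely $Z_{\beta,\omega}(\mc{E}\otimes\mc{L}) = Z_{\beta - D,\omega}(\mc{E})$. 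Consequently, $\sigma_{\beta,\omega}$ lies on a wall for the Mukai vector $v_D$ if and only if the stability condition with central charge $Z_{\beta - D, \omega}$ lies on a wall for $v_0$. So the first step is to verify that, after possibly enlarging the constants $N,\xi,V$ defining $U_{\mc{C}}$, the shifted pair $(\beta - D, \omega)$ lies in $U_{\mc{C}}$ (or at least in the closure $\overline{\mc{L}(S)}$ with the numerical bounds needed to run Proposition \ref{prop:walls_bstab_hilb}) precisely when $(\beta,\omega)\in U_{\mc{C},D}$. The extra conditions in Definition \ref{defn:ucd}, namely $-\beta\cdot\omega>0$ and $(D-\beta)\cdot\omega>0$, are exactly what is needed so that both $-\beta\cdot\omega$ and $-(\beta-D)\cdot\omega$ are positive (the second being $(D-\beta)\cdot\omega$), i.e. so that the shifted pair still satisfies the sign condition $-N < (\beta-D)\cdot\omega \le 0$ after enlarging $N$; the bounds $|C\cdot\beta|<N$, $|\beta^2|<N$ become bounds on $C\cdot(\beta-D)$ and $(\beta-D)^2$ after enlarging $N$ by a constant depending only on $D$ and $\mc{C}$ (here one uses that $D$ and the curves in $\mc{C}$ lie in a fixed finite-rank sublattice, so $C\cdot D$, $D^2$ are bounded), and the conditions $\omega^2 > V$, $|\omega\cdot q|>\xi$, $|\omega\cdot\alpha|>\xi$, $0\le C\cdot\omega < N/\omega^2$ involve only $\omega$ and are unaffected by the shift.

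Next I would apply Proposition \ref{prop:walls_bstab_hilb}(iii) to $(\beta-D,\omega)$: the walls for $v_0$ in $U_{\mc{C}}$ are the locus $\{(\beta-D)\cdot\omega = 0\}$ and $W_{v_0, w'}$ for $w' \in \{(0,C,k) \mid C\in\Z\mc{C},\ C^2 = -2,\ k=0,-1,\ldots,1-n\}$, together with $w' = v(\mc{O}_S)$. Under the tensoring identification, a wall $W_{v_0,w'}$ for the pair $(\beta-D,\omega)$ corresponds to a wall $W_{v_D, w'\cdot e^{D}}$ for $(\beta,\omega)$, where $w'\cdot e^{D} = v(\mc{E}'\otimes\mc{L})$ for any $\mc{E}'$ with $v(\mc{E}') = w'$. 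So the last computational step is simply to compute $v(\mc{O}_C(k)\otimes\mc{L}) = (0,C,k)\cdot e^{D}$. Using $\mathrm{ch}(\mc{O}_C(k)\otimes\mc{L}) = \mathrm{ch}(\mc{O}_C(k))e^{D}$ and the fact that $\mc{O}_C(k)$ is supported on the curve $C$, this gives $v(\mc{O}_C(k)\otimes\mc{L}) = (0, C, k + C\cdot D)$, exactly matching the set of $w$ in the statement. The wall $\{(\beta-D)\cdot\omega = 0\} = \{(D-\beta)\cdot\omega = 0\}$ corresponds to the Hilbert--Chow-type wall for $v_D$; depending on how one wishes to phrase the statement this can either be folded into the $k=0$ case via $w'=v(\mc{O}_S)$ as in Proposition \ref{prop:walls_bstab_hilb}, or noted separately — I would mirror whatever convention is used there. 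This finishes the matching of walls.

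The main obstacle I anticipate is purely bookkeeping rather than conceptual: one must be careful that the constants defining $U_{\mc{C},D}$ can be chosen uniformly so that the shift $\beta\mapsto\beta-D$ carries $U_{\mc{C},D}$ genuinely \emph{into} a set of the form $U_{\mc{C},N',\xi',V'}$ to which Proposition \ref{prop:walls_bstab_hilb} applies, and — more delicately — that no \emph{new} walls appear for $v_D$ that were not visible for $v_0$. This second point is where the hypothesis $c_1(\mc{L})\in\Z\mc{C}$ (equivalently that $D$ lies in the span of the contractible collection) is essential: it ensures that $D$ pairs to zero with the isotropic and $(-2)$-classes $q,\alpha$ controlling the $\xi$-bounds only up to a controlled error, so that the relevant region of $\Stab^\dagger(S)$ is genuinely translated rather than deformed into a region meeting extra walls. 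I would isolate this uniformity as the one lemma requiring care, citing Lemma \ref{lem:stab_limit_gaps} for the description of which $e^{\beta'+i\omega}$ actually lie in $\mc{P}_0^+(S)$ and arguing that the translate of $U_{\mc{C},D}$ avoids the excluded loci $\{C\cdot\beta'\in\Z,\ C\cdot\omega=0\}$ after enlarging $V$; everything else is the two routine central-charge computations above.
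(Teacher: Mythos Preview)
Your approach is exactly the paper's: transport along $\otimes\,\mc{L}$ using Lemma~\ref{lem:cc_shift_c1}, land the shifted pair $(\beta-D,\omega)$ in a set $U_{\mc{C}}$ with enlarged constants, invoke Proposition~\ref{prop:walls_bstab_hilb}(iii), and push the resulting Mukai vectors forward by multiplication with $e^{D}$ to obtain $(0,C,C\cdot D+k)$.

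Two small corrections to your bookkeeping. First, there is no hypothesis $D\in\Z\mc{C}$ in the statement, and none is needed: the bounds $|\omega\cdot q|>\xi$ and $|\omega\cdot\alpha|>\xi$ involve only $\omega$, not $\beta$, so the shift $\beta\mapsto\beta-D$ leaves them untouched regardless of $D$; your worry about ``new walls'' is likewise unnecessary, since $\otimes\,\mc{L}$ is an autoequivalence and gives a bijection between walls for $v_0$ at $(\beta-D,\omega)$ and walls for $v_D$ at $(\beta,\omega)$. Second, the Hilbert--Chow wall $\{(D-\beta)\cdot\omega=0\}$ is not left optional: the strict inequality $(D-\beta)\cdot\omega>0$ in Definition~\ref{defn:ucd} excludes it outright, which is precisely why $w=v(\mc{L})$ does not appear in the proposition's list. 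The paper makes this exclusion explicit, and then checks the converse direction (that each listed $W_{w,v_D}$ actually meets $U_{\mc{C},D}$) by writing out equation~\eqref{eq:Wvd_w}; you should do the same.
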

\begin{proof}
If $\Phi_{\mc{L}}$ is the derived equivalence given by tensoring with $\mc{L}$ then the pullback stability condition $\Phi_{\mc{L}}^*\sigma_{\beta, \omega}$ lies in the Gieseker chamber for $v = (1,0,1-n)$ if and only if $\sigma_{\beta, \omega}$ lies in the Gieseker chamber for $v = v_D$. 
Also, given $(\beta, \omega)\in \mc{L}(S)$ satisfying $-\beta \cdot \omega >0 \text{  and  } (D - \beta)\cdot \omega > 0 $, $(\beta, t\omega)$ is in $\mc{L}(S)$ and $\sigma_{\beta, t\omega}$ is in the Gieseker chamber for both $v = (1,0,1-n)$ and $v = v_D$ for all $t \gg 0$.
 But since the  central charge of the pullback is given by $\Phi_{\mc{L}}^*\sigma_{\beta, \omega}$ a shift of $\beta$ by $D$ and the pullback preserves the set $V(S)$, it follows that after potentially increasing $V$ we have that  $\Phi_{\mc{L}}^*\sigma_{\beta, \omega}\in U_{\mc{C}}$ and so the walls on which $\Phi_{\mc{L}}^*\sigma_{\beta, \omega}$  might lie are a subset those described in Proposition \ref{prop:walls_bstab_hilb}. 
 On the other hand, $\Phi_{\mc{L}}^*\sigma_{\beta, \omega}$ cannot lie on the wall $\beta\cdot\omega = 0$ since this is the wall where $\mc{O}_S$ and $\mc{I}_Y$ have the same phase for $\Phi_{\mc{L}}^*\sigma_{\beta, \omega}$ , but the conditions $-\beta \cdot \omega >0 \text{  and  } (D - \beta)\cdot \omega > 0 $ and the calculations of Table \ref{tbl:slopes_z} show that for the stability condition $\sigma_{\beta, \omega}$ the phases of $\mc{L}$ and $\mc{L}_Y$ do not agree. 

Thus the potential walls for Mukai vector $v_D$ on which $\sigma_{\beta, \omega}$ may lie are a subset of those $W_{w,v_{D}}$ for  
\[w \in \{ \Phi_{\mc{L}, *}((0, C, k)) \mid C\in \Z\mc{C},C^2 = -2, k = 0,-1 \ldots,1-n  \}\].
Where $\Phi_{\mc{L}, *}$ denotes the action of the equivalence $\Phi_{\mc L}$ on Mukai vectors, which acts by multiplication with $\on{ch}(\mc{L})$, i.e. 
\begin{align*}
  \Phi_{\mc{L},*} : (r, C, s) &\mapsto (r, C, s)(1, c_1(\mc{L}), c_1(\mc{L})^2/2)\\ 
  &= (r, C + r c_1(\mc{L}), c_1(\mc{L})\cdot C + s + rc_1(\mc{L})^2/2)
\end{align*}
so that a destabilizing sequence 
\[ \ses{\mc{G}}{\mc{F}}{\mc{O}_C(-k)}\]
for a semistable object $\mc{F}$ with Mukai vector $(1,0,1-n)$ is sent to a destabilizing sequence 
\[ \ses{\mc{G}\otimes \mc{L}}{\mc{F}\otimes \mc{L}}{\mc{O}_C(-k)\otimes \mc{L}}.\]
Thus the potential destabilizing walls are $W_{w,v_D}$ are those for
\[ w\in \{(0, C, C\cdot D + k)\mid C\in \Z\mc{C},C^2 = -2, k = 0,-1 \ldots,1-n   \}.\]

But they are all actually possible since again looking at Table \ref{tbl:slopes_z} we can find $\beta, \omega$ where we do lie on these walls, since for $w = (0, C, \ell)$
\begin{multline}
  \label{eq:Wvd_w}
  W_{w,v_D}\cap U_{\mc{C}} = \{ (\beta, \omega) \in U_{\mc{C}} \mid (C\cdot \beta - \ell)((D - \beta)\cdot \omega ) =  \\ (C\cdot \omega)(-1+(\omega^2-\beta^2)/2 + D\cdot \beta - D^2/2+n)\}.
\end{multline}
\end{proof}

\subsection{Figures for generic elliptic K3 surface}\label{sec:figures}
To clarify the results of this section we include figures of the positive cone of $S^{[n]}$ with its wall and chamber structures where $S$ is a generic elliptic K3 surface with section. This walls near the point corresponding to $Sym^nS_{bd}$ on the surface where we contract the section will coincide with the structure around a single $A_1$ collection in any K3 surface. 

\begin{figure}[h!]
\centering
\includegraphics[width=\linewidth]{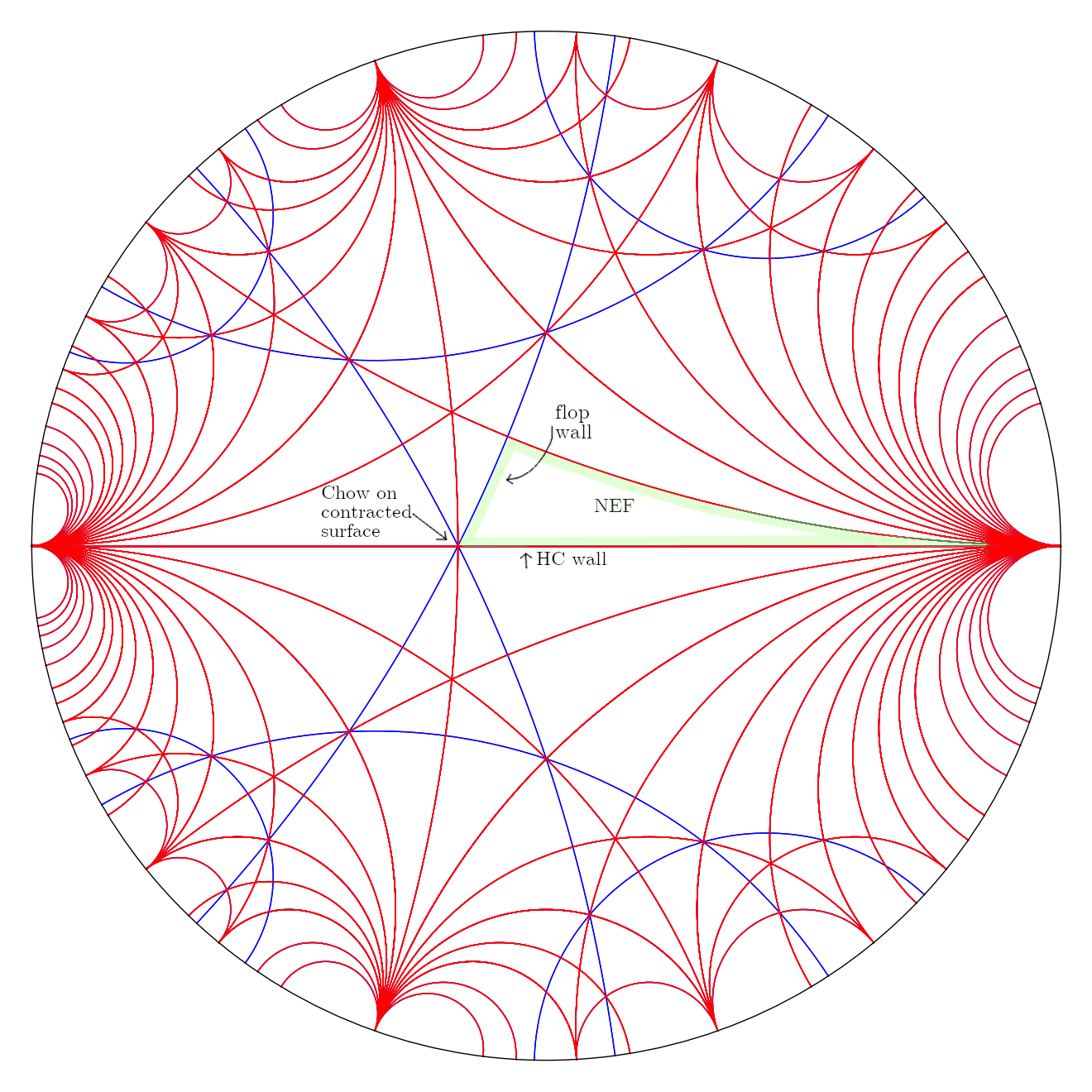}
\caption{The wall and chamber structure on $\Pos(S^{[n]})$ with the Nef cone, Hilbert-Chow wall, and a ray corresponding to $ Sym^nS_{bd}$ labelled. Red walls correspond to walls of the Movable cone or its reflections, and blue walls correspond to higher codimension contractions. }
\end{figure}

\begin{figure}[h!]
  \begin{subfigure}{.5\textwidth}
    \centering
    \includegraphics[width=.9\linewidth]{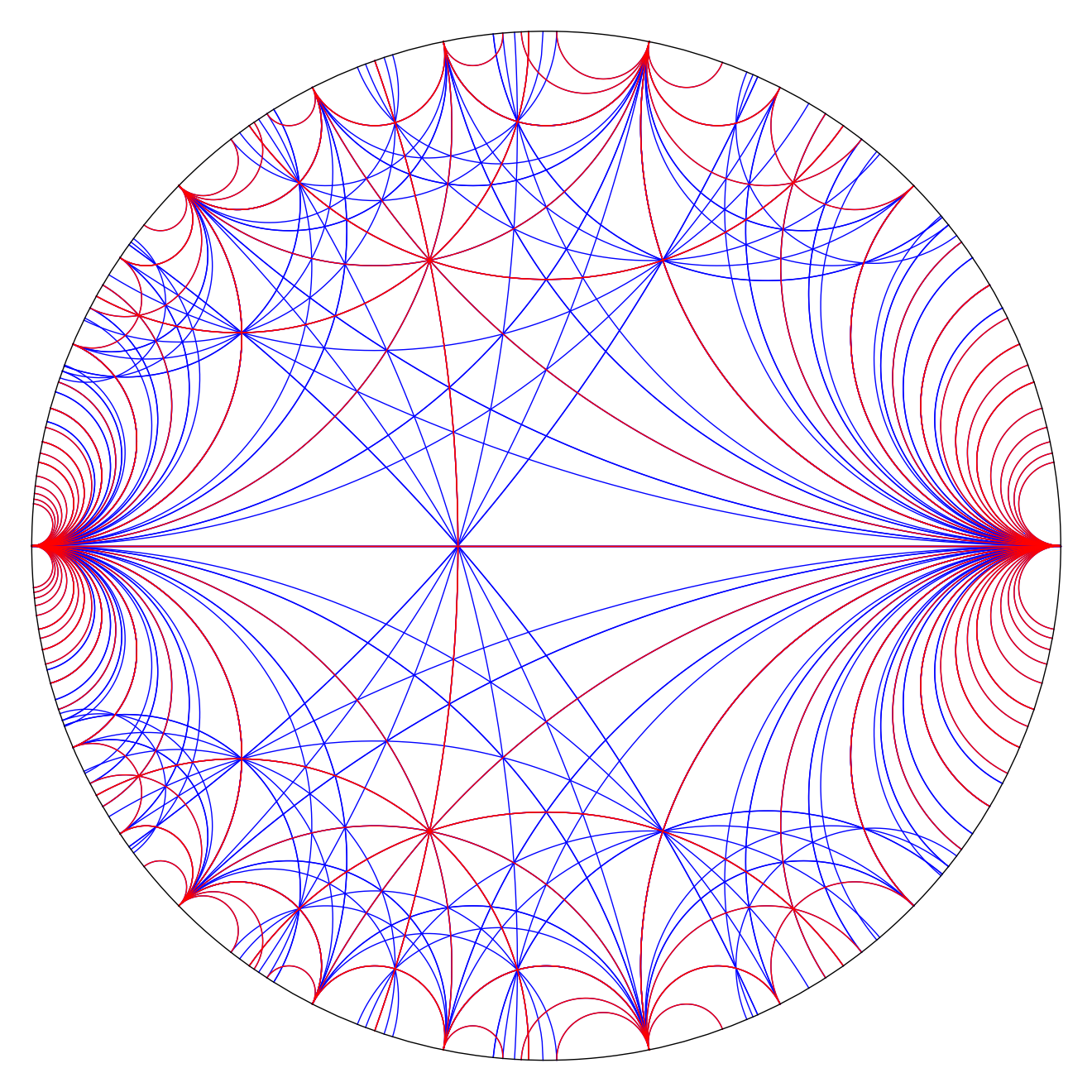}
  \end{subfigure}%
  \begin{subfigure}{.5\textwidth}
    \centering
    \includegraphics[width=.9\linewidth]{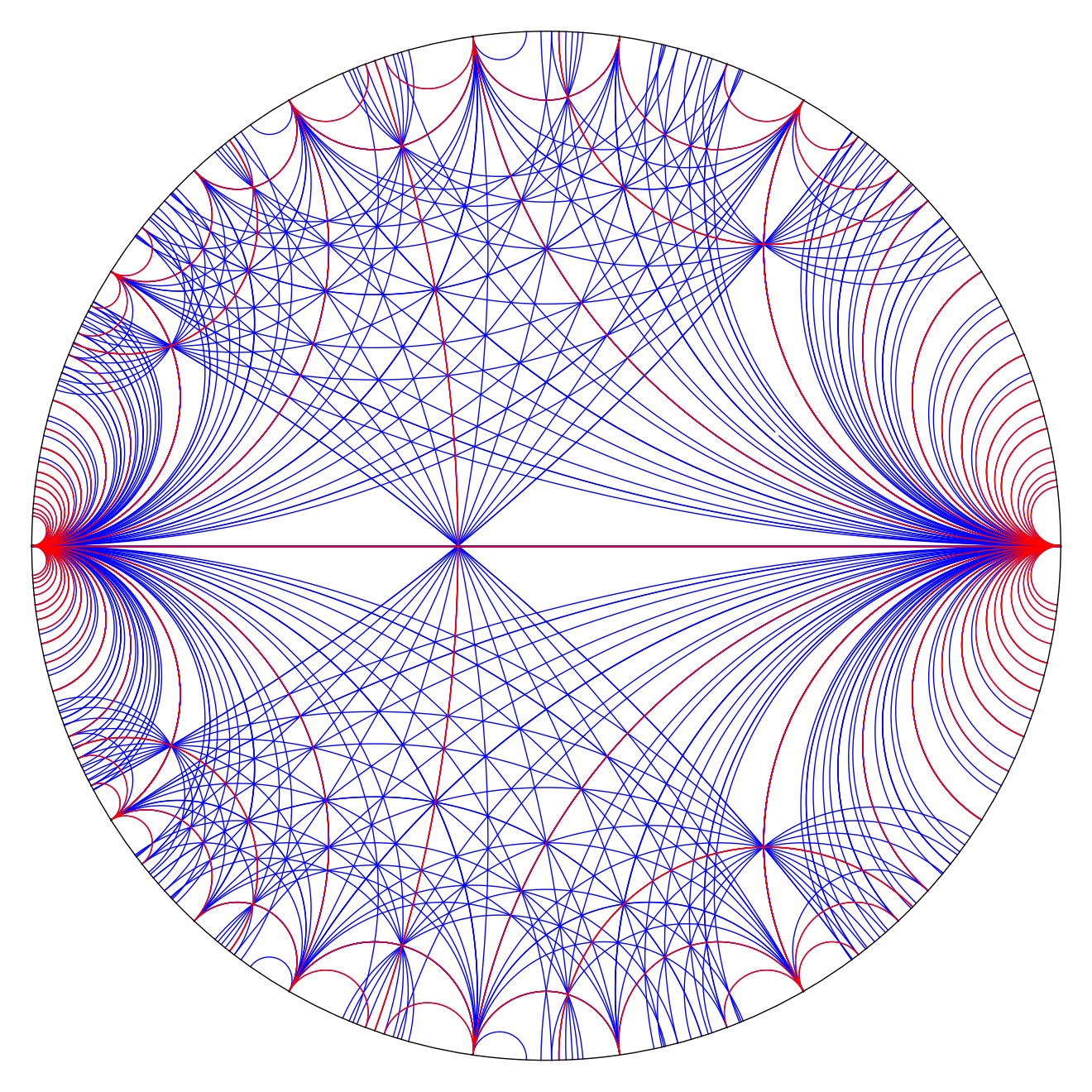}
  \end{subfigure}
  \begin{subfigure}{.5\textwidth}
    \centering
    \includegraphics[width=.9\linewidth]{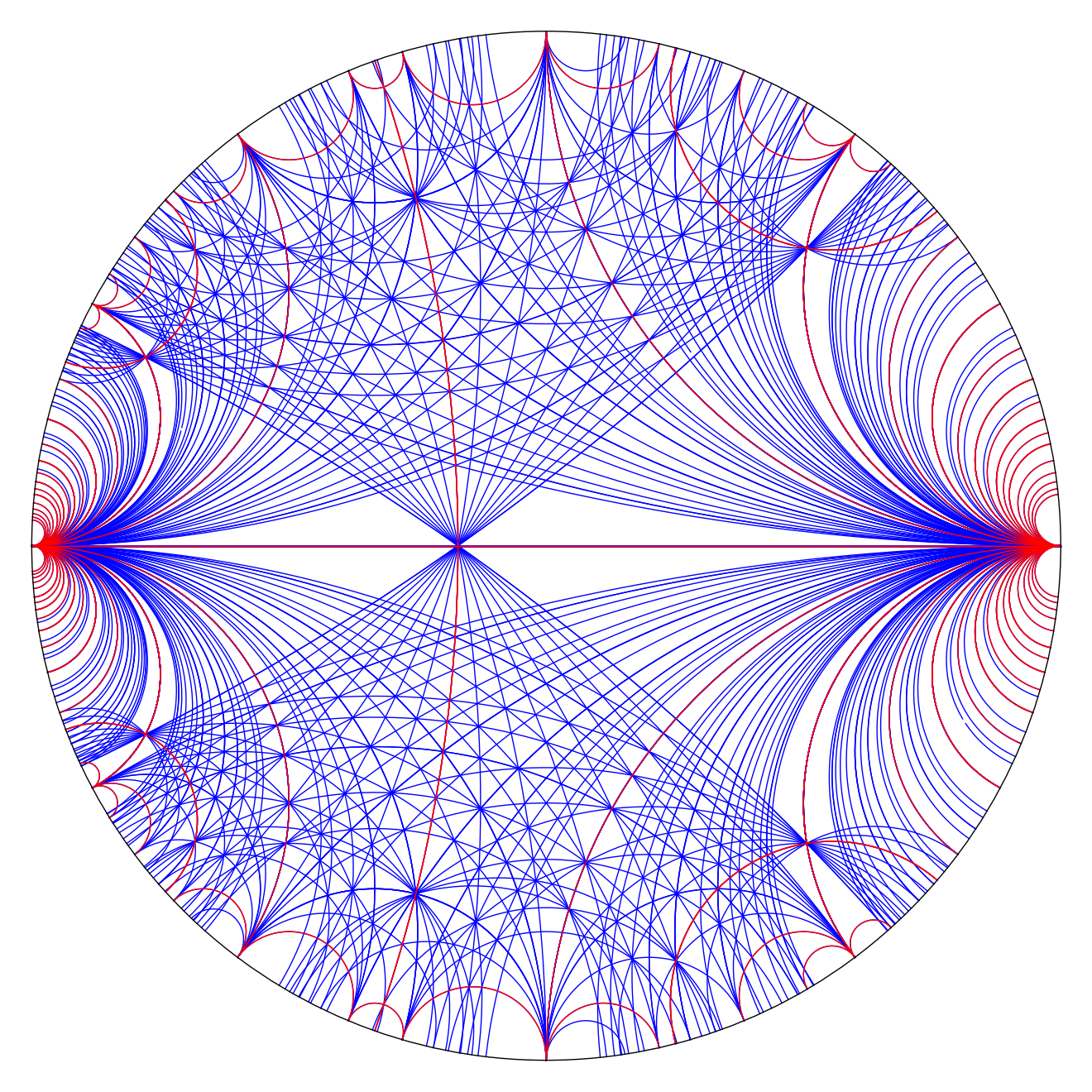}
  \end{subfigure}%
  \begin{subfigure}{.5\textwidth}
    \centering
    \includegraphics[width=.9\linewidth]{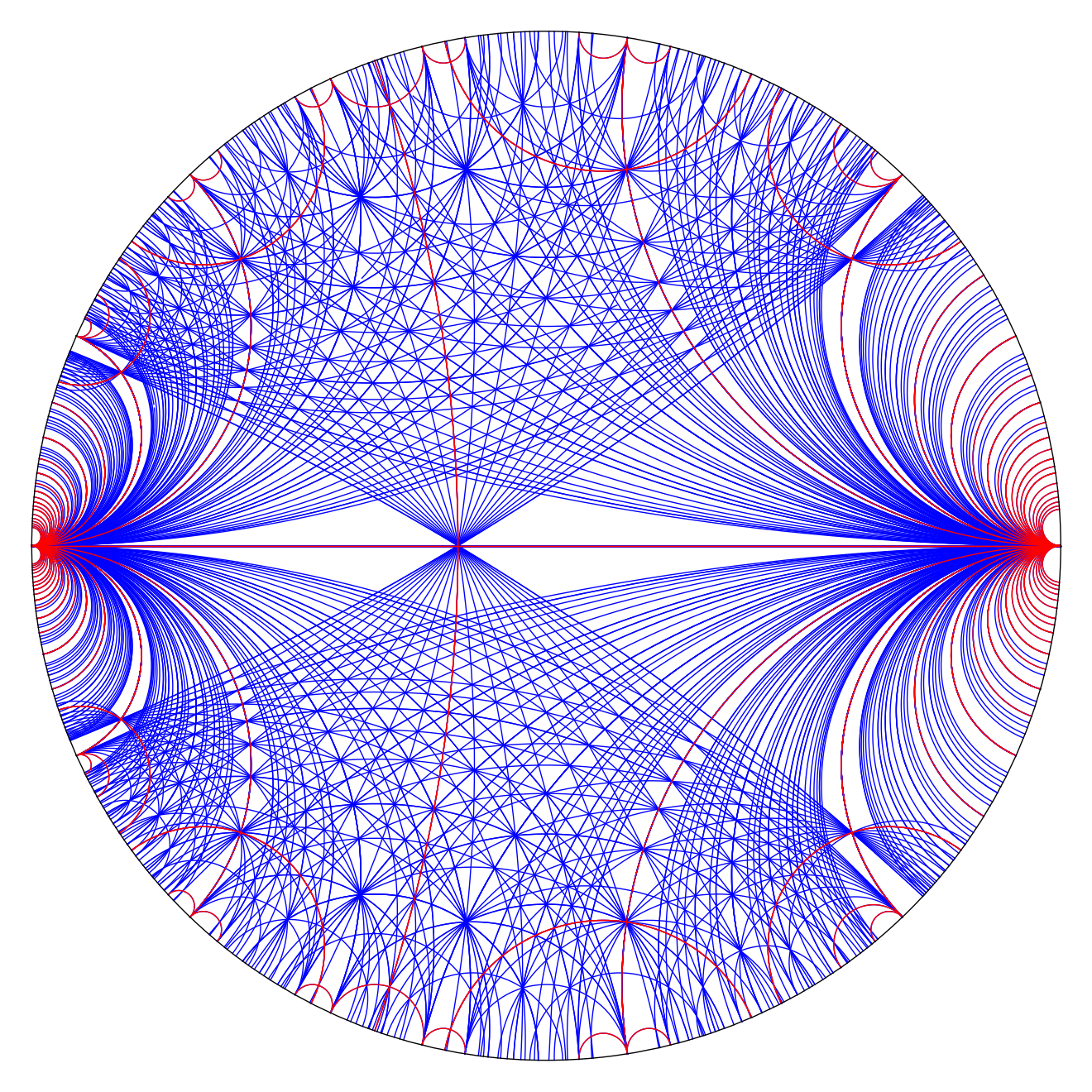}
  \end{subfigure}
  \caption{Positive cones of $S^{[n]}$ for $n \in \{ 4, 7, 9, 12\}$.}
\end{figure}

\newpage
\section{Construction of action}\label{sec:construction}
In this section we construct the action of the universal enveloping algebra of the affinization of a Lorentzian Kac-Moody algebra on the cohomologies of moduli spaces of rank 1 torsion free sheaves on $S$, which is of the same form as in Section \ref{sec:stabk3_specific}.

\paragraph{Stability chambers for smooth moduli spaces}
For the quiver variety case, the natural stability chamber to produce the action consisted of 
$\theta = (\theta_0, \theta_1, \ldots, \theta_r)$ such that $\theta_i > 0$ for all $i$. From the present vantage point, this chamber is the most useful because every wall $\{ \theta \cdot \dim_{gr} S_i = 0\}$ is a wall of this chamber,  where $S_i$ a 1 dimensional quiver representation. The analogous chamber for moduli spaces of objects on K3 surfaces will consist of a chamber $\mc A$ of stability conditions $\sigma_{\beta, \omega}$ such that for every irreducible curve $C\in \mc{C}$ there is a wall on the boundary of $\mc A$ such that on this wall the phases of the torsion-free sheaf $\mc L_Y$ and the sheaf $\mc{O}_C(-1)$ overlap. The following notation described a chamber in $\Stab^\dagger(S) \cap U_{\mc C}$ which is mapped to a chamber in $\Pos(S^{[n]})$ which corresponds to the fundamental alcove of the affine root system under the equivalence of such chambers with alcoves given by Proposition \ref{prop:stab_corner_arranged}. 

\begin{notn}\label{not:mca}
Fix a connected contractible collection $\mc{C}$ of $-2$ curves on
 $S$ and fix some class $D \in \NS(S)$, and Mukai vector $v_D= (1, D,1 + D^2/2 - n )$.
Define the set $\mc{A}_D \subset V(S)$ of stability conditions $\sigma = \sigma_{\beta, \omega}$ such that 
\begin{enumerate}[(i)]
  \item The pair $(\beta, \omega) \in U_{\mc C, D}$ where $U_{\mc C, D}$ is defined in Definition \ref{defn:ucd}. Thus $\mc{A}_D$ depends on the same data as the set $U_{\mc{C}, D}$.
   \item Under $\sigma \in \mc{A}_D$ the phases of a $\sigma$-stable object $\mc{E}\in \mc{P}(0, 1]$ of Mukai vector $v_D$ and objects $\mc{O}_C(k)$ for $k = -1, \ldots, 1-n$ and an irreducible curve $C\in \mc{C}$ satisfy
   \[\phi(\mc{O}_C(-1)) > \phi(\mc{E}) > \phi(\mc{O}_C(-2)) > \cdots > \phi(\mc{O}_C(1-n)). \]
   \item Under the wall and chamber decomposition with respect to the Mukai vector $v_D$ of the subset of $(\beta, \omega)$ in  $U_{\mc{C}}$ satisfying condition (ii), $\sigma$ lies in the unique chamber which for every $C\in \mc{C}$  this chamber has a point on its boundary where we have $\phi(\mc{O}_C(-1)) = \phi(\mc{E})$ for $\mc{E}$ as above. 
\end{enumerate}
\end{notn}

 Under Proposition \ref{prop:wall_chamber_vd}, condition (iii) in the above definition can be replaced by the condition that with respect to a stability condition $\sigma\in \mc{A}_D$ the phases of stable objects $\mc{E}$ and $S$ satisfy $\phi(\mc{E}) > \phi(S)$  where $\mc{E}$ is as above and $S$ is an object in $\mc{P}(0,1]$ of Mukai vector $(0, C_h, -1)$ where $C_h \in \Z \mc{C}$, $C_h = \sum a_i C_i$ with $C_h^2 = -2$ is the class of the highest root under the identification of $\mc{C}$ with the set of simple positive roots of a finite root system. This is the analogue of the condition that $\theta_0 > 0$ in the case of the positive chamber for affine quiver varieties. 

 It will be useful to have a more general description of a chamber in $U_{\mc{C}, D}$ corresponding to a different alcove of the affine reflection group associated to $\mc{C}$, such that $\mc{A}_D$ will correspond to the usual fundamental alcove. 
To this end, consider the finite root system $\Delta$ of rank $r$ associated to $\mc C$ with simple roots $\alpha_i$, positive roots $\alpha\in \Delta_+$. Note that to each alcove $A$ of the corresponding affine reflection group on $\R^r$ (which we might think of as the level 1 hyperplane) we can associate a sequence of numbers $\mb k = (k_\alpha)_{\alpha\in \Delta_+}$ such that for $x\in A$ we have 
\[k_\alpha < \alpha(x) < k_{\alpha} + 1 ~\text{ for all }\alpha\in \Delta_+ .\]
Then the sequence $\mb k$ uniquely determines the alcove (although not every sequence corresponds to an alcove). Using this observation, and Proposition \ref{prop:wall_chamber_vd} which implicitly identifies chambers in $U_{\mc{C}, D}$ with alcoves for the affine reflection group we extend the previous notation. In the following, writing $\phi(v)$ for a Mukai vector $v$ means $\phi(\mc{E})$ for a semistable object in $\mc{P}(0,1]$ of Mukai vector $v$. 

\begin{notn}\label{not:adsk}
  Fix a contractible collection $\mc{C} = \{ C_1, \ldots, C_r\}$ of $-2$ curves on
  $S$ with class $C_\alpha \in \Z\mc C$ corresponding to the positive root $\alpha\in \Delta_+$. Fix some class $D \in \NS(S)$, and Mukai vector $v_D= (1, D,s)$, and consider the set $U_{\mc C, D}$ as above. Given a vector $\mb k = (k_\alpha)_{\alpha\in \Delta_+}$ define the (possibly empty) set $\mc{A}_{D,s, \mb{k}}\subset V(S)$  of stability conditions $\sigma = \sigma_{\beta, \omega}$ such that 
  \begin{itemize}
\item We have $(\beta, \omega) \in U_{\mc{C}, D}$. In particular, $\mc A_{D,s, \mb k}$ depends on everything $U_{\mc{C}}$ does. 
\item  Under $\sigma \in \mc{A}_{D, s,\mb k}$ and for any $\alpha\in \Delta_+$ we have 
\[ \phi((0, C_\alpha, k_\alpha -1)) < \phi(v_D) < \phi((0, C_\alpha, k_\alpha)).\]
  \end{itemize}
\end{notn}
When $s$ is not specified, but there is some integer implicit integer $n$, we take Mukai vector $(1, D,1 + D^2/2 - n )$ and define $\mc{A}_{D,\mb k } := \mc{A}_{D,s,\mb{k} }$ for $s = 1 + D^2/2 - n $. Whenever bold-faced $\mb k$ is not specified we take $\mb k = \mb 0 = (0, 0,\ldots, 0)$.
This is a generalization of the previous notation, since for $\mb k = \mb 0 = (0, 0,\ldots, 0)$ we have $\mc{A}_D = \mc{A}_{D, \mb 0}$.  
Also, note that for a given $n$ (= number of points) and data defining $U_{\mc{C}}$ the set $\mc{A}_{D, \mb k}$ may not be an the intersection of a stability chamber for Mukai vector $v_D$ and $U_{\mc{C}, D}$. But as $n$ and the bounds on $\beta, \omega$ defining $U_{\mc{C}}$ increase every $\mc{A}_{D, \mb{k}}$ is one of these chambers for any $\mb k$ corresponding to an alcove.

\subsection{Stability conditions relating different Mukai vectors}
\label{ssec:different_mukai_vectors}
In order to produce Steinberg correspondences between different smooth moduli spaces, we need to include one singular moduli space into another to parallel the quiver variety case.

To be precise, for stability conditions $\sigma_D \in \mc{A}_D\cap  \mc{A}_{D + C_i}$ the action of $e_i$ mapping $H^*(M_{\sigma_D}(1,D, s))$ to $H^*(M_{\sigma_{D}}(1, D+ C_i, s))$ will be given by a convolution by a Lagrangian correspondence which is an irreducible component of $M_{\sigma_D}((1,D, s)\times_M M_{\sigma_{D}}(1, D+ C_i, s)$ where $M$ is a certain singular moduli space, which we can think of as a stratum of $M_{\sigma}(1, D+ \infty C_i, s)$ for a specifically chosen stability condition $\sigma$ which must be on the boundary of both $\mc{A}_D$ and $ \mc{A}_{D + C_i}$. 

\paragraph{Boundary stability conditions}
There will be some freedom in the choice of non-generic stability condition because we are not taking the fiber product over the analogue of the affine quotient, but rather, a moduli space for a lower-codimension face in the space of stability conditions. Here we describe this face. Pick a base 
integer $s\in \Z$ and $D \in \NS(S)$, a contractible collection $\mc{C}$ of $-2$ curves, and a curve $C_i \in \mc{C}$. Let $U_{\mc{C}, D}$ be the set of $(\beta, \omega)$ defined in Definition  \ref{defn:ucd}.

Now we choose a specific stability condition corresponding to a point in this set. Let 
\begin{equation}
\label{eq:sigma_Di}
\sigma_{D,s,i} = \sigma_{\beta, \omega} \text{ for } (\beta, \omega)\in U_{\mc{C}, D}
\end{equation}
be a stability condition that for all $\ell\in \Z$ such that $(1, D + \ell C_i, s)^2 \ge -2$ we have $\sigma_{D,s,i}\in \partial \mc{A}_{D + \ell C_i,s}$, and further that for each $\ell$ the condition $\sigma_{D,s, i}$ lies generically on the wall of $\mc{A}_{D + \ell C_i,s}$ such that the phase of $\mc{O}_{C_i}(-1)$ overlaps with that of an object of Mukai vector $(1, D + \ell C_i, s)$. We will soon show this exists.

It will actually be necessary to define a more general stability condition where an adjacent generic stability condition, instead of lying in the chamber $\mc{A}_{D,s}$ will lie in $\mc{A}_{D,s,\mb k}$ and our stability condition, instead of lying on a single wall will lie the intersection of several. In the following, we continue  writing $\phi(v)$ for a Mukai vector $v$ means $\phi(\mc{E})$ for a semistable object in $\mc{P}(0,1]$ of Mukai vector $v$. 

\begin{notn}\label{not:boundary_stab}
Fix $D\in \NS(S)$, integer $s\in \Z$, contractible collection $\mc{C} = \{C_1, \ldots, C_r\}$ of $-2$ curves, sequence $\mb{k}$ of integers corresponding to an alcove $A$, and some subset $R \subset \Delta_+$ with of positive roots such that there is a point $x\in \partial A$ on the boundary of the alcove such that $\alpha(x) = k_{\alpha}$ for $\alpha \in R$. Let $\mc{C}_R\subset \Z \mc{C}$ denote the set of $C_{r}\in \Z\mc{C}$ corresponding to $r\in R$. Then let
\begin{equation}
  \label{eq:sigma_DsRk}
  \sigma_{D,s,R,\mb{k}} = \sigma_{\beta, \omega} \text{ for } (\beta, \omega)\in U_{\mc{C}, D}
  \end{equation}
  be a stability condition such that 
  \begin{itemize}
\item  Given $\mb{t}\in \Z^R$ let $D_{\mb t} = D + \sum_{r\in R} t_r C_r$ and 
$s_{\mb t} = s + \sum_{r\in R} {t_r k_r}$. For all $\mb{t}\in \Z^R$
such that 
\[(1,D_{\mb t} ,s_{\mb t} )^2 \ge -2\]
 we have $\sigma_{D,s,R, \mb{k}}\in \partial \mc{A}_{D_{\mb t},s_{\mb t}, \mb{k}}$.
 \item Further, for any such $\mb{t}$, the stability condition $\sigma_{D,s,R, \mb{k}}$ lies generically on the intersection over $r\in R$ of the walls $W_r$ of $\mc{A}_{D_{\mb t},s_{\mb t}, \mb{k}}$ where the phase $\phi((0, C_r, k_r))$ of the unique stable object of Mukai vector $(0, C_r, k_r)$ overlaps with $\phi((1, D_{\mb t},s_{\mb t}))$. 
  \end{itemize}
\end{notn}

\begin{lem}
  \label{lem:sigma_DI_exists}
For large enough data $N$ and $V$ defining $U_{\mc{C}, D}$ there exists such a $\sigma_{D,s,i}$ satisfying the above conditions. More generally given the data in Notation \ref{not:boundary_stab} there exists a $\sigma_{D,s,R,\mb{k}}$ satisfying the required conditions. 
\end{lem}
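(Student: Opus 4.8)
The plan is to exhibit an explicit pair $(\beta,\omega)$ inside $U_{\mc C,D}$ meeting all the required phase conditions simultaneously, using the wall formulas \eqref{eq:wall_vcrv} and \eqref{eq:Wvd_w} together with the description of chambers in Proposition \ref{prop:wall_chamber_vd}. First I would reduce to the case $D=0$ by pulling back along $\Phi_{\mc L}$ with $c_1(\mc L)=D$: by Lemma \ref{lem:cc_shift_c1} tensoring by $\mc L$ shifts $\beta\mapsto\beta-D$ and (after possibly enlarging $V,N$) carries $U_{\mc C,D}$ into $U_{\mc C}$ and the chambers $\mc A_{D_{\mb t},s_{\mb t},\mb k}$ to the corresponding chambers for Mukai vector $(1,0,1-n')$; this is exactly the reduction used in the proof of Proposition \ref{prop:wall_chamber_vd}. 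So it suffices to find, inside $U_{\mc C}$, a stability condition on the prescribed intersection of walls $W_{v,(0,C_r,k_r)}$ which is generic in that intersection.

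The key geometric input is Proposition \ref{prop:walls_bstab_hilb}(iii)--(iv): the walls in $U_{\mc C}$ inducing birational contractions of $M_\sigma(v)$ are exactly $\{\beta\cdot\omega=0\}$ together with $W_{v,(0,C,k)}$ for $C\in\Z\mc C$, $C^2=-2$, $0\le -k\le n-1$, and under $\ell_v$ these map bijectively onto the hyperplanes of Lemma \ref{lem:walls} passing through $x=\tilde H_{\mc C}$, which by Proposition \ref{prop:stab_corner_arranged} are arranged according to the affine root system. Therefore I would: (1) choose in $\Pos(S^{[n]})$ a point $y$ on the face $\partial F_{Sym}$ near $\tilde H_{\mc C}$ lying on precisely the sub-collection of hyperplanes $\{\theta_v(v^\perp\cap(0,C_r,k_r)^\perp)\mid r\in R\}$ and generic on their intersection — such a $y$ exists because those hyperplanes are the walls $\alpha^\perp$ (resp. $(k\delta+\alpha)^\perp$) of an affine reflection arrangement and the point $x$ given in Notation \ref{not:boundary_stab} with $\alpha(x)=k_\alpha$ for $\alpha\in R$ translates, under the identification of Proposition \ref{prop:stab_corner_arranged}, to such a $y$; (2) pull $y$ back through the covering/homeomorphism $\mc Z$ and the map $\ell_v$ of Theorem \ref{thm:bm_mmp}: since $\ell_v$ restricted to $V(S)$ is the homeomorphism onto $\mc L(S)$ composed with $\ell$, and since by Proposition \ref{prop:walls_bstab_hilb}(i) the image of $U_{\mc C}$ is a full neighborhood (in the half-space $D\cdot|2,1^{n-1}\rangle\ge 0$) of the ray $\R_{>0}\tilde H_{\mc C}$, the point $y$ has a preimage $\sigma_{\beta,\omega}$ with $(\beta,\omega)\in U_{\mc C}$ provided $V$ (hence the neighborhood) is taken large enough; (3) check that this $\sigma_{\beta,\omega}$ actually realizes the phase inequalities defining membership in $\partial\mc A_{D_{\mb t},s_{\mb t},\mb k}$ for every admissible $\mb t$ — this is where one uses that the walls $W_{v,(0,C_r,k_r)}$ are independent of $\mb t$ up to the $\Phi$-shift (as in the proof of Prop. \ref{prop:wall_chamber_vd}, the destabilizing sequence $0\to\mc G\to\mc F\to\mc O_C(-k)\to 0$ for Mukai vector $(1,D_{\mb t},s_{\mb t})$ tensors to the one for the shifted vector), so lying on the $R$-walls for one $\mb t$ forces lying on them for all $\mb t$, and genericity in the intersection for one $\mb t$ gives it for all of them since only finitely many such $\mb t$ satisfy $(1,D_{\mb t},s_{\mb t})^2\ge -2$.

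For the first, simpler assertion ($\sigma_{D,s,i}$) this is the special case $R=\{i\}$, $\mb k=\mb 0$: here $y$ lies on the single hyperplane $\theta_v(v^\perp\cap(0,C_i,0)^\perp)$ and generic on it, and the adjacent chamber is $\mc A_{D+\ell C_i,s}=\mc A_{D+\ell C_i,s,\mb 0}$, so the conclusion is immediate from steps (1)--(3). I expect the main obstacle to be step (3): verifying that a point chosen on the nose in $\Pos(S^{[n]})$ and pulled back really does satisfy the several phase inequalities $\phi(\mc O_{C_r}(-1))>\phi(\mc E)>\cdots$ (the "ordering" conditions in Notation \ref{not:adsk}) for \emph{all} admissible $\mb t$ at once, rather than just the single equality $\phi((0,C_r,k_r))=\phi((1,D_{\mb t},s_{\mb t}))$ on the chosen wall; this requires comparing the slopes in Table \ref{tbl:slopes_z} and arguing that the strict inequalities defining $U_{\mc C,D}$ (in particular $|\beta\cdot\omega|<N$, $\omega^2>V$, $0\le C\cdot\omega<N/\omega^2$) force the phases of the objects $\mc O_{C_r}(k)$ to be ordered correctly and well-separated from $\phi(v_D)$ as soon as $V$ is large — essentially the same bounded-coefficients-versus-$\omega^2$ estimate that appears in the proof of Lemma \ref{lem:stab_limit_gaps}. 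Once that separation estimate is in place, the finiteness of the relevant set of $\mb t$ and the $\Phi$-equivariance of the walls close the argument.
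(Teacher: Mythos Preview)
Your overall strategy---choose the right point in $\Pos(S^{[n]})$ and pull it back through $\ell_v$---is different from the paper's and can be made to work, but your step (3) has a real gap. You justify ``lying on the $R$-walls for one $\mb t$ forces lying on them for all $\mb t$'' by the $\Phi_{\mc L}$-tensoring argument from Proposition~\ref{prop:wall_chamber_vd}. That argument relates $\sigma_{\beta,\omega}$-stability for Mukai vector $v_{D_{\mb t}}$ to $\sigma_{\beta - (D_{\mb t}-D),\omega}$-stability for $v_D$: it \emph{changes the stability condition}. It therefore does not show that a \emph{single} $(\beta,\omega)$ lies on the wall $W_{v_{D_{\mb t}},c_r}$ for every $\mb t$. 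What you actually need is the much simpler linearity observation: $v_{D_{\mb t}} = v_D + \sum_r t_r c_r$, so if $Z(v_D)$ and every $Z(c_r)$ are positive real multiples of one another, then $Z(v_{D_{\mb t}})$ is too (positivity of the coefficient uses that $|Z(v_D)|\sim \omega^2$ dominates the bounded $|Z(c_r)|$ in $U_{\mc C}$). This is exactly what the paper establishes by direct computation: it writes out the wall equation for each $\ell$ and shows it reduces to a single equation
\[
(D-\beta)\cdot\omega \;=\; \frac{(C_i\cdot\omega)\bigl(\omega^2/2 + \lambda_{D,\beta}\bigr)}{C_i\cdot\beta}
\]
independent of $\ell$; for the general $R,\mb k$ case it imposes the common ratio $C_r\cdot\omega/(C_r\cdot\beta - k_r)=\xi$ and solves. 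So the paper's route is a hands-on wall computation in $(\beta,\omega)$-coordinates rather than a pull-back from the positive cone, and the independence-of-$\mb t$ that you need emerges for free from the algebra.

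A second, smaller issue: your pull-back step presumes genericity in $\Pos(S^{[n]})$ (for the single vector $v$) transfers to genericity on the intersection for \emph{every} $v_{D_{\mb t}}$. Since the wall-and-chamber structures in $U_{\mc C}$ are different for different Mukai vectors (Proposition~\ref{prop:wall_chamber_vd} gives a $D$-dependent list of walls), this is not automatic from the $\ell_v$-compatibility in Theorem~\ref{thm:bm_mmp}. It does follow once you know there are only finitely many admissible $\mb t$ and hence only finitely many extra walls to avoid---you note this, and the paper handles it the same way (``fixed $C_j\cdot\beta$ for $j\neq i$ \ldots\ meaning we can avoid other walls'')---but it should be stated as a separate finiteness/genericity step rather than folded into the $\Phi$-shift claim.
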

\begin{proof}
  This is not the most immediate way to prove this result, but it makes the geometry of the space of stability conditions clear. We first prove the existence of $\sigma_{D,s,i}$ where the notation is simpler. Also,
  to simplify formulas, define
  \[ \lambda_{D, \beta} = D\cdot \beta  - \beta^2/2 -s.\]
Recall that, for example, equation \eqref{eq:Wvd_w} implies that the condition which allows the phase of $\mc{O}_{C_i}(-1)$ to overlap with that of an object of Mukai vector $(1, D, s)$ is
\[(C_i\cdot \beta)((D - \beta)\cdot \omega ) =  (C_i\cdot \omega)(\omega^2/2 + \lambda_{D, \beta}) \]
and the equation which allows the analogous condition for 
Mukai vector $(1, D+ \ell C_i, s)$ is the equation 
\[(C_i\cdot \beta)((D + \ell C_i - \beta)\cdot \omega ) =  (C_i\cdot \omega)(\omega^2/2 + \lambda_{D, \beta} + \ell C_i \cdot \beta ). \]
Then in $U_{\mc{C}, D}$ both of these conditions are equivalent to 
\begin{equation}
\label{eq:ratio_sigma_Dsi}
(D-\beta)\cdot \omega = \frac{(C_i \cdot \omega)(\omega^2/2 + \lambda_{D,\beta})}{C_i \cdot \beta}
\end{equation}
which is independent of $\ell$ and which for fixed $\omega$ and fixed $C_i\cdot \beta$ (and in fact for fixed $C_j\cdot \beta$ for $j \neq i$ as well, meaning we can avoid other walls) we can vary $\beta$ subject to these constraints so that the value of $\beta\cdot \omega$ is adequate to produce a solution to \eqref{eq:ratio_sigma_Dsi} by slightly deforming the constant solution of $\beta\cdot \omega$ to 
\[ (D-\beta)\cdot \omega = \frac{(C_i\cdot \omega)(\omega^2/2)}{C_i \cdot \beta}\]
since $\lambda_{D, \beta}$ is extremely small compared to $\omega^2/2$. For fixed values of $C_i\cdot \beta$ and $(C_i \cdot\omega)\omega^2$ it may be necessary to extend the data $V$ and $N$ in order for this value of $\beta\cdot\omega$ to be compatible with the condition that $(\beta, \omega)$ lie in $U_{\mc{C}, D}$.

Now fix $\mb{k}$ and $R$ as in Notation \ref{not:boundary_stab}. The Mukai vectors 
$(0, C_r, k_r)$ span a negative definite plane in the Mukai lattice therefore there are only a finite number of $\mb t$ such that $(1,D_{\mb t} ,s_{\mb t} )^2 \ge -2$, meaning that we are free to choose $N$ to be large enough to bound all relevant chern characters. Let $C_{\mb t} = \sum_{r\in R} t_r C_r$ and $k_{\mb t} = \sum_{r\in R} t_r k_r$. The equations for the relevant walls are for all $r \in R$ and relevant $\mb t\in \Z^R$
\[(C_r\cdot \beta -k_r)((D  + C_{\mb t} - \beta)\cdot \omega ) =  (C_r \cdot \omega)(\omega^2/2 + \lambda_{D,\beta} + C_{\mb t} \cdot \beta - k_{\mb t}) \]
which by the same argument admit simultaneously a solution if we choose a fixed common ratio $\xi$ with 
\[\frac{C_r \cdot \omega}{C_r \cdot \beta - k_r } = \xi \] 
for all $r \in R$. 
\end{proof}

This is exactly what is needed to relate moduli spaces for different Mukai vectors since, for example, if $\mc{F}$ is a $\sigma_{D,s, i}$-semistable object (e.g. if $\mc{F}$ is $\sigma_D$-stable) of Mukai vector $(1, D, s)$ then
\[ \mc{F}\oplus \mc{O}_{C_i}(-1)^{\oplus \ell}\]
is $\sigma_{D,s,i}$-semistable for any $\ell \ge 0$ of Mukai vector $(1, D + \ell C_i, s)$.

\subsection{Common base of symplectic resolution}

In the quiver variety case, even in the case of non-generic stability parameter, the construction via Hamiltonian reduction and GIT quotient defines moduli spaces of quiver representations.   Thus in the setting of Section \ref{sec:quiver_action} there was no issue defining a common singular base to construct correspondences between different smooth moduli spaces. 

In this situation, for a non-generic stability condition $\sigma_0$ the relevant Step 4 of Construction \ref{const:bridgeland_stable_spaces} does not produce a space of all semistable objects of a given primitive Mukai vector, only the space of S-equivalence classes of objects which are stable for a generic stability condition in an adjacent chamber. This space can be thought of as a specific \emph{stratum} in an (as of yet presently undefined) larger space of all semistable objects of that Mukai vector. 

Rather than actually define a moduli space of semistable objects for non-generic $\sigma_0$, we will simply take an appropriate union of $\pi_{\sigma_+, \sigma_0}( M_{\sigma_+}(v))$, which will essentially consist of defining the largest relevant stratum of the undefined large moduli space of semistable objects for non-generic $\sigma_0$. 

This is slightly nuanced and some care must be taken because it is possible that other spherical classes show up in the lattice spanned by $(1, D, s)$ and $(0, C, k)$ so we cannot simply look at the lattice spanned by $(1, D, s)$ and $(0, C, k)$ to determine the polystable representative of a given semistable object.  We have to look at the set of effective spherical classes in this lattice, which depends not just on the lattice but on the central charge. It is the specific limit under consideration which makes it so that no spherical twists by higher rank spherical objects influences the decomposition of objects of Mukai vector $(1, D, s) + \ell (0, C, k)$.

For stability conditions lying on the intersection of two or more walls,  we will need a slight extension of some of the results in \cite[\S 6, \S 8]{bayer2014mmp} in the present setting. Also recall the definition of an  effective class $u$, with respect to an ambient Mukai vector $v$ and stability condition $\sigma$ with central charge $Z$. This is a class with $u^2 \ge -2$ and $\re Z(u)/Z(v) > 0$. 

\begin{prop}\label{prop:corner_jh}
  Fix a phase $\phi$ so that we are consider moduli spaces of objects of Mukai vector $(1, D, s)$ of phase $\phi$.  Consider $D, s, \mc{C}, \mb{k}, R \subset \Delta_+, \mc{C}_R\subset \Z \mc{C}$ and 
$\sigma_0 := \sigma_{D,s,R,\mb{k}}$ as in Notation \ref{not:boundary_stab}. Let $\sigma_+\in \mc{A}_{D,s,\mb{k}}$ be a generic stability condition in the adjacent chamber.  Let $\mc{H}$ be the hyperbolic lattice spanned by $v_{D} := (1,D,s)$ and $(0, C_r, k_r)$ for $r \in R$. 
\begin{enumerate}[(i)]
\item Let $\mc{R}\subset \mc{H}$ be the rank $\rho = |R|$ negative definite sublattice spanned by $c_i := (0, C_{r_i}, k_{r_i}), i = 1, \ldots, \rho$. This basis $c_1, \ldots, c_\rho$ of $\mc{R}$ is such that there is a $\sigma_{0}$-stable spherical object $\mc{S}$ of phase $\phi$ if and only if the Mukai vector of $\mc S$ is $c_i$ for some $i$, and if so this object is unique.  Let $\mc{S}_1, \ldots, \mc{S}_\rho$ be the corresponding spherical objects. 
\item Let $\mb{t} \in \Z^\rho$ and let $c_{\mb{t}} = \sum_{i = 1}^\rho t_ic_i$. If $(v_{D} + c_{\mb t})^2 \ge -2$ then the map 
\[\pi_{\sigma_+, \sigma_0} : M_{\sigma_+}(v_{D} + c_{\mb t}) \to M_0 \]
sending a $\sigma_+$ stable object to its $S$-equivalence class under $\sigma_0$ has target consisting entirely of strictly semistable objects if and only if $\langle v_{D,s}, c_i \rangle < 0$  for some $i$.
\item The Jordan-H\"older factors of a $\sigma_+$-stable object of Mukai vector  $v_{D,s} + c_{\mb t}$ consist of $\sigma_0$-stable objects of Mukai vector $c_i$ or those lying in the set 
\[\{v_{D} + c_{\mb {t'}} \mid \mb{t'}\in \Z^\rho, (v_{D} + c_{\mb {t'}})^2 \ge -2, \langle (v_{D} + c_{\mb {t'}}),  c_i \rangle \ge 0 \text{ for all } i\}.\]
\end{enumerate}
\end{prop}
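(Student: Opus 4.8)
\textbf{Setup and strategy.} The proof should parallel the standard analysis of destabilizing sequences in a wall $\sigma_0$ (as in \cite{bayer2014mmp, bridgeland2008stability}), adapted to the fact that $\sigma_0 = \sigma_{D,s,R,\mb k}$ lies on an intersection of several walls rather than a single wall. The key structural input is that $\mc H$ is a rank-$(\rho+1)$ hyperbolic lattice, and that in the limit defining $U_{\mc C, D}$ (conditions \eqref{eq:stab_limit_lv}--\eqref{eq:stab_limit_small_exc}) the set of effective spherical classes of phase $\phi$ in $\mc H$ is controlled: because $\omega^2 \gg 0$ while the classes $C_r$ are contracted, one checks (using Lemma \ref{lem:stab_limit_gaps} and Table \ref{tbl:slopes_z}) that the only spherical classes $u$ with $u^2 = -2$ and $\re Z(u)/Z(v_D) > 0$ which can appear as $\sigma_0$-stable factors lie in the negative definite sublattice $\mc R$, and moreover are the $c_i$ themselves (not, say, $c_i + c_j$, which have square $< -2$ when $(C_{r_i}, C_{r_j}) = 0$, or square $-2$ precisely when the curves meet, but then correspond to a reducible curve class whose structure sheaf is not $\sigma_0$-stable in the limit). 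The rank-$1$ positive part of $\mc H$ forces any would-be higher-rank spherical object to have large $\omega$-slope mismatch, contradicting phase overlap; this is exactly the point flagged in the paragraph preceding the proposition and is the main thing to verify carefully.

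\textbf{Part (i).} First I would observe that a $\sigma_0$-stable object $\mc S$ of phase $\phi$ with $v(\mc S)^2 = -2$ must, by the Mukai pairing being even and negative definite on $\mc R$ and hyperbolic on $\mc H$, have Mukai vector in the ``negative cone'' of $\mc H$; writing $v(\mc S) = a v_D + \sum t_i c_i$, the conditions $\re Z(v(\mc S))/Z(v_D) > 0$ (effectivity) together with $\im Z(v(\mc S)) = 0$ (phase overlap, forced because $\sigma_0$ is on the relevant walls) pin down $a = 0$ via the explicit central charge formulas in Table \ref{tbl:slopes_z} and the defining inequalities of $U_{\mc C, D}$ --- here one uses that $|\beta\cdot\omega|, |\beta^2|$ etc. are bounded while $\omega^2 \to\infty$, so $a\neq 0$ would make $|\re Z|$ too large. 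Then among classes in $\mc R$ with square $-2$, the basis vectors $c_i$ are the only ones that are actual classes of $\sigma_0$-stable spherical objects: $c_i = v(\mc O_{C_{r_i}}(k_{r_i} - 1))$ up to the equivalences $\Phi_{\mc L}$ tracked in Section \ref{ssec:different_mukai_vectors}, and these line-bundle-on-rational-curve objects are $\sigma_0$-stable on the wall by the description of the $(C_k)$ walls in \eqref{eq:uwalls_stab} and the surrounding discussion; uniqueness is because $\mc O_{C_{r_i}}(k)$ is the unique stable object in its class. Other square-$(-2)$ classes in $\mc R$, being sums $\sum t_i c_i$ with at least two nonzero or with an entry $|t_i|\geq 2$, are either non-effective or have JH filtrations into the $c_i$, hence are not stable.

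\textbf{Parts (ii) and (iii).} For (ii), the map $\pi_{\sigma_+,\sigma_0}$ has image consisting entirely of strictly semistable objects precisely when $v_D + c_{\mb t}$ itself is not $\sigma_0$-stable, i.e.\ when it admits a nontrivial JH filtration in $\sigma_0$; since the only available stable factors of phase $\phi$ are (by (i)) the $\mc S_i$ and stable objects with Mukai vector of the form $v_D + c_{\mb{t'}}$, a strictly semistable representative exists iff we can split off some $\mc S_i$, which by the standard see-saw/Mukai-pairing argument is possible iff $\langle v_D, c_i\rangle < 0$ for some $i$ (so that $c_i$ is a sub- or quotient-object class lowering the relevant quadratic form). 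I would make this precise by running the usual induction: if $\langle v_D + c_{\mb{t'}}, c_i\rangle \geq 0$ for all $i$ and all reachable $\mb{t'}$, then $(v_D + c_{\mb t})$ has no $\sigma_0$-destabilizing sub-object and is stable; conversely $\langle v_D, c_i\rangle < 0$ produces an extension that is strictly semistable for all $\mb t$ in the relevant cone. Part (iii) is then the bookkeeping conclusion of this induction: repeatedly peeling off copies of the $\mc S_i$ from a $\sigma_+$-stable object of Mukai vector $v_{D} + c_{\mb t}$, the process terminates (negative-definiteness of $\mc R$ bounds the number of factors, as in the finiteness argument in the proof of Lemma \ref{lem:sigma_DI_exists}) at a $\sigma_0$-stable ``core'' whose Mukai vector $v_D + c_{\mb{t'}}$ must satisfy $\langle v_D + c_{\mb{t'}}, c_i\rangle \geq 0$ for all $i$ and $(v_D + c_{\mb{t'}})^2 \geq -2$ --- otherwise it would itself be destabilized by some $\mc S_i$ or fail to support a stable object.

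\textbf{Main obstacle.} The step I expect to be genuinely delicate is the first half of (i): ruling out \emph{all} higher-rank spherical (or merely effective-of-square-$(-2)$) classes in $\mc H$ and, more importantly, ruling out \emph{spherical objects not coming from the rational curves $C_r$} --- e.g.\ twists of $\mc O_S$ or of the ideal sheaf by spherical bundles, the $(A_\pm)$-type phenomena of \eqref{eq:uwalls_stab}. The lattice-theoretic classification of square-$(-2)$ vectors in $\mc H$ is elementary, but showing that the only ones \emph{realized by $\sigma_0$-stable objects of phase $\phi$} are the $c_i$ requires quantitative control from the limit $U_{\mc C, D}$: one must show $\omega^2 \gg 0$ (relative to $N, \xi$) forces any such object to be supported on $\cup \mc C$, which is exactly the content asserted in the paragraph before the proposition (``no spherical twists by higher rank spherical objects influences the decomposition''). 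I would isolate this as a lemma and prove it by the same central-charge estimate used in Lemma \ref{lem:stab_limit_gaps}: a positive-rank spherical class $\delta = (r, \Delta, s)$ with $r > 0$ and phase equal to $\phi(v_D)$ forces $(\Delta - r\beta)\in\omega^\perp$ (negative definite) with bounded coefficients, and then $\re Z(\delta)$ grows like $r\omega^2/2$, incompatible with $\re Z(v_D) \sim \omega^2/2$ unless $r = 1$, $\delta = v_D + (\text{class in }\mc R)$ — which is handled by the rank-one analysis above.
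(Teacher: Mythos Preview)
Your overall strategy matches the paper's, and your identification of the main obstacle (ruling out higher-rank spherical contributions via the $\omega^2\gg 0$ estimate) is exactly the point the paper makes at the end of its proof of (i). Two substantive differences are worth noting.

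\textbf{Part (i).} Your argument that among square-$(-2)$ classes in $\mc R$ only the $c_i$ support $\sigma_0$-stable objects proceeds by direct inspection of curve sheaves (``reducible curve class whose structure sheaf is not $\sigma_0$-stable''). The paper instead uses a cleaner categorical observation: if $\mc S_i, \mc S_j$ are distinct $\sigma_0$-stable objects of the same phase then $\Hom(\mc S_i,\mc S_j)=0$, hence $\langle c_i,c_j\rangle = \dim\Ext^1(\mc S_i,\mc S_j)\ge 0$, which forces the set of stable spherical Mukai vectors to be a system of \emph{simple} roots for the root system generated by $R$ --- automatically the $c_i$ up to relabelling. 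This avoids having to analyze case-by-case which sums $\sum t_ic_i$ are realized by stable objects.

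\textbf{Part (ii).} Here there is a genuine gap in your converse direction. You assert that if $\langle v_D+c_{\mb t},c_i\rangle\ge 0$ for all $i$ then there is no $\sigma_0$-destabilizing subobject, appealing to a ``standard see-saw/Mukai-pairing argument''. But (i) only classifies the stable \emph{spherical} factors; it does not by itself rule out other destabilizing mechanisms. The paper handles this by invoking the Bayer--Macr\`i classification (their Lemma 6.5 and Propositions 6.8, 8.4), which says the wall is totally semistable iff either some effective spherical $s\in\mc H$ has $\langle s,v_D\rangle<0$ \emph{or} some isotropic $w\in\mc H$ has $\langle w,v_D\rangle=1$. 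The paper then performs explicit lattice computations in $\mc H$ to verify that neither occurs when $\langle v_D,c_i\rangle\ge 0$: for (a) writing $s=av_D+c_{\mb t}$ and combining $s^2=-2$ with $\langle s,v_D\rangle<0$ yields a contradiction; for (b) writing $w=av_D+c_{\mb t}$ isotropic forces $a=1$, $v_D^2=2$, incompatible with $\langle w,v_D\rangle=1$. You omit the isotropic check entirely, and your spherical check is not spelled out. Without these computations (or an alternative argument that the generic object of Mukai vector $v_D+c_{\mb t}$ remains $\sigma_0$-stable), the ``only if'' direction of (ii) is incomplete.
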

\begin{proof}
  To prove (i), first note that Mukai vectors of stable spherical objects of phase $\phi$ must span $\mc R$. This is because we can deform to a stability condition where these phases don't overlap with $v_{D}$ and so the Jordan-H\"older factors of $\sigma_+$-stable objects with Mukai vectors in $\mc{R}$ have Mukai vectors in $\mc{R}$. But then $\mc{R}$ has a basis of spherical objects and the Jordan-H\"older factors of spherical objects are spherical \cite[\S 2]{huybrechts_stability_2008} (also see \cite[Lemma 6.2]{bayer2014mmp}.) Let $\{c_1, \ldots, c_n\}$ be the spanning set of Mukai vectors of these spherical objects $\mc{S}_1, \ldots, \mc S_n$. 
  
  Now we show that there are no linear relations among the Mukai vectors of stable spherical objects. Note that the fact that they correspond to objects of phase $\phi$ means the $H^2$ part of the Mukai vector of any stable spherical object is a positive root. The fact that they are all stable of the same phase means $\langle  c_i, c_j \rangle = \Ext^1(\mc S_i, \mc S_j)\ge 0$ since $Hom(\mc S_i, \mc S_j) = 0$ for $i \neq j$ because they are stable. But this implies that the $H^2$ parts of the $c_i$ form a choice of simple roots for a root system, and it must be the root system generated by $R$, so up to relabelling $\{ c_1, \ldots, c_n \} = \{ c_1, \ldots, c_\rho \}$ is the given basis. But when the phase of $v_{D}$ overlaps with the $c_i$, the objects $\mc{S}_i$ cannot be destabilized because the real part $\re Z(\mc{E})$ of the central charge of positive rank objects is too large for any object $\mc{E}$ of Mukai vector $a v_{D} + b q$ for $q\in \mc{R}$ to destabilize any of the $\mc S_i$ with $a \neq 0$. 

  To prove (ii) note that the $(v_D + c_{\mb t})^2 =  -2$ case follows from the fact that destabilizing objects must have rank $\le 1$ due to the central charge, and thus this is an effective spherical object precisely if it is not destabilized by objects in $\mc{R}$. Now assume $(v_D + c_{\mb t})^2 \ge 0$, and assume up to a shift in $D$ and $s$ that $\mb{t} = 0$. A similar argument will work in this case, but we relate it to the techniques of \cite{bayer2014mmp}. In particular, from this source Proposition 6.8 implies that if if there is a spherical object $\mc{S}$ with Mukai vector $\mb s$ such that generically on a wall where the phase of $\mc{S}$ is overlaps with that of $v_{D}$ and $\mc{S}$ is stable such that $\langle \mb s, v_{D} \rangle < 0$ then every object of Mukai vector $v_{D}$ is destabilized for stability conditions on this wall. Conversely, Lemma 6.5 and Proposition 8.4 imply that if for effective spherical class $\mb s \in \mc{H}$ we have $\langle \mb s, v_{D} \rangle \ge 0$ and also there are no isotropic classes $\mb{w}\in \mc{H}$ with $\langle\mb{w} , v \rangle  =1$ then the wall is not totally semistable, i.e. the generic $\sigma_+$-stable object of Mukai vector $v_D$ is $\sigma_0$-stable. To recover our formulation using a higher rank lattice from theirs, if the generic object of Mukai vector $\mb{v}$ is destabilized on this wall, take one factor $q$ in its Jordan-H\"older filtration and the wall where the Mukai vectors of objects with Mukai vector $q$ have overlapping phase with objects of Mukai vector $v_D$ will give the situation described in the cited work. 
  
  In particular, suppose $\langle v_D, c_i \rangle < 0$ for some $i$, then every object of Mukai vector $v_D$ is destabilized by deforming the stability condition to where only these phases overlap. Conversely, assume $\langle v_D, c_i \rangle \ge 0$. We need to show that 
  \begin{enumerate}[(a)]
\item There is no effective spherical class $\mb s \in \mc{H}$ with $\langle \mb s, v_D \rangle < 0$. 
\item There is no isotropic class $w\in \mc{H}$ with $\langle w, v_D \rangle =1$.
  \end{enumerate}
For (a), suppose that $\mb s = a v_D + c_{\mb t}$ is this class for $a\in \Z$ with $a \neq 0$ (we know all effective spherical classes in $\mc{R}$, they are the $c_i$). Then for $\mb s$ to be effective we must have $a > 0$ because of the stability condition limit. But 
\[ -2 = \mb s^2 = a^2 v_D^2 + 2a \langle v, c_{\mb t} \rangle + c_{\mb t} ^2 \]
and 
\[\langle \mb s, v_D \rangle = a v_D^2 + \langle v, c_{\mb t} \rangle\]
so that combining these we get 
\[ \langle \mb s, v_D \rangle = \frac{-2 - c_{\mb t}^2}{a} - \langle v, c_{\mb t} \rangle. \]
If $ \langle \mb s, v_D \rangle < 0$ we must have $\langle v, c_{\mb t} \rangle<0$ but then $-c_{\mb t}^2 \ge 2$ and so $\langle \mb s, v_D \rangle \ge 0$ after all. 
For (b) suppose $w =  a v_D + c_{\mb t}$ is isotropic and $\langle w, v_d \rangle = 0$. Then similarly we have the formulas
\begin{align*}
0 &= w^2 = a^2 v_D^2 + c_{\mb t}^2 + 2 a \langle v_D, c_{\mb t} \rangle\\
\langle w, v_D \rangle &= av^2 + \langle v_D, c_{\mb t} \rangle = - \frac{c_{\mb t}^2}{a}.
\end{align*}
Thus again we can deduce from $av^2 \ge 0$ that $\langle v_D, c_{\mb t} \rangle \ge 0$ and dividing the formula for $w^2$ by $a$, the only solution to $w^2 = 0$ occurs with $a = 1, v^2 = 2$, but then we can't have $\langle w, v_D \rangle = 1$. 

To show (iii) first consider the decomposition of $\mc{H}$ via the walls $c_i^\perp$.  When $v_D + c_{\mb t}$ lies in the chamber of where $\langle v_D + c_{\mb t} , c_i \rangle \ge 0$ then the destabilizing sequences are iterated extensions of stable objects with Mukai vectors of the form $ v_D + c_{\mb t'}$ by the stable spherical objects $\mc{S}_i$ for some $v_D + c_{\mb t'}$ also in this chamber (or the extensions are the other way around). When we are in another chamber, there is a sequence of spherical twists by the $\mc S_i$ under which an object with Mukai vector $v_D + c_{\mb t}$ is sent to one in the positive chamber, where the previous case applies. In this case we combine the previous decomposition with the decomposition given by these spherical twists. 
\end{proof}
This description will allow us to locate the maximal common stratum after the following:
\begin{lem}\label{lem:singular_stratum_corner}
Using the notation of Proposition \ref{prop:corner_jh}, there exists a unique $\mb m\in \Z^\rho$ such that for every $\sigma_+$-stable object $\mc F \in M_{\sigma_+}(v_{D,s} + c_{\mb t})$ for any $\mb t \in \Z^\rho$, there are $\ell_i \ge 0$ such that the direct sum $\mc F \oplus \bigoplus_{i = 1}^\rho \mc{S}_i^{\oplus \ell_i}$ is $S$-equivalent under the stability condition $\sigma_0$ to an object of the form 
\[ \mc{E} \oplus \bigoplus_{i = 1}^\rho \mc{S}_i^{\oplus k_i}\]
for some $k_i \ge 0$ where $\mc{E} \in M_{\sigma_+}(v_{D,s} + c_{\mb m})$ is $\sigma_+$-stable of Mukai vector $v_{D,s} + c_{\mb m}$. 
\end{lem}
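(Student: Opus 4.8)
The plan is to reduce the statement to the chamber structure of $\mc H$ under the walls $c_i^\perp$, which was set up in Proposition \ref{prop:corner_jh}(iii). First I would observe that the lattice $\mc H$ is spanned by $v_{D,s}$ and the negative definite sublattice $\mc R = \langle c_1,\dots,c_\rho\rangle$, and that the walls $c_i^\perp$ cut $\mc H$ into chambers; the relevant ``positive'' chamber is $P := \{u \in \mc H \mid \langle u, c_i\rangle \ge 0 \text{ for all } i\}$. By part (iii) of Proposition \ref{prop:corner_jh}, the Jordan--H\"older factors of any $\sigma_+$-stable object of Mukai vector $v_{D,s} + c_{\mb t}$ (for any $\mb t$) consist of the spherical objects $\mc S_i$ together with a single factor whose Mukai vector lies in $P$, namely of the form $v_{D,s} + c_{\mb t'}$ with $v_{D,s} + c_{\mb t'} \in P$. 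The key point to extract is that this ``interior'' factor is unique: since $v_{D,s} + \Z c_1 + \dots + \Z c_\rho$ is a coset of $\mc R$ and the chamber $P$ is a fundamental domain for the action of the finite Weyl group $W$ generated by the reflections $\rho_{c_i}$ on $\mc R_{\R}$ (here I use that the $H^2$-parts of the $c_i$ are a system of simple roots, as established in the proof of Proposition \ref{prop:corner_jh}(i)), the orbit $W\cdot(v_{D,s}+c_{\mb t})$ meets $\overline P$ in exactly one point; and the generic such orbit meets the open chamber, giving a single $\mb m \in \Z^\rho$ with $v_{D,s} + c_{\mb m} \in P$ in the same coset. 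This $\mb m$ is the claimed vector.

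Next I would verify that the spherical twists realizing the JH decomposition in the non-positive chambers are exactly the Weyl group action. From the last paragraph of the proof of Proposition \ref{prop:corner_jh}(iii), when $v_{D,s}+c_{\mb t}$ does not lie in $P$ there is a sequence of spherical twists by the $\mc S_i$ carrying an object of that Mukai vector into the positive chamber; on Mukai vectors these twists act by the reflections $\rho_{c_i}$, so the composite carries $v_{D,s}+c_{\mb t}$ to $v_{D,s}+c_{\mb m}$. Unwinding the definition of $S$-equivalence under $\sigma_0$: a $\sigma_+$-stable $\mc F$ of Mukai vector $v_{D,s}+c_{\mb t}$ has $\sigma_0$-JH filtration with graded pieces $\mc E' \in M_{\sigma_+}(v_{D,s}+c_{\mb m})$ (the unique interior factor, $\sigma_+$-stable of Mukai vector $v_{D,s}+c_{\mb m}$) and copies $\mc S_i^{\oplus a_i}$. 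Taking $\mc F \oplus \bigoplus \mc S_i^{\oplus \ell_i}$ with suitable $\ell_i \ge 0$ (chosen large enough to absorb the $a_i$ and make the object polystable / to pass to the $S$-equivalence class in the common stratum), the $\sigma_0$-$S$-equivalence class is represented by $\mc E' \oplus \bigoplus \mc S_i^{\oplus k_i}$ with $k_i = a_i + \ell_i \ge 0$, which is the required form with $\mc E = \mc E'$. Uniqueness of $\mb m$ follows because the Mukai vector $v_{D,s}+c_{\mb m}$ of the interior factor is determined by the $\sigma_0$-$S$-equivalence class (it is the sum of the non-spherical JH pieces, and there is exactly one), and two distinct $\mb m, \mb m'$ with $v_{D,s}+c_{\mb m}, v_{D,s}+c_{\mb m'} \in P$ in the same coset of $\mc R$ would contradict that $P$ is a fundamental domain for $W$ (using that the $W$-orbit determines the coset representative in $\overline P$ uniquely, and genericity of $\sigma_+$ keeps us in the open chamber).

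The main obstacle I anticipate is the bookkeeping around the distinction between ``$S$-equivalence class of a $\sigma_+$-stable object'' and ``polystable object in some not-yet-defined moduli space of $\sigma_0$-semistable objects'': one has to be careful that adding $\mc S_i^{\oplus \ell_i}$ genuinely lands in a \emph{common} stratum independent of $\mb t$, i.e. that the $\ell_i$ can be chosen uniformly enough that all the objects $\mc F \oplus \bigoplus \mc S_i^{\oplus \ell_i}$, as $\mb t$ and $\mc F$ vary, become $S$-equivalent to polystable objects with the \emph{same} non-spherical factor space $v_{D,s}+c_{\mb m}$ — this is exactly the analogue of the maximal-stratum statement \eqref{eq:pts_quiv_stratification} in the quiver setting, and its proof should parallel the quiver-variety argument once the Weyl-group/fundamental-domain picture of the previous paragraph is in place. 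A secondary technical point is confirming that the finitely many spherical twists involved do not introduce spurious extra spherical factors outside $\mc R$; but this is precisely what the stability-condition limit $U_{\mc C, D}$ and Proposition \ref{prop:corner_jh}(i) were designed to rule out, so it should reduce to citing those.
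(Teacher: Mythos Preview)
There is a genuine gap in your uniqueness argument for $\mb m$. You argue that since $P = \{u : \langle u, c_i\rangle \ge 0\}$ is a fundamental domain for the finite Weyl group $W$ acting on $\mc R_\R$, the orbit $W\cdot(v_{D,s}+c_{\mb t})$ meets $\overline P$ in exactly one point, and you take this point to be $v_{D,s}+c_{\mb m}$. But the fundamental-domain property only says that \emph{each Weyl orbit} has a unique representative in $\overline P$; the coset $v_{D,s}+\mc R$ contains many distinct Weyl orbits (for instance, distinguished by the norm $(v_{D,s}+c_{\mb t})^2$, which $W$ preserves), and hence many lattice points in $P$. So your $\mb m$ depends on the orbit of $\mb t$, and in particular on $\mb t$: it is not a single vector working for all $\mb t$ simultaneously. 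Relatedly, the ``interior'' JH factor of a given $\mc F$ need not have Mukai vector equal to the Weyl representative of $v(\mc F)$---it has Mukai vector \emph{some} element of $P\cap(v_D+\mc R)$, possibly strictly smaller, since even for $v_D+c_{\mb t}\in P$ the object $\mc F$ may fail to be $\sigma_0$-stable.

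The paper's argument uses a different characterization: $v_D+c_{\mb m}$ is the unique lattice point in $P\cap(v_D+\mc R)$ which is \emph{maximal} in the sense that $v_D+c_{\mb m}+c_{\mb t}\notin P$ for every $\mb t$ with all components positive. Uniqueness of such a maximal point is proved by a short combinatorial contradiction: given two candidates $x,y$, write $y=x+c_{\mb t}-c_{\mb t'}$ with $\mb t,\mb t'$ of disjoint support, use maximality of $x$ to find $j$ with $\langle x+c_{\mb t},c_j\rangle<0$, and combine $\langle x,c_j\rangle\ge 0$ and $\langle y,c_j\rangle\ge 0$ to force both $\langle c_{\mb t},c_j\rangle<0$ and $\langle c_{\mb t'},c_j\rangle<0$, contradicting disjointness of supports. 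This ``top corner'' property is exactly what ensures that every other lattice point $v_D+c_{\mb t'}\in P$ (hence every possible Mukai vector of an interior JH factor) satisfies $c_{\mb m}-c_{\mb t'}\in\sum\Z_{\ge 0}c_i$, so that adding copies of the $\mc S_i$ carries one to Mukai vector $v_D+c_{\mb m}$. Your Weyl-group picture does not supply this ordering statement; the maximal-element picture is the missing ingredient, not a parallel to it.
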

\begin{proof}
  Recall the proof of part (iii) of the previous proposition, in particular the decomposition of $\mc{H}$ via the walls $c_i^\perp$ so that the polystable representative of on object of Mukai vector $v_D + c_{\mb t}$ is equivalent to  that of $\mc F \bigoplus \oplus_i \mc S_i$ for some (possibly repeated) $\mc S_i$ where $\mc F$ has Mukai vector $v_D + c_{\mb t'}$  in the positive chamber, i.e.  $\langle v_D + c_{\mb t'}, c_i \rangle \ge 0$ for all $i$.  Now write $\mc{R}_{\R}$ as the $\R$-span the $c_i$, so that there is a unique $v_0\in v_D + \mc{R}_\R$ such that $v_0 \in c_i^\perp$ for all $i$. But if there were two different $\alpha, \beta\in \mc{R}_\R$  such that $x := v_0 + \alpha$ and $y: = v_0 + \beta$ both lie in the lattice $v_D + \mc R$ and satisfy the condition that for all $\mb t$ with all components positive we have $(x + c_{\mb t})\cdot c_j < 0$  for some $j$ (and analogously for $y$) then there must exist $\mb t$ and $\mb t'$ will all components positive such that $y  = x +c_{\mb t} - c_{\mb t'}$ where we can choose $\mb t$ and $\mb t'$ such that they have no non-zero components in common. But then we can find $c_j$ such that
  \begin{align*} 
    \langle y, c_j \rangle =  \langle  x + c_{\mb t} - c_{\mb t'}, c_j \rangle &\ge 0\\
    \langle  x , c_j \rangle &\ge 0\\
    \langle  x + c_{\mb t}, c_j \rangle &< 0
  \end{align*}
  from which it follows that $\langle c_{\mb t}, c_j \rangle < 0$ and $\langle c_{\mb t'}, c_j \rangle < 0$ contradicting that only one of $c_{\mb t}$ and $c_{\mb t'}$ has a $c_j$ component. 

  Let $\mb{m}$ be defined so that $v_D + c_{\mb m} = x = y$. Then all polystable representative of objects $v_D + c_{\mb t}$ with Mukai vector in the positive chamber can, after some extension by some of the $\mc S_i$ be a $\sigma_+$ stable object of Mukai vector $v_D + c_{\mb m}$, while vectors which don't lie in the positive chamber are equivalent to those in the positive chamber after formally subtracting some $\mc S_i$ by Proposition \ref{prop:corner_jh} (iii). 
  \end{proof}

It is now possible to finally realize the goal of this section and define the common base of the maps $\pi_{\sigma_+, \sigma_0}$ for different Mukai vectors. 

\begin{defn}
  Fix $D\in \NS(S)$, integer $s\in \Z$, contractible collection $\mc{C} = \{C_1, \ldots, C_r\}$ of $-2$ curves, sequence $\mb{k}$ of integers, and some subset $R \subset \Delta_+$ corresponding to positive roots with corresponding Mukai vectors $c_i = (0, C_i, k_i)$ and let $\mc{R}$ denote the set $\{c_1, \ldots, c_\rho\}$.  Choose stability conditions $\sigma_0 = \sigma_{D,s,R,\mb k}$ as in Notation \ref{not:boundary_stab} and choose a generic stability condition $\sigma_+$ in the adjacent chamber $\mc{A}_{D,s,\mb k}$. Let $v_D = (1,D,s)$.

  Given $\mb{m}$ from Lemma \ref{lem:singular_stratum_corner} define 
  \[M_{\sigma_0}(v_D + \Z\mc{R})\]
  to be the target of the map $\pi_{\sigma_+, \sigma_0}: M_{\sigma_+}(v_D + c_{\mb m}) \to M_0$ contracting S-equivalent objects. Also for any $c_{\mb t}$ such that $(v_D + c_{\mb t})^2\ge -2$ define a map
  \[\pi_{\sigma_+, \sigma_0}: M_{\sigma_+}(v_D + c_{\mb t}) \to M_{\sigma_0}(v_D + \Z\mc{R}) \]
  sending a $\sigma_+$-stable sheaf $\mc{F}$ to its equivalence class up to S-equivalence and the transitive closure of the relation where objects are equivalent up to taking direct sums with the $\mc S_i$. The previous lemma implies that this map is well defined. 
\end{defn}

\paragraph{General Mukai vector} While for Mukai vectors of the form $v_D = (1, D,s)$ and some contractible collection $\mc{C} = \{C_1, \ldots, C_\rho\}$ it is easy to write down explicitly stability conditions so that the phases of objects of Mukai vector  $v_D$ and those of Mukai vectors in the span $\Z \mc{R}$ of $\mc R = \{ (0, C_i, k_i)\}_{i = 1}^\rho$ overlap and the only stable factors for objects of Mukai vector $v_D$ were other objects with Mukai vector $(1, D', s')$ or elements of $\mc R$, and this is the key property which allows one to define common bases for Steinberg correspondences. 

That being said, once we have this property for some other Mukai vector $v$ and some spherical classes $\mc{S} = \{s_1, \ldots, s_\rho\}$ which span a negative definite lattice, it is again possible to define a common base $M_{\sigma_0}(v + \Z \mc S)$ under reasonable hypotheses. We summarize how the arguments in this section apply in the general case, where the proofs go though without change. 

\begin{prop} 
  \label{prop:general_base}
  Let $v$ be some primitive Mukai vector with $v^2 \ge -2$ and let $\mc{S} = \{s_1, \ldots, s_\rho\}$ be a collection of spherical classes spanning a negative definite lattice arranged according to a simply laced root system $\Delta$. Consider a stability condition $\sigma_0$ for which there are semistable objects of Mukai vector $v$ of phase $\phi$ such that 
  \begin{itemize}
    \item For $s_{\mb t} \in \Z \mc {S}$ such that $(v + s_{\mb t})^2 \ge -2$ we have that $v + s_{\mb t}$ is primitive. In particular, the corresponding moduli spaces for generic $\sigma$ are smooth.
\item There is a $\sigma_0$-stable object $S_i$ of Mukai vector $s_i$ and phase $\phi$ for every $i$. 
\item  For an adjacent generic $\sigma$ and $\sigma$-stable object $\mc E$ of Mukai vector $v$, the Jordan-H\"older filtration of $\mc E$ consists of objects $S_i$ with some multiplicity and a unique object of Mukai vector $v - s_{\mb t}$ for $s_{\mb t} \in \Z \mc S$. 
  \end{itemize}
  Given such $\mc E$ let $\pi(\mc E)$ denote this unique object of Mukai vector $v - s_{\mb t}$. 
Then there exists a unique $\mb m \in \Z^\rho$ such that $\langle v+ s_{\mb m}, s_i \rangle\ge 0$ for all $i$ and a variety 
$M_{\sigma_0}(v + \Z \mc S)$ 
defined  as the image of the contraction for generic adjacent $\sigma$
\[\pi_{\sigma, \sigma_0}: M_{\sigma}(v + s_{\mb m}) \to M_{\sigma_0}(v + \Z \mc S)  \]
such that for any $s_{\mb t} \in \Z \mc S$ and generic adjacent $\sigma$ there is a map 
\[\pi_{\sigma, \sigma_0}: M_{\sigma}(v + s_{\mb t}) \to M_{\sigma_0}(v + \Z \mc S)  \]
which is the composition of the map contracting $S$-equivalent objects with an inclusion such that $\pi_{\sigma, \sigma_0}(\mc E)$ is equivalent to $\pi(\mc E)$ after potentially adding and/or subtracting some $S_i$. 
\end{prop}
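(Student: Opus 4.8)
The plan is to observe that the three bulleted hypotheses are precisely the abstract form of the structural facts (i)--(iii) of Proposition~\ref{prop:corner_jh} together with the input of Lemma~\ref{lem:singular_stratum_corner}, so the arguments of this section apply \emph{verbatim} once the role of the hyperbolic lattice spanned by $(1,D,s)$ and the $(0,C_r,k_r)$ is played by the lattice $\mc{H}$ generated by $v$ and $\mc{S}$, and the role of the negative definite sublattice spanned by the $(0,C_r,k_r)$ is played by $\Z\mc{S}$, which is negative definite and arranged according to the simply-laced root system $\Delta$. Concretely, I would first record that the $S_i$ are the only $\sigma_0$-stable spherical objects of phase $\phi$ whose Mukai vector lies in $\Z\mc{S}$, and each is unique; this is exactly the second bullet, but it can also be re-derived as in Proposition~\ref{prop:corner_jh}(i) by deforming $\sigma_0$ to a nearby stability condition where the phase of $v$ no longer overlaps those of the $s_i$, using that Jordan--H\"older factors of a spherical object are spherical (\cite[\S2]{huybrechts_stability_2008}, \cite[Lemma 6.2]{bayer2014mmp}), and that stability at a common phase forces $\langle s_i,s_j\rangle=\ext^1(S_i,S_j)\ge 0$, so the $s_i$ form a system of simple roots, hence the given basis. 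The third bullet plays the role of Proposition~\ref{prop:corner_jh}(ii)--(iii) and is the consequence one extracts from \cite[Prop. 6.8, Lemma 6.5, Prop. 8.4]{bayer2014mmp} in the $(1,D,s)$ case, so I would not reprove it, only note that in the relevant limit the effective spherical classes of $\mc{H}$ are still exhausted by the $s_i$ (the central-charge bound on $\re Z$ of positive-rank objects is what keeps the $S_i$ non-destabilizable).

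Next I would establish the existence and uniqueness of $\mb m$. For existence, the finite Weyl group $W$ of $\Delta$ acts on the coset $v+\Z\mc{S}$ (the reflection in $s_i$ sends $v$ to $v+\langle v,s_i\rangle s_i\in v+\Z\mc{S}$), and the $W$-orbit of $v$ meets the closed dominant chamber $\{u:\langle u,s_i\rangle\ge 0\ \forall i\}$; iterating this and then pushing to the ``maximal'' dominant representative gives a vector $v+s_{\mb m}$ with $0\le\langle v+s_{\mb m},s_i\rangle\le 1$ for all $i$, which refines the inequality in the statement. Uniqueness of such a vector follows because this box is a fundamental domain for translation by $\Z\mc{S}$ and $v+\Z\mc{S}$ is a single such coset; alternatively, and matching the paper, I would run the disjoint-support argument of Lemma~\ref{lem:singular_stratum_corner}: if $v+s_{\mb m}$ and $v+s_{\mb m'}$ were both maximal-dominant, writing $s_{\mb m'}-s_{\mb m}=s_{\mb t}-s_{\mb t'}$ with $\mb t,\mb t'\ge 0$ of disjoint support and choosing an index $j$ separating them yields $\langle s_{\mb t},s_j\rangle<0$ and $\langle s_{\mb t'},s_j\rangle<0$ simultaneously, contradicting disjointness of supports and simple-lacedness of $\Delta$.

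With $\mb m$ in hand I would define $M_{\sigma_0}(v+\Z\mc{S})$ to be the image of the contraction $\pi_{\sigma,\sigma_0}\colon M_\sigma(v+s_{\mb m})\to M_0$ of $S$-equivalent objects, a normal projective irreducible variety carrying the nef class $\ell_{\sigma_0}$ of \eqref{eq:bm_ample} (Construction~\ref{const:bridgeland_stable_spaces}, Step 4), independent of the choice of generic $\sigma$ in the adjacent chamber since $\ell_{\sigma_0}$ depends only on $\sigma_0$ up to the birational identifications of Theorem~\ref{thm:bm_mmp}. For an arbitrary $s_{\mb t}$ with $(v+s_{\mb t})^2\ge -2$ I would then define $\pi_{\sigma,\sigma_0}\colon M_\sigma(v+s_{\mb t})\to M_{\sigma_0}(v+\Z\mc{S})$ by sending $\mc E$ to its $S$-equivalence class for $\sigma_0$ modulo the transitive closure of the relation $\mc E\sim\mc E\oplus S_i$. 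The analogue of Lemma~\ref{lem:singular_stratum_corner}, proved by the same dominant-chamber and spherical-twist bookkeeping as in Proposition~\ref{prop:corner_jh}(iii), shows that for suitable $\ell_i\ge 0$ the $\sigma_0$-polystable representative of $\mc E\oplus\bigoplus_i S_i^{\oplus\ell_i}$ has the form $\mc E'\oplus\bigoplus_i S_i^{\oplus k_i}$ with $\mc E'\in M_\sigma(v+s_{\mb m})$ $\sigma$-stable; hence this class is well defined and agrees with $\pi(\mc E)$ up to adding and subtracting copies of the $S_i$.

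The hardest part of making this precise is not a genuinely new phenomenon but exactly the two combinatorial points that already arose in the $(1,D,s)$ case: the uniqueness of $\mb m$ via the disjoint-support argument, and the verification that the ``add/subtract $S_i$'' relation is consistent across the different Mukai vectors $v+s_{\mb t}$. The subtlety that the general statement is designed to finesse --- that \emph{a priori} unexpected higher-rank spherical classes in $\mc{H}$, and hence spherical twists by them, could alter the polystable decomposition --- is absorbed into the hypotheses, and in the intended applications it is checked using the explicit stability limits of Section~\ref{sec:stabk3_specific}; once it is assumed away, nothing in the proofs of Proposition~\ref{prop:corner_jh}, Lemma~\ref{lem:singular_stratum_corner}, or the surrounding discussion uses the particular shape $(1,D,s)$ or the geometry of $-2$ curves beyond these lattice-theoretic facts, so they transcribe without change.
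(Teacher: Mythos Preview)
Your approach matches the paper's own, which does not give a separate proof but simply states that the arguments of Proposition~\ref{prop:corner_jh}, Lemma~\ref{lem:singular_stratum_corner}, and the subsequent definition transcribe verbatim once the bulleted hypotheses replace the specific lattice facts about $(1,D,s)$ and the $(0,C_r,k_r)$; your write-up is a faithful elaboration of exactly this.

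One small correction: your fundamental-domain argument for uniqueness of $\mb m$ is not quite right. The box $\{x:0\le\langle x,s_i\rangle\le 1\}$ is a necessary condition for the maximality property of Lemma~\ref{lem:singular_stratum_corner} (take $\mb t=e_i$), but it is not sufficient and is not a fundamental domain for the $\Z\mc{S}$-translation action on the coset $v+\Z\mc{S}$. For instance in type $A_2$ with $\langle v,s_1\rangle=\langle v,s_2\rangle=0$, both $v$ and $v-s_1-s_2$ lie in the box, but only $v$ satisfies the maximality condition (since $(v-s_1-s_2)+s_1+s_2=v$ is still dominant). So the ``box'' uniqueness argument fails, and you should rely solely on the disjoint-support argument you correctly cite from Lemma~\ref{lem:singular_stratum_corner}, which characterizes $\mb m$ via maximality rather than via the box.
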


\subsection{Local analytic structure}\label{sec:local_analytic_brute_force}

We need to describe locally analytically the structure of the maps $\pi_{\sigma_+,\sigma_0}: M_{\sigma_+}(v_D + c_{\mb t}) \to M_{\sigma_0}(v_D + \Z\mc{R})$. It will turn out that in this case, the Ext quiver description of Section \ref{sec:local_k3_ext_quiver} is correct, but the proof here instead involves the map 
$M_{\sigma_+}(v_D + c_{\mb t}) \to Sym^n(S_{bd})$ to the symmetric product on the blown-down surface and the fact that this map factors through $\pi_{\sigma_+,\sigma_0}$ and a comparison with the ADE surfaces obtained as  neighborhoods around the connected components of contracted curves. 

Fix $D\in \NS(S), s \in \Z$, let $v = (1, D, s)$. First note that when a stability condition $\sigma \in \mc{A}_{D,s, \mb{k}}$ (see Notation \ref{not:adsk}) approaches the boundary of this set where $(D- \beta)\cdot \omega = 0$ for $\omega$ ample the phases of $\mc{L_D}$ and objects of Mukai vector $v$ overlap, and we are on the wall inducing the Hilbert-Chow map. If, in addition we have $C\cdot \omega = 0$ for $C \in \mc{C}$ for a contractible collection $\mc{C}$ then the stability condition corresponds to $Sym^n(S_{\mc{C}})$ of n points on the surface $S_{\mc C}$ where the curves in $\mc{C}$ are contracted. Let $\sigma_{\mc{C}}$ denote a stability condition corresponding to $Sym^n(S_{\mc{C}})$ on the boundary of $\mc{A}_{D,s, \mb{k}}$, and let $\pi_{\mc{C}}: M_{\sigma}(v) \to Sym^n(S_{\mc{C}})$ denote the contraction map. The birational maps $F_{\mb k, \mb k'}: M_{\sigma}(v) \dashrightarrow M_{\sigma'}(v)$ between generic $\sigma \in \mc{A}_{D,s, \mb{k}}$ and $ \sigma' \in \mc{A}_{D,s, \mb{k}'}$ are maps over $Sym^n(S_{\mc{C}})$. 

Write the collection $\mc{C}$ as $\mc{C} = \mc{C}_1 \sqcup \cdots\sqcup \mc{C}_m$ as a union of disjoint ADE collections. Let $Q_i$ be the affine ADE quiver corresponding to the collection $\mc{C}_i$ and $\mb{w_0}$ the usual 1-dimensional framing vector at the affine node for each $i$, which should not cause any confusion. 

\begin{prop}\label{prop:hilb_is_quiv_amenable}
  Let $\sigma_{\mb k} \in \mc{A}_{D,s, \mb{k}}$ be generic stability conditions for $\mb k$ ranging over a set $K$ sequences giving all chambers in the relevant limit $U_{\mc{C}, D}$ of $\Stab^\dagger(S)$. 
  \begin{enumerate}[(i)]
\item There is an analytic open covering of $M_{\sigma_{\mb k}}(v)$ for every $\mb k\in K$ consisting of sets of the form 
\[ U_{\mb k} = \pi_{\mc C}^{-1}(\prod_{i=1}^m Sym^{\lambda_i} (U_i) \times U_0)\] for some partition $\lambda_0 + \lambda_1 + \ldots \lambda_m = n$ of $n$,  where $U_i \subset S_{\mc C}$ is a set containing the singular locus where $\mc{C}_i$ is contracted and also $U_i$ is biholomorphic to the corresponding ADE surface and $U_0\subset Sym^{\lambda_0}(S\backslash \sqcup U_i)$ is a small open set around a configuration of points outside of the $U_i$. The $U_i$ are required to be pairwise disjoint. 
\item Pick one such set of open sets $U_i, U_0$ and the corresponding $U_{\mb k}$. Let $\tilde{U}_0$ denote the inverse image of $U_0$ under the Hilbert-Chow map.
 Let $\mb{v}_{D,\lambda_i}$ denote the dimension vector for the quiver $Q_i$ such that the Hilb chamber for this dimension vector corresponds to rank 1 torsion-free sheaves with $c_1 = c_1(\mc{L}_D|_{U_i})$ and the finite length quotients have length $\lambda_i$. 
For every $\mb k$ there is a chamber $C_{\mb k, i}$ in the stability space for $Q_i$ with dimension vector $\mb{v}_{D,\lambda_i}$ and framing $\mb{w_0}$ such that for $\theta_i \in C_{\mb k, i}$  there is an  isomorphism 
\[U_{\mb k} \simeq \prod_{i = 1}^m \quiv{Q_i, \theta_i}{\mb{v}_{D,\lambda_i}}{\mb{w_0}} \times \tilde U _0 \]
  such that the map $\pi_{\mc{C}}$ coincides with the map 
  \[\pi_{\theta_1, 0}\times\cdots \times \pi_{\theta_m, 0}\times  \pi_{HC},\] the product of the map to the affine quotient \[\pi_{\theta_i, 0} : \quiv{Q_i, \theta_i}{\mb{v}_{D,a, i}}{\mb{w_0}} \to Sym^a(U_i)\] on the first $m$ factors with the Hilbert-Chow map $\pi_{HC}:\tilde{U}_0 \to U_0$ on the last factor. 
  \item The correspondence between $\mb k$ and chambers $C_{\mb k, i}$ is as follows: given the alcove $A_{\mb k}$ of the affine reflection group action on $\R^\rho$ where $\rho = |\mc{C}|$, we can write $A_{\mb k}$ as $\prod_{i = 1}^m A_{k_i}$ where $A_{k_i}$ is an alcove for the $i$th factor under the decomposition of $\R^\rho$ into a product of affine hyperplane arrangements.  the chamber $C_{\mb{k}, i}$ is the one intersecting $A_{k_i}$ on the alcove structure on the level 1 hyperplane $\{\theta\cdot \delta = 1\}$ in the stability space for the quiver $Q_i$. 
  \end{enumerate}
\end{prop}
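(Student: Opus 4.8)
The plan is to bypass the general Ext-quiver description of singularities and instead work directly with the explicit contraction $\pi_{\mc C}\colon M_{\sigma_{\mb k}}(v)\to Sym^n(S_{\mc C})$, together with the fact that moduli of rank $1$ torsion-free sheaves factorize over disjoint support; this reduces everything over the contracted loci to the ADE-surface situation governed by Theorem~\ref{thm:birat_sym}. First I would, after enlarging $n$ and the data $N,V$ defining $U_{\mc C,D}$ so that each $\mc{A}_{D,s,\mb k}$ is a genuine chamber for $v=v_D=(1,D,s)$ (cf.\ the remark after Notation~\ref{not:adsk}), invoke Proposition~\ref{prop:walls_bstab_hilb}: the boundary stability condition $\sigma_{\mc C}$ of $\mc{A}_{D,s,\mb k}$ is sent by $\ell_v$ to a positive multiple of the big movable divisor $\tilde H_{\mc C}$ inducing the contraction $S^{[n]}\to Sym^n(S_{\mc C})$, so Theorem~\ref{thm:bm_mmp} yields a projective birational morphism $\pi_{\mc C}$ from each smooth model $M_{\sigma_{\mb k}}(v)$ onto $Sym^n(S_{\mc C})$, crepant and symplectic away from the singular locus, with all the models $F_{\mb k,\mb k'}$-birational over this common base.

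Next I would decompose the base analytically. Any point of $Sym^n(S_{\mc C})$ is $\sum_j n_j[x_j]$ with distinct $x_j$ and has a neighbourhood biholomorphic to $\prod_j Sym^{n_j}(W_j)$ for small contractible $W_j\ni x_j$; grouping the $x_j$ equal to the singular point of $\mc{C}_i$ (multiplicity $\lambda_i$) and the remaining ones (multiplicity $\lambda_0$, in the smooth locus) gives neighbourhoods $\prod_i Sym^{\lambda_i}(U_i)\times U_0$ of the stated form, with $U_i$ biholomorphic to a neighbourhood of $0$ in $\C^2/\Gamma_i$ and $U_0\subset Sym^{\lambda_0}(S\backslash\bigsqcup_i U_i)$. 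Since these cover $Sym^n(S_{\mc C})$ and $\pi_{\mc C}$ is surjective, their preimages $U_{\mb k}$ cover $M_{\sigma_{\mb k}}(v)$, giving (i). For the product decomposition in (ii) I would argue that a torsion-free subsheaf of $\mc{L}_D$ whose finite-length quotient is supported on the clusters over $U_1,\dots,U_m,U_0$ splits uniquely into its restrictions to neighbourhoods of the $\mc{C}_i$ in $S$ and to the smooth locus, which are mutually independent; in the relevant limit $\sigma$-stability coincides with twisted Gieseker stability (Proposition~\ref{prop:bridge_is_twist}) so these objects are genuine sheaves and the decomposition is a routine gluing/descent argument. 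Over each ADE factor Theorem~\ref{thm:quivers_are_torsionfree_moduli} identifies the moduli of such sheaves (with $c_1=c_1(\mc{L}_D|_{U_i})$ and quotient length $\lambda_i$) with a quiver variety for $Q_i$ over its affine quotient $Sym^{\lambda_i}(\C^2/\Gamma_i)$, and Theorem~\ref{thm:birat_sym} forces $\pi_{\mc C}^{-1}(Sym^{\lambda_i}(U_i))$, being a crepant partial resolution, to be $\quiv{Q_i,\theta_i}{\mb{v}_{D,\lambda_i}}{\mb{w_0}}$ for $\theta_i$ in a uniquely determined chamber $C_{\mb k,i}$, with $\pi_{\mc C}$ restricting to $\pi_{\theta_i,0}$; over $U_0$ no flop occurs and the preimage is the Hilbert--Chow preimage $\tilde U_0$. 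Assembling these gives the displayed isomorphism and the factorization of $\pi_{\mc C}$.

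For (iii) the plan is to combine Proposition~\ref{prop:stab_corner_arranged} and Lemma~\ref{lem:walls}, which show the chambers of the relevant region of $\Stab^\dagger(S)$ near $\sigma_{\mc C}$ are arranged according to the affine root system attached to $\mc C$ — whose finite part is the orthogonal sum of the root systems of the $\mc{C}_i$, so an alcove $A_{\mb k}$ on $\R^\rho$ ($\rho=|\mc C|$) factors as $\prod_i A_{k_i}$ — together with Proposition~\ref{prop:affine_quiver_shift}, which records the shift of the level-$1$ hyperplane by $c_1(\mc{L}_D|_{U_i})$ needed when $c_1\ne 0$. Tracking the identification through $\ell_v$, the Mukai homomorphism, and the product decomposition above (in which the $i$-th factor of each wall-crossing is exactly the quiver flop for $Q_i$ over $Sym^{\lambda_i}(U_i)$, the $\tilde U_0$ factor being unmodified), one reads off that $C_{\mb k,i}$ is the chamber meeting $A_{k_i}$ on the level-$1$ hyperplane $\{\theta\cdot\delta=1\}$ of the quiver stability space. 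The main obstacle I anticipate is the factorization step: making the analytic-local product structure of $\pi_{\mc C}$ precise — checking that the moduli of $\sigma$-semistable objects of Mukai vector $v$ genuinely decompose over disjoint support, compatibly with $\pi_{\mc C}$ and with the birational maps $F_{\mb k,\mb k'}$ across different $\mb k$ — and, entangled with it, pinning down $C_{\mb k,i}$ with the correct $c_1$-shift; the covering (i) and the chamber-matching (iii) should then be comparatively formal consequences.
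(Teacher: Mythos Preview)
Your overall strategy matches the paper's: cover $Sym^n(S_{\mc C})$ by products, establish a product decomposition of $\pi_{\mc C}^{-1}$ over each piece, identify each ADE factor as a quiver variety via Bellamy--Craw, and match chambers using Proposition~\ref{prop:affine_quiver_shift}. Parts (i) and (iii) are essentially as in the paper.

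The gap is exactly where you anticipated it. Your argument for the product structure in (ii) rests on the claim that ``in the relevant limit $\sigma$-stability coincides with twisted Gieseker stability (Proposition~\ref{prop:bridge_is_twist}) so these objects are genuine sheaves.'' This is only valid in the Gieseker chamber. For $\mb k\neq$ the Gieseker alcove, the stability conditions $\sigma_{\mb k}\in\mc{A}_{D,s,\mb k}$ lie across walls from the Gieseker chamber, and $M_{\sigma_{\mb k}}(v)$ parametrizes genuine complexes (spherical twists of ideal sheaves by objects $\mc{O}_C(k)$), not sheaves. Proposition~\ref{prop:bridge_is_twist} requires $\omega^2\gg 0$ \emph{for fixed $\beta$}, whereas the region $U_{\mc C,D}$ is precisely engineered so that the walls $W_{v,(0,C,k)}$ persist at arbitrarily large volume. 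So your direct sheaf-factorization over disjoint support does not apply to all $\mb k$.

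The paper's fix is to prove the product decomposition only for the Gieseker chamber (where your sheaf argument is valid) and then \emph{propagate} it inductively across walls. The key observation is that every codimension-$1$ wall in $U_{\mc C,D}$ is of the form $W_{v,(0,C,k)}$ with $C$ supported in a \emph{single} connected component $\mc{C}_i$; hence the destabilizing spherical object lives entirely over $U_i$, and the corresponding flop is an isomorphism over the other factors $U_j$ ($j\neq i$) and over $\tilde U_0$. Each wall-crossing therefore preserves the product structure, affecting only the $i$th factor, and Theorem~\ref{thm:birat_sym} then identifies that factor as $\quiv{Q_i,\theta_i}{\mb v_{D,\lambda_i}}{\mb{w__0}}$ for the appropriate $\theta_i$. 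This inductive propagation through flops is the missing ingredient in your proposal.
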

\begin{proof}
The statement of (i) just follows from the fact that the sets 
\[
  \prod_{i = 1}^mSym^\lambda_i(U_i) \times U_0
  \]
   cover $Sym^n(S_{\mc C})$. Now note that (ii) is true for chambers for the quivers and for $S$ which actually correspond to the space of rank 1 torsion-free sheaves. Then since all of the birational transformations between different $M_{\sigma_{\mb k}}(v)$ 1) are birational maps over $Sym^n(S_{\mc C})$ and 2) do not contract the exceptional locus of the Hilbert-Chow map, we know that all of the flops are locally isomorphisms over the $\tilde{U}_0$ factor. More generally, we know that all of the codimension 1 walls are walls where the phase of an object in our moduli space overlaps with an object with support in a \emph{connected} collection $\mc{C}_i$ and therefore contracted curves for this wall only occur in the corresponding factor over $U_i$. It follows that all of the flops preserve the product structure on $U_{\mb k}$ since every flop can be factors into a sequence of such flops hitting codimension 1 walls, and each of these preserves the product structure. This proves (ii) after Theorem \ref{thm:birat_sym} implies that all smooth symplectic birational models of $\quiv{Q_i, \theta_i}{\mb v_{D, \lambda_i}}{\mb{w_0}}$ over $Sym^a(U_i)$ are given by variation of GIT stability. 
   
   Then (iii) follows from Lemma \ref{lem:cc_shift_c1}, Proposition \ref{prop:affine_quiver_shift} which by matching shifts on both the quiver and K3 surface sides, reduces it to the case where we are dealing with the Hilbert scheme, i.e. for trivial first chern class. There it follows from the fact that the map $N^1(S^{[n]}/Sym^{n}(S_{\mc C}))\to N^1(U_i^{[a]}/Sym^{\lambda_i}(U_i)))$ sends $|2, 1^{n-1} \rangle$ to $|2, 1^{n-2} \rangle$ and $|1_{C}, 1^{n-1} \rangle$ to $|1_{C}, 1^{n-1} \rangle$ for every $C\in \mc{C}_i$, and so the birational maps induced on the $i$th factor of $U_{\mb k}$ by varying Bridgeland stability conditions must be the ones from varying GIT stability on the quiver.

\end{proof}
This implies a geometric modular interpretation of some quiver varieties for affine ADE quivers $Q$ corresponding to finite ADE Dynkin diagrams that show up as dual graphs of collections of -2 curves on K3 surfaces. 
\begin{cor}\label{cor:quiver_geometric_moduli}
Let $Q$ be an affine quiver corresponding to a connected contractible collection $\mc C$ on $S$. Fix an open set $U \subset S_{\mc C}$ containing $\mc{C}$ which is biholomorphic to the corresponding ADE surface.  Fix framing vector $\mb{w_0}$. Then given a generic stability condition $\theta$ and dimension vector $\mb v$ there is a Mukai vector $v$, a generic stability condition $\sigma \in \Stab^\dagger(S)$ in a chamber which has a stability condition on its boundary inducing a contraction $\pi_{\mc C}$ onto $Sym^k(S_{\mc C})$ and an isomorphism 
\[ \quiv{\theta}{\mb{v}}{\mb{w_0}} \simeq M_\sigma(v, U)\]
between the corresponding quiver variety and an open set $M_\sigma(v, U) = \pi_{\mc {C}}^{-1}(Sym^k(U))$ of $M_\sigma(v)$ parametrizing $\sigma$-stable objects on $S$ of Mukai vector $v$ such that under the map $\pi_{\mc C}$ the support of $\pi_{\mc C}(\mc{E})$ for an object $\mc E \in M_\sigma(v,U)$ lies in $U$. This correspondence is such that the birational transformations between different chambers in the stability space for the quiver are induced by those between different chambers in $\Stab^\dagger(S)$.
\end{cor}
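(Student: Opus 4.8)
The plan is to deduce this corollary from Proposition \ref{prop:hilb_is_quiv_amenable} by specializing to the extreme case of part (i) in which the open set $U_0$ is taken to be a single (empty) configuration of points, so that the Hilbert–Chow factor $\tilde U_0$ collapses to a point and the covering piece $U_{\mb k}$ becomes the \emph{entire} quiver variety $\quiv{\theta}{\mb v}{\mb{w_0}}$.

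First I would translate the input data. Given the affine ADE quiver $Q$ attached to the connected contractible collection $\mc C$, a dimension vector $\mb v$, and a generic stability parameter $\theta$, set $\mb u = \mb{w_0} - C\mb v$ and $n = v_0 - \mb v^T C\mb v/2$. Using that the Néron–Severi group of the ADE surface $U\subset S_{\mc C}$ is spanned by the classes of the curves in $\mc C$, which extend to $\NS(S)$, choose $D\in\NS(S)$ supported on $\mc C$ with $c_1(\mc L_D|_U) = \sum_{i\neq 0} u_i c_1(\mc V_i)$, and put $v = v_D = (1, D, 1 + D^2/2 - n)$. By Theorem \ref{thm:quivers_are_torsionfree_moduli} this is exactly the Mukai vector for which $\mb v = \mb v_{D,n}$ in the notation of Proposition \ref{prop:hilb_is_quiv_amenable}, with $k = n$. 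Then I would invoke Proposition \ref{prop:affine_quiver_shift} together with Theorem \ref{thm:birat_sym} to identify the stability space $\Theta_{\mb v}$ with $\Theta_{n\delta}$ up to the shift on the level-$1$ hyperplane, and hence to see that every GIT chamber for $(\mb v,\mb{w_0})$ over its affine quotient occurs among the chambers $C_{\mb k}$ produced in Proposition \ref{prop:hilb_is_quiv_amenable}(iii) for a suitable alcove $A_{\mb k}$ of the affine reflection group of $\mc C$; fix the $\mb k$ for which $C_{\mb k}$ is the chamber of $\theta$.

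Next I would take $\sigma$ to be a generic stability condition in $\mc A_{D,\mb k}$, with the data $N, V$ defining $U_{\mc C, D}$ taken large enough (depending on $n$) as in Lemma \ref{lem:stab_limit_gaps} and Notation \ref{not:adsk}; by construction there is a $\sigma_{\mc C}\in\partial \mc A_{D,\mb k}$ inducing the contraction $\pi_{\mc C}: M_\sigma(v)\to Sym^n(S_{\mc C})$. Applying Proposition \ref{prop:hilb_is_quiv_amenable}(i)–(ii) with the covering set taken to be $U_{\mb k} = \pi_{\mc C}^{-1}(Sym^n(U))$ (the case $\lambda_1 = n$, $\lambda_0 = 0$, so $\tilde U_0$ is a point) then yields an isomorphism $\quiv{\theta}{\mb v}{\mb{w_0}} \simeq M_\sigma(v, U)$ with $M_\sigma(v, U) = \pi_{\mc C}^{-1}(Sym^n(U))$ intertwining $\pi_{\theta,0}$ with $\pi_{\mc C}$; in particular the support of $\pi_{\mc C}(\mc E)$ lies in $U$ for every $\mc E\in M_\sigma(v, U)$, since the preimage is taken over $Sym^n(U)$. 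Finally, the compatibility of birational transformations is exactly the content built into the proof of Proposition \ref{prop:hilb_is_quiv_amenable}(ii): every codimension-one wall crossing for $M_\sigma(v)$ inside $U_{\mc C, D}$ is a flop supported over the connected collection $\mc C$, and under the isomorphism above it is carried to a wall crossing in $\Theta_{\mb v}$, which by Proposition \ref{prop:affine_quiver_shift} and Theorem \ref{thm:birat_sym} is the variation of GIT stability for $Q$.

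I expect the main obstacle to be the bookkeeping in the second step: verifying that the chamber of the given $\theta$ genuinely occurs among the $C_{\mb k}$. This requires combining the shift identification $\Theta_{\mb v}\cong\Theta_{n\delta}$ of Proposition \ref{prop:affine_quiver_shift} with the description in Proposition \ref{prop:hilb_is_quiv_amenable}(iii) of which alcoves of the affine reflection group on $\R^{|\mc C|}$ are relevant, and then checking that the finitely many walls for $(\mb v,\mb{w_0})$ are matched correctly rather than only a proper subset being realized. Everything else is a direct specialization of results already in hand.
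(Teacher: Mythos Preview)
Your approach via Proposition \ref{prop:hilb_is_quiv_amenable} with the degenerate covering piece $\lambda_0=0$ is correct and is exactly what the paper does for the range $\theta\cdot\delta>0$. The translation of $(\mb v,\theta)$ into $(v_D,\sigma)$ and the compatibility of wall crossings are handled as you describe.

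There is, however, a genuine gap. Your claim that ``every GIT chamber for $(\mb v,\mb{w_0})$ over its affine quotient occurs among the chambers $C_{\mb k}$'' is false: Proposition \ref{prop:hilb_is_quiv_amenable}(iii) indexes chambers by alcoves on the level-$1$ hyperplane $\{\theta\cdot\delta=1\}$, so the $C_{\mb k}$ all satisfy $\theta\cdot\delta>0$. The stability conditions you produce lie in $\mc A_{D,\mb k}\subset U_{\mc C,D}$, where by Definition \ref{defn:ucd} one has $(D-\beta)\cdot\omega>0$; under the dictionary this is precisely the half-space $\theta\cdot\delta>0$. Chambers with $\theta\cdot\delta<0$ are not reached by your argument, and Proposition \ref{prop:affine_quiver_shift} does not help here since it only identifies $\Theta_{\mb v}$ with $\Theta_{n\delta}$ by a shift preserving the sign of $\theta\cdot\delta$.

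The paper fills this gap with an extra ingredient: the involution $(\omega,\beta)\mapsto(\omega,-\beta)$ on central charges, which by \cite[Prop.~2.11]{bayer2014mmp} induces an isomorphism $M_{\sigma_{\omega,\beta}}(v)\simeq M_{\sigma_{\omega,-\beta}}(v)$ via derived dual. Since $-\beta\cdot\omega$ can be taken much larger than $c_1(\mc E)\cdot\omega$, this flips the sign of $(D-\beta)\cdot\omega$ and hence realizes the chambers with $\theta\cdot\delta<0$. The wall $\theta\cdot\delta=0$ matches the Hilbert--Chow wall on both sides, so the birational compatibility extends across it. Your ``expected obstacle'' (chamber bookkeeping) is not where the difficulty lies; the missing piece is this duality argument.
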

\begin{proof}
This is straightforward from the previous proposition when $(c_1(\mc{E}) - \beta )\cdot \omega > 0$ which gives all stability chambers for $\theta$ such that $\theta\cdot \delta > 0$. But since $-\beta\cdot \omega$ can be chosen to have a much larger magnitude than $c_1(\mc{E})\cdot \omega$ the general case follows from the fact (see e.g. \cite[Prop. 2.11]{bayer2014mmp}) that mapping $(\omega, \beta) \mapsto (\omega, -\beta)$ gives isomorphic moduli spaces $M_{\sigma_{\omega, \beta}}(v) \simeq M_{\sigma_{\omega, -\beta}}(v) $ where the isomorphism is given by taking a derived dual of a stable object, and so we obtain moduli spaces of Bridgeland stable objects for stability chambers containing $\theta$ such that $\theta\cdot \delta < 0$, and hence for all quiver stability chambers. Further, the walls $\{\theta\cdot \delta = 0\}$ for the quiver side and $(c_1(\mc{E}) - \beta )\cdot \omega = 0$ both induce, for stability conditions generically on this wall, the Hilbert-Chow contraction, so the birational transformations induced by variation of Bridgeland stability between different chambers are exactly those induced by variation of GIT locally for the quiver varieties. 
\end{proof}

\paragraph{Definition allowing for Lie algebra actions}
The previous paragraphs describes the local structure of certain singularities for  moduli spaces birational to moduli spaces of rank 1 torsion free sheaves in a way which will be shown to be sufficient to produce an action of a Lie algebra. We propose a definition which should hold in quite general scenarios (c.f. Section \ref{sec:local_k3_ext_quiver}) which allows one to construct finite dimensional Lie algebra actions. We restrict for simplicity to connected Dynkin diagrams but the generalization is immediate. 

Let $v$ be some primitive Mukai vector with $v^2 \ge -2$ and let $\mc{S} = \{s_1, \ldots, s_\rho\}$ be a collection of spherical classes spanning a negative definite lattice arranged according to a simply laced root system $\Delta$. Consider a stability condition $\sigma_0$ for which there are semistable objects of Mukai vector $v$ of phase $\phi$. We know that $\sigma_0$ lies on the intersection of codimension 1 walls $W_\alpha$ arranged according to the root system $\Delta$. Let $\{C_w\}_{w\in W}$ be the chambers for the corresponding finite quiver variety and pick an isomorphism between wall and chamber structures near $\sigma_0$ and for the quiver variety preserving the polyhedral structure, and $\sigma_w$ for $w \in W$ generic in the chamber corresponding to $C_w$. 

\begin{defn}\label{def:amenable}
 Then the data $(v, \mc S, \sigma_0)$ is called \emph{amenable to a local quiver Lie algebra action} if 
\begin{itemize}
  \item The conditions of Proposition \ref{prop:general_base} are satisfied so there is a base $M_{\sigma_0}(v + \Z \mc S)$ to which all $M_{\sigma}(v + s_{\mb t})$ map, such that the map is the composition of the map contracting S-equivalent objects under $\sigma_0$ followed by an inclusion.  
  \item For each $\mc{E}\in M_{\sigma_w}(v + s_\mb {t})$ for all $\mb t$ and all $w\in W$ there is a neighborhood around $\mc {E}$ such that the map $\pi_{\sigma_w, \sigma_0}$ is isomorphic to the product $\pi_{\theta, 0}\times \Id$ where $\Id$ is the identity on some factor and $\pi_{\theta, 0}$ is the map to the affine quotient for the Ext quiver of Definition \ref{defn:ext_quiver_object} of the $\sigma_0$-polystable representative of $\mc E$ and $\theta$ is a generic stability condition. 
\end{itemize}
\end{defn}

In particular, Proposition \ref{prop:hilb_is_quiv_amenable} together with the local description of Theorem \ref{thm:local_quiver} imply that for any $v_D = (1, D, s)$, and negative definite collection $\mc{R}$ corresponding to a connected Dynkin diagram
and $\sigma_0 = \sigma_{D,s, \mc R, \mb k}$, the data $(v_D, \mc R, \sigma_0)$ is amenable to a local quiver Lie algebra action. 

We will later see in Theorem \ref{thm:finite_action}
that this condition allows for the construction of a geometric finite dimensional Lie algebra action on the cohomologies of associated moduli spaces, which justifies the name. 

\subsection{Action near corners}
In this section, given a fixed Mukai vector $v_D = (1, D,1 + D^2/2 - n)$ and contractible collection $\mc C$ with corresponding finite dimensional semisimple Lie algebra $\mf g$ of rank $\rho$ and we define a geometric action of the affine Lie algebra $\hat{\mf g}$ on 
\[ \bigoplus_{\substack{\mb t \in \Z^\rho \\ s \in \Z}} H^*(M_{\sigma_{D,s}}((1, D + c_{\mb t}, s)))\]
where $\sigma_{D, s}$ is a stability condition in $\mc{A}_{D,s} = \mc{A}_{D,s, \mb 0}$ from Notation \ref{not:adsk}. We can also produce Lie algebra actions for different stability conditions (e.g. for the Gieseker chamber) by conjugating by the birational transformations induced by varying the stability condition. 

The same technique will produce finite dimensional Lie algebra actions on cohomologies for more general collections of Mukai vectors subject to some conditions on stable factors and local structures of singularities so that we may use local quiver calculations. Thus the result in this setup will also be recorded. 

\paragraph{Glueing Hecke corespondences}

We construct Lagrangian correspondences between a pair of moduli spaces which locally agree with the Hecke correspondences using the local description of moduli space contractions as quiver variety contractions. One of these correspondences will locally coincide with the Hecke correspondence, and thus convolution with this correspondence will essentially induce the action of the Chevalley generators $e_i$ and $f_i$ on cohomology. 

We work in the setup of Proposition \ref{prop:general_base} and Definition \ref{def:amenable}. Let $\mc S = \{s\}$ be a single spherical class and pick Mukai vector $v_0$ and stability condition $\sigma_0$ satisfying the conditions of the proposition and pick $\sigma$ in a chamber adjacent to the wall on which $\sigma_0$ lies. Let $M_{\sigma_0}(v_0+ \Z s)$ denote the base produced by the theorem. Assume that the data $(v_0, \mc{S}, \sigma_0)$ is amenable to a local quiver Lie algebra action. 

\begin{lem}\label{lem:stratum_open_cover}
  Given $v\in v_0 + \Z s$, let $M_0$ denote the smallest stratum of $M_{\sigma_0}(v_0 + \Z s)$ which contains the image of both $M_{\sigma}(v)$ and $M_{\sigma}(v + s)$ under $\pi_{\sigma, \sigma_0}: M_{\sigma}(v)\to M_{\sigma_0}(v_0 + \Z s)$ and $\pi'_{\sigma, \sigma_0}: M_{\sigma}(v + s)\to M_{\sigma_0}(v_0 + \Z s)$. 
There is a finite open cover $\{U_i\}$  of $M_0$ such that 
\[\pi_{\sigma, \sigma_0}^{-1}(U_i) \to U_i\]
and 
\[\pi_{\sigma, \sigma_0}^{'-1}(U_i) \to U_i\]
as maps onto their images are isomorphic to a trivial factors times Springer resolutions of closures of nilpotent orbits. More precisely if $\pi_{k,n}$ denotes the Springer map $\pi_{k,n}: T^* Gr(k, n) \to \mc{N}$ restricted to some neighborhood of the central fiber for some $k,n$, there is an open set $V\subset \C^n$ such that locally $\pi_{\sigma, \sigma_0} \simeq \pi_{k,n} \times \Id_V$  while  $\pi_{\sigma, \sigma_0}^{'}\simeq \pi_{k+1,n} \times \Id_V$ for a different Springer map $\pi_{k+1,n}: T^* Gr(k+1, n) \to \mc{N}'$ times the same trivial factor. Further these maps are compatible under the inclusion of one of $\mc N$ or $\mc N'$ into the other.
\end{lem}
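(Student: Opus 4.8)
The plan is to bootstrap from the already-established local quiver description and the known local analytic structure of the Hecke correspondence for affine ADE quivers, reducing the statement to a purely quiver-theoretic one that was essentially recorded in Section~\ref{sec:quivers}. First I would fix $v \in v_0 + \Z s$ and let $M_0$ denote the smallest stratum of $M_{\sigma_0}(v_0 + \Z s)$ containing $\pi_{\sigma,\sigma_0}(M_\sigma(v))$ and $\pi'_{\sigma,\sigma_0}(M_\sigma(v+s))$. Since $(v_0,\mc S,\sigma_0)$ is amenable to a local quiver Lie algebra action, for each point $x\in M_0$ and each $\mc E \in \pi_{\sigma,\sigma_0}^{-1}(x)$ there is, by Definition~\ref{def:amenable}, a neighborhood on which $\pi_{\sigma,\sigma_0}$ is isomorphic to $\pi_{\theta,0}\times \Id$ where $\pi_{\theta,0}$ is the map to the affine quotient for the Ext-quiver of the $\sigma_0$-polystable representative of $\mc E$, and $\theta$ is generic in the chamber corresponding to $\sigma$. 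Because $\mc S = \{s\}$ is a single spherical class (so the relevant part of the Ext-quiver has a single dimension node with no loops), this Ext-quiver is a single node with no loops; by Definition~\ref{defn:ext_quiver_quiv} and the computation in the proof of Proposition~\ref{prop:affine_quiver_shift}, the associated quiver variety with dimension $\ell$ at this node and framing $m$ has central fiber a Grassmannian $Gr(\ell,m)$, and $\pi_{\theta,0}$ restricted near this fiber is precisely the Springer resolution $\pi_{k,n}:T^*Gr(k,n)\to\mc N$ (restricted to a neighborhood of the central fiber), where $n = m$ and $k = \ell$ is determined by the multiplicity of $s$ in the polystable decomposition of the image point.

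Next I would extract a finite open cover. The stratification of $M_{\sigma_0}(v_0+\Z s)$ by the isomorphism type of the polystable representative (equivalently, by the multiplicity $r$ of the spherical factor $S$) is finite on the relevant bounded range of Mukai vectors, so $M_0$ is a single such stratum and the local model is constant along it: by the amenability hypothesis the data $(k,n)$ and the trivial factor $V\subset\C^n$ depend only on the stratum, and shrinking if necessary I obtain a finite cover $\{U_i\}$ of $M_0$ such that over each $U_i$ both $\pi_{\sigma,\sigma_0}$ and $\pi'_{\sigma,\sigma_0}$ are isomorphic to $\pi_{k,n}\times\Id_V$ and $\pi_{k',n}\times\Id_V$ respectively for the same $n$ and the same trivial factor. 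The key point that $k' = k+1$ is exactly the statement that passing from the moduli space of Mukai vector $v$ to that of $v+s$ increments the multiplicity at the relevant vertex by one: this follows because adding the spherical class $s$ once to a polystable representative lying over a point of the same stratum $M_0$ increases the dimension vector at that node by the $\langle\,\cdot\,,\,\cdot\,\rangle$-pairing contribution of $s$, which is $1$ by the normalization of $\mc S$ (the Gram matrix of $\mc S$ has $2$ on the diagonal, so the Hecke step $v\mapsto v+s$ is the elementary modification increasing dimension by one at that vertex; cf.\ the Hecke correspondence \eqref{eq:hecke} and Proposition~\ref{prop:hecke_from_1_wall}). Compatibility of the two Springer maps under the inclusion $\mc N\hookrightarrow\mc N'$ (or vice versa) is the statement that the two affine cones are nested as closures of adjacent nilpotent orbits, which is again just the local quiver picture: both are the affine quotient $\mf M_0(\ell,m)$ resp.\ $\mf M_0(\ell+1,m)$ for the single-node quiver, and these fit in a chain as in the local description of resolutions in Section~\ref{sec:quivers}.

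The main obstacle will be verifying that the two local models over a given $U_i$ share the \emph{same} trivial factor $V$ — i.e.\ that the neighborhoods for $\pi_{\sigma,\sigma_0}$ and $\pi'_{\sigma,\sigma_0}$ can be chosen compatibly, rather than merely that each individually has such a product form. To handle this I would use that $\sigma$ lies in a chamber adjacent to the wall containing $\sigma_0$ and that both maps $\pi_{\sigma,\sigma_0}$ and $\pi'_{\sigma,\sigma_0}$ factor through the \emph{same} common base $M_{\sigma_0}(v_0+\Z s)$ constructed in Proposition~\ref{prop:general_base}: the amenability condition gives, simultaneously for all $\mb t$, a product decomposition over a neighborhood of each point of $M_{\sigma_0}(v_0+\Z s)$ whose non-quiver factor is intrinsic to the base (it is the deformation direction $\C^\ell$ with $\ell = p(\beta^\infty)$, the loops at the framing vertex, in the notation of Theorem~\ref{thm:local_quiver}), hence independent of which $v+s_{\mb t}$ one resolves. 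Concretely I would invoke Theorem~\ref{thm:local_quiver} (together with its extension to non-generic parameters cited in the remark following it): the local analytic isomorphism there identifies a neighborhood in $M_\sigma(v+s_{\mb t})$ with $(\pi_{\rho,0}\times\id)^{-1}(V)$ for $V$ a neighborhood of $0\times 0$ in $\mf M_0(\mb n,\mb m)\times\C^\ell$, and the factor $\C^\ell$ — which becomes the trivial factor $\Id_V$ in the statement — is manifestly the same for $v$ and $v+s$ since it depends only on $\beta^\infty$, the dimension vector of the framing summand of the polystable object, which is unchanged. Once this compatibility is in hand, the identification of $\pi_{\theta_i,0}$ near the central fiber with the concrete Springer map $T^*Gr(k,n)\to\mc N$ is exactly Theorem~\ref{thm:uea_action_nakajima}(ii) together with the well-known identification of $\mf M_\theta(\ell,m)$ for the one-node quiver with $T^*Gr(\ell,m)$, and the $k\mapsto k+1$ increment and nesting of the cones complete the proof.
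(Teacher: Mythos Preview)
Your proposal is correct and follows essentially the same approach as the paper: invoke amenability to obtain local Ext-quiver descriptions, observe that the restriction on stable factors forces the Ext-quiver to be a single node with no loops (hence the quiver variety is $T^*Gr(k,n)$ and the map to the affine quotient is the Springer map), and argue that the trivial factor is shared because it depends only on the framing summand $\beta^\infty$, which is the same for $v$ and $v+s$. The paper's proof is terser but makes exactly these moves.

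One small inaccuracy worth flagging: your sentence ``$M_0$ is a single such stratum and the local model is constant along it'' conflates two stratifications. The stratum $M_0$ in the statement is a stratum of the chain of images $\pi_{\sigma,\sigma_0}(M_\sigma(v+ks))$, not a stratum by polystable type; points of $M_0$ can have different multiplicities of the spherical factor $S$, so the local data $(k,n)$ genuinely vary over $M_0$ (and hence over the $U_i$). This does not affect your argument, since you then correctly extract a finite cover on each piece of which the local model is fixed, but you should drop the claim of constancy.
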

\begin{proof}
Pick some finite set of points $x$ in $M_0$ corresponding to polystable sheaves $\{ \mc F_x\}$ of Mukai vector $v$ or $v+s$ such that by our assumption that the data given is amenable to a local quiver Lie algebra action there is an open covering $U_i$ of $M_0$ around these $x$ for which the maps $\pi_{\sigma, \sigma_0}$ and $\pi'_{\sigma, \sigma_0}$  admit local Ext-quiver descriptions. Then by the assumption that the stable factors of these objects may only be a unique object with Mukai vector in $v_0+ \Z s$ and some number of copies of $s$, the local Ext quiver must be empty or the quiver with one node and no loops. And if $x$ is in the image of both $\pi_{\sigma, \sigma_0}$ and $\pi'_{\sigma, \sigma_0}$ then the unique object with Mukai vector in $v_0+ \Z s$ agrees, which forms the framing node in the Crawley-Boevey quiver $Q_\infty$ so the framing dimensions agree as well as the dimension of the trivial factor, and the dimension vectors for the two Ext quivers must differ by $1$. 
\end{proof}
This description allows us to define the correspondence, whose definition is the same if we are considering the affine Lie algebra action or the finite dimensional one. 

\begin{defn}
\label{defn:heckek3}
Let
\[ \mf{P}_s^{\mathrm{K3}}(v) \subset M_{\sigma}(v) \times_{M_0} M_{\sigma}(v + s) \] 
denote the subvariety such that given the open cover $U_i$ of the previous lemma, we have that 
\[ \mf{P}_s^{\mathrm{K3}}(v) \big |_{\pi^{-1}(U_i)\times_{U_i} \pi^{'-1}(U_i)} = \mf{P}(v)\times \Delta\]
where 
$\mf{P}(v)$ is the Hecke correspondence on the first factor and $\Delta$ is the diagonal on the factors from the previous lemma on which $\pi$ and $\pi'$ are the identity. Note that we suppress the subscripts on these maps. 
\end{defn}

\begin{prop}
  This subvariety $ \mf{P}_s^{\mathrm{K3}}(v)$ is well defined. In addition it is smooth, irreducible, and Lagrangian under the usual product holomorphic symplectic form on $M_{\sigma}(v) \times M_{\sigma}(v + s)$, i.e. with a minus sign on the second factor. 
\end{prop}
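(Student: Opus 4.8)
The plan is to verify well-definedness first, then smoothness, irreducibility, and the Lagrangian property by reducing each assertion to the known local model. The correspondence $\mf{P}_s^{\mathrm{K3}}(v)$ is defined locally on the open cover $\{U_i\}$ from Lemma \ref{lem:stratum_open_cover} as $\mf{P}(v)\times\Delta$, so the first task is to check these local definitions agree on overlaps $U_i\cap U_j$. On an overlap, both descriptions arise from the Ext-quiver local model with one node, no loops, and a common framing dimension; since the Hecke correspondence $\mf{P}_i(\mb v;\mb w)$ for a quiver variety is intrinsically defined (it is the closure of the locus where one representation includes into the other, see \eqref{eq:hecke} and Proposition \ref{prop:hecke_from_1_wall}) it does not depend on the choice of local analytic chart, and the diagonal factor is likewise intrinsic. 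Hence the locally defined pieces glue to a global closed subvariety of $M_{\sigma}(v)\times_{M_0}M_{\sigma}(v+s)$, which is well defined.

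Next, smoothness and the dimension count: locally $\mf{P}_s^{\mathrm{K3}}(v)$ is isomorphic to $\mf{P}(v)\times V$ with $V\subset\C^n$ smooth, and by Theorem \ref{thm:uea_action_nakajima}(ii) each connected component of the quiver Hecke correspondence is a nonsingular Lagrangian subvariety of the product of quiver varieties; since smoothness is local this shows $\mf{P}_s^{\mathrm{K3}}(v)$ is smooth, and its dimension equals $\dim M_{\sigma}(v)$ (half the dimension of $M_{\sigma}(v)\times M_{\sigma}(v+s)$, using that $v$ and $v+s$ give moduli spaces of the same dimension because $(v+s)^2=v^2$ when $s^2=-2$ and $\langle v,s\rangle$ is constrained — more precisely $\dim M_\sigma(v+s)=(v+s)^2+2$ and one checks this matches, or rather one notes that the Hecke correspondence between quiver varieties $\quivb{\theta}{v}{w}$ and $\quiv{\theta}{\mb v+\rho_i}{\mb w}$ is middle-dimensional in the product). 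For the Lagrangian property, one restricts the holomorphic symplectic form (with the sign flip on the second factor) to each chart: the product form on $M_\sigma(v)\times M_\sigma(v+s)$ restricts, under the local isomorphism $\pi^{-1}(U_i)\simeq\mf{M}_\theta(\mathbf n,\mathbf m)\times V$, to the sum of the quiver-variety symplectic form and the standard symplectic form on $V\times\bar V$ viewed as $T^*V$; on $\mf{P}(v)\times\Delta$ the quiver form vanishes by Nakajima's result and the form on $V\times\bar V$ vanishes on the diagonal $\Delta$. Since vanishing of a $2$-form and having the right dimension are both local conditions, $\mf{P}_s^{\mathrm{K3}}(v)$ is Lagrangian.

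The remaining point is irreducibility, which is not purely local and will be the main obstacle. The local pieces $\mf{P}(v)\times V$ need not be connected (the quiver Hecke correspondence can have several components), so one cannot conclude irreducibility by gluing irreducible local pieces. Instead I would argue as follows: by Proposition \ref{prop:hecke_from_1_wall} the relevant component $\mf{P}(v)$ of the quiver Hecke correspondence is singled out among all components by its fiber dimensions over the two quiver varieties (projective spaces of the stated dimensions), and globally we single out the analogous component $\mf{P}_s^{\mathrm{K3}}(v)$ of $M_\sigma(v)\times_{M_0}M_\sigma(v+s)$ the same way. To see it is irreducible, use that the first projection $\mf{P}_s^{\mathrm{K3}}(v)\to M_\sigma(v)$ has projective-space fibers whose dimension jumps only along proper closed subsets, so $\mf{P}_s^{\mathrm{K3}}(v)$ is the closure of the locus over the open dense set where the fiber is a single point (or a fixed-dimensional projective space), and that locus is a projective-space bundle over an irreducible base, hence irreducible; its closure is then irreducible. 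Concretely: over the dense open subset $M_\sigma(v)^\circ$ where $\pi_{\sigma,\sigma_0}(\mathcal F)$ has no summand $S$ (equivalently the Jordan--Hölder filtration is as generic as possible), the fiber of $\mf{P}_s^{\mathrm{K3}}(v)$ is $\PP^{N}$ for fixed $N=\langle s, v\rangle + (\text{framing contribution})$, so this open part is a $\PP^N$-bundle over the irreducible variety $M_\sigma(v)^\circ$, hence irreducible, and $\mf{P}_s^{\mathrm{K3}}(v)$ is its closure. The verification that the bad locus has positive codimension, and that the defining condition (constant fiber dimension) indeed cuts out a dense open subset whose preimage is dense in $\mf{P}_s^{\mathrm{K3}}(v)$, is the delicate part and will require invoking the local quiver description once more to compute fiber dimensions, together with the fact that $M_\sigma(v)$ is smooth and irreducible (being a moduli space of stable objects for primitive $v$ and generic $\sigma$, by Theorem \ref{thm:bm_mmp} and Construction \ref{const:bridgeland_stable_spaces}).
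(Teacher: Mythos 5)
Your treatment of well-definedness, smoothness, and the Lagrangian property is essentially the paper's argument: the chart pieces are compared via Proposition \ref{prop:hecke_from_1_wall}, which pins down the Hecke correspondence by its fiber dimensions over each factor and so makes the definition independent of the chosen Ext-quiver chart (note that ``the closure of the incidence locus'' alone is not chart-intrinsic, since the identification of points with quiver representations depends on the local analytic isomorphism of Lemma \ref{lem:stratum_open_cover}; the fiber-dimension characterization is what removes this dependence), and smoothness and the vanishing of the product symplectic form are checked locally using Theorem \ref{thm:uea_action_nakajima}(ii), exactly as in the paper.

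The genuine problem is your irreducibility argument. You propose that $\mf{P}_s^{\mathrm{K3}}(v)$ is the closure of a $\PP^N$-bundle over a dense open subset $M_\sigma(v)^\circ\subset M_\sigma(v)$ where $\pi_{\sigma,\sigma_0}(\mc F)$ has no summand $S$. But over that locus the fiber is $\PP(\Ext^1(S,\mc F))$, and $\ext^1(S,\mc F)=\langle s,v\rangle$ for generic $\mc F$ (no Hom's or Ext$^2$'s), which is frequently zero: already in the paper's basic case $v=(1,0,1-n)$, $s=(0,C,0)$ one has $\langle s,v\rangle=0$, so an ideal sheaf of points supported away from $C$ admits no extension by $\mc{O}_C(-1)$ and the first projection of $\mf{P}_s^{\mathrm{K3}}(v)$ lands in a \emph{proper closed} subset of $M_\sigma(v)$. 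Hence the correspondence is not the closure of anything lying over a dense open subset of $M_\sigma(v)$, and the ``bundle over $M_\sigma(v)^\circ$, then take closure'' step fails as stated. The density of a good open locus has to be established inside the correspondence itself, and the only available handle is the local chart description: on each $\pi^{-1}(U_i)\times_{U_i}\pi'^{-1}(U_i)$ the correspondence equals $\mf{P}(v)\times\Delta$, which is irreducible because Nakajima's component $\mf{P}(v)$ is, and one then globalizes using the consistency of charts guaranteed by Proposition \ref{prop:hecke_from_1_wall} (overlapping irreducible open pieces have irreducible union). This is precisely how the paper argues — irreducibility is absorbed into the same local calculation as smoothness — so your proof should be repaired by replacing the global bundle-closure argument with this chart-by-chart one, rather than by trying to control fiber dimensions over $M_\sigma(v)$.
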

\begin{proof}
That the subvariety is smooth irreducible and Lagrangian follows from a local calculation where it is true over each open set $U_i$. That it is well defined follows from the fact Proposition \ref{prop:hecke_from_1_wall} which implies that it is uniquely determined by its fiber over a point on each factor $M_{\sigma}(v)$ and $M_{\sigma}(v + s)$, regardless of the choice of point around which we take a local Ext quiver description. 
\end{proof}

\begin{rmk}
The variety 
\[ \mf{P}_s^{\mathrm{K3}}(v) \subset M_{\sigma}(v) \times_{M_0} M_{\sigma}(v + s) \]  is  essentially the variety of extensions 
\[\{\ses{\mc E}{\mc E'}{S} \mid \mc E \in M_{\sigma(v)}, \mc E' \in M_{\sigma}(v + s)  \} \]
where $S$ is the unique $\sigma_0$-stable object of class $s$ and the maps to the two factors $M_{\sigma}(v)$ and $M_{\sigma}(v + s)$ project onto $\mc{E}$ and $\mc E'$. This means that convolution with this class encodes multiplication in some subset of some version of the Hall algebra. 
\end{rmk}

\paragraph{Finite dimensional Lie algbra actions}

We first construct the finite dimensional Lie algebra action, and justify the name of Definition \ref{def:amenable}. To this end let $v$ be a Mukai vector, $\sigma_0$ a stability condition and $\mc S = \{ s_1, \ldots, s_\rho\}$ be a set of spherical classes such that 
$(v, \mc S, \sigma_0)$ is amenable to a local quiver Lie algebra action. Pick $\sigma_{w}$ generic in chambers adjacent to $\sigma_0$, labelled by $w\in W$ the Weyl group for the corresponding finite dimensional Lie algebra $\mf{g}$. Consider the finite disjoint union and common base
\begin{gather*}
   M_{\sigma_w} := \bigsqcup_{(v + s_{\mb t})^2 \ge -2} M_{\sigma_w}(v + s_{\mb t})\\
   M_0 := M_{\sigma_0}(v + \Z \mc S)
\end{gather*}
and define the analogue of the Steinberg variety as 
\[ Z := M_{\sigma_w} \times_{M_0} M_{\sigma_w}.\]

We record a few facts about the convolution structure. For proofs see \cite[\S 2.7]{chriss_representation_2010}. Let $p_{ij}: M_{\sigma_w} \times  M_{\sigma_w} \times  M_{\sigma_w}\to M_{\sigma_w} \times  M_{\sigma_w}$ denote projection onto the product of the $i$ and $j$ factors. In what follows we will frequently suppress applications of Poincar\'e duality. 

\begin{prop}\label{prop:steinberg_properties}
The cohomology $H^*(Z)$ is an associative algebra under the convolution product 
\[ [\alpha] \circ [\beta] := p_{13 *}(p^*_{12}(\alpha) \cup p^*_{23}(\beta))\]
for $[\alpha], [\beta]\in H^*(Z)$. Further 
\begin{enumerate}[(i)]
  \item The diagonal $[\Delta] \subset H^*(Z)$ is the identity for the product.
  \item Let $H(Z)$ denote the subspace spanned by cycles which are middle dimensional in $M_{\sigma_w}(v + s_{\mb t}) \times M_{\sigma_w}(v + s_{\mb t'})$ for some $\mb t, \mb t'$. Then this subspace is actually a subalgebra. Also the set-theoretic convolution of two Lagrangian cycles is isotropic. 
  \item For any $x\in M_0$, if $M_x$ is the fiber of $M_{\sigma_w}$ over $x$ then $H^*(M_x)$ is a module over $H^*(Z)$ and $H(Z)$ preserves degrees on this module. 
  \item Given $U\subset M_0$ open if we repeat the construction using Borel-Moore homology for $Z_U := \pi^{-1}(U)\times_U \pi^{-1}(U)$ then the restriction map composed with Poincar\'e duality 
  \[H^*(Z) \to H_*^{BM}(Z_U) \]
  is an algebra homomorphism which preserves the module structure on $H^*(M_x)$ for $x\in U$. 
\end{enumerate}
\end{prop}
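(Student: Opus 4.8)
The plan is to recognize Proposition~\ref{prop:steinberg_properties} as a relative version, over the projective morphism $\pi := \pi_{\sigma_w,\sigma_0}\colon M_{\sigma_w}\to M_0$, of the convolution formalism of \cite[\S 2.7]{chriss_representation_2010}, and to carry that reference out with attention to three points: properness, the dimension estimate, and compatibility with open restriction. First I would record the ambient geometry: since each $v+s_{\mb t}$ occurring in $M_{\sigma_w}$ is primitive and $\sigma_w$ generic, each $M_{\sigma_w}(v+s_{\mb t})$ is smooth holomorphic symplectic of dimension $d_{\mb t}:=(v+s_{\mb t})^2+2$, and by Step~4 of Construction~\ref{const:bridgeland_stable_spaces} together with Proposition~\ref{prop:general_base} the contraction $\pi$ is projective. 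Hence $Z$ is a closed subvariety of the smooth variety $M_{\sigma_w}\times M_{\sigma_w}$, proper under each projection, and $p_{13}$ is proper on the triple fibre product $M_{\sigma_w}\times_{M_0}M_{\sigma_w}\times_{M_0}M_{\sigma_w}$; this is precisely the input needed to define $[\alpha]\circ[\beta]$ as $p_{13*}(p_{12}^*\alpha\cup p_{23}^*\beta)$ using cup product on the smooth ambient products and proper pushforward, and associativity and part~(i) (the relative diagonal is the two-sided unit) then hold verbatim as in \cite[\S 2.7]{chriss_representation_2010}.

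For part~(ii) the key lemma I would prove is the dimension estimate $\dim\bigl(M_{\sigma_w}(v+s_{\mb t})\times_{M_0}M_{\sigma_w}(v+s_{\mb t'})\bigr)\le\tfrac12(d_{\mb t}+d_{\mb t'})$, and similarly for triple fibre products. This is analytic-local on $M_0$, so by Definition~\ref{def:amenable} it reduces to the same statement for a product $\pi_{\theta,0}\times\Id$, where $\pi_{\theta,0}$ is a generic contraction of a quiver variety whose Ext quiver --- since the $\sigma_0$-stable factors are one object with Mukai vector in $v+\Z\mc S$ and copies of the $S_i$ --- has a single node and no loops, hence is a Springer-type resolution $T^*\mathrm{Gr}(k,n)\to\mc N$; such resolutions are semismall (cf.\ the semismallness already invoked for the decomposition theorem in Section~\ref{sec:quivers}), which yields the fibrewise half-dimensionality needed. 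Granting this, the standard dimension count for convolution of semismall maps shows $p_{12}^*\alpha\cup p_{23}^*\beta$ is carried by a subvariety of the triple fibre product whose $p_{13}$-image in $M_{\sigma_w}(v+s_{\mb t})\times M_{\sigma_w}(v+s_{\mb t''})$ has dimension $\le\tfrac12(d_{\mb t}+d_{\mb t''})$; together with $[\Delta]\in H(Z)$ this gives that $H(Z)$ is a subalgebra. The isotropy of the set-theoretic convolution of Lagrangian (for the product form with a sign on the second factor) cycles I would take from \cite[\S 2.7]{chriss_representation_2010}; combined with the dimension bound, the top-dimensional components of such a convolution are Lagrangian.

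For~(iii) and~(iv) I would apply the same convolution set-up with third space a point: for $x\in M_0$ the fibre $M_x=\pi^{-1}(x)$ is projective, so $H^{BM}_*(M_x)=H_*(M_x)$, and taking $Z_{23}=M_x\subset M_{\sigma_w}\times\{x\}$ makes $H^*(Z)$ act on $H_*(M_x)$, the requisite properness being that $p_{13}$ restricted to $\{(m_1,m_2)\in Z:\pi(m_2)=x\}$ has image the projective variety $M_x$; that $H(Z)$ preserves the complex degree is again the fibrewise dimension estimate. For~(iv) I would note $Z_U=Z\cap(\pi^{-1}(U)\times\pi^{-1}(U))$ is open in $Z$ and $\pi^{-1}(U)\to U$ is still proper (properness is local on the base), so the Borel--Moore convolution algebra $H^{BM}_*(Z_U)$ and its action on $H_*(M_x)$ for $x\in U$ are defined; since open restriction in Borel--Moore homology commutes with open pullback, cup product and proper pushforward (base change along the square obtained by restricting all factors over $U$), the composite $H^*(Z)\to H^{BM}_*(Z_U)$ of restriction and the duality isomorphism is an algebra homomorphism intertwining the two module structures.

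The main obstacle, and the only step that is not formal bookkeeping, is the dimension estimate of part~(ii): one must know that $\pi$ is semismall relative to $M_0$ (equivalently, a symplectic resolution onto its image with Lagrangian fibres). This is exactly what the amenability hypothesis of Definition~\ref{def:amenable} buys, via the local reduction to Springer/Nakajima quiver resolutions whose semismallness is known; I would therefore spend the bulk of the argument making that local comparison precise and deducing the global bound, after which parts~(i), (iii) and~(iv) follow from \cite[\S 2.7]{chriss_representation_2010} with only notational changes.
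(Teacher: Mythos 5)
Your proposal is correct and follows exactly the route the paper intends: the paper offers no proof beyond the citation to \cite[\S 2.7]{chriss_representation_2010}, and your argument is a faithful execution of that reference in the relative setting over $\pi_{\sigma_w,\sigma_0}$. You correctly isolate the one non-formal ingredient --- the semismallness/dimension estimate needed for (ii) and (iii) --- and obtain it from the local Ext-quiver description guaranteed by Definition~\ref{def:amenable} (cf.\ Lemma~\ref{lem:stratum_open_cover}), which is precisely what the amenability hypothesis is there to provide.
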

Define the map $\omega: M\times N \to N\times M$ which flips the two components of any product. We come to the analogue of (a simpler version of) Theorem \ref{thm:uea_action_nakajima}. Recall that convolution is written in the opposite order as composition of operators. 
\begin{thm}\label{thm:finite_action}
Let $e_i, f_i, h_i \in \mf g$ for $i = 1, \ldots, \rho $ denote the Chevalley generators for the finite dimensional Lie algebra associated to the data $(v, \mc{S}, \sigma_0)$.
Then there is an algebra morphism 
\[ U(\mf g) \to H(Z)\]
defined on generators by
\begin{align*}
e_i &\mapsto   \sum_{(v+ s_{\mb t})^2 \ge -2 } \big [ \mf{P}_{s_i}^{\mathrm{K3}}(v+ s_{\mb t})\big]\\
f_i &\mapsto \sum_{(v+ s_{\mb t})^2 \ge -2} (-1)^{r_i( s_{\mb t})}\big [ \omega\big(\mf{P}_{s_i}^{\mathrm{K3}}(v+ s_{\mb t})\big)\big]\\
h_i &\mapsto \sum_{(v+ s_{\mb t})^2 \ge -2} -\langle v + s_{\mb t} , s_i \rangle[\Delta_{M_{\sigma_w}(v+ s_{\mb t})}]
\end{align*}
where 
\begin{align*}
  r_i( s_{\mb t}) &:= \frac 12 (\dim M_{\sigma_w}(v + s_{\mb t} + s_i) - \dim M_{\sigma_w}(v + s_{\mb t})) \\
  &= \langle v + s_{\mb t}, s_i \rangle - 1.
\end{align*}
\end{thm}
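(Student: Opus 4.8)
The plan is to reduce the global statement to the known local quiver statement (Theorem~\ref{thm:uea_action_nakajima}) via the local Ext-quiver description provided by the amenability hypothesis, exactly as in the proof of Proposition~\ref{prop:flop_intertwines}. First I would observe that checking $U(\mf g)\to H(Z)$ is an algebra morphism amounts to verifying the Serre-type and Chevalley relations among the proposed cycles, and that every such relation is an identity in the convolution algebra $H(Z)$ of the form ``a certain $\Z$-linear combination of convolutions of the classes $[\mf P_{s_i}^{\mathrm{K3}}]$, $[\omega(\mf P_{s_i}^{\mathrm{K3}})]$ and $[\Delta]$ vanishes (or equals another such combination).'' By Proposition~\ref{prop:steinberg_properties}(iv), restriction to an open cover $\{U_j\}$ of $M_0$ gives an algebra homomorphism $H(Z)\to H_*^{BM}(Z_{U_j})$, and these are jointly injective on the classes in question because each $\mf P_{s_i}^{\mathrm{K3}}(v+s_{\mb t})$ is irreducible and meets the preimage of some $U_j$ in an open dense subset (by Lemma~\ref{lem:stratum_open_cover} the cover can be chosen to pass through chosen points of every relevant stratum). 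Hence it suffices to check each relation after restriction to each $Z_{U_j}$.

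Next I would unwind the local picture. Over a chart $U_j$, Definition~\ref{def:amenable} gives $\pi_{\sigma_w,\sigma_0}\simeq \pi_{\theta,0}\times\Id_V$ where $\pi_{\theta,0}$ is the map to the affine quotient for the Ext-quiver of the relevant $\sigma_0$-polystable object; by the hypothesis on Jordan--H\"older factors (a unique object of Mukai vector in $v+\Z\mc S$ plus copies of the $S_i$) this Ext-quiver is a disjoint union of single-node loop-free quivers, i.e. a product of type-$A_1$ Springer maps $T^*\mathrm{Gr}(k,n)\to\mc N$, with the ``$v+\Z\mc S$'' summand furnishing the Crawley--Boevey framing node. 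By Definition~\ref{defn:heckek3}, $\mf P_{s_i}^{\mathrm{K3}}(v+s_{\mb t})$ restricts over $U_j$ to $\mf P_i(\mb v')\times\Delta$, the quiver Hecke correspondence on the active factor times the diagonal elsewhere. Therefore, after restriction, the proposed assignment is precisely the $\mf{sl}_2$-triple action of Theorem~\ref{thm:uea_action_nakajima} (specialized to a single simple root, i.e. the rank-one Nakajima correspondence of \cite{nakajima1994instantons}) tensored with the identity on the trivial factor $V$; the sign $(-1)^{r_i(s_{\mb t})}$ and the coefficient $-\langle v+s_{\mb t},s_i\rangle$ match the local normalizations because $r_i(s_{\mb t})=\langle v+s_{\mb t},s_i\rangle-1$ equals the corresponding dimension-shift on the quiver side (this uses $\langle v+s_{\mb t},s_i\rangle = \rho_i^T(\mb w-C\mb v')+(\text{const})$, the Mukai-pairing analogue of the framing computation in Proposition~\ref{prop:affine_quiver_shift}). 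Since the $\mf{sl}_2$-relations (and, when several $s_i$ interact, the rank-two integrability/Serre relations) hold in the quiver convolution algebra by Theorem~\ref{thm:uea_action_nakajima} and the standard integrability argument of \cite[\S9]{nakajima1998quiver}, and tensoring with $\Id_V$ preserves them, the relations hold in each $H_*^{BM}(Z_{U_j})$, hence in $H(Z)$.

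Two points need care. First, compatibility of the several charts: different $U_j$ may see the active node/dimension vector differently, so I must check the gluing is consistent — this follows because $\mf P_{s_i}^{\mathrm{K3}}(v+s_{\mb t})$ is \emph{globally} defined and irreducible (the preceding Proposition), so its restrictions to overlapping $Z_{U_j}$ automatically agree, and the relation, being an equality of global classes whose restrictions all vanish, vanishes globally. Second, one must confirm that $H(Z)$ is the right target: Proposition~\ref{prop:steinberg_properties}(ii) says the middle-dimensional cycles form a subalgebra, and each $\mf P_{s_i}^{\mathrm{K3}}$, $\omega(\mf P_{s_i}^{\mathrm{K3}})$, $\Delta$ is Lagrangian in the relevant product (the Proposition above, plus $r_i(s_{\mb t})$ being exactly the half-dimension shift), so the image lands in $H(Z)$. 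I expect the main obstacle to be the bookkeeping in the mixed relations $[e_i,f_j]=\delta_{ij}h_i$ and the Serre relations when $\langle s_i,s_j\rangle\in\{0,-1\}$: here the relevant Ext-quiver on a chart can have two interacting nodes, and one must verify that the local model is the corresponding rank-two Nakajima quiver variety (times a trivial factor) with the Hecke correspondences matching on the nose, including signs. Once that identification is in place the relations are quoted from \cite{nakajima1994instantons,nakajima1998quiver}; everything else is the restriction-to-charts formalism already used for Proposition~\ref{prop:flop_intertwines}.
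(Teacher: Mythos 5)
Your overall route coincides with the paper's for the core relation: restrict to charts using Proposition \ref{prop:steinberg_properties}(iv), identify the restricted correspondences with Nakajima's Hecke correspondences via amenability, quote the local computation of \cite[\S 9]{nakajima1998quiver}, and check the sign $(-1)^{r_i(s_{\mb t})}$ and the constant $-\langle v+s_{\mb t},s_i\rangle$ by a local dimension count (the trivial factor cancelling in the difference of dimensions). The genuine gap is in your treatment of the Serre-type relations \eqref{bkm4}--\eqref{bkm5}. You propose to verify \emph{all} relations chart by chart and then conclude they hold ``in each $H^{BM}_*(Z_{U_j})$, hence in $H(Z)$,'' justifying the converse direction by ``joint injectivity'' of the restriction maps on the classes in question. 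That justification only applies to classes represented by explicit Lagrangian cycles whose multiplicities can be read off at generic points of their irreducible components; the classes appearing in the Serre relations are iterated convolution products, and you have not shown that restriction to an open cover detects them (restriction of Borel--Moore classes to open sets is not injective in general, and the supports and lower-degree contributions of these iterated compositions are not controlled by irreducibility of the generators). The paper avoids this entirely: only \eqref{bkm3} is checked locally -- there the class $(f_j\circ e_i-e_i\circ f_j)\circ[\Delta_{M_{\sigma_w}(v+s_{\mb t})}]$ is represented by an explicit cycle, empty for $i\neq j$ and a multiple of the diagonal for $i=j$, so a local multiplicity computation suffices -- and \eqref{bkm4}, \eqref{bkm5} are then deduced globally from the finite-dimensionality of $H^*(M_{\sigma_w})$ via the integrability argument of \cite[\S 9.iii]{nakajima1998quiver}. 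You mention that integrability argument in passing, but as written your proof leans on the unsubstantiated local-to-global transfer for the higher relations, and this step would need to be replaced by the global argument (or by a substantially more careful analysis of the supports of the iterated convolutions).

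A secondary inaccuracy: your intermediate claim that amenability forces the local Ext-quiver to be a disjoint union of single-node loop-free quivers is false once two classes $s_i,s_j\in\mc S$ with $\langle s_i,s_j\rangle=1$ both occur among the $\sigma_0$-stable factors; the local model is then a two-node (type $A_2$) quiver variety times a trivial factor, and this multi-node case is exactly what is needed to see $[e_i,f_j]=0$ for $i\neq j$. You flag this at the end as ``the main obstacle'' without resolving it; note that the identification of the local model is already supplied by Definition \ref{def:amenable}, and the compatibility of the globally defined correspondences of Definition \ref{defn:heckek3} with these multi-node charts follows from their characterization fiberwise (Proposition \ref{prop:hecke_from_1_wall}), which is how the paper implicitly uses them. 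Finally, the commutativity of generators attached to orthogonal or disjoint components, which the paper proves via the product structure of local quiver varieties for disjoint quivers, is folded into your ``Serre relation'' discussion but would also need the corrected local models rather than the single-node description.
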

\begin{proof}
We check first the case when $\mf{g}$ is ADE, and later deduce the case where $\mf{g}$ is a direct sum of ADE Lie algebras. The relations \eqref{bkm1}- \eqref{bkm5}, essentially using Proposition \ref{prop:steinberg_properties} (iv) which reduces to the local case,  where the local calculations are in \cite[\S 9]{nakajima1998quiver}. The relations \eqref{bkm1} and \eqref{bkm2} are immediate. Then as in \cite[9.iii]{nakajima1998quiver}, the relations \eqref{bkm4} and \eqref{bkm5} follow from the finite-dimensionality of $H^*(M_{\sigma_w})$ once we know \eqref{bkm3}. But this follows from the fact that 
\[ [\Delta] = \sum [\Delta_{M_{\sigma_w}(v + s_{\mb t})}]\] and then
locally using \cite[p. 545-6]{nakajima1998quiver} that the class
\[
[C] := (f_j \circ e_i - e_i \circ f_j)\circ [\Delta_{M_{\sigma_w}(v + s_{\mb t})}]
\]
is represented by the cycle
\[ C = \begin{cases} 
  \emptyset &   i \neq j\\
  c \Delta_{M_{\sigma_w}(v + s_{\mb t})} & i  = j
\end{cases}\]
where the constant can be checked over an open set $U\in M_0$ to be
\[ c =   -1 - \frac{1}{2} (\dim M_{\sigma_w}(v + s_{\mb t} + s_i) - \dim M_{\sigma_w}(v + s_{\mb t})) = -\langle v + s_{\mb t}, s_i \rangle \]
because taking the difference in dimensions eliminates the contribution of the trivial factor from Lemma \ref{lem:stratum_open_cover}. 

Now consider $\mf{g} = \oplus_{i=1}^d \mf{g}_i$ where each $\mf{g}_i$ is simple. Pick roots $\mc{S}_i = \{s_{i,j}\} \subset \mc{S}$ corresponding to each $\mf{g}_i$ and stability conditions $\sigma_{0,i}$ where the phase of $v$ generically overlaps with that of an object of each Mukai vector $s_{i,j}$. 
Then the data $(v, \mc{S}_i, \sigma_{0,i})$ are each amenable to local quiver Lie algebra actions, and the previous case proves maps $U(\mf{g}_i) \to H(Z)$, we are left to prove that they commute. 
Without loss of generality take $e_1 \in \mf{g}_1$ and $e_2 \in \mf{g}_2$ corresponding to classes $s_1, s_2 \in \mc{S}$.  
But since a local quiver variety for a disjoint quiver $Q_1\sqcup Q_2$ has the structure of a product $\mf M_{Q_1, \theta_1}(\mb v_1, \mb{w}_1)\times \mf M_{Q_2, \theta_2}(\mb v_2, \mb{w}_2)$,
 we have that $M_0$ is covered by open sets of the form $U$ such that their inverse image in each $M_{\sigma}(v + s_{\mb t})$ is of the form 
\[ U_1 \times U_2 \times U_3 \]
where $\mf{P}_{s_1}^{\mathrm{K3}}(v+ s_{\mb t})$ is of the form $L \times \Delta_{U_2\times U_3}$ and up to reordering $\mf{P}_{s_2}^{\mathrm {K3}}(v+ s_{\mb t})$ is of the form $L \times \Delta_{U_1\times U_3}$, from which it follows that $e_1$ and $e_2$ commute. 

\end{proof}

\paragraph{Affine Lie algebra actions}
We now restrict to Mukai vectors of the form $(1, D, s)$ and essentially glue the previous Lie algebra actions from corners of the chamber $\mc{A}_{D,s}$ together to form affine Lie algebra actions. We can accomplish the same effect by working with the map to $Sym^n(S_{\mc C})$ for a contractible collection $\mc{C}$. 

To this end fix $v_D = (1,D,s)$ and a (not necessarily connected) contractible collection $\mc{C} = \sqcup_{i = 1}^d \mc{C}_d$ with $\mc{C} = \{C_{1}, \ldots, C_{ \rho}\}$, consider the Mukai vectors
\begin{align*}
  c_{i} &:= v(\mc O_{C_{i}}(-1)) = (0, C_{i}, 0)\\
  c_{0_j} &:= (0, -C_{\beta_j}, 1) 
\end{align*} 
with $C_{\beta_j}$ the class corresponding to the highest root in $\Z \mc{C}_j$. Let $\mc{R}$ denote the set of $-2$ classes in the span of all $c_{i}$ and $c_{0_j}$.  If $\mb t \in \Z^\rho$ let 
\[c_{\mb t} : = (0, t_i C_i, 0)\]
  Let 
\[M_0 := \bigcup_{n\ge 0} Sym^n(S_{\mc C}) \]
where we identify cycles on $S_{\mc C}$ if they differ by cycles supported at singular points. Thus for a Mukai vector
\[(1, D', s') \in v_D + \Z \mc{R} = v_D + \Z \mc C + \Z (0,0,1) \]
and any stability condition $\sigma_{D',s'} \in \mc{A}_{D',s'}$ from Notation \ref{not:adsk} there is a map 
\[ M_{\sigma_{D,s}}(1, D', s') \to M_0\]
which factors though the map 
\[ M_{\sigma_{D,s}}(1, D', s') \to M_{\sigma_{R'}}((1, D', s') + \Z R')\]
where $R'\subset \mc{R}$ is a subset whose span is negative definite and $\sigma_{R'} = \sigma_{D', s', R', \mb 0}$ in the language of Notation \ref{not:boundary_stab}. 
Given $v_1 = (1, D_1, s_1)$ and $v_2 = (1, D_2, s_2)$ both in $v_D + \Z \mc R$ let 
\[ Z(v_1, v_2) := M_{\sigma_{D_1, s_1}}(v_1) \times_{M_0} M_{\sigma_{D_2, s_2}}(v_2)\]
so that there is a convolution algebra structure on (what we will abusively refer to as )
\[H(Z) := \bigoplus_{v_1, v_2} H(Z(v_1, v_2)). \]

Let 
\begin{align*}
  \hat{\mf g}&:= \mf g[t^{\pm 1}] \oplus \Q c \oplus \Q d
\end{align*}
 denote the affine Lie algebra whose finite Dynkin diagram agrees with the dual graph of the (not necessarily connected) contractible collection $\mc{C}$. This algebra is generated by the affine Lie algebras $\tilde{\mf{g}}_{\mc C_j}$ for connected components $\mc C_j$ by requiring that all of the central elements agree and by adjoining $d = t\frac{d}{dt}$. Thus $\hat{\mf g}$ is generated by $c, d$ and $e_i, f_i, h_i$ for $i = 1, \ldots, \rho$ and an additional $e_{0_j}, f_{0_j}, h_{0_j}$ for each connected component $\mc C_j$. 

Let $P_S\subset H^*(S, \Z)$ be the set of Mukai vectors of the form $v = (1,D,1 + s)$ and consider the map $\lambda: P_S \to P_{\hat{\mf g}}$ defined by
\[\lambda : v \mapsto \Lambda_0  + s \delta - \langle v, c_\alpha \rangle c_i \]
where we identify $c_i$ for $i = 1, \ldots, \rho$ with roots for $\hat {\mf g}$ and $\Lambda_0$ is the fundamental weight dual to $c\in \hat{\mf g}.$

Finally recall from \cite{lusztig_quiver_1998} (see \cite[\S 2]{nakajima1998quiver}) the modified universal enveloping algebra, which allows us to deal easily with infinite unions of Steinberg correspondences. This is an algebra $\tilde{U}(\mf g)$ for a Kac-Moody (or BKM) algebra $\mf g$ generated by  $a_{\lambda}\in P_{\mf g}$ and also $e_i a_{\lambda}$ and $a_{\lambda} f_i $ for $e_i\in U^+(\mf g), f_i\in U^-(\mf g)$ such that under mild conditions which are satisfied in this paper, a representation $M$ of $\tilde{U}(\mf g)$ is the same as a representation of $U(\mf g)$ with a weight decomposition 
\[M = \bigoplus_{\lambda \in P_{\mf g}} M_\lambda\]
 where $a_\lambda$ acts as the projection 
$M \to M_{\lambda}$. 

\begin{prop}\label{prop:affine_corners} 
  For base Mukai vector $v_D = (1,D,s)$ there exists a unique algebra morphism 
\[ \Phi: \tilde U(\hat{\mf g}) \to H(Z)\]
such that 
\begin{align*}
a_{\lambda} &\mapsto \begin{cases} 
  [\Delta_{M_{\sigma_{D',s'}}(1, D', s')}] & \lambda = \lambda(1, D', s')\\
  0 & \text{else}
\end{cases}\\
e_{i} a_{\lambda} &\mapsto E_i \Phi(a_\lambda)\\
a_{\lambda} f_i &\mapsto \Phi(a_\lambda) F_i
\end{align*}
where $E_i, F_i \in \prod_{v_1, v_2} H(Z(v_1, v_2))$ are operators on $H(Z)$ for $i = 1, \ldots, \rho$ or $i = 0_j$ for some $j$ defined by convolution on the right with the formal sums 
\begin{align*}
E_i  &:= \sum_{\substack{v\in v_D + \Z \mc R\\ v^2 \ge -2} }[ \mf{P}_{c_{i}}^{\mathrm{K3}}(v)\big] \\
F_i  &:= \sum_{\substack{v\in v_D + \Z \mc R\\ v^2 \ge -2} } (-1)^{r_i( v)}\big [ \omega\big(\mf{P}_{c_{i}}^{\mathrm{K3}}(v)\big)\big]
\end{align*}
where $r_i(v) = \langle v, c_i \rangle -1 .$
\end{prop}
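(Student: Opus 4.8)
\textbf{Proof plan for Proposition \ref{prop:affine_corners}.}

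The plan is to reduce the statement to the finite-dimensional result of Theorem \ref{thm:finite_action} applied at each corner of the chamber $\mc{A}_{D,s}$, together with Nakajima-type compatibility arguments for the affine relations. First I would observe that for each subset $R \subset \Delta_+$ whose associated Mukai vectors span a negative definite sublattice of $\mc{R}$, the data $(v_{D'}, \{c_r\}_{r\in R}, \sigma_{D',s',R,\mb 0})$ is amenable to a local quiver Lie algebra action: this is exactly the content of the remark following Definition \ref{def:amenable}, which invokes Proposition \ref{prop:hilb_is_quiv_amenable} and the local structure of Theorem \ref{thm:local_quiver}. In particular, taking $R$ to be (the positive roots of) a single connected component $\mc C_j$ together with its affine node realizes the affine Dynkin diagram as the relevant quiver; and Proposition \ref{prop:stab_corner_arranged}, Lemma \ref{lem:walls} identify the wall-and-chamber structure of $\Stab^\dagger(S)$ near the corner with that of the affine ADE quiver variety for dimension vector $n\delta$ and framing $\mb{w_0}$. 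The operators $E_i, F_i$ for $i$ a finite node of a component $\mc C_j$ are then governed by Theorem \ref{thm:finite_action} applied to the finite semisimple Lie algebra of that component, while the operator $E_{0_j}$ corresponding to the affine node is governed by the same construction applied at the adjacent corner where the class $c_{0_j}=(0,-C_{\beta_j},1)$ has overlapping phase.

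The key steps, in order, are: (1) verify that $\mf{P}_{c_i}^{\mathrm K3}(v)$ is defined for all relevant $i$ and $v$, which follows from the amenability just discussed together with Definition \ref{defn:heckek3} and the proposition establishing that it is well-defined, smooth, irreducible and Lagrangian; (2) check that the formal sums $E_i, F_i$ are locally finite as operators on $H(Z)$ so that convolution on the right makes sense in $\prod_{v_1,v_2} H(Z(v_1,v_2))$ and descends to the modified enveloping algebra $\tilde U(\hat{\mf g})$ — this is the reason we pass to $\tilde U$ following \cite{lusztig_quiver_1998}, and it reduces to the fact that for fixed $v_1$ only finitely many $\mf{P}_{c_i}^{\mathrm K3}$ contribute a nonzero convolution, since the Hecke correspondence raises the dimension vector by a single simple root; (3) verify the relations \eqref{bkm1}–\eqref{bkm5} among the $E_i, F_i, H_i$ for all $\rho + d$ Chevalley-type generators, where $H_i := \sum_v -\langle v, c_i\rangle [\Delta_{M_{\sigma}(v)}]$; here relations among generators within a single component follow directly from Theorem \ref{thm:finite_action}, and relations between generators from distinct components $\mc C_j, \mc C_{j'}$ follow because the Mukai vectors are $\langle c_i, c_{i'}\rangle = 0$ and (as in the last paragraph of the proof of Theorem \ref{thm:finite_action}) the local quiver for a disjoint union of quivers is a product, so the correspondences are supported on complementary factors and commute; (4) upgrade to the affine relations — i.e. check $[e_i \otimes t^m, f_j\otimes t^n] = \delta_{ij}(h_i \otimes t^{m+n} + m\langle \cdot,\cdot\rangle \delta_{m+n,0} c)$ and the central term — which in this geometric realization amounts to the standard argument (cf. \cite[\S 9]{nakajima1998quiver}, \cite{nagao2009quiver}) that the loop-algebra grading is realized by the $s$-grading of the Mukai vector, combined with an integrability argument since each $H^*(M_{\sigma}(v))$ is finite dimensional; (5) establish the weight-space statement, namely that $a_\lambda$ acts as the projection onto the summand indexed by $v = (1,D',1+s')$ with $\lambda = \lambda(1,D',s')$, which follows from the explicit formula for $\lambda$ once we match $\langle v, c_i\rangle$ to the weight of the corresponding cohomology group under $H_i$ and the central charge $c$ to the rank-one condition; and (6) uniqueness, which is immediate since $\tilde U(\hat{\mf g})$ is generated by the $a_\lambda$, $e_i a_\lambda$, $a_\lambda f_i$.

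I expect step (4) — verifying the full affine commutation relations, especially the central extension term $m\langle x,y\rangle\delta_{m+n,0}c$ — to be the main obstacle. The subtlety is that the affine structure here is not visible on any single quiver variety: the loop parameter $t$ corresponds to translation in the $s$-component (equivalently, adding the class $(0,0,1) = c_{0_j} + c_{\beta_j}$ repeatedly, or changing the number of points $n$), and the affine node's Hecke correspondence $\mf P_{c_{0_j}}^{\mathrm K3}$ is glued from local quiver pieces at a \emph{different} corner than the finite-node correspondences, so one must carefully track that the local Ext-quiver descriptions at the various corners are mutually compatible along the edges of $\mc A_{D,s}$. Concretely, the argument will proceed by reducing, via Proposition \ref{prop:steinberg_properties}(iv) and the local-to-global dictionary of Proposition \ref{prop:hilb_is_quiv_amenable}, to the corresponding identity for the affine ADE quiver variety with framing $\mb{w_0}$, where it is Theorem \ref{thm:uea_action_nakajima} (Nakajima); the remaining work is to check that the gluing in Definition \ref{defn:heckek3} respects the convolution product globally, which follows from Proposition \ref{prop:hecke_from_1_wall} since each correspondence is determined by its fibers over the two factors and these are computed locally. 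The central charge $c$ then acts by the scalar $1$ because we work with rank-one objects, matching $\Lambda_0$ being the fundamental weight dual to $c$, exactly as in the level-one Frenkel–Kac picture of Section \ref{sec:VOA}.
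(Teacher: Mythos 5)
Your overall strategy—define the correspondences via Definition \ref{defn:heckek3}, pass to $\tilde U(\hat{\mf g})$ for local finiteness, check relations locally through the quiver dictionary of Proposition \ref{prop:hilb_is_quiv_amenable}, and get commutativity of distinct components from the product structure of Lemma \ref{lem:stratum_open_cover}—is essentially the paper's proof, which consists of exactly two moves: run the argument of Theorem \ref{thm:finite_action} verbatim for each connected component $\mc C_j$ (with the modified enveloping algebra replacing the finite sum $[\Delta]=\sum_M[\Delta_M]$), and then check that the algebras attached to distinct components commute via the local product decomposition. However, your step (4) misidentifies what has to be proved. The algebra $\hat{\mf g}$ enters through its Chevalley--Serre presentation on the affine Dynkin diagram: the generators are $a_\lambda$, $e_ia_\lambda$, $a_\lambda f_i$ with $i$ ranging over \emph{all} nodes including the affine ones, so the only relations to verify are \eqref{bkm1}--\eqref{bkm5} (plus the idempotent compatibilities), exactly as in the finite case; the loop-algebra relations $[e_i\otimes t^m, f_j\otimes t^n]$ and the central term are formal consequences of that presentation and never require a separate geometric verification. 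The level ($c$ acting by $1$) and the degree operator $d$ are read off from the weights $\lambda(1,D',s')$ carried by the idempotents, not checked by a correspondence computation. So the step you flag as "the main obstacle" is vacuous in the paper's route, and your proposed remedy (a global reduction to Theorem \ref{thm:uea_action_nakajima} with a delicate compatibility of Ext-quiver charts along the edges of $\mc A_{D,s}$) is work you do not need to do.

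There is also a small overreach in your step (3): you say relations among generators "within a single component follow directly from Theorem \ref{thm:finite_action}," but that theorem requires the spherical classes to span a \emph{negative definite} lattice, and the full set of affine simple classes of one component does not (their span contains $(0,0,1)=c_{0_j}+c_{\beta_j}$, of square zero), so there is no single corner $\sigma_0$ at which all of them have overlapping phase with $v$. The paper handles this not by invoking the theorem as a black box but by rerunning its argument over the base $M_0=\bigcup_n Sym^n(S_{\mc C})$, where Proposition \ref{prop:hilb_is_quiv_amenable} gives simultaneous local models by affine ADE quiver varieties with framing $\mb{w_0}$, so that the affine-node correspondence is on exactly the same footing as the finite-node ones and the local computations (relation \eqref{bkm3}, with \eqref{bkm4}--\eqref{bkm5} from integrability) are Nakajima's. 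Your own remarks about rank-two corners and about $\mf P^{\mathrm{K3}}_{c_i}$ being determined by its fibers (Proposition \ref{prop:hecke_from_1_wall}) would let you patch this, so I would call it a fixable imprecision rather than a fatal gap; but as written the proposal both overstates what Theorem \ref{thm:finite_action} delivers and invents an extra affine-relations step that the actual argument never needs.
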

\begin{proof}
Because of the local quiver description of Proposition \ref{prop:hilb_is_quiv_amenable}, the exact same argument as Theorem \ref{thm:finite_action} we get, for every connected component $\mc C_j$, a morphism $\hat{\mf g}_{\mc C_j} \to H(Z)$ of the stated form with the modification that instead of using a decomposition $[\Delta] = \sum_M [\Delta_M]$ we use the modified universal enveloping algebra. 

Since $c$ acts by $1$ in all of these representations and the action of $d$ coincide, it remains to see that the factors $\tilde{g}_{\mc C_j}$ and $\tilde{g}_{\mc C_j'}$
commute with each other for $j \neq j'$. But this follows from the argument in Lemma 
\ref{lem:stratum_open_cover} which implies that if $c_1$ and $c_2$ are classes supported in different components of $\mc{C}$ then the relevant stratum of $M_0$ is covered by open sets $U$ such that the inverse image of $U$ in $M_{\sigma_{D',s'}}(v)$ for any $v$ is a product
\[ U_1 \times U_2 \times U_3 \]
such that the Hecke correspondences $\mf{P}_{c_1}^{\mathrm{K3}}(v)$ are the identity in the factor $U_2\times U_3$ while the correspondences $\mf{P}_{c_2}^{\mathrm{K3}}(v')$
act as the identity in $U_1 \times U_3$. Thus they commute with each other. 
\end{proof}
\begin{cor}\label{cor:affine_on_VOA}
  By taking a set of representatives $\{ 1, D_i, s_i\}$ for the action of $\Z \mc{R}$ by addition on $P_S  = \{ (1, D, s) \in H^*(S, \Z)\}$, the previous proposition gives a representation of 
  $\tilde{U}(\hat{\mf g})$, and hence of $U(\hat{\mf g})$ on 
  \[\bigoplus_{(1,D,s) \in P_S} H^*(M_{\sigma_{D,s}}(1, D, s)).\]
\end{cor}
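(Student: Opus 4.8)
The plan is to split $P_S$ into orbits under the translation action of the rank-$(\rho+1)$ lattice $\Z\mc{R}=\Z\mc{C}+\Z(0,0,1)$ and apply Proposition~\ref{prop:affine_corners} to each orbit separately, then take a direct sum. First I would fix, once and for all and for every primitive rank-one Mukai vector $(1,D,s)\in P_S$, a generic stability condition $\sigma_{D,s}\in\mc{A}_{D,s}$ as in Notation~\ref{not:adsk}, enlarging the data $N,\xi,V$ defining the relevant limit $U_{\mc{C}}$ as needed for each particular Chern character (which is permitted since the excerpt allows these bounds to depend on $n$ and on $D$). Since $\Z\mc{R}$ consists of rank-zero classes, addition of an element of $\Z\mc{R}$ carries $P_S$ to itself and acts by translations on the affine lattice $P_S$; hence the cosets partition $P_S$, and I pick one base vector $(1,D_i,s_i)$ per coset, indexed by $i\in I$.

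Next I would observe that every datum entering Proposition~\ref{prop:affine_corners} for a base vector $v_D$ — the index set $\{v\in v_D+\Z\mc{R}\}$, the Hecke-type correspondences $\mf{P}^{\mathrm{K3}}_{c_i}(v)$ and $\mf{P}^{\mathrm{K3}}_{c_{0_j}}(v)$, the operators $E_i,F_i,h_i,c,d$, the idempotents $a_\lambda$, and the weight map $\lambda\colon P_S\to P_{\hat{\mf g}}$ — depends only on the coset $v_D+\Z\mc{R}$, not on the chosen base point inside it. Therefore, composing the algebra morphism $\Phi_i\colon\widetilde{U}(\hat{\mf g})\to H(Z_i)$ supplied by the proposition with the convolution action of $H(Z_i)$ on $H^*\!\bigl(\bigsqcup_{v}M_{\sigma_{D,s}}(v)\bigr)$ (standard for Steinberg-type algebras; cf. Proposition~\ref{prop:steinberg_properties}(iii)), we obtain a $\widetilde{U}(\hat{\mf g})$-module structure on
\[
N_i:=\bigoplus_{v=(1,D,s)\in v_{D_i}+\Z\mc{R}}H^*\bigl(M_{\sigma_{D,s}}(v)\bigr),
\]
where summands with $v^2<-2$ vanish and so may be included harmlessly. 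Because the correspondences $\mf{P}^{\mathrm{K3}}_{c_i},\mf{P}^{\mathrm{K3}}_{c_{0_j}}$ shift the Mukai vector by an element of $\Z\mc{R}$ and the $a_\lambda$ act by diagonals on fixed moduli spaces, none of these operators mixes distinct cosets; hence $\bigoplus_{i\in I}N_i$ carries the evident $\widetilde{U}(\hat{\mf g})$-module structure restricting to $N_i$ on each summand. Since the cosets partition $P_S$, there is a canonical identification $\bigoplus_{i\in I}N_i=\bigoplus_{(1,D,s)\in P_S}H^*(M_{\sigma_{D,s}}(1,D,s))$, independent of the chosen representatives precisely because the per-coset action was already base-point independent.

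Finally, to upgrade from $\widetilde{U}(\hat{\mf g})$ to $U(\hat{\mf g})$ I would invoke the dictionary recalled above (following \cite{lusztig_quiver_1998}, \cite[\S 2]{nakajima1998quiver}): it suffices to produce a $P_{\hat{\mf g}}$-grading of $M:=\bigoplus_{(1,D,s)\in P_S}H^*(M_{\sigma_{D,s}}(1,D,s))$ on which $a_\mu$ acts as the projection to the $\mu$-graded piece. By the formula for $a_\lambda$ in Proposition~\ref{prop:affine_corners}, $a_\mu$ acts as the identity on $H^*(M_{\sigma_{D,s}}(1,D,s))$ when $\mu=\lambda(1,D,s)$ and as zero on all other summands, so
\[
M_\mu:=\bigoplus_{\substack{(1,D,s)\in P_S\\ \lambda(1,D,s)=\mu}}H^*\bigl(M_{\sigma_{D,s}}(1,D,s)\bigr)
\]
is the required weight decomposition, each vector of $M$ lies in a single $M_\mu$, and $\sum_\mu a_\mu$ acts as the identity in the usual locally-finite sense. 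This yields the $U(\hat{\mf g})$-module structure.

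The bulk of this is bookkeeping; the one point demanding genuine care — the main obstacle — is verifying that the construction of Proposition~\ref{prop:affine_corners} truly factors through the coset $v_D+\Z\mc{R}$ and that the Hecke correspondences defining $E_i,F_i$ never connect moduli spaces lying in different cosets, since only then do the per-orbit actions assemble into a direct sum without cross-terms and does the passage $\widetilde{U}(\hat{\mf g})\rightsquigarrow U(\hat{\mf g})$ become purely formal.
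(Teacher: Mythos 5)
Your proposal is correct and follows essentially the same route the paper intends: the corollary is an immediate consequence of Proposition \ref{prop:affine_corners}, obtained by running that proposition once per $\Z\mc{R}$-coset of $P_S$, noting that the correspondences $E_i, F_i$ and the idempotents $a_\lambda$ never mix cosets, and then passing from $\tilde{U}(\hat{\mf g})$ to $U(\hat{\mf g})$ via the weight decomposition in which $a_\mu$ acts as projection onto the corresponding summands. Your extra checks (coset-independence of the data and the harmless inclusion of empty moduli spaces with $v^2<-2$) are consistent with the paper's setup and do not change the argument.
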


\subsection{Compatibility of corner actions}

The next section will prove the main theorem \ref{thm:main} of the paper.
The argument essentially proceeds by combining the previously constructed affine Lie algebra actions and thus rests on the following compatibility lemma. For two stability conditions $\sigma, \sigma'$  and primitive Mukai vector $v$ let
\[F_{\sigma, \sigma'}(v) \subset M_{\sigma}(v)  \times M_{\sigma'}(v) \]
denote the cycle which induces the isomorphism $H^*(M_{\sigma}(v)) \simeq H^*(M_{\sigma'}(v))$ via convolution. 

Given a Lie algebra action for a stability condition $\sigma$, we can induce Lie algebra actions for moduli spaces for another generic stability condition $\sigma'$ by conjugating by a flop relating $M_{\sigma}(v)$ and $M_{\sigma'}(v)$. The lemma says that if there are multiple choices of $\sigma$ where the algebra action is defined, this procedure induces the same action for stability condition $\sigma'$ regardless of $\sigma$.   

\begin{lem}\label{lem:flop_chain}
Let $v_1 = (1, D, s)$ and $v_2 = (1, D+ C, s)$ denote two Mukai vectors which differ by the class of $\mc{O}_C(-1)$ for an irreducible $-2$ curve $C$.
 Let $\sigma_{Hilb}$ be a stability condition in the Gieseker chamber for both $v_1$ and $v_2$. Let $\mc{C}$ and $\mc C'$ denote contractible collections both containing $C$ and stability conditions $\sigma_{\mc C}$, $\sigma_{\mc C'}$ which lie in the chambers $\mc {A}_{D, s}$ from Notation \ref{not:adsk} for the contractible collections $\mc C$ and $\mc{C}'$ respectively.
 \begin{enumerate}[(i)]
  \item Let $\mf{P}_{\mc C}^{\mathrm{K3}}(v)$ and $\mf{P}_{\mc C'}^{\mathrm{K3}}(v)$ denote the Hecke correspondence for class $c = (0, C, 0)$  between moduli spaces for stability conditions $\sigma_{\mc C}$ and $\sigma_{\mc C'}$ respectively.
   Then the cycles 
\[[F_{\sigma_{Hilb}, \sigma_{\mc C}} (v_1)]\circ [\mf{P}_{\mc C}^{\mathrm{K3}}(v_1)] \circ [F_{\sigma_{\mc C} , \sigma_{Hilb}}(v_2)] \]
and
\[[F_{\sigma_{Hilb}, \sigma_{\mc C'}} (v_1)]\circ [\mf{P}_{\mc C'}^{\mathrm{K3}}(v_1)] \circ [F_{\sigma_{\mc C'} , \sigma_{Hilb}}(v_2)] \]
agree and hence induce the same map 
\[H^*(M_{\sigma_{Hilb}}(v_1)) \to H^*(M_{\sigma_{Hilb}}(v_2)).\]
\item More generally, consider contractible collections $\mc C \subset \mc C'$, the class $c = (0, -C_{\max}, 1)$ with $C_{\max}$ the curve class corresponding to the highest root in $\mc{C}$,  with $v_1 = (1, D, s)$ and $v_2 = v_1 + c$. Then there is a corner of $\mc{A}_{D,s}$  for the collection $\mc{C}'$  and $\sigma_0$ on this corner such that for some $\sigma_w$ in a chamber (not necessarily $\mc{A}_{D,s}$)adjacent to $\sigma_0$ such that the wall where the phases of $v_1$ overlaps with that of $c$ is a wall of this chamber. We have an equality of cycles 
\begin{align*} &[F_{\sigma_{Hilb}, \sigma_{\mc C}} (v_1)]\circ [\mf{P}_{\mc C}^{\mathrm{K3}}(v_1)] \circ [F_{\sigma_{\mc C} , \sigma_{Hilb}}(v_2)] \\
&= [F_{\sigma_{Hilb}, \sigma_{\mc C}} (v_1)]\circ [F_{ \sigma_{\mc C}, \sigma_w} (v_1)] \circ [\mf{P}_{\mc C'}^{\mathrm{K3}}(v_1)] \circ [F_{\sigma_{w}, \sigma_{\mc C}} (v_2)]\circ [F_{\sigma_{\mc C} , \sigma_{Hilb}}(v_2)].
\end{align*}
\end{enumerate}
\end{lem}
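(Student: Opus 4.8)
The plan is to prove both parts by reducing everything to a local statement about quiver varieties via the amenability results established earlier, and then invoking the compatibility of the quiver Lie algebra actions across flops (Proposition \ref{prop:flop_intertwines} and its D/E analogues implicit in Proposition \ref{prop:hilb_is_quiv_amenable}). The key observation is that all the cycles appearing in the statement -- the flop correspondences $F_{\sigma,\sigma'}(v)$ and the glued Hecke correspondences $\mf{P}^{\mathrm{K3}}_{\mc C}(v)$ -- are, over the common open cover of $M_0 = \bigcup_n Sym^n(S_{\mc C})$ or $Sym^n(S_{\mc C'})$ furnished by Proposition \ref{prop:hilb_is_quiv_amenable}, products of quiver-variety correspondences with diagonals on trivial factors. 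So the equality of compositions of cycles can be checked locally, where it becomes a statement purely about affine ADE quiver varieties.

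First I would set up the common local picture: both $\sigma_{\mc C}$ and $\sigma_{\mc C'}$ lie in chambers $\mc A_{D,s}$ over the maps to $Sym^n(S_{\mc C})$ and $Sym^n(S_{\mc C'})$ respectively, but since $C \in \mc C \cap \mc C'$, both contractions factor through the intermediate contraction $\pi_{\sigma, \sigma_0}$ onto $M_{\sigma_0}(v_1 + \Z\{c\})$ where $\sigma_0$ is a boundary stability condition isolating only the wall where the phase of $c = (0,C,0)$ overlaps that of $v_1$ (as in Notation \ref{not:boundary_stab} with $R = \{[C]\}$). By Definition \ref{defn:heckek3} and the uniqueness in Proposition \ref{prop:hecke_from_1_wall}, the correspondence $\mf{P}^{\mathrm{K3}}_{c}(v_1)$ -- defined using \emph{either} ambient contractible collection -- is the unique irreducible component of $M_{\sigma}(v_1)\times_{M_{\sigma_0}(v_1+\Z c)} M_{\sigma}(v_2)$ with the prescribed projective-space fibers, and this fiber product and its distinguished component depend only on the single wall $W_{v_1,c}$ and the adjacent chamber, not on which larger collection $\mc C$ or $\mc C'$ we embedded that wall into. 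Thus on the level of the genuinely-local (over $M_{\sigma_0}$) object, $\mf{P}^{\mathrm{K3}}_{\mc C}(v_1) = \mf{P}^{\mathrm{K3}}_{\mc C'}(v_1)$ already as cycles in the fiber product over $M_{\sigma_0}(v_1+\Z c)$.

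Then for part (i): the difference between the two composite cycles is that we transport via a flop chain $M_{\sigma_{Hilb}} \dashrightarrow M_{\sigma_{\mc C}} \dashrightarrow M_{\sigma_{Hilb}}$ versus $M_{\sigma_{Hilb}} \dashrightarrow M_{\sigma_{\mc C'}} \dashrightarrow M_{\sigma_{Hilb}}$; since the correspondences $F$ are induced by birational maps which are all isomorphisms in codimension one and all lie over $M_{\sigma_0}(v_1 + \Z c)$ (because they are compositions of flops over $Sym^n$ which refine the map to $M_{\sigma_0}$), the two composites both compute convolution-transport of the \emph{same} local Hecke correspondence, and associativity of convolution (Proposition \ref{prop:steinberg_properties}) plus the cocycle relation $F_{\sigma,\sigma''} = F_{\sigma,\sigma'}\circ F_{\sigma',\sigma''}$ for flops in a fixed birational class collapses both to $F_{\sigma_{Hilb},\sigma_?}(v_1)\circ \mf{P}^{\mathrm{K3}}_{c}(v_1)\circ F_{\sigma_?,\sigma_{Hilb}}(v_2)$ where $\sigma_?$ is any stability condition on the chamber adjacent to the wall $W_{v_1,c}$; this is independent of the choice of $\mc C$ vs $\mc C'$. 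Part (ii) is the same argument with $c = (0,-C_{\max},1)$: here $\mf{P}^{\mathrm{K3}}_{\mc C}(v_1)$ is built over the contraction involving the affine node of $\mc C$, while $\mf{P}^{\mathrm{K3}}_{\mc C'}(v_1)$ is built over a deeper corner (since $\mc C \subsetneq \mc C'$ means the affine root of $\mc C$ is an honest finite root $C_{\max}$ for the larger affine system of $\mc C'$), and the extra flop $F_{\sigma_{\mc C},\sigma_w}$ is precisely what moves from the chamber over $Sym^n(S_{\mc C})$ into a chamber $\sigma_w$ adjacent to $\sigma_0$ where that wall is exposed -- exactly the content of Lemma \ref{lem:flop_chain}(ii). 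I expect the main obstacle to be \emph{bookkeeping of which boundary stability condition $\sigma_0$ is the correct common base} in part (ii): one must verify that the corner of $\mc A_{D,s}$ for $\mc C'$ that exposes the $W_{v_1,c}$-wall actually satisfies the hypotheses of Proposition \ref{prop:corner_jh}/Lemma \ref{lem:singular_stratum_corner} (no spurious spherical classes, the Jordan-Hölder factors behave), so that $\mf{P}^{\mathrm{K3}}_{\mc C'}(v_1)$ is well-defined there and agrees with the local Hecke correspondence; this requires checking the limit conditions \eqref{eq:stab_limit_lv}--\eqref{eq:stab_limit_small_exc} are compatible with the enlarged collection, which is where the choice of constants $N, V$ defining $U_{\mc C'}$ must be taken large enough -- a routine but delicate point already handled in the proof of Lemma \ref{lem:sigma_DI_exists}.
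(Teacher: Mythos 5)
There is a genuine gap at the heart of your argument. You claim that $\mf{P}^{\mathrm{K3}}_{\mc C}(v_1)$ and $\mf{P}^{\mathrm{K3}}_{\mc C'}(v_1)$ are ``already equal as cycles in the fiber product over $M_{\sigma_0}(v_1+\Z c)$'' because the wall $W_{v_1,c}$ and Proposition \ref{prop:hecke_from_1_wall} determine the component uniquely. But these two correspondences do not live in the same ambient space: $\sigma_{\mc C}$ and $\sigma_{\mc C'}$ lie in genuinely different chambers of $\Stab^\dagger(S)$ (the regions $U_{\mc C,D}$ and $U_{\mc C',D}$ are separated by walls attached to the curves in the symmetric difference of $\mc C$ and $\mc C'$), so $M_{\sigma_{\mc C}}(v_i)$ and $M_{\sigma_{\mc C'}}(v_i)$ are different birational models, and the only way to compare the two Hecke cycles is to transport one by the flop correspondences $F_{\sigma_{\mc C},\sigma_{\mc C'}}(v_i)$. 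Whether that transport yields exactly the Hecke correspondence on the other side --- with no correction terms supported over the deeper strata of $M_{\sigma_0}(v_1+\Z c)$, where the convolution $F\circ \mf{P}\circ F$ can acquire excess components --- is precisely the content of the lemma, so your step (1) assumes what must be proved. The appeal to a ``cocycle relation'' $F_{\sigma,\sigma''}=F_{\sigma,\sigma'}\circ F_{\sigma',\sigma''}$ has the same problem: compositions of graph-type correspondences between birational models are not automatically functorial over singular loci, and this also needs the local control you have not supplied.

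The paper closes this gap by an explicitly step-by-step argument: one chooses a path of stability conditions hugging the wall $W_C$ and passing only through codimension-one walls, giving a chain of chambers $\mc A_{D,s,\mb 0}\to\mc A_{D,s,\mb j_1}\to\cdots\to\mc A_{D,s,\mb 0}$ interpolating between the $\mc C$-side and the $\mc C'$-side. At each single wall-crossing the stability condition $\sigma_0$ lying generically on both $W_C$ and the crossed wall makes $(v_1,\mc S,\sigma_0)$ amenable to a local quiver Lie algebra action of rank two (type $A_2$ or $A_1\times A_1$, with $\mc S$ consisting of $c$ and one other spherical class), and the local intertwining of that finite-dimensional action under the single flop gives the one-step identity $[F]\circ[\mf P^{\mathrm{K3}}_{c}]\circ[F]=[\mf P^{\mathrm{K3}}_{c}]$, because the difference of the two cycles is locally zero. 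Chaining these identities along the path gives (i), and (ii) is the same argument after one preliminary conjugation, with the path now hugging the wall where the phase of $v_1$ overlaps that of $(0,-C_{\max},1)$. Your instinct to reduce to local quiver statements and your remark about choosing the constants $N,V$ for the enlarged collection in (ii) are both on target, but without the path decomposition and the rank-two corner analysis the transport statement you need is unsupported.
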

\begin{proof}
First we prove (i). By Proposition \ref{prop:stab_corner_arranged} in the positive cone for $S^{[n]}$ and its analogue Proposition \ref{prop:wall_chamber_vd} which defines a the wall and chamber structure on  subset $U_{\mc C, D}\subset \Stab^\dagger(S)$  for $v_1$ and $v_2$, there is a continuous path $\sigma_t\in \Stab^{\dagger}$ for $t \in [0,1]$ such that $\sigma_t \in U_{\mc C, D}\cup U_{\mc C', D}$  (c.f. also Notation \ref{not:mca} and subsequent paragraph for the definition of $U_{\mc C, D}$).  Further, this path can be chosen so that it only passes through codimension 1 walls, and lies arbitrarily close to the wall $W_C$ where the phase of $\mc{O}_C(-1)$ overlaps with that of an object of Mukai vector
$v_1$ and $v_2$. In other words, we pass through the chambers of Notation \ref{not:adsk} through codimension 1 walls
\begin{align}\label{eq:sigmatpath}
  ~~~\mc{A}_{D,s, \mb 0} \to \mc{A}_{D,s, \mb j_1} \to \cdots \to &\mc{A}_{D,s, \mb j_n} \to  \\
  \to \mc{A}_{D,s, \mb k_m} &\to \cdots \to \mc{A}_{D,s,\mb k_1 }\to  \mc{A}_{D,s,\mb 0}~~~
\end{align}
where the first line refers to chambers $\mc A$ for contractible collection $\mc{C}$ and the second for $\mc C'$, and each vector $\mb j$ or $\mb k$ defining an alcove has component $0$ corresponding to the class $C$.  For convenience denote $\mb j_0 = \mb 0$ and $\mb k_0 = \mb 0$. Let $\sigma_{\mb j_i}$ denote a class in $\mc{A}_{D,s, \mb j_i}$ and analogously define $\sigma_{\mb k_i}$. 

For $\sigma_0$ generic subject to the condition that it lies on the wall $W_C$ and also on the wall $W_{\mb j_i}$ between $\mc{A}_{D,s, \mb j_i}$ and $\mc{A}_{D,s, \mb j_{i+1}}$ or analogously on both $W_C$ and $W_{\mb k_i}$, then  the collection $(v_1, \mc{S}, \sigma_0)$ is amenable to a local quiver Lie algebra action for $\mf g$ of type $A_2$ or of $A_1\times A_1$ with $\mc{S}$ consisting of $c$ and some other spherical class.  
In either case we have (for $\mb j$, and analogously for $\mb k$ ) an equality of cycles 
\[[F_{\sigma_{\mb j_i}, \sigma_{\mb j_{i+1}}} (v_1)]\circ [\mf{P}_{c,\sigma_{\mb j_{i+1}} }^{\mathrm{K3}}(v_1)] \circ [F_{\sigma_{\mb j_{i+1}} , \sigma_{\mb j_i}}(v_2)] = [\mf{P}_{c, \sigma_{\mb j_{i}}}^{\mathrm{K3}}(v_1)]  \]
 where $\mf{P}_{c, \sigma}^{\mathrm {K3}}(v)$ is the cycle of Definition \ref{defn:heckek3} for given generic condition $\sigma$ and class $s = c$. This follows  from the fact that locally, the geometric action of $\mf{g}$ is intertwined by the flops between different chambers, and so the difference between these cycles is represented by a cycle which is locally the zero cycle. 

By chaining the string of equalities of these cycles the result follows as the stability condition passes through  the chambers in \eqref{eq:sigmatpath} after conjugation by a flop  into the Gieseker chamber, i.e. 
\begin{align*}
  &[F_{\sigma_{Hilb}, \sigma_{\mc C}} (v_1)]\circ [\mf{P}_{\mc C}^{\mathrm{K3}}(v_1)] \circ [F_{\sigma_{\mc C} , \sigma_{Hilb}}(v_2)]  \\
 &= [F_{\sigma_{Hilb}, \sigma_{\mc C}} (v_1)]\circ [F_{\sigma_{\mc C}, \sigma_{\mc C'}} (v_1)] \circ [\mf{P}_{\mc C'}^{\mathrm{K3}}(v_1)]\circ [F_{\sigma_{\mc C'}, \sigma_{\mc C}} (v_2)]\circ [F_{\sigma_{\mc C} , \sigma_{Hilb}}(v_2)] \\
&= [F_{\sigma_{Hilb}, \sigma_{\mc C'}} (v_1)]\circ [\mf{P}_{\mc C'}^{\mathrm{K3}}(v_1)] \circ [F_{\sigma_{\mc C'} , \sigma_{Hilb}}(v_2)]. 
\end{align*}

The proof of (ii) is identical except we first conjugate the Hecke correspondence by a flop to write it as a correspondence 
\[ [F_{ \sigma_{\mc C}, \sigma_w} (v_1)] \circ [\mf{P}_{\mc C'}^{\mathrm{K3}}(v_1)] \circ [F_{\sigma_{w}, \sigma_{\mc C}} (v_2)] \]
between moduli spaces for $\sigma\in \mc{A}_{D, s}$, and then instead of choosing a path of stability conditions close to the wall where the phase of $v_1$ overlaps with $(0,C,0)$ we choose a path arbitrarily close to the wall where the phase of $v_1$ overlaps with $(0, -C_{\max}, 1)$. 
\end{proof}

\subsection{Main theorem}

Let $\mf{g}$ denote the Kac-Moody algebra with Cartan matrix the negative of the Gram matrix for the pairing on $\NS(S)$, where we reiterate that K3 surface $S$ such that $\bar{\NE}(S)$ is the cone spanned irreducible $-2$ curves, and any pair of these either don't intersect or intersect transversely at a single point.

We arrive at the main theorem which gives a representation of the algebra 
\[\hat{\mf g}(\NS(S)) = \mf{g}[t^{\pm 1}] \oplus \Q c \oplus \Q d\]
of Section \ref{sec:VOA} via geometric correspondences defined by variation of Bridgeland stability conditions. 

Now recall the space from \eqref{eq:rk_1_torsion_free_V} which is the direct sum of the cohomologies of all rank 1 torsion-free sheaves denoted
\begin{equation*}
V = \bigoplus_{\substack{D\in \NS(S)\\s\in \Z}} H^*(M(1, D, s)) = \bigoplus_{\substack{D\in \NS(S)\\s\in \Z}} H^*(M_{\sigma_{Hilb}(D,s)}(1, D, s))
\end{equation*}
where $\sigma_{Hilb}(D,s)$ is a stability condition in the Gieseker chamber for Mukai vector $(1, D, S)$. We combine the affine Lie algebra actions which Corollary \ref{cor:affine_on_VOA} imply act on $V$. Denote the triangular decomposition of $\mf{g}$ as $\mf{g} = \mf{g}_+ \oplus \mf h \oplus \mf{g}_-$ chosen so that positive real simple roots correspond to effective $-2$ classes. Recall the modified universal enveloping algebra $\tilde{U}(\hat {\mf{g}}(\NS(S)))$ from the paragraph before \ref{prop:affine_corners}, based on the decomposition
\begin{align*}
U_+(\hat{\mf g}(\NS(S))) &:=  U( t\mf {g} [t] \oplus \mf{g}_+ )\\
U_-(\hat{\mf g}(\NS(S))) &:=  U( t^{-1} \mf {g} [t^{-1}] \oplus \mf{g}_- ).
\end{align*}
This $\tilde{U}(\hat {\mf{g}}(\NS(S)))$ is generated by elements
\[ a_{\lambda}, e a_{\lambda}, a_{\lambda} f ~~ \text{ for } e \in U_+, f\in U_-, \lambda \in P\]
where $P = P_{\mf g} \oplus \Z \Lambda_0 \oplus \Z \delta$ is the weight lattice of $\hat{\mf g}(\NS(S))$ with $\Lambda_0(c) = \delta(d) = 1$ and $\Lambda_0(d) = \delta(c) = 0$ as usual. 

For the following, write $[\alpha]$ for the operator of convolution with $[\alpha]$ and write $[\alpha][\beta]$ for the operator $[\beta]\circ[\alpha](t)$ when $[\alpha]$ and $[\beta]$ are cohomology classes for which the convolution is well defined. Also write $[F_{\sigma, \sigma'}(v)]$ for the class of the cycle inducing the birational transformation between $M_{\sigma}(v)$ and $F_{\sigma'}(v)$. Let $\sigma_{Hilb}(v)$ denote a stability condition in the Gieseker chamber for Mukai vector $v = (1, D, s)$. 
\begin{thm}\label{thm:main}
There is an action of $\tilde{U}(\hat{\mf{g}}(\NS(S)))$ on $V$ generated by elements which are the conjugation of Steinberg correspondences by birational transformations induced by variation of Bridgeland stability conditions. Let $\lambda(v) = \lambda(1, D,s) \in P$ denote the weight corresponding to a specific moduli space so that $a_{\lambda(1,D,s)}$ acts by projection onto $H^*(M_{\sigma_{Hilb}(v)}(v))$ for $ v = (1,D,s)$. This action can be chosen so that
\begin{enumerate}[(i)]
\item For any negative definite (i.e. contractible) collection of irreducible $-2$ curves corresponding to an affine Lie algebra action of $\hat{\mf g}$ on $V'$ from Corollary \ref{cor:affine_on_VOA} where 
\[ V' := \bigoplus_{(1,D,s) \in P_S} H^*(M_{\sigma_{D,s}}(1, D, s)) \]
if $\hat{\mf g}\subset \hat{\mf g}(\NS(S))$ is generated by $c, d$ and $e_i, f_i, h_i$ for $i = 1, \ldots, \rho$ and $i= 0_1, \ldots, 0_m$ then the action of $e_ia_{\lambda}$ and $a_{\lambda}f_i$ for $\lambda = \lambda(v)$ are given by 
\begin{equation}\label{eq:mainE}
   [F_{ \sigma_{D,s},\sigma_{Hilb}(v + c_i)}(v + c_i)] E_i [F_{\sigma_{Hilb}(v), \sigma_{D,s}}(v)] a_{\lambda}
\end{equation}
and 
\begin{equation}\label{eq:mainF}
a_{\lambda}[F_{ \sigma_{D,s},\sigma_{Hilb}(v)}(v)] F_i [F_{\sigma_{Hilb}(v-c_i), \sigma_{D,s}}(v - c_i)] 
\end{equation}
respectively, where $E_i$ and $F_i$ are defined as in Proposition \ref{prop:affine_corners} and $c_i$ is the corresponding $-2$ class.
\item Write $V = V_{alg} \otimes V_T$ where 
\begin{align*}
  V_{alg} &= \bigoplus_{\substack{D\in \NS(S)\\s\in \Z}} H^*_{alg}(M_{\sigma_{Hilb}(D,s)}(1, D, s))\\
  V_{T} &= \bigoplus_{\substack{D\in \NS(S)\\s\in \Z}} H^*_{alg}(M_{\sigma_{Hilb}(D,s)}(1, D, s))^\perp.
\end{align*}
The action of $\hat{\mf g}(\NS(S))$ on $V$ coincides with the action given by Fourier coefficients of vertex operators from Proposition \ref{prop:main_from_rep_theory} for some choice of cocycle $\epsilon$ in \eqref{eq:VOA} defining the VOA structure on $V_{alg}$ acting on the first tensor factor of $V = V_{alg} \otimes V_T$. 
\end{enumerate}
\end{thm}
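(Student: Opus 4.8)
The plan is to assemble the global representation from the local affine pieces supplied by Corollary \ref{cor:affine_on_VOA}, using the compatibility result Lemma \ref{lem:flop_chain} to check that these pieces glue, and then to compare the resulting action with the vertex-operator action of Proposition \ref{prop:main_from_rep_theory}. First I would fix the generators: for each irreducible $-2$ curve $C_i$, pick a contractible collection $\mc C \ni C_i$ (possible since each individual curve is contractible) and let $e_i a_\lambda$, $a_\lambda f_i$ act by the operators \eqref{eq:mainE}, \eqref{eq:mainF} conjugated from the affine action of Proposition \ref{prop:affine_corners} into the Gieseker chamber. For the Cartan generators $h_i$ (and $c$, $d$), take the scalar operators on each weight space $H^*(M_{\sigma_{Hilb}(v)}(v))$ dictated by the weight $\lambda(v)$; these are independent of any choice. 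The point of Lemma \ref{lem:flop_chain}(i) is exactly that the conjugated Hecke correspondence $[F_{\sigma_{Hilb},\sigma_{\mc C}}(v_1)]\circ[\mf P^{\mathrm{K3}}_{\mc C}(v_1)]\circ[F_{\sigma_{\mc C},\sigma_{Hilb}}(v_2)]$ does not depend on which contractible collection $\mc C$ containing $C_i$ was used, so $e_i$ is well defined; part (ii) of that lemma handles the affine node $e_{0_j}$ (the class $(0,-C_{\max},1)$) and shows it agrees with the element one would get by enlarging the collection.

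Next I would verify the defining relations \eqref{bkm1}--\eqref{bkm5} of $\mf g(\NS(S))$ together with the loop-algebra relation \eqref{eq:affine}. For a pair of curves $\alpha_i,\alpha_j$ the relevant relations live inside the subalgebra they generate, which is either $\widehat{A_1\times A_1}$ (disjoint curves) or $\widehat{A_2}$ (curves meeting transversally at a point) or $\widehat{A_1}$-type when $i=j$; in each of these finitely many cases the relation is a statement about a single contractible collection of rank $\le 2$, and Proposition \ref{prop:affine_corners} (proved via Theorem \ref{thm:finite_action} and the local quiver description Proposition \ref{prop:hilb_is_quiv_amenable}) already gives the corresponding affine Lie algebra morphism there. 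The Serre-type relation \eqref{bkm4} is the most delicate: as in the quiver case it follows from integrability, i.e. from finite-dimensionality of each weight space $H^*(M_{\sigma}(v))$ together with the $\widehat{\mf{sl}}_2$ or $\widehat{\mf{sl}}_3$ structure, exactly as in \cite[\S9.iii]{nakajima1998quiver}. The loop grading and central term \eqref{eq:affine} are read off from the weight grading: $d$ acting by $-L(0) = \mathrm{ch}_2$-shift as computed in Section \ref{sec:VOA}, and $c$ acting by $1$ since each local action is level $1$. The modified enveloping algebra $\tilde U$ is used precisely so that the infinite sums $E_i, F_i$ make sense; I would note that the module $V$ has the required weight decomposition so a $\tilde U(\hat{\mf g}(\NS(S)))$-module is the same as a $U(\hat{\mf g}(\NS(S)))$-module.

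For part (ii), the comparison with Fourier coefficients of vertex operators, the strategy is to invoke the uniqueness statement of Proposition \ref{prop:frenkel_kac_follows_from_VOA} (Frenkel-Kac): it suffices to check that the geometric operators realize the Heisenberg $h_i(n)$ correctly and that the geometric $e_i$, $f_i$ have the same commutation relations with the Heisenberg currents as the vertex operators $x_n(\pm\alpha_i)$. The Heisenberg part is already identified geometrically — $V = \bigoplus_{D,n} H^*(M(1,D,n))$ is $\mc F_{\Lambda_{K3}}(1)\otimes\Q[\NS(S)]$ with the Nakajima action, and tensoring by $\mc L_\alpha$ identifies the factor $H^*(M(1,\alpha,*))$ with $\bigoplus_n H^*(S^{[n]})$. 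The key new check is that the geometric Chevalley generators satisfy the weight relation \eqref{hx_fourier_rel}: $[h(n), e_i\, (\text{as an operator})] = \langle h,\alpha_i\rangle (\cdot)$ at the appropriate loop grading, which follows since the Hecke correspondence $\mf P^{\mathrm{K3}}_{c_i}$ changes $c_1$ by $C_i$ and $\mathrm{ch}_2$ by a fixed amount (so changes $d$-weight by a controlled shift), and since the Nakajima Heisenberg operators act diagonally on the $\Q[\NS(S)]$-grading via \eqref{heis_deg_0_wt}. Then any two modules for the vertex algebra built on $V_{alg}$ agreeing on these data are identified by the reconstruction theorem, fixing the cocycle $\epsilon$. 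The main obstacle I anticipate is the first part: making the glueing of Lemma \ref{lem:flop_chain} genuinely well defined for the affine node $e_{0_j}$ across overlapping contractible collections of different types, i.e. checking that the chains of flops in \eqref{eq:sigmatpath} can always be arranged to pass only through codimension-one walls with the required $A_1\times A_1$ or $A_2$ local structure — this is where the hypothesis that any two $-2$ curves are disjoint or meet transversally at a single point, and the wall-crossing analysis of Section \ref{sec:stabk3_specific}, does the essential work.
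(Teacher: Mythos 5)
Your overall architecture differs from the paper's in a way that matters, and the difference hides a genuine gap. You propose to establish part (i) first by checking the relations \eqref{bkm1}--\eqref{bkm5} and \eqref{eq:affine} pairwise, inside the rank $\le 2$ affine subalgebras $\widehat{A_2}$, $\widehat{A_1\times A_1}$, $\widehat{A_1}$. But $\hat{\mf g}(\NS(S))$ is not defined by generators and relations: it is $\mf{g}[t^{\pm1}]\oplus\Q c\oplus\Q d$ with bracket \eqref{eq:affine}, and since $\mf g(\NS(S))$ is Lorentzian this affinization is not a Kac--Moody algebra, so there is no Gabber--Kac/Serre-type presentation in which all relations live in pairwise-generated subalgebras. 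Checking pairwise relations only produces compatible actions of each corner algebra $\hat{\mf g}_{\mc C}$; it does not define operators for elements such as $x\otimes t^n$ with $x$ in an imaginary root space of $\mf g(\NS(S))$, nor does it show that the assignment extends consistently to a Lie algebra homomorphism on all of $\hat{\mf g}(\NS(S))$. The paper avoids this entirely by running your part (ii) \emph{first}: it identifies the geometric Chevalley/loop operators with the Fourier coefficients $x_n(\pm\alpha_i)$ via Proposition \ref{prop:frenkel_kac_follows_from_VOA}, and then the existence of the global $\tilde U(\hat{\mf g}(\NS(S)))$-action is inherited from Proposition \ref{prop:main_from_rep_theory}, because those Fourier coefficients already generate a representation of the full algebra. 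Your gluing discussion via Lemma \ref{lem:flop_chain} matches the paper, but it cannot by itself deliver the statement of the theorem.

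There is a second, related gap in your part (ii). To apply Proposition \ref{prop:frenkel_kac_follows_from_VOA} you must verify the full commutation relations \eqref{hx_fourier_rel} of the geometric operators with the Heisenberg currents $h(n)$ for all $n$, not just with the degree-zero modes. Your argument --- that the Hecke correspondence shifts $c_1$ by $C_i$ and $\mathrm{ch}_2$ by a fixed amount --- only controls the action of $h(0)$ and of $d$, i.e.\ the weight grading. The paper obtains the relations with $h(n)$, $n\ne 0$, by first proving that inside each connected $A_n$-type corner the Heisenberg subalgebra of the geometric affine action coincides, cycle by cycle, with the Nakajima Heisenberg algebra; this is exactly where Proposition \ref{prop:flop_intertwines} (the all-cohomology extension of Nagao's theorem) and the product-structure open covers of Proposition \ref{prop:hilb_is_quiv_amenable} are used, and then Lemma \ref{lem:flop_chain} propagates the identification from $A_n$ subcollections to arbitrary contractible collections. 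Your proposal never invokes Proposition \ref{prop:flop_intertwines}, and without that identification of the two Heisenberg actions (and hence without the factorization $V=V_{alg}\otimes V_T$ with the geometric operators acting as $x\otimes 1$), the hypotheses of the Frenkel--Kac uniqueness statement are not verified.
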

\begin{proof}
Let $\mc{C}_A$ be a connected contractible $A_n$ collection of curves on $S$. We get two actions of associated Heisenberg algebra $\Heis_{A_n}$ via Nakajima operators and $\Heis_{A_n}'$ from the local construction in Corollary \ref{cor:affine_on_VOA}, 
together with the inclusion $\Heis_{A_n}' \hookrightarrow \hat{\mf{g}}_{\mc C_A}$. 
For both of these, there is an open covering of each $M_{\sigma_{Hilb}(v)}(v)$ by sets of the form $U_{1,v} \times U_{2,v}$ such that for $h(n) \in \Heis_{A_n}$ we have that $h(n)$ acts by a correspondence represented by
\[ L\times\Delta_{U_2} \subset U_{1, v} \times U_2 \times U_{1, v+ (0,0,n)} \times U_2 \]
with $U_2 = U_{2, v}= U_{2, v+ (0,0,n)}$, and analogously $h(n)\in \Heis_{A_n}'$ is represented by a cycle $L'\times \Delta_{U_2}$. Here $U_{1, v}\times U_{2,v}$ is a set $U_{\mb k}$ of the form defined in Proposition \ref{prop:hilb_is_quiv_amenable}. Then Theorem \ref{prop:flop_intertwines} implies that the cycles $L$ and $L'$ agree. Further this proposition implies that for any $x\in \hat{\mf g}_{\mc C_A}$ agrees with a cycled generated by convolution by Nakajima operators and hence $\hat{\mf g}_{\mc C_A}$ acts by $x \otimes 1$ on $V = V_{alg}\otimes V_T$, so we can restrict to studying $V_{alg}$. 

Now let $\mc{C}$ be any contractible collection. Then Lemma \ref{lem:flop_chain} implies that for $\mc{C}_{A}\subset \mc{C}$ an $A_n$ collection, the Lie algebra action $\hat{\mf{g}}_{\mc C_A}$ on $V$ constructed by Corollary \ref{cor:affine_on_VOA} agrees with the action induced by the inclusion $\hat{\mf{g}}_{\mc C_A} \hookrightarrow \hat{\mf{g}}_{\mc C}$ and the action of the latter on $V$ given by the same corollary. 
Part (i) of  Lemma \ref{lem:flop_chain} implies that the action of $e_i$ and$f_i$ on $V$agree for $i \neq 0$ and part (ii) implies that the action of $e_0$ and $f_0$ agree.
 Thus the Heisenberg algebra actions $Heis_{\mc C}$ on $V$ induced by Nakajima operators and $Heis_{\mc C}'$ induced as a subset of $\hat{\mf g}_{\mc C}$ agree by the fact that they agree for all $A_n$ subalgebras, and by the same argument as the previous paragraph act by $x\otimes 1$ on $V = V_{alg}\otimes V_T$.

 Combining this construction for all possible contractible collections, we get that for the action of the Heisenberg algebra $Heis_{\NS(S)}$ on $V$ induced by Nakajima operators and any contractible collection $\mc{C}$, the action of the Heisenberg algebra $Heis_{\mc{C}} \subset \hat{\mf g}_{\mc C}$ on $V$ induced by Corollary \ref{cor:affine_on_VOA} agrees with that induced by  $Heis_{\mc{C}}\hookrightarrow Heis_{\NS(S)}$, and both of these come from an action on $V_{alg}$. Further if $\alpha\in \NS(S)$ is a class corresponding to $e_i$ or $f_i$ for $i = 1, \ldots \rho$ or $i = 0_{j}$ a generator of the Lie algebra $\hat{\mf g}_{\mc C}$ then the vertex operator 
 \[ Y(z) := \sum_{n \in \Z} y_n(\alpha)z^{-n} \]
 where $y_n(\alpha)$ is an operator which on weight space $\lambda$ acts by an appropriate operator generated by those of the form \eqref{eq:mainE} or \eqref{eq:mainF} has the correct commutation relations with elements of the Heisenberg algebra and therefore agrees with that defined in Proposition \ref{prop:main_from_rep_theory} by Proposition \ref{prop:frenkel_kac_follows_from_VOA} for some choice of cocycle $\epsilon$.  Then because these  Fourier coefficients generate the entire algebra $\hat{\mf{g}}(\NS(S))$ the result follows. 
\end{proof}

\paragraph{Further questions}
This result is hopefully a special case of a more general result which takes a collection of spherical classes $\mc{S}$ and Mukai vector $v$ and produces an action of a Lie algebra $\mf{g}_{\mc S}$ with real roots corresponding to $s\in \mc{S}$ on the space 
\[ \bigoplus_{s_{\mb t} \in \Z \mc{S}} H^*(M_{\sigma}(v + s_{\mb t}))\]
where $H^*(-)$ is some cohomology theory and this action is generated by multiplication in a version of the Hall algebra for this cohomology theory by objects of Mukai vector in $\mc{S}$. One set of examples is induced by Theorem \ref{thm:main} under any derived autoequivalence of $D^b(Coh(S))$. In particular if we spherical twist by a higher rank spherical vector bundle we obtain a somewhat peculiar action of $\hat{\mf g}(\NS(S))$ on some moduli spaces corresponding to Mukai vectors with varying rank. It is an interesting question as to how general this construction may be, and among these lie algebra actions which are induced by Fourier coefficients of vertex operators as is this one. 

Also, we expect that just as in the quiver variety case \cite{nakajima2001quiver_finite}, this construction is the classical limit $q \to 1$ of a construction which gives an evaluation representation of a $q$-deformation of $U(\on{Map}(\mathbb{G} \to \hat{\mf{g}}(\NS(S)))))$ on 
the cohomologies of these moduli spaces for some cohomology theory dependent on $\mathbb{G}$ which is the additive group or the multiplicative group,  or possibly an elliptic curve if maps are defined in the correct way. The role of $q$ here, and therefore the relevant cohomology theory is more mysterious. The author intends to return to this question and its potential applications to enumerative geometry in future work. 

\bibliographystyle{abbrv}
\bibliography{lorentzian-KM-K3}

\end{document}